\let\newpf\proof \let\proof\relax
\def\bm{\begin{matrix}}
\def\em{\end{matrix}}
\newcommand{\bt}{\begin{thm}}
\newcommand{\et}{\end{thm}}
\newcommand{\bl}{\begin{lemma}}
  \newcommand{\el}{\end{lemma}}
\newcommand{\beq}{\begin{eqnarray}}
\newcommand{\eeq}{\end{eqnarray}}
\def\be{\begin{equation}}
\def\ee{\end{equation}}
\def\ba{{\begin{align}}}
\def\ea{{\end{align}}}
\def\0{{\mathbf 0}}
\newtheorem{thm}{Theorem}[section]
\newtheorem{cor}[thm]{Corollary}
\newtheorem{lemma}[thm]{Lemma}
\newtheorem{conj}[thm]{Conjecture}
\theoremstyle{remark}
\newtheorem{rem}{Remark}[section]
\numberwithin{equation}{section}
\def \bn {\hfill \\ \smallskip\noindent}
\theoremstyle{definition}
\newtheorem{definition}{Definition}[section]
\def\proof{\bn {\bf Proof.} }
\def\note#1
\newcommand{\dist}{\operatorname{dist}}
\newcommand{\Q}{{\mathbb Q}}
\newcommand{\R}{{\mathbb R}}
\newcommand{\T}{{\mathbb T}}
\newcommand{\Z}{{\mathbb Z}}
\def\B0{{\bold{0}}}
\newcommand{\lf}{\lfloor}
\newcommand{\rf}{\rfloor}
\def\Empty{}
\newcommand\oplabel[1]{
  \def\OpArg{#1} \ifx \OpArg\Empty {} \else
  	\label{#1}
  \fi}
\newcommand{\comm}[1]{}
\newcommand{\comment}[1]{}
\begin{document}

\title{Anti-resonances and sharp analysis of Maryland localization for all parameters}

\author{Rui Han, Svetlana Jitomirskaya and Fan Yang}



\thanks{}

\begin{abstract}
We develop the technique to prove localization through the analysis of
eigenfunctions in presence of both exponential frequency resonances
and exponential phase
barriers (anti-resonances) and use it to prove localization for the Maryland model for all parameters.

\end{abstract}

\maketitle

\section{Introduction}
As once noted by Ya. G. Sinai, localization is a game of
resonances. When restrictions to boxes that are not too far away from
each other have eigenvalues
that are too close, small denominators are created, making proofs of
localization always challenging and proofs of delocalization
occasionally possible (e.g. \cite{g,as,js,aw,jk,jl2}). Two types of
resonances have played a special role in the spectral theory of
quasiperiodic operators. Frequency resonances were first exploited in
\cite{as} based on \cite{g} to prove absence of eigenvalues (and
therefore singular continuous spetrum in the hyperbolic regime) for
quasiperiodic operators with Liouville frequencies. Phase resonances
were discovered in \cite{js} and used to prove absence of
eigenvalues for quasiperiodic operators with even potentials and
(arithmetically defined) generic frequencies. It was conjectured in
\cite{jcongr1} that for the almost Mathieu family- the
prototypical quasiperiodic operator - the two above types of
resonances are the only ones that appear and the competition between
the Lyapunov growth and combined exponential resonance strength resolves in
a sharp way. This was so far proved for single-type-resonances only:
for pure frequency resonances (that is for so-called
$\alpha$-Diophantine phases for which there are no exponential phase resonances) in \cite{ayz,jl1} and for pure phase
resonances (that is for Diophantine frequencies for which there are no
exponential frequency resonances) in \cite{jl2}. Papers \cite{jl1,jl2}
required developing sharp techniques for dealing with correspondingly
phase and frequency resonances. Both can survive, on the localization
side, adding weak resonances of the other type, but so far without the
desired sharpness. In this paper we succeed for the first
time  in
dealing in a sharp way with two different types of resonances. 

Our operators come not from the almost Mathieu, but from another popular
family, the Maryland model, a family of quasi-periodic Schr\"odinger operators
\begin{equation}\label{marylandmodel}
(H_{\lambda,\alpha,\theta}u)_n=u_{n+1}+u_{n-1}+\lambda\tan(\pi(\theta+n\alpha)u_n),
\end{equation}
where $\lambda>0$ is the coupling constant, irrational $\alpha\in \T=[0,1]$ is the frequency and $\theta\in \T$ is the phase.
We assume $\theta \notin \Theta=\{\frac{1}{2}+\alpha \Z+\Z\}$.

Maryland model was originally proposed by Grempel,
Fishman and Prange \cite{fgp} as a linear version of the quantum
kicked rotor and has attracted continuing interest from the physics community, see e.g. \cite{berry, fishman2010, GKDS2014}, since it serves as 
an exactly solvable example of the family of incommensurate models.

Frequency resonances are ubiquitous for all quasiperiodic potentials,
while phase resonances discussed above exist only for even sampling
functions, and thus not for the Maryland model. Indeed, as a result, for
Diophantine (i.e. non-resonant) frequencies it has localization for
{\it all} phases \cite{simm, JYMaryland}. However, it does have barriers, when
trajectory of a given phase approaches the singularity too
early. Barriers can compensate for the resonances, and therefore serve
as what we call here {\it anti-resonances}. They are precisely the
reason why for the Maryland model there are phases with localization
even for the most Liouville frequencies \cite{maryland}.  Thus
Maryland model features a combination of frequency resonances and
phase anti-resonances. Our main achievement here is the development of
a precise
understanding of barriers as phase anti-resonances, of how the competition
between them and  frequency resonances unfolds, as well as sharp analysis
of eigenfunction decay in presence of this combination.

B. Simon called the Maryland model a useful laboratory \cite{simm} because it is
exactly solvable in some sense, and thus can serve as a source of both general conjectures and counterexamples. It has explicit expression for the
Lyapunov exponent, integrated density of states, and even (somewhat implicit)  for
the eigenvalues and eigenfunctions. Thus certain features admit a more
direct analysis. Indeed it admits a very beautiful
trick noticed originally in \cite{fgp}: under the combination of Cayley and Fourier transforms it
leads to an explicit cohomological equation, and the analysis
becomes similar to that of Sarnak \cite{s}. \footnote{ \cite{s} was the paper where
the importance of the arithmetics in this type of spectral questions was
proposed even before \cite{as}}. Utilizing the Cayley transform, the
spectral decomposition for the Maryland model was determined fully,
for all $\alpha,\theta,$ in \cite{maryland}, making it the first - and
so far the only - model with spectral transitions
where this could be claimed. Previously, localization up to the sharp
threshold was proved for a.e. $\theta$ in \cite{simm}. The extension
of the analysis from a.e $\theta$ in \cite{simm} to {\it all} $\theta$ in
\cite{maryland} required  accounting the effect of the barriers, and Cayley
transform allowed to do it albeit in a highly implicit way.

Namely, let $p_n/q_n$ be the continued fraction approximants of
$\alpha$. The index $\beta(\alpha)$ that measures exponential strength
of the frequency resonances is defined as follows:
\begin{align}\label{def:beta}
\beta(\alpha)=\limsup_{n\to\infty}\frac{\ln q_{n+1}}{q_n}= \limsup_{n\to\infty}\frac{-\ln \|n\alpha\|}{n},
\end{align}
where $\|x\|_{\T}:=\text{dist} (x, \Z)$.

A new index, $\delta(\alpha,\theta)$ was introduced in \cite{maryland} as
\begin{align}\label{def:delta}
\delta(\alpha,\theta):=\limsup_{n\to \infty}\frac{\ln q_{n+1}+\ln
  \|q_n(\theta-\frac{1}{2})\|_{\T}}{q_n}.
\end{align}

With the Lyapunov exponent $L_\lambda(E)$ explicitly defined by
\eqref{LE} (and not dependent on $\alpha,\theta$), it was proved in \cite{maryland} that

\begin{thm}
$H_{\lambda,\alpha,\theta}$ has purely singular continuous spectrum on $\{E: L_{\lambda}(E)<\delta(\alpha,\theta)\}$, and pure point spectrum on $\{E: L_{\lambda}(E)>\delta(\alpha,\theta)\}$. 
\end{thm}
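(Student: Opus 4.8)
The plan is to reduce the statement, via Schnol's theorem, to two assertions about solutions of $H_{\lambda,\alpha,\theta}u=Eu$, to be proved by a sharp analysis of the transfer matrices over blocks that tracks frequency resonances and phase barriers simultaneously. The absolutely continuous part needs nothing new: by the explicit formula $L_\lambda(E)>0$ for every real $E$, and it is classical (\cite{simm}, or via the Cayley transform \cite{maryland}) that this rules out a.c.\ spectrum for all admissible $\theta$. So it suffices to show: (A) if $L_\lambda(E)>\delta(\alpha,\theta)$ then every polynomially bounded solution of $H_{\lambda,\alpha,\theta}u=Eu$ decays exponentially; and (B) if $L_\lambda(E)<\delta(\alpha,\theta)$ then there is no nonzero $\ell^2$ solution. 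Indeed, (A) together with Schnol's theorem makes the spectral measure restricted to $\{L_\lambda(E)>\delta(\alpha,\theta)\}$ carried by genuine eigenvalues, i.e.\ pure point, while (B) together with the absence of a.c.\ spectrum makes it purely singular continuous on $\{L_\lambda(E)<\delta(\alpha,\theta)\}$.

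The common tool is a precise estimate for the transfer cocycle $A_n(\theta)$ of $H_{\lambda,\alpha,\theta}-E$ over intervals, in which two features interact. A \emph{frequency resonance} at scale $q_m$: since $\|q_m\alpha\|\asymp q_{m+1}^{-1}$, the potential is nearly $q_m$-periodic over $\asymp q_{m+1}$ sites, so a block of that length may carry an eigenvalue of $H_I$ extremely close to $E$ and fail to be hyperbolic in the naive sense. A \emph{phase barrier}: when $\theta+n_0\alpha$ is within $d$ of the singular set $\frac12+\Z$, the potential has a spike of height $\asymp\lambda/(\pi d)$ and the one-step transfer matrix at $n_0$ has norm $\asymp d^{-1}$ --- a Dirichlet-type wall of transmission $\lesssim d$. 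The combinatorial input linking this to $\delta(\alpha,\theta)$ is a three-distances computation for the orbit $\{(\theta-\frac12)+j\alpha\}$: the barrier strength accumulated in the window attached to scale $q_m$ has exponential rate $-\log\|q_m(\theta-\frac12)\|$, so the net balance ``Lyapunov growth $+$ barrier strength $-$ resonance strength'' over that window has rate exactly $L_\lambda(E)-\delta(\alpha,\theta)$. This is how an anti-resonance cancels a frequency resonance.

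For (A) I would run the block-resolvent method, as in \cite{jl1}. For a block $I=[a,b]$ with $E\notin\sigma(H_I)$ one has $u_x=-G_I(x,a)u_{a-1}-G_I(x,b)u_{b+1}$; for blocks free of barriers and off resonant scales, a large-deviation bound for $\frac1{|I|}\log\|A_I\|$ (available because $L_\lambda$ is explicit and the base is a rotation) plus an avalanche-principle argument give $|G_I(x,y)|\le e^{-(L_\lambda(E)-\epsilon)|x-y|}$, and iterating such regular blocks around every site yields exponential decay of $u$. The iteration can only stall at a resonant scale $q_m$, where the block must be taken of length $\asymp q_{m+1}$ and may be resonant-bad; but it then contains a barrier, and the spike ``decouples'' the block --- transmission $\lesssim e^{-(\text{barrier strength})}$ --- so for $x,y$ on opposite sides of the spike one still gets $|G_I(x,y)|\le e^{-(L_\lambda(E)-\epsilon)|x-y|}$ provided (Lyapunov growth across $I$) $+$ (barrier strength) $>$ (resonance strength), which is precisely $L_\lambda(E)>\delta(\alpha,\theta)$. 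Feeding in these enlarged good blocks gives $|u_x|\le C_\epsilon e^{-(L_\lambda(E)-\epsilon)|x|}$ for every $\epsilon>0$.

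For (B), suppose $u\in\ell^2\setminus\{0\}$ solves $H_{\lambda,\alpha,\theta}u=Eu$, and pick $q=q_{m_k}$ along a subsequence with $\log q_{m_k+1}+\log\|q_{m_k}(\theta-\frac12)\|\ge(L_\lambda(E)+\eta)q_{m_k}$ for some $\eta>0$. A Gordon-type reflection argument in the spirit of \cite{as} --- comparing $u$ on consecutive blocks of length $q$ through $A_q(\theta)$, $A_q(\theta\pm q\alpha)$ and Cayley--Hamilton, using $\|q\alpha\|\lesssim q_{m_k+1}^{-1}$ and hyperbolicity of rate $L_\lambda(E)$ away from the one dominant per-period spike of height $\lesssim q_{m_k}/\|q_{m_k}(\theta-\frac12)\|$ --- then forces $u$ to carry a fixed amount of $\ell^2$-mass on a window of $\asymp q_{m_k+1}$ sites about the origin, for all $k$, contradicting $u\in\ell^2$. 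The subtle point, and in both directions the main obstacle, is that the barrier must be charged \emph{once}: a crude bound on $A_q(\theta)-A_q(\theta\pm q\alpha)$ counts the spike height roughly twice and only yields a strictly smaller, non-sharp threshold, so one must exploit the one-dimensionality --- treating the spike as an essentially rank-one modification whose effect on the monodromy is linear in its height, or using the explicit Cayley-transform form of the solutions from \cite{maryland}. More broadly, the heart of the argument is to show the barrier contributes its full strength $-\log(\text{distance to the singularity})$ additively to the exponential growth of transfer matrices with no hidden cancellation, to identify ``a barrier in the scale-$q_m$ window'' with the rate $-\log\|q_m(\theta-\frac12)\|$ through the continued-fraction combinatorics of $\{(\theta-\frac12)+j\alpha\}$, and to push all errors to $o(q_m)$ so that the transition sits exactly at $L_\lambda(E)=\delta(\alpha,\theta)$ --- feasible precisely because the Maryland model is explicitly solvable.
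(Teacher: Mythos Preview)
First, a framing point: the paper does not prove this theorem at all --- it is quoted from \cite{maryland} as background. The proof in \cite{maryland} proceeds via the Cayley transform and the resulting explicit cohomological equation, not via direct transfer-matrix analysis. The present paper's own contribution (Theorem~\ref{main0}) is a new, constructive proof of the \emph{pure point} direction only; it never reproves the singular continuous direction. So your proposal is being compared partly against a proof that lives elsewhere and uses a completely different mechanism (Cayley transform) from the one you sketch.

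On part (A), your outline invokes large-deviation estimates and the avalanche principle; neither appears in this paper. The paper's route is: write $\tilde P_k(\theta)=\prod\cos\cdot P_k$ as a degree-$k$ polynomial in $\tan\pi\theta$, use Lagrange interpolation plus $\eta$-uniformity of $\{\theta_j\}_{j\in I_0\cup I_\ell}$ to force $|\tilde P_{2q_n-1}(\theta_{x_1})|\gtrsim(|\ell|/q_{n+1})e^{2q_n\tilde L}$ for some $x_1\in I_\ell$, and then --- the genuinely new idea, Section~\ref{Sec:key} --- show that the \emph{numerator} $|\tilde P_{q_n-1}(\theta_{m_n+\ell q_n+1})|$ in the Green's formula carries an extra factor $e^{(\delta_n-\beta_n)q_n}$, obtained by combining the eigenvalue equation at the barrier site $m_n$ with the Schnol bound on $\phi$ and telescoping. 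This is not ``transmission across a barrier is $\lesssim d$'' (that intuition, in the hard cases of Section~\ref{Sec:C2}, simply fails: the relevant product of cosines need not contain a minimal cosine at all). Your remark that large deviations are ``available because $L_\lambda$ is explicit'' glosses over the unboundedness of $\tan$; the paper works exclusively with the bounded cocycle $F=\cos\pi\theta\cdot A$ precisely to avoid this.

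On part (B), you correctly identify the obstruction in a Gordon-type argument --- a naive bound on $A_q(\theta)-A_q(\theta\pm q\alpha)$ double-counts the spike --- but you do not resolve it; ``treating the spike as an essentially rank-one modification'' and ``using the explicit Cayley-transform form'' are suggestions, not arguments. The actual proof in \cite{maryland} takes the latter route wholesale: it never runs a Gordon argument, but analyzes the cohomological equation arising after the Cayley and Fourier transforms, where $\delta(\alpha,\theta)$ emerges naturally. As written, your (B) is a plan with its central difficulty left open.
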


Thus Maryland model has sharp spectral transition defined by the
interplay between the Lyapunov exponent and $\delta(\alpha,\theta)$.

The index $\delta(\alpha,\theta)$ appeared naturally in the context of the cohomological
equation arising as a result of the Cayley and Fourier transforms. It is clear that $\delta(\alpha,\theta)\leq \beta(\alpha)$. 
Indeed $\delta(\alpha,\theta)=\beta(\alpha)$ holds for a.e.  (however
not all) $\theta$. 

Here we show another representation for $\delta(\alpha,\theta), $ see
Corollary \ref{cor:alternate_delta},
\begin{align}\label{def:another}
\delta(\alpha,\theta)=\limsup_{n\to \infty}\frac{\max(0,\ln q_{n+1}+\displaystyle\min_{k=0,\ldots,q_n-1}\ln
  \|\theta-\frac{1}{2}+k\alpha\|_{\T})}{q_n}.
\end{align}

Therefore, $\delta(\alpha,\theta)$ can be  interpreted  as the
exponential strength of
frequency resonances, $\beta(\alpha)$, tamed by   the
phase anti-resonances, defined as the positions of exponential near-zeros of the $\cos(\pi(\theta+k\alpha)).$ 

As mentioned, the proof of \cite{maryland} as well as all the ones prior to it, including
the original physics paper \cite{fgp}, have been based on a Cayley
transform and therefore indirect. Moreover, the eigenfunctions of the
Maryland model are, as a result of indirect analysis, known exactly,
yet the formulas don’t allow for easy conclusions about their
behavior, which is expected to be quite interesting, with transfer matrices
satisfying certain exact renormalization \cite{fs}. Also, Maryland eigenfunctions
are expected, through numerics, to have hierarchical structure driven
by the continued fraction expansion of the frequency. In \cite{JYMaryland} two
of the authors
developed a Green's function based approach to localization  for this
model, and used it to obtain Anderson localization for all $\theta$ and
Diophantine $\alpha$ or, in other words, for the case
$\beta(\alpha)=0$, that is in absence of frequency resonances. In this
paper we fully handle the difficult resonant case, where the anti-resonances
also start playing a crucial role. Our main conclusion is
\begin{thm}
\label{main0}
For any $\alpha\in \R\setminus \Q$ and any $\theta,$  
the spectrum on $\{E:L_{\lambda}(E)\ge\delta(\alpha, \theta)\}$  is
pure point and for any eigenvalue
$E\in \{L_{\lambda}(E)>\delta(\alpha, \theta)\}$ and any $\epsilon>0,$
the corresponding
eigenfunction $\phi_E$ satisfies $|\phi_E(k)|< e^{-
  (L_{\lambda}(E)-\delta(\alpha, \theta)-\epsilon)|k|}$ for
sufficiently large $|k|.$
\end{thm}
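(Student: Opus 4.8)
The plan is to bypass the Cayley transform used in \cite{fgp,simm,maryland} and instead analyze the generalized eigenfunctions directly, via block Green's functions, in the spirit of \cite{jl1,JYMaryland}, incorporating a mechanism that controls at once the exponential strength of frequency resonances --- carried by the continued-fraction denominators $q_n$ of $\alpha$ --- and the compensating effect of the barriers. Fix $E$ with $L_\lambda(E)>\delta(\alpha,\theta)$ and $\epsilon>0$, and let $\phi$ be a polynomially bounded solution of $H_{\lambda,\alpha,\theta}\phi=E\phi$ normalized by $|\phi(0)|^2+|\phi(-1)|^2=1$. It suffices to prove that every $k$ with $|k|$ large lies well inside an interval $I=[a,b]$ on which $|G_I(k,\partial I)|\le e^{-(L_\lambda(E)-\delta(\alpha,\theta)-\epsilon)\,\dist(k,\partial I)}$, where $G_I=(H_I-E)^{-1}$: the block-resolvent identity $\phi(k)=-G_I(k,a)\phi(a-1)-G_I(k,b)\phi(b+1)$ then propagates decay from a bounded set, summing the resulting bounds puts $\phi\in\ell^2$, and since by Schnol's theorem the spectral measure on $\{L_\lambda>\delta\}$ is carried by generalized eigenvalues --- each of which now supports an $\ell^2$ eigenfunction --- the spectrum there is pure point.

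The core is a Green's-function estimate at the resonance scale: on an interval $I$ with $|I|\asymp q_n$ one has, for $n$ large, $|G_I(x,y)|\le e^{(\delta(\alpha,\theta)+\epsilon)q_n}\,e^{-L_\lambda(E)|x-y|}$. To establish it I would first control the (unbounded) Maryland transfer-matrix cocycle on a \emph{generic} block, using the explicit formula \eqref{LE} for $L_\lambda(E)$ together with an upper large-deviation estimate, handling the $\tan$-singularities by isolating the sites $k$ with $\|\theta-\tfrac12+k\alpha\|_{\T}$ small: such a site of depth $\|\theta-\tfrac12+k\alpha\|_{\T}=e^{-s}$ carries potential of size $\asymp e^{s}$ and hence forces an \emph{extra} factor $e^{s}$ of decay across it --- the quantitative content of a phase anti-resonance. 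On a \emph{resonant} block, where $\|q_n\alpha\|_{\T}$ is exponentially small and the potential nearly repeats with period $q_n$, one must bound $|\det(H_I-E)|$ from below: near-periodicity allows it to be as small as $e^{-L_\lambda(E)|I|+\ln q_{n+1}}$, \emph{unless} the block also contains a deep barrier, whose large diagonal entry improves the bound by exactly the barrier depth $-\min_{0\le k<q_n}\ln\|\theta-\tfrac12+k\alpha\|_{\T}$. Combining the two, the net exponential loss in $G_I$ across such a block is at most $\max\bigl(0,\ \ln q_{n+1}+\min_{0\le k<q_n}\ln\|\theta-\tfrac12+k\alpha\|_{\T}\bigr)$, which by Corollary \ref{cor:alternate_delta} (the representation \eqref{def:another}) is $\le(\delta(\alpha,\theta)+\epsilon)q_n$ for all large $n$.

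Granting the block estimate, the conclusion follows from the by-now-standard scale induction: cover $[-N,N]$ by good blocks at successive scales $q_n$, use the sparsity of the strong resonances at each scale (a continued-fraction property, as exploited in \cite{jl1,jl2}) so that at most one resonant scale is relevant near a given $k$, and telescope the block-resolvent identity to get $|\phi(k)|\le e^{-(L_\lambda(E)-\delta(\alpha,\theta)-\epsilon)|k|}$ for $|k|$ large, with the full rate $L_\lambda(E)-\epsilon$ between resonant scales and the $\delta$-corrected rate only where a resonant scale intervenes; for $q_n\le|k|\le q_{n+1}$ the relevant correction is $\lesssim\ln q_{n+1}+\min_{0\le k<q_n}\ln\|\theta-\tfrac12+k\alpha\|_{\T}\le\delta(\alpha,\theta)q_n\le\delta(\alpha,\theta)|k|$, which is why the bound holds for \emph{all} large $|k|$ rather than merely along a subsequence. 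Finally the boundary set $\{L_\lambda(E)=\delta(\alpha,\theta)\}$, left open by the transition theorem of \cite{maryland}, is a discrete (hence countable) subset of $\R$ since $L_\lambda$ is real-analytic and non-constant by \eqref{LE}; as the spectral measure has no absolutely continuous component ($L_\lambda>0$) and no singular continuous mass on a countable set, its restriction to that set is automatically pure point, which finishes the claim on $\{L_\lambda\ge\delta\}$.

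The step I expect to be the main obstacle is the lower bound on the resonant block: making precise, and \emph{sharp}, the assertion that a barrier of depth $s$ cancels exactly $s$ units of resonance strength. A resonant block is a near-periodic Jacobi block whose eigenvalues may be exponentially close to $E$, and one must show that a single large diagonal entry $\asymp e^{s}$ repels the nearest eigenvalue by a factor $\asymp e^{s}$ uniformly in the otherwise uncontrolled fine structure of the block, \emph{and} that the resulting off-diagonal decay of $G_I$ across the barrier occurs at the Lyapunov rate $L_\lambda(E)$ rather than at the larger hyperbolic rate the spike alone would suggest --- i.e. the barrier must genuinely decouple the two sides of the block at rate $L_\lambda(E)$, not merely damp $\phi$ near itself. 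This is exactly the precise understanding of barriers as phase anti-resonances, and of the competition between them and frequency resonances, advertised in the introduction; I expect it to require a joint induction on scales in which barrier positions --- extracted from the continued-fraction data of $\theta-\tfrac12$ as in \eqref{def:delta} --- are tracked alongside the resonance scales $q_n$, perhaps in tandem with the exact transfer-matrix renormalization noted in \cite{fs}.
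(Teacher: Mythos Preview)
Your framework is correct --- block Green's functions, Schnol, and the resonance/anti-resonance competition encoded by $\delta$ via the representation \eqref{def:another} --- but the mechanism you propose for the key step is not the one that works, and I do not see how to make yours sharp. You locate the effect of the barrier in the \emph{lower bound on the resonant-block determinant}, arguing that a large diagonal entry $\asymp e^{s}$ repels the nearest eigenvalue and thereby improves $|\det(H_I-E)|$ by $e^{s}$. In the regularized picture one must actually work with $\tilde P_k=\prod\cos\cdot P_k$, and there the tangent spike is exactly cancelled by the small cosine, so the barrier leaves no direct trace on $\tilde P$; moreover the eigenvalue-repulsion heuristic controls only one eigenvalue of the block and says nothing about whether the remaining ones stay away from $E$, so it does not give the sharp constant $\delta$.

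In the paper the denominator lower bound $|\tilde P_{2q_n-1}(\theta_{x_1})|\gtrsim (|\ell|/q_{n+1})\,e^{(2q_n-1)\tilde L}$ comes from Lagrange interpolation and uniformity (Corollary~\ref{cor:1_res_uni}), exactly as in \cite{jl1}, \emph{with no input from the barrier}. The barrier enters only through the \emph{numerator} of \eqref{Green_tildeP}: in the easy cases the cosine product already contains a factor $c_{n,\ell}\sim e^{(\delta_n-\beta_n)q_n}$, but in the hard cases (e.g.\ Case~1 of Section~\ref{Sec:C2}) it does not, and the missing decay must be found in an improved \emph{upper} bound on $|\tilde P_{q_n-1}(\theta_{m_n+\ell q_n+1})|$ itself (Lemmas~\ref{lem:Pqn_mn}--\ref{lem:cor_Pqn}). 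The mechanism is a feedback through the eigenfunction rather than eigenvalue repulsion: the eigenvalue equation at the barrier site $m_n$ together with the Shnol bound forces $|\phi(m_n)|\lesssim c_{n,0}\,q_n$; inserting this and $|\phi(0)|\ge 1$ into \eqref{Green_tildeP} on $[m_n-q_n+1,m_n-1]$ forces $|\tilde P_{q_n-1}(\theta_{m_n-q_n+1})|$ to be small, and a telescoping argument transports this to all $\ell$. Neither a joint scale induction tracking barrier positions nor the renormalization of \cite{fs} is needed.
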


\begin{rem} We make a few remarks:
  \begin{enumerate}
    \item It is known that $\sigma_{pp}(H_{\lambda,\alpha,\theta})=
      \{E:L_{\lambda}(E)\ge\delta(\alpha, \theta)\}$ \cite{maryland}.
    We include the statement on pure point
     spectrum on  $ \{E:L_{\lambda}(E)\ge\delta(\alpha, \theta)\}$
     only to emphasize that it also 
         follows independently from our approach.
    \item In fact, we prove a much more precise local statement at
      each scale, see
      Lemma \ref{lem:main}.
      \item We therefore obtain sharp bounds on the decay of all
        eigenfunctions except for at most two values of $E$ where
        $L(E)= \delta(\alpha, \theta),$ which may or may not be eigenvalues \cite{maryland}.
       
       \end{enumerate}
     \end{rem}
     
Our main achievement however is the {\it approach} we develop here to treat
the ``resonance tamed by an anti-resonance'' situation. This paper is the first one in a
series of at least two as it paves the way to study full asymptotics
of the eigenfunctions. The result of Theorem \ref{main0} provides the
sharp upper envelope, but does not otherwise give insight into the
fine behavior of the eigenfunctions. However, we develop here the key
tools for such study, and the latter will be presented in the follow-up
work \cite{HJY}. In fact, some of our technical statements are
more detailed than needed for the purpose of our main results, because
we want to create the foundation for what will follow
in \cite{HJY}. Moreover, we expect this to lead to universal hierarchical
structure in the behavior of the eigenfunctions, identical to the one
discovered in \cite{jl1} for the almost Mathieu operator in case of 
absence of the anti-resonances (i.e. $\delta(\alpha, \theta)=\beta(\alpha)$), but a lot
more rich and complex in presence of the anti-resonances. We also
comment that Fourier transforms of the eigenfunctions represent
functions with natural boundaries on both circles bounding the annulus
of analiticity \cite{simm}, and our analysis promises to provide
various insights on their boundary behavior which is expected to be universal.

Moreover, while certain arguments we present depend on some specific
aspects of the Maryland model, the crucial part of the proof:
sharp analysis of the effect of  anti-resonances, is actually quite robust, and we
expect it to be useful in the study of other one-frequency
quasiperiodic models with 
unbounded potentials that have attracted attention recently
\cite{ilya1,ilya2,ilya3,gps} . Additionally, in the models that lead to
singular Jacobi matrices (that is where
the off-diagonal terms can approach zero) positions of off-diagonal
exponential near-zeros also compensate for resonant small divisors,
thus creating
effective anti-resonances. Such models have appeared lately in the
study of various graphene-type structures (e.g. \cite{graphene, AA}),
and in presence of certain anisotropy they also lead to models with
hyperbolicity where
frequency resonances coexist with phase anti-resonances. We expect
many parts of our
method 
to be applicable to all those scenarios.

In particular, a popular model, also originating in physics,  is the
extended Harper's model (EHM), introduced by Thouless \cite{thou}. It is a
family indexed by five parameters, given by $$
(H^{\mathrm{EHM}}_{\lambda, \alpha,\theta}u)_n=c_{\lambda}(\theta+n\alpha)u_{n+1}+\overline{c_{\lambda}(\theta+(n-1)\alpha)}u_{n-1}+2\cos{2\pi(\theta+n\alpha)}u_{n},
$$
where $\lambda =(\lambda_1,\lambda_2,\lambda-3),\;c_{\lambda}(\theta)=\lambda_1 e^{-2\pi i(\theta+\frac{\alpha}{2})}+\lambda_2+\lambda_3 e^{2\pi i(\theta+\frac{\alpha}{2})}$.
See \cite{JMreview} for a 2017 review  and \cite{dryten,AJM,nopoint,HJ,HYZ,Xin} for more recent results.

Extended Harper's model has a range of parameters where the
Lyapunov exponent is positive on the spectrum, while the corresponding
Jacobi matrix is singular, thus again becoming a fertile ground for
resonance/anti-resonance analysis. However, in this case there is an
additional feature: phase resonances, thus the situation is even more complicated. 
In fact, our analysis prompts us to formulate the following conjecture. 

Let $L(\lambda)$ be the Lyapunov exponent of the EHM on the spectrum (it
is known exactly and only depends on $\lambda$, see \cite{Jm}).
Let the exponent $\gamma(\alpha,\theta)$ be defined by 
$$\gamma(\alpha,\theta):=\limsup -\frac{\ln \|2\theta+n\alpha\|}{|n|}.$$
It characterizes the exponential strength of phase resonances.
Let $\mathcal{R}_1:=\{\lambda: 0<\max(\lambda_1+\lambda_3, \lambda_2)<1\}$ be the positive Lyapunov exponent regime of the EHM, and 
$\mathcal{R}_s:=\{\lambda: \lambda_1=\lambda_3\geq
\frac{\lambda_2}{2}, \text{ or } \lambda_1+\lambda_3=\lambda_2\}$ be
the singular regime: that is where $c_{\lambda}(\theta)=0$ for some $\theta\in \T$. 
\begin{conj}
For $\lambda \in \mathcal{R}_1\cap \mathcal{R}_s$, $H^{\mathrm{EHM}}_{\lambda, \alpha, \theta}$ has purely singular continuous spectrum if $L(\lambda)<\tilde{\delta}(\alpha,\theta)+\gamma(\alpha,\theta)$, and pure point spectrum if $L(\lambda)>\tilde{\delta}(\alpha,\theta)+\gamma(\alpha,\theta)$, where 
\begin{align}\label{def:tildedet}
\tilde{\delta}(\alpha,\theta)=\limsup_{n\to \infty} \frac{\ln
  q_{n+1}+\sum_{\theta ':c_{\lambda}(\theta ')=0}\ln \|q_n(\theta-\theta
  ')\|}{q_n},
\end{align}
where zeros are counted with multiplicities.
We note that $c_{\lambda}$ has either one or two (possibly coinciding)
zeros in the
indicated regime.
.
\end{conj}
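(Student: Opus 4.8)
The natural route to the conjecture is to run the two-sided scheme of \cite{maryland} and of the present paper: on $\{L(\lambda)>\tilde{\delta}(\alpha,\theta)+\gamma(\alpha,\theta)\}$ prove pure point spectrum with the sharp decay rate $L(\lambda)-\tilde{\delta}(\alpha,\theta)-\gamma(\alpha,\theta)-\epsilon$, and on the complementary open set prove absence of eigenvalues; in both cases one reduces to local estimates at the resonant scales $q_n$ in the spirit of Lemma \ref{lem:main}. Since the EHM admits no Cayley transform, one must work directly with the (singular) transfer cocycle and Green's functions as in \cite{JYMaryland}. The two genuinely new features, relative to \cite{JYMaryland} and to \cite{jl2}, are that the Jacobi matrix is singular on $\mathcal{R}_s$ and that phase resonances are present; one is thus forced to superimpose three mechanisms: frequency resonances, the off-diagonal anti-resonances analyzed in this paper, and the reflection-type phase resonances of \cite{js,jl2}.

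For the localization half, the first step is to renormalize the cocycle at the off-diagonal near-zeros. When $|c_\lambda(\theta+k_0\alpha)|$ is exponentially small the bond between sites $k_0$ and $k_0+1$ is nearly severed, so over a window of length $\asymp q_n$ the transfer-matrix product factors, up to an explicit product built from the $|c_\lambda(\theta+k\alpha)|$ and a controlled error, through the restrictions to the maximal subintervals between consecutive near-zeros. This is precisely the singular-Jacobi incarnation of a Maryland barrier, and it is through this factorization that the term $\sum_{c_\lambda(\theta')=0}\ln\|q_n(\theta-\theta')\|$ in $\tilde{\delta}$ enters: an anti-resonance located between the center of an eigenfunction and a frequency-resonant box prevents the large value on the far box from propagating inward, giving back up to $\bigl|\sum_i\ln\|q_n(\theta-\theta'_i)\|\bigr|$ of the loss $\beta(\alpha)$. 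Once the cocycle is renormalized one is left with a Schr\"odinger-type block, to which the frequency-resonance analysis of this paper applies, producing an effective Lyapunov loss of $\tilde{\delta}$ at scale $q_n$. The phase resonances are then added as in \cite{jl2}: a near-coincidence $\|2\theta+q_n\alpha\|$ small makes the restricted operator nearly reflection-symmetric about a point, forcing a candidate eigenfunction to be approximately (anti)symmetric there and yielding a second, essentially independent, loss of $\gamma$. The core claim to be established is that these two losses \emph{add} rather than interfere, which should hold because the anti-resonant blocks and the reflection center can be arranged to sit on disjoint parts of the trajectory; making this precise is where most of the work lies.

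For the delocalization half, I would run a Gordon-type argument on the $q_n$-periodic approximants: if $L(\lambda)<\tilde{\delta}(\alpha,\theta)+\gamma(\alpha,\theta)-\epsilon$ along a subsequence, then after extracting the anti-resonant scalars the $q_n$-step transfer matrix is within $e^{-\epsilon q_n}$ of a $q_n$-periodic cocycle, while the phase-resonance symmetry forces any putative $\ell^2$ eigenfunction to be approximately reflection-(anti)symmetric about two nearby centers; the three-term recurrence then contradicts square-summability, as in \cite{g,as,js}. The one genuinely new subtlety here, absent in both \cite{maryland} and \cite{jl2}, is that in the singular regime an off-diagonal near-zero can itself create a near-decoupling that would \emph{support} a localized solution on a half-line; one has to verify that inside $\mathcal{R}_1\cap\mathcal{R}_s$ the zeros of $c_\lambda$ occur in a balanced fashion along the orbit, so that a near-barrier always comes with a matching near-barrier on the opposite side within the relevant scale and no such solution survives the $\limsup$. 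This is the analogue of the observation, used in this paper, that a barrier felt from only one side does not by itself defeat a resonance; it also indicates that the conjecture should be understood for $\theta$ with $c_\lambda(\theta+n\alpha)\neq0$ for all $n$, in parallel with the hypothesis $\theta\notin\Theta$ in the Maryland case, since exact off-diagonal zeros produce honest compactly supported eigenfunctions.

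The main obstacle I anticipate is the triple interaction itself. The anti-resonance analysis developed in this paper lives on a genuine Schr\"odinger cocycle, whereas here the carrier is a singular Jacobi cocycle whose transfer matrices degenerate exactly at the anti-resonances and which simultaneously supports phase resonances. Controlling the non-invertible directions at the off-diagonal near-zeros while tracking the phase-resonant reflection, and proving that the two effects decouple \emph{at the level of the sharp constant} $\tilde{\delta}+\gamma$, is substantially harder than either ingredient in isolation; I expect it to require a reflection-aware and singularity-aware refinement of the block decomposition underlying Lemma \ref{lem:main}.
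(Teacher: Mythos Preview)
The statement you address is a \emph{conjecture}; the paper does not prove it and offers no proof to compare against. What the paper does supply is motivation: the anti-resonance mechanism developed here for Maryland suggests the $\tilde{\delta}$ term, the phase-resonance analysis of \cite{jl2} suggests the $\gamma$ term, and the remark following the conjecture notes that the singular-continuous half is already established for $L(\lambda)<\tilde{\delta}(\alpha,\theta)$ in \cite{HJ}. Your proposal is therefore not a proof but a research outline, and you seem aware of this.

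As an outline it is reasonable and tracks the paper's own hints. You correctly identify the three coexisting mechanisms and the central difficulty---showing that the losses $\tilde{\delta}$ and $\gamma$ add at the sharp constant rather than interfere---and you flag the singular-Jacobi degeneracy as the new technical obstacle. One point to sharpen: your claim that anti-resonant blocks and reflection centers ``can be arranged to sit on disjoint parts of the trajectory'' is precisely what is \emph{not} obvious and may fail along subsequences; the paper's Remark following the conjecture singles out exactly this triple-interaction problem as open even in the special completely-resonant case $2\theta\in\Z\alpha$, where $\gamma=\beta$. Your delocalization sketch also understates the difficulty: the Gordon argument in the singular setting must contend with the fact that an off-diagonal near-zero already breaks the three-term recurrence needed to propagate the contradiction, so the ``balanced barriers'' heuristic needs to become a quantitative statement before the argument closes. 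These are genuine gaps, but since the statement is conjectural, identifying them is the appropriate output.
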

\begin{rem}
The index $\tilde{\delta}$ index was introduced to account for
anti-resonances in the proofs of {\it singular continuous spectrum}
in \cite{JYsingular, HJ} for, correspondingly, general
operators with unbounded potentials/singular Jacobi matrices (for
unbounded potentials the sum is instead over the singularities). 
In particular purely singular continuous spectrum was proved for the extended Harper's model whenever $L(\lambda)<\tilde{\delta}(\alpha,\theta)$ \cite{HJ}.
\end{rem}

\begin{rem}
As we were finalizing this paper we learned of a preprint \cite{liu} by Liu, where he proved localization for
  the almost Mathieu operator with
  completely resonant phases (when $2\theta\in \Z\alpha$, hence
  $\beta(\alpha)=\gamma(\alpha,\theta)$) up to the conjectured
  threshold $\{\ln \lambda>2\beta(\alpha)\}$, which improves on earlier results \cite{liuyuan2,liu2}. It is another remarkable
  case of sharp analysis in a situation of two coexisting types of
  resonances: in that case, phase and frequency.
  It is interesting to see whether the techniques of \cite{liu} can be
  combined with ours to prove the corresponding result for completely
  resonant phases for the extended Harper's
  model, thus  localization for
  $L>\tilde{\delta}(\alpha,\theta)+\beta(\alpha)$. Here, however, the analysis
  would require studying the interaction of {\it three} coexisting types of
  resonances. 
\end{rem}

Finally, our proof is local, thus potentially allowing also for the
analysis of the behavior of generalized eigenfunctions corresponding
to the singular continuous spectrum regime.

We now briefly comment on our argument. Proofs of arithmetic
localization in the spirit of \cite{j0, j, jks, AJ1,liuyuan2,  liuyuan, jl1,jl2}\footnote{See
  \cite{pcmi,icm} for  recent reviews}  have to deal with the competition between the
hyperbolicity of the transfer matrices and exponential strength of the
resonances. A sharp way to resolve this competition for pure frequency
resonances has been developed in \cite{jl1}.  We start with following its main
framework combined with the strategy of \cite{JYMaryland} and dealing
with technical complications arising from the unboundedness and  lack of
continuity. However, this alone only brings us to the same conclusion as in
\cite{jl1}, that is localization in the regime
$\{L_{\lambda}(E)>\beta(\alpha)\}$. 
The region that needs completely new ideas is
$\{\delta(\alpha,\theta)<L(E)\leq \beta(\alpha)\}$. That's what we
develop here, exploiting the unbounded nature of the potentials rather
than circumventing it, by showing how a properly understood  anti-resonance creates
additional decay of the Green's function. This helps us to
lower the threshold down all the way to the sharp
$\delta(\alpha,\theta)$.

This paper is organized as follows. in Sec. \ref{details} we present
more detail on the general strategy and difficulties of the proof. In
Sec. \ref{Sec:pre} we collect some preliminary results; in
Sec. \ref{Sec:delta} we locate the  minimum values of (the absolute
value of) $q_n$ consequent cosines at $\{m_n+\ell q_n\}_{\ell}$, and
give a characterization of $\delta(\alpha,\theta)$ using these minimum
values; in Sec. \ref{Sec:proof} we discuss the proof of our main
Theorem \ref{main0} and reduce it to the main Lemma \ref{lem:main}. We
present some standard uniformity results in Sec. \ref{Sec:uni}; our
key estimate for the numerators is presented in Sec. \ref{Sec:key}; the proof of Lemma \ref{lem:main} is presented in Sec. \ref{Sec:loc} with preparations in Sections \ref{Sec:C1} and \ref{Sec:C2} addressing non-resonant $m_n$ and resonant $m_n$ respectively.

\section{Strategy and difficulties}\label{details}

We first  introduce some notations and recall the key framework,
slightly modified from the one
developed in \cite{jl1}, also with adaptions from \cite{JYMaryland}.

Let $\tau>0$ be a small constant. For large $n$, let $b_n=[\tau q_n]$ and $R_{\ell}:=[\ell q_n-b_n, \ell q_n+b_n]$ be the resonant regimes and also $r_{\ell}:=\sup_{y\in R_{\ell}} |\phi(y)|$.
Let us also write $L_{\lambda}(E)$ as $L$ and $L-\ln 2$ as $\tilde{L}$.
We want to prove the generalized eigenfunction $\phi$ decays exponentially (with a positive decay rate independent of $n$) on $[q_n/3, q_{n+1}/3]$.
To do so, first we show that (roughly) at each non-resonance $|\phi|$
can be dominated by $|\phi|$ at its two nearby resonant regimes.
This allows us to only focus on the relations between $r_{\ell}$'s. 
For each $y\in r_{\ell}$, we want to expand $\phi(y)$ using the Green's formula \eqref{Green_tildeP}
\begin{align*}
|\phi(y)|\leq &\frac{|\tilde{P}_{x_2-y}(\theta+(y+1)\alpha)|}{|\tilde{P}_{2q_n-1}(\theta+x_1\alpha)|}
\prod_{j=x_1}^{y}|\cos(\pi(\theta+j\alpha))|\cdot |\phi(x_1-1)|\\
&+\frac{|\tilde{P}_{y-x_1}(\theta+x_1\alpha)|}{|\tilde{P}_{2q_n-1}(\theta+x_1\alpha)|}\prod_{j=y}^{x_2}|\cos(\pi(\theta+j\alpha))|\cdot |\phi(x_2+1)|,
\end{align*} 
with a nicely placed interval $D=[x_1, x_2] \ni y$ satisfying
$|D|=2q_n-1$ and such that 
$$\frac{1}{4} |D|<y-x_1< \frac{3}{4} |D|.$$ 
Let $I_{\ell}=[\ell q_n-3q_n/2, \ell q_n-q_n/2]$ be the collection of potential left-end points $x_1$ in the Green's formula. 
The goal is to obtain a good lower bound of $|\tilde{P}_{2q_n-1}(\theta+x_1\alpha)|$ for certain $x_1\in I_{\ell}$. 
This is done by showing: 

1. $\ln |\tilde{P}_{2q_n-1}(\theta)|$ has an average lower bound $\tilde{L}=L-\ln 2$,

2. $\tilde{P}_{2q_n-1}(\theta)$ is a polynomial in $\tan(\theta)$ of degree $2q_n-1$

Hence the Lagrange interpolation formula tells us $|\tilde{P}_{2q_n-1}(\theta)|$ can not be simultaneously small at $2q_n$ well distributed point $\theta$'s.
One can show that $\{\theta+j\alpha\}_{j\in I_0\cup I_{\ell}}$ are $\eta/(2q_n)$-uniform (in the sense of \eqref{defuniform}) with $\eta=\ln(q_{n+1}/|\ell|)$.
This implies there exists a certain $x_1\in I_{\ell}$ ($x_1$ can not be in $I_0$ because that leads to a contradiction) such that 
$$|\tilde{P}_{2q_n-1}(\theta_{x_1})|\gtrsim \frac{|\ell|}{q_{n+1}} e^{2q_n L}.$$
Plugging this lower bound into the Green's formula,
using standard control of the $|\tilde{P}_k|\lesssim e^{\tilde{L}|k|}$ by the Lyapunov exponent $\tilde{L}$ in the numerators, 
and adapting the estimates for the (possibly) non-resonant $|\phi(x_1-1)|$ and $|\phi(x_2+1)|$, 
one can show that
\begin{align}\label{eq:intro:L>beta}
r_{\ell}\lesssim e^{-(L-\beta_n) q_n} \max(r_{\ell-1}, r_{\ell+1}),
\end{align}
where $\beta_n:=(\ln q_{n+1})/q_n$.
When $L>\beta$, combining the inequality above with our argument in Section \ref{Sec:loc} already yields exponential decay.
However this does not work for $\delta<L<\beta$, so
one has to look for an additional decay to break the $\beta$ barrier.

It turns out this additional decay in some simple cases comes in
handy, directly from the product (indeed the minimum) of cosines in the Green's formula. 
The minimum values of (the absolute values of) cosines can be located
at points of the form $m_n+\ell q_n$ for certain $m_n,$  and the minimum values are roughly of the size $\exp((\delta-\beta_n)q_n)$.
In those simple cases, the product of cosines contain (at least) one of the minimum values thus bringing 
the decay in \eqref{eq:intro:L>beta} down to
$$r_{\ell}\lesssim e^{-(L-\delta) q_n} \max(r_{\ell-1}, r_{\ell+1}),$$
which is just enough.
However, there are difficult cases when the product of cosines does
not contain such a minimum value, see e.g. Case 1 of Section \ref{Sec:C2}. 
Then the question is: where does the additional decay hide in these cases?
Tackling this question is the main breakthrough of this paper that has
made the full analysis possible.  the key ideas are presented in Section \ref{Sec:key}.

\section{Preliminaries}\label{Sec:pre}
We will from now use $\|\theta\|$ for $\|\theta\|_{\T}$ for simplicity.
For $x\in \R$, let $[x]$ be the largest integer that is less than or equal to $x$.
For a fixed $\theta$, let $\theta_k:=\theta+k\alpha$.
We adopt the convention that various large (or small) constants (e.g. $N(\varepsilon)$) may change
their exact values even within the same inequality.

\subsection{Continued fractions}
Let $[a_1,a_2,...]=\alpha$ be the continued fraction expansion of $\alpha$.
For $k\geq 1$, let $p_k/q_k:=[a_1,a_2,...,a_k]$ be the continued fraction approximants to $\alpha$. The following properties hold
\begin{align}\label{eq:cont1}
\|q_{k-1}\alpha\|=\min_{1\leq n<q_k} \|n\alpha\|,
\end{align}
\begin{align}\label{eq:cont2}
\frac{1}{2q_{k+1}}\leq \|q_k\alpha\|\leq \frac{1}{q_{k+1}},
\end{align}
\begin{align}\label{eq:cont3}
q_{k+1}=a_{k+1}q_{k}+q_{k-1},
\end{align}
and
\begin{align}\label{eq:cont4}
\|q_{k-1}\alpha\|=a_{k+1}\|q_k\alpha\|+\|q_{k+1}\alpha\|.
\end{align}

A key technical lemma is the following.
\begin{lemma} \label{lana}\cite{AJ1}
Let $\alpha\in \R\setminus \Q $,\ $\theta\in\R$ and $0\leq j_0 \leq q_{n}-1$ be such that 
$$\mid \cos \pi(\theta+j_{0}\alpha)\mid = \inf_{0\leq j \leq q_{n}-1} \mid \cos \pi(\theta+j\alpha)\mid ,$$
then for some absolute constant $C>0$,
$$-C\ln q_{n} \leq \sum_{j=0,j\neq j_0}^{q_{n}-1} \ln \mid \cos \pi (\theta+j\alpha) \mid+(q_{n}-1)\ln2 \leq C\ln q_n$$
\end{lemma}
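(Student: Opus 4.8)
The plan is to compare the sum against an integral. Since $\ln|\cos\pi\theta_j|+\ln2=\ln|2\cos\pi\theta_j|$, the quantity to be bounded is $T:=\sum_{j\neq j_0}\ln|2\cos\pi\theta_j|$, and I would relate $\sum_{j=0}^{q_n-1}\ln|2\cos\pi\theta_j|$ to $q_n\int_{\T}g\,dx$, where $g(x):=\ln|2\cos\pi x|=\ln|1+e^{2\pi ix}|$. Three elementary facts drive everything: $\int_{\T}g\,dx=0$ (Jensen's formula applied to $z+1$; equivalently $\int_0^1\ln|2\sin\pi x|\,dx=0$); $g$ is real-analytic on $\T$ except for a single logarithmic singularity at $x=\tfrac{1}{2}$; and $|g'(x)|=\pi|\tan\pi x|\le \frac{\pi}{2\|x-\frac{1}{2}\|}$ for all $x$.

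Next I would pin down the distribution of the sampling points. By \eqref{eq:cont1}--\eqref{eq:cont2} and the three-distance theorem, the $q_n$ points $\{\theta_j\}_{0\le j<q_n}$ are $\tfrac{C_0}{q_n}$-uniformly distributed on $\T$: their minimal gap equals $\|q_{n-1}\alpha\|\ge\tfrac{1}{2q_n}$, their maximal gap is $\|q_{n-1}\alpha\|+\|q_n\alpha\|\le\tfrac{2}{q_n}$, every arc $I$ contains $q_n|I|+O(1)$ of them, and any arc of length $<\tfrac{1}{2q_n}$ contains at most one $\theta_j$. Let $J$ be the arc of radius $\rho:=\tfrac{1}{10q_n}$ centered at $\tfrac{1}{2}$. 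Since $|J|=\tfrac{1}{5q_n}<\tfrac{1}{2q_n}$, at most one $\theta_j$ lies in $J$; and if one does, it must be $\theta_{j_0}$, because $\theta_{j_0}$ is the point closest to $\tfrac{1}{2}$. This is the crucial observation: the only potentially pathological term is quarantined inside $J$, while on $\T\setminus J$ the function $g$ is tame.

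It then remains to estimate the two regions and assemble. On $J$, an elementary computation gives $q_n\int_J|g|\,dx\le 2q_n\int_0^{\rho}|\ln(2\sin\pi s)|\,ds=O(\ln q_n)$, which disposes of all the singular mass. On $\T\setminus J$, $g$ is $C^1$ with $\int_{\T\setminus J}|g'|\,dx\le\pi\ln(5q_n)=O(\ln q_n)$ and $\sup_{\partial J}|g|=O(\ln q_n)$, so summation by parts against the bounded counting function $t\mapsto\#\{j<q_n:\theta_j\in[0,t)\}-q_nt$ gives $\bigl|\sum_{j:\,\theta_j\in\T\setminus J}g(\theta_j)-q_n\int_{\T\setminus J}g\bigr|=O(\ln q_n)$; together with $q_n\int_{\T\setminus J}g=-q_n\int_Jg=O(\ln q_n)$ this yields $\sum_{j:\,\theta_j\in\T\setminus J}g(\theta_j)=O(\ln q_n)$. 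To conclude: if $\theta_{j_0}\in J$ then $\{j\neq j_0\}=\{j:\theta_j\in\T\setminus J\}$, so $T=O(\ln q_n)$; if $\theta_{j_0}\notin J$ then $\|\theta_{j_0}-\tfrac{1}{2}\|\ge\rho$, hence $|g(\theta_{j_0})|=O(\ln q_n)$, and again $T=\sum_{j:\,\theta_j\in\T\setminus J}g(\theta_j)-g(\theta_{j_0})=O(\ln q_n)$. In both cases $|T|\le C\ln q_n$, which is the asserted two-sided bound. The main obstacle — essentially the only step needing care — is the bookkeeping at the singularity: choosing the single $\tfrac{1}{q_n}$-scale arc $J$ so that it isolates exactly the minimizing index while simultaneously keeping $q_n\int_J|g|$, $\sup_{\partial J}|g|$, and $\int_{\T\setminus J}|g'|$ all $O(\ln q_n)$. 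Once the singularity is quarantined this way, the remainder is the standard fact that $\tfrac{C}{q_n}$-uniformly distributed points turn Riemann sums of a $C^1$ function into approximations with $O(\ln q_n)$ error, whose only nonelementary input is the three-distance structure of $\{\theta_j\}_{j<q_n}$ supplied by \eqref{eq:cont1}--\eqref{eq:cont2}.
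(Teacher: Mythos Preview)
The paper does not supply its own proof of this lemma; it simply quotes the result from \cite{AJ1}. So there is nothing in the paper to compare against, and the question is only whether your argument stands on its own.

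It does. Your route---quarantine the logarithmic singularity in an arc $J$ of radius $\asymp 1/q_n$, observe that at most one sample point (necessarily $\theta_{j_0}$) can fall in $J$, and then run a Koksma/summation-by-parts estimate on $\T\setminus J$ using that the variation of $g$ there and the mass $q_n\int_J|g|$ are both $O(\ln q_n)$---is correct and is one of the two standard ways to prove this. The only place you are slightly glib is the discrepancy input: the $O(1)$ bound on $\#\{j<q_n:\theta_j\in I\}-q_n|I|$ does \emph{not} follow from the min/max gap bounds alone (those only give multiplicative control). You need the extra fact that $\{\theta+j\alpha\}_{j<q_n}$ is a perturbation of the equally spaced grid $\{\theta+k/q_n\}_{k<q_n}$ by at most $q_n|\alpha-p_n/q_n|<1/q_{n+1}$, which immediately yields the $O(1)$ discrepancy. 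With that one line added, the argument is complete.

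For comparison, the original proof in \cite{AJ1} is more hands-on: rather than invoking discrepancy abstractly, it orders the points by their distance to $\tfrac12$, uses the three-distance structure to show the $k$-th closest satisfies $\|\theta_j-\tfrac12\|\asymp k/q_n$, and then sums $\sum_{k=1}^{q_n-1}\ln(k/q_n)$ directly via Stirling. Your version trades that explicit bookkeeping for the single discrepancy bound, which is cleaner but relies on essentially the same arithmetic input.
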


\subsection{Solution and Green's function}
Let $G_{[x_1,x_2]}(x,y)=(H_{[x_1, x_2]}-E)^{-1}(x,y)$ be the Green's function, where $H_{[x_1, x_2]}$ is the operator $H_{\lambda,\alpha,\theta}$ restricted to the interval $[x_1,x_2]$.

Let $\phi$ be a solution to $H\phi=E\phi$, let $[x_1, x_2]$ be an interval containing $y$, then we have
\begin{align}\label{Green}
\phi(y)=G_{[x_1, x_2]}(x_1, y)\phi(x_1-1)+G_{[x_1, x_2]}(x_2, y)\phi(x_2+1).
\end{align}

\subsection{Cocycles}
Consider the equation $H\phi=E\phi$. Let
\begin{align}\label{transferA}
A(\theta, E)=
\left(
\begin{matrix}
E-\lambda\tan{\pi \theta}\ &-1\\
1                          &0
\end{matrix}
\right).
\end{align}
Then any solution can be reconstructed via the following relation
\begin{align*}
\left(
\begin{matrix}
\phi(k+1)\\
\phi(k)
\end{matrix}
\right)
=
A(\theta+k\alpha, E)
\left(
\begin{matrix}
\phi(k)\\
\phi(k-1)
\end{matrix}
\right).
\end{align*}
If we iterate this process, we get
\begin{align*}
\left(
\begin{matrix}
\phi(k)\\
\phi(k-1)
\end{matrix}
\right)
=
A_k(\theta, E)
\left(
\begin{matrix}
\phi(0)\\
\phi(-1)
\end{matrix}
\right),
\end{align*}
where
\begin{align*}
\left\lbrace
\begin{matrix}
A_k(\theta, E)=A(\theta+(k-1)\alpha, E)\cdots A(\theta+\alpha, E)A(\theta, E)\ \mathrm{for}\ k\geq 1,\\
A_0(\theta, E)=\mathrm{Id},\\
A_k(\theta, E)=(A_{-k}(\theta+k\alpha, E))^{-1}\ \mathrm{for}\ k\leq -1.
\end{matrix}
\right.
\end{align*}
Note that the cocycle $A(\theta, E)$ is actually singular because it contains $\tan{\pi\theta}$. Sometimes it is more convenient for us to work with non-singular cocycles. Let us denote 
\begin{align}\label{defnonsingular}
F(\theta, E)=\cos{\pi \theta}\cdot A(\theta, E)=
\left(
\begin{matrix}
E\cos{\pi \theta}-\lambda\sin{\pi \theta}\ &-\cos{\pi \theta}\\
\cos{\pi \theta}                          &0
\end{matrix}
\right).
\end{align}

\subsection{Lyapunov exponent}
Let $L(\alpha, A(\theta, E))$ be the Lyapunov exponent of the Maryland model, it is defined as follows
\begin{align}\label{defLE}
L(\alpha, A(\theta, E))=\lim_{k\rightarrow \infty}\frac{1}{k}\int_{\T}\ln{\|A_k(\theta, E)\|} \mathrm{d}\theta.
\end{align}
It was shown in \cite{fgp} that $L(\alpha, A(\theta, E))$ depends only on $\lambda$ and $E$ (hence we denote it by $L_{\lambda}(E)$) and is uniquely determined by the following equation
\begin{align}\label{LE}
e^{L_{\lambda}(E)}+e^{-L_{\lambda}(E)}=\frac{\sqrt{(2+E)^2+\lambda^2}+\sqrt{(2-E)^2+\lambda^2}}{2}.
\end{align}
Let us also denote $\tilde{L}(E)=L(\alpha, F(\theta, E))$, then by (\ref{defnonsingular}) we have
\begin{align}\label{LEtildeLE}
\tilde{L}_{\lambda}(E)=L_{\lambda}(E)-\ln{2}.
\end{align}

From this point on, we shall write $L_{\lambda}(E)$ as $L(E)$ or $L$, and $\tilde{L}_{\lambda}(E)$ as $\tilde{L}(E)$ or $\tilde{L}$ for simplicity.

\subsection{General upper bounds of transfer matrices}
\begin{lemma}[e.g. \cite{Furman}]\label{upperbounds}
Let $(a, D)$ be a continuous cocycle, then for any $\epsilon>0$, there exists a constant $C(\alpha, D, \varepsilon)$ such that for any $k\in \Z$,
\begin{align}
\|D_k(\theta)\|\leq C(\alpha, D, \varepsilon) e^{|k|(L(\alpha, D)+\epsilon)}\ \mathrm{for}\ \mathrm{any}\ \theta\in\T.
\end{align}
\end{lemma}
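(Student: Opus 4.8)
The plan is to deduce the uniform-in-$\theta$ bound from the spatial-average definition \eqref{defLE} of $L(\alpha,D)$ using unique ergodicity of the rotation $\theta\mapsto\theta+\alpha$ on $\T$ (valid since $\alpha\notin\Q$); this is the classical argument, see \cite{Furman}. Write $g_k(\theta):=\ln\|D_k(\theta)\|\in[-\infty,\infty)$, which is upper semicontinuous and bounded above because $D$ is continuous and $\T$ is compact. The cocycle identity $D_{k+l}(\theta)=D_k(\theta+l\alpha)D_l(\theta)$ gives the subadditivity
$$g_{k+l}(\theta)\leq g_l(\theta)+g_k(\theta+l\alpha),$$
and, integrating over $\theta$ and using translation invariance of Haar measure, $a_{k+l}\leq a_k+a_l$ where $a_k:=\int_\T g_k\,d\theta$; hence $a_k/k\to\inf_m a_m/m=L(\alpha,D)$ by Fekete. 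The task is thus to upgrade this convergence of averages to a uniform upper bound on $g_k(\theta)/k$.

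First I would prove
$$\limsup_{k\to\infty}\ \sup_{\theta\in\T}\tfrac1k g_k(\theta)\ \leq\ L(\alpha,D).$$
Fix $N\geq1$ and $\epsilon>0$; for $k=qN+r$ with $0\leq r<N$, iterating subadditivity along blocks of length $N$ gives
$$g_k(\theta)\leq\sum_{j=0}^{q-1}g_N(\theta+jN\alpha)+g_r(\theta+qN\alpha)\leq\sum_{j=0}^{q-1}g_N(\theta+jN\alpha)+M,$$
where $M:=\max_{0\leq s<N}\sup_\theta g_s<\infty$. Since $g_N$ is upper semicontinuous and bounded above, it is a decreasing pointwise limit of continuous functions, so I may pick a continuous $h\geq g_N$ with $\int_\T h\leq a_N+\epsilon$; as $N\alpha\notin\Q$, rotation by $N\alpha$ is uniquely ergodic, so $\frac1q\sum_{j=0}^{q-1}h(\theta+jN\alpha)\to\int_\T h$ uniformly in $\theta$. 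Hence for all large $q$, uniformly in $\theta$, $g_k(\theta)\leq q(a_N+2\epsilon)+M$, so $\sup_\theta\frac1k g_k(\theta)\leq\frac1N(a_N+2\epsilon)+\frac{M}{k}$ (with a trivial modification when $a_N+2\epsilon<0$). Letting $k\to\infty$ and then $N\to\infty$, using $a_N/N\to L(\alpha,D)$, yields the displayed inequality.

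This inequality says precisely that for each $\epsilon>0$ there is $K=K(\alpha,D,\epsilon)$ with $\|D_k(\theta)\|\leq e^{k(L(\alpha,D)+\epsilon)}$ for all $k\geq K$ and all $\theta$; for the finitely many $0\leq k<K$ the quantity $\sup_\theta\|D_k(\theta)\|$ is finite by continuity and compactness, and enlarging $C(\alpha,D,\epsilon)$ to dominate all of these (and to be $\geq1$) gives the bound for all $k\geq0$. The case $k<0$ follows by the same reasoning applied to the inverse cocycle; for $\mathrm{SL}_2(\R)$-valued $D$ it reduces immediately to $k>0$ via $\|M^{-1}\|=\|M\|$ together with $D_{-m}(\theta)=D_m(\theta-m\alpha)^{-1}$. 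I expect the only real obstacle to be the step passing from convergence of the averages $a_k/k$ to a uniform-in-$\theta$ bound on $g_k(\theta)/k$; this is exactly where unique ergodicity is used, and its one technical subtlety is that $g_N$ need not be continuous — it equals $-\infty$ wherever $D_N$ degenerates — which is why $g_N$ must first be dominated by a continuous function before uniform Birkhoff convergence applies.
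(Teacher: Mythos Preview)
Your proposal is correct and is precisely the standard Furman argument. The paper does not give its own proof of this lemma at all---it simply cites \cite{Furman} and uses the result as a black box---so there is nothing to compare against; your write-up supplies exactly the argument the citation points to.
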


As a corollary we have the following lemma which will be used many times throughout the paper.
\begin{cor}\label{cor:prod_cos}
Let $\ell_2\geq \ell_1$. We have
\begin{align*}
\prod_{\ell=\ell_1}^{\ell_2}|\cos(\pi(\theta+\ell\alpha))|\leq C(\varepsilon) e^{(\ell_2-\ell_1)(-\ln{2}+\varepsilon)} \inf_{j=\ell_1}^{\ell_2}|\cos(\pi(\theta+j\alpha))|,
\end{align*}
where $C(\varepsilon)$ is a constant that depends only on $\varepsilon$.
\end{cor}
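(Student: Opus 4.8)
The plan is to read the left-hand side as the norm of a scalar cocycle over the rotation by $\alpha$, and then to peel off the infimum factor by cutting the product at its smallest term. First I would set $D(\theta):=\cos(\pi\theta)$ (if a genuine matrix cocycle is preferred, embed it as the $2\times 2$ cocycle $\cos(\pi\theta)\,\id$), which is continuous in the sense of \lemref{upperbounds} and satisfies $D_k(\theta)=\prod_{j=0}^{k-1}\cos(\pi(\theta+j\alpha))$, hence $\|D_k(\theta)\|=\prod_{j=0}^{k-1}|\cos(\pi(\theta+j\alpha))|$. Its Lyapunov exponent is $L(\alpha,D)=-\ln 2$: since $F(\theta,E)=\cos(\pi\theta)\,A(\theta,E)$ by \eqref{defnonsingular} one has $\|F_k(\theta,E)\|=\|D_k(\theta)\|\,\|A_k(\theta,E)\|$, so $L(\alpha,D)=L(\alpha,F)-L(\alpha,A)=\tilde L(E)-L(E)=-\ln 2$ by \eqref{LEtildeLE} (equivalently, $L(\alpha,D)=\int_{\T}\ln|\cos(\pi\theta)|\,d\theta=-\ln 2$).

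Next I would apply \lemref{upperbounds} to $D$ at the shifted base point $\theta+a\alpha$ to obtain, for every $\varepsilon>0$, a constant $C(\varepsilon)$ with
\[
\prod_{\ell=a}^{b}|\cos(\pi(\theta+\ell\alpha))|\ \le\ C(\varepsilon)\,e^{(b-a+1)(-\ln 2+\varepsilon)}\qquad\text{for all }a\le b\text{ and all }\theta .
\]
To recover the corollary, let $j_0\in[\ell_1,\ell_2]$ attain $\inf_{j}|\cos(\pi(\theta+j\alpha))|$ and factor
\[
\begin{aligned}
\prod_{\ell=\ell_1}^{\ell_2}|\cos(\pi(\theta+\ell\alpha))|
&=|\cos(\pi(\theta+j_0\alpha))|\cdot\prod_{\ell=\ell_1}^{j_0-1}|\cos(\pi(\theta+\ell\alpha))|\\
&\qquad\cdot\prod_{\ell=j_0+1}^{\ell_2}|\cos(\pi(\theta+\ell\alpha))| .
\end{aligned}
\]
Bounding the two tail products by the displayed uniform estimate and multiplying them, and noting that their lengths $j_0-\ell_1$ and $\ell_2-j_0$ add up to $\ell_2-\ell_1$, yields the claimed inequality with $C(\varepsilon)^2$ in place of $C(\varepsilon)$; the degenerate cases $j_0=\ell_1$, $j_0=\ell_2$, or $\ell_1=\ell_2$ are immediate.

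I do not expect a real obstacle here: the whole content is the identification of $\cos(\pi\theta)$ as a cocycle with Lyapunov exponent exactly $-\ln 2$, together with the trivial observation that discarding the single smallest factor leaves two honest blocks on which the uniform upper bound of \lemref{upperbounds} applies. The reason for recording the estimate in this shape is that the surviving $\inf$-factor is precisely the exponential near-zero of a cosine — an anti-resonance — which the later sections will harvest to produce the extra Green's-function decay needed to pass below the $\beta(\alpha)$ threshold.
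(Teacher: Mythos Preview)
Your proposal is correct and is exactly the argument the paper has in mind: apply \lemref{upperbounds} to the continuous diagonal cocycle $D(\theta)=\cos(\pi\theta)\,\id$, whose Lyapunov exponent is $\int_\T\ln|\cos(\pi\theta)|\,d\theta=-\ln 2$ (equivalently $\tilde L-L$ via \eqref{defnonsingular} and \eqref{LEtildeLE}), and then factor out the minimal cosine and bound the two remaining blocks. The paper presents the corollary without proof precisely because this is the one-line consequence of \lemref{upperbounds} you wrote out.
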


\subsection{A closer look at the transfer matrix}
If we consider the Schr\"odinger cocycle $(\alpha, A(\theta, E))$, it turns out $A_k(\theta, E)$ has the following expression
\begin{equation}\label{PinA}
A_k(\theta,E)=
\left(
\begin{array}{cc}
P_k(\theta,E) & -P_{k-1}(\theta+\alpha,E) \\
P_{k-1}(\theta,E) & -P_{k-2}(\theta+\alpha,E)
\end{array}
\right),
\end{equation}
where
\begin{align}
P_k(\theta,E)=&\det{[(E-H_{\theta})|_{[0,k-1]}]}    \notag \\
=&
\det{ 
\left[\begin{array}{cccccc}
   E-\lambda\tan{\pi\theta}                         &                 -1                                           &             & &    \\
                    -1                                          & E-\lambda\tan\pi(\theta+\alpha)             & -1                    \\
                                                                 &-1                                                             & \cdots   \\
                                                                  &                                                               &             & \cdots  &  -1 \\
                                                                  &                                                               &             & -1        & E-\lambda\tan\pi(\theta+(k-1)\alpha)
\end{array}\right]_{k\times k}
}
\end{align}
Let $\tilde{P}_k(\theta, E)=\prod_{j=0}^{k-1} \cos \pi (\theta+j \alpha)\cdot P_k(\theta, E)$.  Then clearly
\begin{align}\label{tildePinF}
F_k(\theta, E)=
\left(
\begin{array}{cc}
\tilde{P}_k(\theta,E) & -\tilde{P}_{k-1}(\theta+\alpha,E)\cos{\pi\theta} \\
\tilde{P}_{k-1}(\theta,E)\cos{\pi (\theta+(k-1)\alpha)} & -\tilde{P}_{k-2}(\theta+\alpha,E)\cos{\pi\theta}\cos{\pi(\theta+(k-1)\alpha)}
\end{array}
\right).
\end{align}
By the fact that $F$ is continous and (\ref{upperbounds}), (\ref{tildePinF}) we have the following control of $\tilde{P}_k$.
\begin{lemma}\label{lem:upperbddtildeP}
For any $\varepsilon>0$ there exists constant $C(\alpha, E, \lambda, \varepsilon)>0$ such that for any $k\in \Z$,
\begin{align}
|\tilde{P}_k(\theta, E)|\leq C(\alpha, E, \lambda, \varepsilon) e^{(\tilde{L}(E)+\varepsilon)|k|}\ \mathrm{for}\ \mathrm{any}\ \theta\in \T.
\end{align}
\end{lemma}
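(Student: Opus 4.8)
The plan is to read off the bound directly from the general upper bound for continuous cocycles, \lemref{upperbounds}, applied to the \emph{non-singular} cocycle $(\alpha, F(\theta,E))$ of \eqref{defnonsingular} rather than to the singular Schr\"odinger cocycle $(\alpha, A(\theta,E))$. The observation that makes this work is that, by \eqref{tildePinF}, $\tilde P_k(\theta,E)$ is simply a matrix entry of $F_k(\theta,E)$, so it inherits the exponential upper bound satisfied by the operator norm $\|F_k(\theta,E)\|$.

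First I would record that the entries of $F(\theta,E)$ in \eqref{defnonsingular} are continuous in $\theta\in\T$, so \lemref{upperbounds} applies to $(\alpha, F(\theta,E))$: for every $\varepsilon>0$ there is a constant $C(\alpha,E,\lambda,\varepsilon)>0$, independent of $\theta$, with $\|F_k(\theta,E)\|\le C\,e^{|k|(L(\alpha,F(\theta,E))+\varepsilon)}$ for all $k\in\Z$ and all $\theta$. Next, by \eqref{LEtildeLE} the Lyapunov exponent of $F$ equals $\tilde L(E)=L(E)-\ln 2$, so the exponent in this bound is precisely $\tilde L(E)+\varepsilon$. Finally, comparing with \eqref{tildePinF} identifies $\tilde P_k(\theta,E)$ with the $(1,1)$-entry of $F_k(\theta,E)$ for $k\ge 1$, whence $|\tilde P_k(\theta,E)|\le \|F_k(\theta,E)\|\le C\,e^{(\tilde L(E)+\varepsilon)|k|}$. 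For $k=0$ one has $\tilde P_0\equiv 1$ from $F_0=\mathrm{Id}$, and for $k\le -1$ one uses the same matrix-entry identification together with $F_k(\theta,E)=(F_{-k}(\theta+k\alpha,E))^{-1}$ (equivalently, the analogously defined $\tilde P_k$), so that again $|\tilde P_k(\theta,E)|\le\|F_k(\theta,E)\|$ and the estimate holds with $|k|$ in the exponent.

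I do not expect a genuine obstacle: this is a routine corollary, and it is exactly the argument gestured at in the sentence preceding the statement. The only points needing a line of care are (i) that the constant in \lemref{upperbounds} is uniform in $\theta$, which is precisely what that lemma supplies, and (ii) fixing the convention for $\tilde P_k$ when $k\le 0$ so that the matrix-entry description \eqref{tildePinF} stays valid; both are pure bookkeeping.
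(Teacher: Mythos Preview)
Your proposal is correct and matches the paper's own argument essentially verbatim: the sentence preceding the lemma states it follows from the continuity of $F$, \lemref{upperbounds}, and \eqref{tildePinF}, which is precisely what you do. The only thing the paper does not spell out (and you handle) is the bookkeeping for $k\le 0$.
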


There is the following connection between the determinants $P_k$ and Green's functions:
\begin{align}\label{PkG}
|G_{[x_1, x_2]}(x_1, y)|=&\frac{|P_{x_2-y}(\theta_{y+1})|}{|P_{x_2-x_1+1}(\theta_{x_1})|}
=\frac{|\tilde{P}_{x_2-y}(\theta_{y+1})|}{|\tilde{P}_{x_2-x_1+1}(\theta_{x_1})|} \prod_{k=x_1}^y |\cos(\pi\theta_k)|\\
|G_{[x_1, x_2]}(x_2, y)|=&\frac{|P_{y-x_1}(\theta_{x_1})|}{|P_{x_2-x_1+1}(\theta_{x_1})|}
=\frac{|\tilde{P}_{y-x_1}(\theta_{x_1})|}{|\tilde{P}_{x_2-x_1+1}(\theta_{x_1})|}\prod_{k=y}^{x_2} |\cos(\pi\theta_k)|\notag
\end{align}
As a consequence, from \eqref{Green} we can deduce
\begin{align}\label{Green_tildeP}
|\phi(y)|\leq \frac{|\tilde{P}_{x_2-y}(\theta_{y+1})|}{|\tilde{P}_{x_2-x_1+1}(\theta_{x_1})|} \prod_{k=x_1}^y |\cos(\pi\theta_k)| 
\cdot |\phi(x_1-1)|+ \frac{|\tilde{P}_{y-x_1}(\theta_{x_1})|}{|\tilde{P}_{x_2-x_1+1}(\theta_{x_1})|}\prod_{k=y}^{x_2} |\cos(\pi\theta_k)|\cdot |\phi(x_2+1)|.
\end{align}

\subsection{Writing $\tilde{P}_k$ as a polynomial of $\tan{\pi\theta}$, see \cite{JYMaryland}}\

$\tilde{P}_k(\theta)/{\cos^k{\pi\theta}}$ 
can be expressed as a polynomial of degree $k$ in $\tan\pi\theta$, namely,
\begin{equation}\label{Phpolyg}
\frac{\tilde{P}_k(\theta)}{(\cos{\pi\theta})^k}  =: g_k(\tan\pi\theta).
\end{equation}
By Lagrange interpolation formula,
\begin{equation*}
g_k(\tan\pi \theta)=\sum_{j=0}^{k}g_k(\tan \pi \theta_j) \frac{\prod_{\ell \neq j}\tan \pi \theta-\tan\pi\theta_{\ell}}{\prod_{\ell\neq j}\tan\pi\theta_j-\tan\pi\theta_{\ell}}.
\end{equation*}
Thus 
\begin{align}
\tilde{P}_k(\theta)=(\cos\pi \theta)^k g_k(\tan\pi \theta)
&=\sum_{j=0}^{k}\tilde{P}_k(\theta_j)\frac{\prod_{\ell \neq j}\tan \pi \theta-\tan\pi\theta_{\ell}}{\prod_{\ell\neq j}\tan\pi\theta_j-\tan\pi\theta_{\ell}}\cdot  \frac{\cos^k {\pi \theta}}{\cos^k{\pi\theta_j}} \notag\\
&=\sum_{j=0}^{k}\tilde{P}_k(\theta_j)\prod_{\ell\neq j} \frac{\sin\pi(\theta-\theta_{\ell})}{\sin\pi(\theta_j-\theta_{\ell})}. \label{tildePlagrange}
\end{align}

\subsection{Average lower bound of $\tilde{P}_k$}
The following is Lemma 3.1 of \cite{JYMaryland}.
\begin{lemma}\label{subharmonic}
By Herman's subharmonic trick, one has
\begin{align}\label{averagelower}
\frac{1}{k}\int_{0}^{1}\ln |\tilde{P}_k(\theta)| \mathrm{d}\theta =\frac{1}{k} \int_{0}^{1} \ln |\tilde{P}_k(2\theta)| \mathrm{d}\theta \geq L-\ln2=\tilde{L}
\end{align}
\end{lemma}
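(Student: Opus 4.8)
The plan is to run Herman's subharmonic trick on the entire function $\theta\mapsto\tilde P_k(\theta,E)$. The first displayed equality is harmless: substituting $\theta\mapsto 2\theta$ and using that $|\tilde P_k|$ is $1$-periodic (from $\tilde P_k(\theta+1)=(-1)^k\tilde P_k(\theta)$, each factor $\cos\pi\theta_j$ changing sign) gives $\int_0^1\ln|\tilde P_k(2\theta)|\,d\theta=\int_0^1\ln|\tilde P_k(\theta)|\,d\theta$; the point of the $2\theta$-version is only that it makes $z=e^{2\pi i\theta}$ the natural variable. Note that $|\tilde P_k(\theta)|\le\|F_k(\theta,E)\|$, so the subadditive bound $\frac1k\int_0^1\ln\|F_k\|\,d\theta\ge\tilde L$ goes the wrong way — the content of the lemma is that the single entry $\tilde P_k=[F_k]_{11}$ (by \eqref{tildePinF}) is still large on average, and this is exactly what Herman's trick delivers. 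First I would observe that $\tilde P_k(\theta,E)=[F_k(\theta,E)]_{11}$ is \emph{entire} in $\theta$ (unlike $P_k$ or $A_k$, whose $\tan$-poles are cancelled by the $\cos$-factors), so $u(\theta):=\ln|\tilde P_k(\theta,E)|$ is subharmonic on $\mathbb C$ and $1$-periodic in $\operatorname{Re}\theta$. Hence $W(y):=\int_0^1 u(x+iy)\,dx$ is convex in $y\in\mathbb R$, and since $\overline{F(\theta,E)}=F(\bar\theta,E)$ (as $E,\lambda$ are real) one has $|\tilde P_k(x-iy)|=|\tilde P_k(x+iy)|$, so $W$ is even.

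The remaining input is the behavior of $W$ as $y\to+\infty$. There $\cos\pi(x+iy)\sim\tfrac12 e^{\pi y}e^{-i\pi x}$ and $\tan\pi(x+iy)\to i$, so $F(x+iy,E)\sim\tfrac12 e^{\pi y}e^{-i\pi x}M$ uniformly in $x$, with $M=\left(\begin{smallmatrix}E-i\lambda&-1\\ 1&0\end{smallmatrix}\right)\in\mathrm{SL}_2(\mathbb C)$; multiplying the $k$ factors, $F_k(x+iy,E)\sim\big(\tfrac12 e^{\pi y}\big)^k e^{-i\pi\sum_j x_j}M^k$, hence $W(y)=k(\pi y-\ln 2)+\ln\big|[M^k]_{11}\big|+o(1)$. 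Since $\operatorname{tr}M=E-i\lambda\notin\mathbb R$ (here $\lambda>0$), $M$ is hyperbolic with spectral radius $e^{L}$ — this is precisely the FGP identity \eqref{LE} for the Maryland Lyapunov exponent — and $[M^k]_{11}$ is the corresponding Chebyshev-type polynomial, so $\frac1k\ln|[M^k]_{11}|\to L$. To conclude, $W(y)-k\pi y$ is convex on $[0,\infty)$ and tends to $k\ln\tfrac12+\ln|[M^k]_{11}|$; a convex function with a finite limit at $+\infty$ is non-increasing, so $W(0)\ge k\ln\tfrac12+\ln|[M^k]_{11}|$, i.e. $\frac1k\int_0^1\ln|\tilde P_k|\,d\theta\ge-\ln 2+\frac1k\ln|[M^k]_{11}|\to L-\ln 2=\tilde L$. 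Alternatively, and perhaps more cleanly, one can skip convexity: after $\theta\mapsto2\theta$, the same $|\operatorname{Im}\theta|\to\infty$ asymptotics show that $z^k\tilde P_k(2\theta)$, $z=e^{2\pi i\theta}$, extends to a polynomial in $z$ of degree $2k$ whose constant and leading coefficients both have modulus $|[M^k]_{11}|/2^k$, and Jensen's formula gives $\int_0^1\ln|\tilde P_k(2\theta)|\,d\theta\ge\ln|[M^k]_{11}|-k\ln 2$ at once.

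The main obstacle is making the $|\operatorname{Im}\theta|\to\infty$ asymptotics honest in spite of the singular cocycle: one must work with $F_k$ rather than $A_k$ so that the $e^{\pi|\operatorname{Im}\theta|}$ growth factors out, and check that the per-factor error terms — which a priori could compound over the $k$ factors — do not accumulate (they do not, uniformly in $x$, once $\operatorname{Im}\theta$ is large), and then identify $\rho(M)=e^{L}$ via \eqref{LE}. Everything else is soft. One caveat: the clean form "$\ge\tilde L$" is the $k\to\infty$ statement; for each fixed $k$ the argument yields $\ge-\ln 2+\frac1k\ln|[M^k]_{11}|$, which is all that is needed since in the applications $k=2q_n-1$ is large.
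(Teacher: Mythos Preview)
Your argument is correct and is precisely the Herman subharmonic trick that the lemma's name refers to; the paper does not prove it in-text but cites \cite{JYMaryland}, where the same computation is carried out. Your caveat that for fixed $k$ one obtains $\ge -\ln 2+\tfrac{1}{k}\ln\bigl|[M^k]_{11}\bigr|=\tilde L+O(1/k)$ rather than exactly $\tilde L$ is well-taken and harmless for all applications in the paper, where $k\in\{2sq_{n-n_0}-1,\,2q_n-1\}$ is large.
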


\subsection{Uniformity}
\begin{definition}
We say that the set $\{\theta_1,..., \theta_{k+1}\}$ are $\gamma$-uniform if
\begin{align}\label{defuniform}
\max_{\theta\in [0, 1]}\ \max_{j=1,..., k+1} \prod_{\ell\neq j}\ \frac{|\sin\pi(\theta-\theta_{\ell})|}{|\sin\pi(\theta_j-\theta_{\ell})|}<e^{\gamma k}.
\end{align}
\end{definition}

\section{Resonance and non-resonances}\label{Sec:uni}
Choose a value (from multiple possible values) of $\tau_n$ such that
$$
\tau_n\in \left(\frac{\varepsilon}{2\max(L, 1)}, \frac{\varepsilon}{\max(L, 1)} \right],
$$ 
and $\tau_n q_n\in \Z$.
Define $b_n=\tau_n q_n$. 
For any $y\in \Z$ we call $y$ {\it resonant} (at the scale of $q_n$) if $\dist(y, q_n\Z)\leq b_n$, otherwise we call $y$ {\it non-resonant}.

\subsection{Non-resonances: uniformity}
For a non-resonant $y$, let $n_0$ be the least positive integer so that 
\begin{align*}
2q_{n-n_0}\leq \dist(y, q_n\Z).
\end{align*}
Once $n_0$ is chosen, we can fix $s$ be the greatest positive integer such that 
\begin{equation}
2sq_{n-n_0}\leq \dist(y, q_n\Z).
\end{equation} Clearly, 
Let 
\begin{align}\label{def:Iy}
\tilde{I}_0&=[-[sq_{n-n_0}/2]-sq_{n-n_0},\, -[sq_{n-n_0}/2]-1 ]\cap \Z, \notag\\
\tilde{I}_y&=[y-[sq_{n-n_0}/2]-sq_{n-n_0},\, y-[sq_{n-n_0}/2]-1] \cap \Z.
\end{align}
Clearly $\tilde{I}_0\cup \tilde{I}_y$ contains $2sq_{n-n_0}$ distinct numbers.
Let us also note that by our choice of $n_0$, we have
\begin{align}\label{eq:bn<yn-n0+1}
b_n< \dist(y, q_n \Z)<2q_{n-n_0+1}
\end{align}
and also 
\begin{align}\label{sq-n0<qn-n0+1}
sq_{n-n_0}<q_{n-n_0+1}.
\end{align}

The following lemma is the consequence of a variant of Lemma 9.10 of \cite{AJ1}. We will include its proof in the appendix for completeness.
\begin{lemma}\label{lem:nonres_uni}
For a non-resonant $y$, for $n>N(\varepsilon)$ large enough, we have $\{\theta_{\ell}\}_{\ell\in \tilde{I}_0\cup \tilde{I}_y}$ are $\varepsilon$-uniform.
\end{lemma}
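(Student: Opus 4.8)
The plan is to deduce uniformity of $\{\theta_\ell\}_{\ell\in\tilde I_0\cup\tilde I_y}$ from the corresponding statement for blocks based at a single scale, which is exactly the content behind Lemma 9.10 of \cite{AJ1}. First I would recall the combinatorial heart of the matter: the points in $\tilde I_0$ and $\tilde I_y$ each form a block of $sq_{n-n_0}$ consecutive integers, and by \eqref{eq:cont1}–\eqref{eq:cont4} together with $sq_{n-n_0}<q_{n-n_0+1}$ (inequality \eqref{sq-n0<qn-n0+1}), the translates $\{k\alpha\}$ for $k$ running over such a block of length $sq_{n-n_0}$ are spread out on the circle with gaps comparable to $\|q_{n-n_0-1}\alpha\|\geq 1/(2q_{n-n_0})$, so that the $2sq_{n-n_0}$ points $\{\theta+\ell\alpha\}_{\ell\in\tilde I_0\cup\tilde I_y}$ are roughly $1/(sq_{n-n_0})$-separated up to the shift $y\alpha$ between the two blocks.

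Then I would estimate the Lagrange-type product in \eqref{defuniform}. Fix $j$ and split the product over $\ell\neq j$ into the numerator $\prod_\ell|\sin\pi(\theta-\theta_\ell)|$ and the denominator $\prod_{\ell\neq j}|\sin\pi(\theta_j-\theta_\ell)|$. For the numerator, one uses the standard fact that for any $q$ consecutive $\ell$'s with $q\leq q_{m+1}$, $\prod|\sin\pi(\theta-\theta_\ell)|$ is bounded above by $e^{Cq}$ uniformly in $\theta$ (a consequence of Lemma \ref{lana} applied to both the $\cos$ and, after a half-shift, the $\sin$, or directly from the subadditive/Herman bound); applying this to each of the two blocks gives an upper bound of the shape $e^{C sq_{n-n_0}}$. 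For the denominator one shows a matching lower bound: because the $\theta_\ell$ over a block of length $sq_{n-n_0}<q_{n-n_0+1}$ are $\|q_{n-n_0-1}\alpha\|$-separated, the product $\prod_{\ell\neq j}|\sin\pi(\theta_j-\theta_\ell)|$ is bounded below by $(c/sq_{n-n_0})^{sq_{n-n_0}}\gtrsim e^{-C' sq_{n-n_0}\ln q_{n-n_0}/ (sq_{n-n_0})}$-type quantities; the key point is that $\ln(sq_{n-n_0})=o(q_{n-n_0})$ so this only costs a subexponential factor relative to the length $2sq_{n-n_0}$ of the full index set. The cross terms between the two blocks contribute a further bounded factor since the two blocks are translates by $y\alpha$ and $\|y\alpha\|$ is not too small (by $\dist(y,q_n\Z)<2q_{n-n_0+1}$ one controls the relevant distances). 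Dividing, one gets that the left side of \eqref{defuniform} is at most $e^{o(q_{n-n_0})}=e^{o(sq_{n-n_0})}$, which is $<e^{\varepsilon\cdot 2sq_{n-n_0}}$ once $n>N(\varepsilon)$, i.e. $\varepsilon$-uniformity with $k=2sq_{n-n_0}-1$.

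The main obstacle I anticipate is making the denominator lower bound uniform and genuinely subexponential: one must control, simultaneously over all $j$ in the union and all the other $\ell$'s (including those in the \emph{other} block), the product of $|\sin\pi(\theta_j-\theta_\ell)|$ from below, and the danger is that two of these points are exponentially close, which would ruin the bound. This is precisely where the arithmetic hypotheses enter — the choice of $n_0$ as the \emph{least} integer with $2q_{n-n_0}\leq\dist(y,q_n\Z)$ forces $\dist(y,q_n\Z)<2q_{n-n_0+1}$, so the separation within and between the two translated blocks is governed by three-distance-type estimates at scale $n-n_0$, and one never sees returns closer than $\|q_{n-n_0-1}\alpha\|$. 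I would organize this as: (i) a clean statement that over a block of length $<q_{m+1}$ the points $\{k\alpha\}$ are $\|q_{m-1}\alpha\|$-separated; (ii) the numerator upper bound via Lemma \ref{lana}; (iii) the denominator lower bound via (i) plus the elementary inequality $\prod_{1\le i\le M}(i\delta)\ge$ (something)$\,\ge e^{-CM\ln M}$ with $\delta\asymp\|q_{m-1}\alpha\|$ and $M\asymp sq_{n-n_0}$; (iv) bounding the inter-block contributions using $\|y\alpha\|$; (v) combining and absorbing all subexponential factors into $e^{\varepsilon k}$ for $n$ large. The remaining details are the routine bookkeeping of constants, which I would defer to the appendix as the statement of the lemma already promises.
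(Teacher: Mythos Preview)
Your proposal has the right skeleton (split numerator and denominator, block structure, Lemma~\ref{lana}) but the denominator estimate as you describe it does not work, and this is where the actual content of the lemma lives.

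First, your separation claim is off. For $s\ge 2$ the indices in a single block of length $sq_{n-n_0}<q_{n-n_0+1}$ produce differences $|k_1-k_2|$ that include $q_{n-n_0}$, so the minimum gap between the $\theta_\ell$'s is $\|q_{n-n_0}\alpha\|\sim 1/q_{n-n_0+1}$, not $\|q_{n-n_0-1}\alpha\|\sim 1/q_{n-n_0}$. With the correct separation your crude product bound $\prod_{i\le M}(i\delta)$ gives a lower bound whose logarithm is of order $-M\ln(q_{n-n_0+1}/M)$, and since $M=sq_{n-n_0}<q_{n-n_0+1}$ this is genuinely of size $-M\cdot\mathrm{const}$, not $o(M)$. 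So the leading exponents in numerator and denominator do not cancel, and you do not get $\varepsilon$-uniformity this way.

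What the paper actually does is different in a crucial respect: it does \emph{not} treat each of $\tilde I_0,\tilde I_y$ as a single block of length $sq_{n-n_0}$, but subdivides each into $s$ sub-blocks $T_1,\dots,T_{2s}$ of length exactly $q_{n-n_0}$, and applies Lemma~\ref{lana} to \emph{both} numerator \emph{and} denominator on each $T_m$. This is what produces matching $-\ln 2$ main terms that cancel. The price is that from each $T_m$ one minimum term $\ln|\sin\pi(\theta_j-\theta_{l_m})|$ is left over in the denominator; there are $2s$ of these, and their sum is what you must control. The paper bounds it by a quantity of the form $s\ln(s/q_{n-n_0+1})$ (see \eqref{sum1smallest}, \eqref{suml2smallestcase2final}), and then---this is the step you are missing entirely---uses the arithmetic relations $q_{n-n_0+1}\le q_n$ and $\tau_n q_n<4sq_{n-n_0}$ to convert $\ln(q_{n-n_0+1}/s)/q_{n-n_0}$ into $\ln(q_{n-n_0}/\tau_n)/q_{n-n_0}$, which is small. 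Your step~(iv) ``bounding the inter-block contributions using $\|y\alpha\|$'' is not the mechanism at all: $y$ can be of order $q_{n+1}$, so $\|y\alpha\|$ carries no useful information; the cross-block minima are handled by reducing to $\|(j-\tilde l_m)\alpha\|$ with $|j-\tilde l_m|<q_{n-n_0+1}$ after shifting by the appropriate multiple of $q_n$, followed by a careful case split (see \eqref{sumI2smallestcase1}--\eqref{sumI2smallestcase2}). Without the sub-block decomposition and this arithmetic reduction, the argument does not close.
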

Combining this with a standard argument in the literature, one can show
\begin{lemma}\label{lem:nonres_I2_large}
For $n>N(\varepsilon)$ large enough, there exists $x_1\in \tilde{I}_y$ so that 
$$|\tilde{P}_{2sq_{n-n_0}-1}(\theta_{x_1})|\geq e^{(\tilde{L}-2\varepsilon)(2sq_{n-n_0}-1)}.$$
\end{lemma}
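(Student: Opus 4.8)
The plan is to combine the average lower bound for $\tilde P_k$ (Lemma \ref{subharmonic}), the Lagrange interpolation representation \eqref{tildePlagrange}, and the $\varepsilon$-uniformity of $\{\theta_\ell\}_{\ell\in \tilde I_0\cup \tilde I_y}$ from Lemma \ref{lem:nonres_uni}, in the now-standard way (as in \cite{AJ1,jl1,JYMaryland}). Set $k:=2sq_{n-n_0}-1$, so that $\tilde I_0\cup \tilde I_y$ consists of exactly $k+1$ points, and enumerate these points as $\theta_{x}$, $x\in \tilde I_0\cup \tilde I_y$. First I would apply \eqref{tildePlagrange} with the evaluation point $\theta$ chosen to be any of the $\theta_x$ with $x\in \tilde I_0$: this expresses $\tilde P_k(\theta_x)$ as a linear combination of the values $\tilde P_k(\theta_{x'})$ over all $x'\in \tilde I_0\cup\tilde I_y$, with coefficients bounded by $e^{\varepsilon k}$ thanks to \eqref{defuniform}. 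Second, I would argue by contradiction: suppose that for \emph{every} $x_1\in \tilde I_y$ we had $|\tilde P_k(\theta_{x_1})|< e^{(\tilde L-2\varepsilon)k}$.

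Under this assumption the interpolation bound shows that $|\tilde P_k(\theta_x)|$ is also small for every $x\in \tilde I_0$: indeed, splitting the Lagrange sum into the terms with $x'\in \tilde I_y$ (each $\le e^{\varepsilon k}\cdot e^{(\tilde L-2\varepsilon)k}$ by the contradiction hypothesis) and the terms with $x'\in \tilde I_0$ (each $\le e^{\varepsilon k}\cdot C(\alpha,E,\lambda,\varepsilon)e^{(\tilde L+\varepsilon)k}$ by the general upper bound Lemma \ref{lem:upperbddtildeP}), one would get, for an appropriate choice of the uniformity and upper-bound $\varepsilon$'s, a bound like $|\tilde P_k(\theta_x)|< e^{(\tilde L-\varepsilon')k}$ for all $x\in \tilde I_0$. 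But this contradicts Lemma \ref{subharmonic}: the average of $\frac1k\ln|\tilde P_k(\theta)|$ over $\theta\in[0,1]$ is $\ge\tilde L$, which forces $|\tilde P_k|$ to be at least $e^{(\tilde L-\varepsilon')k}$ somewhere; more precisely, one uses that $\tilde I_0$ is a block of $\sim k/2$ consecutive integers so that $\{\theta_x\}_{x\in \tilde I_0}$ together with the subharmonicity (or a second application of interpolation/uniformity on a slightly larger set) detects the average lower bound. This is the step where the precise bookkeeping of the $\varepsilon$'s matters, and I would first run the contradiction argument against $\tilde I_0$ rather than the full interval so that I land cleanly on the hypothesis of Lemma \ref{subharmonic}.

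The main technical point I expect to work out carefully is the last sentence above: deducing from "$|\tilde P_k|$ is $\le e^{(\tilde L-\varepsilon')k}$ on all of $\tilde I_0$" a contradiction with the average lower bound. The standard device (cf. the proof of the analogous statement in \cite{jl1} or Lemma 9.11 of \cite{AJ1}) is to note that if $|\tilde P_k(\theta_{x_1})|$ were small for all $x_1$ in the combined set, then applying interpolation once more at an \emph{arbitrary} $\theta\in[0,1]$ — using that $\{\theta_x\}_{x\in \tilde I_0\cup \tilde I_y}$ is $\varepsilon$-uniform — would yield $|\tilde P_k(\theta)|\le (k+1)e^{\varepsilon k}\max_x|\tilde P_k(\theta_x)|< e^{(\tilde L-\varepsilon')k}$ for all $\theta$, whence $\frac1k\int_0^1\ln|\tilde P_k(\theta)|\,d\theta < \tilde L-\varepsilon'$, directly contradicting \eqref{averagelower}. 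So in fact the clean route is: assume $|\tilde P_k(\theta_{x_1})|<e^{(\tilde L-2\varepsilon)k}$ for all $x_1\in\tilde I_y$; combine with Lemma \ref{lem:upperbddtildeP} on $\tilde I_0$ and $\varepsilon$-uniformity to bound $|\tilde P_k(\theta)|$ for all $\theta$; contradict Lemma \ref{subharmonic}. Finally I would check that the "$n>N(\varepsilon)$ large enough" hypothesis absorbs the constant $C(\alpha,E,\lambda,\varepsilon)$ and the polynomial factor $(k+1)$, since $k\to\infty$ with $n$ (note $k=2sq_{n-n_0}-1\ge 2q_{n-n_0}-1\to\infty$), and adjust the relation between the auxiliary $\varepsilon$'s used in uniformity, in Lemma \ref{lem:upperbddtildeP}, and in the final $2\varepsilon$ so that all inequalities are consistent.
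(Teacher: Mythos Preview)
Your proposal has a genuine gap. The contradiction argument you describe only works if you know that $|\tilde P_k(\theta_{x_1})|<e^{(\tilde L-2\varepsilon)k}$ for \emph{all} $x_1\in\tilde I_0\cup\tilde I_y$; then interpolation plus $\varepsilon$-uniformity indeed bounds $|\tilde P_k(\theta)|$ uniformly below $e^{(\tilde L-\varepsilon/2)k}$, contradicting \eqref{averagelower}. But your contradiction hypothesis only gives smallness on $\tilde I_y$. For the nodes in $\tilde I_0$ you invoke Lemma~\ref{lem:upperbddtildeP}, which yields only $|\tilde P_k(\theta_{x})|\le C(\varepsilon)e^{(\tilde L+\varepsilon)k}$ --- an \emph{upper} bound above $e^{\tilde L k}$, not a bound below it. When you plug this into the Lagrange sum, the $\tilde I_0$ terms contribute of order $e^{(\tilde L+2\varepsilon)k}$, and no adjustment of the auxiliary $\varepsilon$'s brings this below $e^{\tilde L k}$. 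So the step ``combine with Lemma~\ref{lem:upperbddtildeP} on $\tilde I_0$ \dots\ contradict Lemma~\ref{subharmonic}'' does not go through.

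What is missing is an \emph{independent} argument that values on $\tilde I_0$ are already small. The paper supplies exactly this as a separate lemma (Lemma~\ref{lem:nonres_I1_small}): if some $x_1\in\tilde I_0$ satisfied $|\tilde P_{2sq_{n-n_0}-1}(\theta_{x_1})|\ge e^{(\tilde L-2\varepsilon)(2sq_{n-n_0}-1)}$, one expands $\phi(0)$ via the Green's formula \eqref{Green_tildeP} on $[x_1,x_1+2sq_{n-n_0}-2]$, uses this lower bound on the denominator together with Lemma~\ref{lem:upperbddtildeP} and Corollary~\ref{cor:prod_cos} on the numerators, and bounds $|\phi(x_1-1)|,|\phi(x_2+1)|$ by Shnol \eqref{Shnol}. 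Since $0$ sits near the middle of the interval (of length $\asymp sq_{n-n_0}\ge b_n/2$), this forces $|\phi(0)|<1$, contradicting \eqref{assume:phi0=1}. This dynamical input --- that $\tilde I_0$ is centered near the a priori localization center --- is essential and is absent from your outline. Once you have it, your ``clean route'' (which is the paper's Corollary~\ref{cor:nonres_uni}) finishes the proof.
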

We will also include its proof in the appendix.

\subsection{Resonances: uniformity}
For $\ell\in \Z$, let $I_{\ell}$ be defined below
\begin{align}\label{def:1_Ia}
I_\ell:=&[(\ell-1)q_n -\lf q_n/2\rf, \ell q_n-\lf q_n/2\rf-1]\cap \Z.
\end{align}

\begin{lemma}\label{lem:1_res_uni}
For $\ell $ such that $0<|\ell |\leq 2q_{n+1}/(3q_n)$, 
$\{\theta_j\}_{j\in I_0\cup I_{\ell}}$ are $\frac{\ln{(q_{n+1}/|\ell |)}}{2q_n-1}+\epsilon$-uniform.
\end{lemma}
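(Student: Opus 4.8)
The plan is to estimate the uniformity constant for the set $\{\theta_j\}_{j\in I_0\cup I_\ell}$ by bounding, for each candidate node $j$, the product
$$
\prod_{\ell'\neq j}\frac{|\sin\pi(\theta-\theta_{\ell'})|}{|\sin\pi(\theta_j-\theta_{\ell'})|}
$$
uniformly in $\theta\in[0,1]$. The numerator is the easy part: the $2q_n$ points $\{\theta_{\ell'}\}$ project, modulo $1$, into the $2q_n$ ``Farey'' subintervals determined by $q_n\alpha$, so each factor $|\sin\pi(\theta-\theta_{\ell'})|\le 1$ and a grouping argument (splitting according to how far $\ell'$ is from $j$ in $\Z$, and using $\|k\alpha\|\ge\|q_{n-1}\alpha\|\ge c/q_n$ for $|k|<q_n$) gives $\prod_{\ell'\neq j}|\sin\pi(\theta-\theta_{\ell'})|\le C\, e^{-(2q_n-1)\ln 2}\cdot(\text{poly in }q_n)$, i.e.\ essentially the bound coming from the average being $-\ln 2$ up to subexponential corrections. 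This is the standard estimate used repeatedly in \cite{AJ1,jl1}; I would just quote/adapt it.

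The real work is the denominator: a lower bound for $\prod_{\ell'\neq j}|\sin\pi(\theta_j-\theta_{\ell'})|$ that is valid \emph{uniformly over all $j\in I_0\cup I_\ell$}, and that loses only the factor $e^{-(2q_n-1)\ln 2}$ plus a controlled term of size $(|\ell|/q_{n+1})$. First I would fix $j$ and reindex, writing the differences $\theta_j-\theta_{\ell'}=(j-\ell')\alpha$ as $m\alpha$ with $m$ ranging over $2q_n-1$ nonzero integers; because $I_0$ and $I_\ell$ are each of length $q_n$ and centered roughly at $0$ and $\ell q_n$, the index set $\{j-\ell'\}$ consists of the ``local'' block of integers of absolute value $<q_n$ (from $\ell'$ in the same interval as $j$) together with a block of integers near $\pm\ell q_n$ of width $q_n$ (from the other interval). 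For the local block, the three-distance / continued-fraction structure gives $\prod_{0<|m|<q_n}\|m\alpha\|\ge c\,q_n^{-C}$ as usual. For the far block, the key point is that the $q_n$ consecutive values $\{(\ell q_n+m)\alpha\}$, $|m|<q_n/2$, are spaced by $\|q_n\alpha\|\approx 1/q_{n+1}$ and drift away from the lattice $\Z$ at rate $\|q_n\alpha\|$; so their distances to $\Z$ are, up to reordering, comparable to $\|\ell q_n\alpha\|,\ \|\ell q_n\alpha\|\pm\|q_n\alpha\|,\dots$. Since $\|\ell q_n\alpha\|\le |\ell|\,\|q_n\alpha\|\le |\ell|/q_{n+1}$, the smallest of these is $\gtrsim$ something like $1/q_{n+1}$ and the product over the far block is $\gtrsim (|\ell|/q_{n+1})\cdot(\text{product of an arithmetic-like progression with step }1/q_{n+1})\gtrsim (|\ell|/q_{n+1})\,q_n^{-C}\,2^{-q_n}$, where the $2^{-q_n}$ is exactly the ``$\ln 2$ per site'' contribution. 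Combining, $\prod_{\ell'\neq j}|\sin\pi(\theta_j-\theta_{\ell'})|\ge c\,q_n^{-C}\,(|\ell|/q_{n+1})\,e^{-(2q_n-1)\ln 2}$.

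Dividing, the polynomial-in-$q_n$ factors and the $e^{-(2q_n-1)\ln 2}$ cancel, leaving
$$
\max_j\ \max_\theta\ \prod_{\ell'\neq j}\frac{|\sin\pi(\theta-\theta_{\ell'})|}{|\sin\pi(\theta_j-\theta_{\ell'})|}\ \le\ C\,q_n^{C}\,\frac{q_{n+1}}{|\ell|}\ \le\ e^{(2q_n-1)\left(\frac{\ln(q_{n+1}/|\ell|)}{2q_n-1}+\epsilon\right)}
$$
for $n>N(\epsilon)$, which is the claimed $\bigl(\tfrac{\ln(q_{n+1}/|\ell|)}{2q_n-1}+\epsilon\bigr)$-uniformity; the restriction $0<|\ell|\le 2q_{n+1}/(3q_n)$ is precisely what guarantees that the far block $\{\ell q_n+m:|m|<q_n\}$ stays within one period, i.e.\ that $\|\ell q_n\alpha\|\approx|\ell|\,\|q_n\alpha\|$ and the two index blocks $I_0,I_\ell$ do not overlap or wrap around. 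The main obstacle is the uniformity in $j$: for $j$ near the \emph{edges} of $I_0$ or $I_\ell$ the ``local'' and ``far'' blocks shift, and one must check that the worst case still only costs a polynomial factor; handling the edge cases and making sure the constants $C$ do not secretly depend on $\ell$ or $n$ is the delicate bookkeeping. I would organize this by proving the product bound for the two canonical choices $j\in I_0$ and $j\in I_\ell$ separately and noting the estimate is translation-robust up to $q_n^{C}$, then absorbing everything into the $\epsilon q_n$ slack.
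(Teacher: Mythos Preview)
Your overall architecture matches the paper's proof exactly: split the denominator into the ``local'' block (same interval as the node $j$) and the ``far'' block (the other interval), use Lemma~\ref{lana} on each, and observe that the only term not absorbed by the $-(q_n-1)\ln 2 \pm C\ln q_n$ bookkeeping is the minimum of the far block, which contributes the $\ln(|\ell|/q_{n+1})$. The numerator bound and the final assembly are also as in the paper.

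However, your justification for the far-block lower bound contains a genuine error. The differences $k-\ell'$ for $\ell'\in I_\ell$ are $q_n$ \emph{consecutive integers} near $-\ell q_n$, so the points $(k-\ell')\alpha$ are spaced by $\alpha$ on $\R/\Z$, not by $\|q_n\alpha\|$. Their distances to $\Z$ are \emph{not} an arithmetic-like progression $\|\ell q_n\alpha\|,\ \|\ell q_n\alpha\|\pm\|q_n\alpha\|,\dots$; for instance the very next one, $\|(\ell q_n\pm 1)\alpha\|$, is close to $\|\alpha\|$, not to $\|\ell q_n\alpha\|+\|q_n\alpha\|$. If the distances \emph{were} the arithmetic progression you describe, the product would be of order $(q_n/q_{n+1})^{q_n}$ up to polynomial factors, which is far smaller than $2^{-q_n}$ when $q_{n+1}/q_n$ is large, and the lemma would fail. (Relatedly, your local-block bound $\prod_{0<|m|<q_n}\|m\alpha\|\ge c\,q_n^{-C}$ is missing the factor $2^{-(q_n-1)}$; presumably a slip.)

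The paper's fix is simple and you should adopt it: apply Lemma~\ref{lana} to the far block as well, which gives
\[
\textstyle\sum_{j\in I_\ell}\ln|\sin\pi(k-j)\alpha|\ \ge\ -(q_n-1)\ln 2 - C\ln q_n + \ln\min_{j\in I_\ell}\|(k-j)\alpha\|,
\]
and then bound the minimum directly. The key observation is that for $k\in I_0$ there is a unique $j_1\in I_\ell$ with $k-j_1=-\ell q_n$, for which $\|(k-j_1)\alpha\|=|\ell|\,\|q_n\alpha\|$; for every other $j\in I_\ell$ one has $|j-j_1|<q_n$ and hence $\|(k-j)\alpha\|\ge\|q_{n-1}\alpha\|-|\ell|\,\|q_n\alpha\|$, which under the hypothesis $|\ell|\le 2q_{n+1}/(3q_n)$ is again $\gtrsim |\ell|\,\|q_n\alpha\|$. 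Thus $\min_{j\in I_\ell}\|(k-j)\alpha\|\gtrsim |\ell|/q_{n+1}$, and the claimed uniformity follows.
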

This is a variant of Theorem B.5. of \cite{jl1}. We include the proof in the appendix.

\begin{cor}\label{cor:1_res_uni}
For $\ell $ such that $0<|\ell |\leq 2q_{n+1}/(3q_n)$, there exists $x_1\in I_0\cup I_{\ell}$ such that 
$$|\tilde{P}_{2q_n-1}(\theta_{x_1})|\geq \frac{|\ell|}{q_{n+1}} e^{(\tilde{L}-2\varepsilon)(2q_n-1)}.$$
\end{cor}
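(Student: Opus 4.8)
The plan is to deduce Corollary~\ref{cor:1_res_uni} from Lemma~\ref{lem:1_res_uni} in exactly the way one passes from a uniformity statement to a lower bound on some interpolation node, combining it with the average lower bound of Lemma~\ref{subharmonic}. First I would record the count: $|I_0\cup I_\ell|=2q_n$, so I am interpolating the polynomial $\tilde P_{2q_n-1}$ of degree $2q_n-1$ at $2q_n$ points $\{\theta_j\}_{j\in I_0\cup I_\ell}$, and formula \eqref{tildePlagrange} with $k=2q_n-1$ gives, for any $\theta$,
\begin{align*}
|\tilde P_{2q_n-1}(\theta)|\le \sum_{j\in I_0\cup I_\ell}|\tilde P_{2q_n-1}(\theta_j)|\prod_{\substack{\ell'\in I_0\cup I_\ell\\ \ell'\neq j}}\frac{|\sin\pi(\theta-\theta_{\ell'})|}{|\sin\pi(\theta_j-\theta_{\ell'})|}.
\end{align*}

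Next I would argue by contradiction: suppose $|\tilde P_{2q_n-1}(\theta_{x_1})|< \tfrac{|\ell|}{q_{n+1}}e^{(\tilde L-2\varepsilon)(2q_n-1)}$ for every $x_1\in I_0\cup I_\ell$. Then bounding the sum over the $2q_n$ nodes and using the uniformity bound \eqref{defuniform} from Lemma~\ref{lem:1_res_uni} with $k=2q_n-1$ and $\gamma=\frac{\ln(q_{n+1}/|\ell|)}{2q_n-1}+\varepsilon$, for every $\theta\in[0,1]$ we would get
\begin{align*}
|\tilde P_{2q_n-1}(\theta)|< 2q_n\cdot \frac{|\ell|}{q_{n+1}}e^{(\tilde L-2\varepsilon)(2q_n-1)}\cdot e^{\gamma(2q_n-1)}= 2q_n\, e^{(\tilde L-\varepsilon)(2q_n-1)}.
\end{align*}
Taking $\frac{1}{2q_n-1}\int_0^1\ln|\tilde P_{2q_n-1}(\theta)|\,d\theta$ of this (pointwise) bound would yield an average strictly below $\tilde L-\varepsilon+\frac{\ln(2q_n)}{2q_n-1}$, which for $n>N(\varepsilon)$ large is strictly less than $\tilde L$, contradicting Lemma~\ref{subharmonic}. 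Hence some node $x_1\in I_0\cup I_\ell$ satisfies the desired lower bound. (I would remark that at this stage $x_1$ may lie in either $I_0$ or $I_\ell$; the refinement that forces $x_1\in I_\ell$, alluded to in Section~\ref{details}, is a separate argument not needed for this corollary.)

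The only genuinely delicate point is checking that the hypothesis $0<|\ell|\le 2q_{n+1}/(3q_n)$ is exactly what makes the loss factor $e^{\gamma(2q_n-1)}=\frac{q_{n+1}}{|\ell|}e^{\varepsilon(2q_n-1)}$ cancel the prefactor $\frac{|\ell|}{q_{n+1}}$ cleanly, and that the residual polynomial losses ($2q_n$ from the node count, $\ln(2q_n)$ after taking logs) are absorbed by upgrading $2\varepsilon$ to something slightly larger or by taking $n$ large — here this is harmless since I only shave an $\varepsilon$-order term off an exponent of size $q_n$. I do not expect any real obstacle: this is the standard ``uniformity $+$ subharmonicity $\Rightarrow$ large value at a node'' mechanism, and Lemma~\ref{lem:1_res_uni} and Lemma~\ref{subharmonic} have already done the substantive work.
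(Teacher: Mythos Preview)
Your proof is correct and follows essentially the same approach as the paper: argue by contradiction, plug the assumed upper bound into the Lagrange interpolation formula \eqref{tildePlagrange}, apply the uniformity bound of Lemma~\ref{lem:1_res_uni} to get a uniform-in-$\theta$ upper bound of the form $2q_n\,e^{(\tilde L-\varepsilon)(2q_n-1)}$, and contradict the average lower bound of Lemma~\ref{subharmonic}. Your remark that locating $x_1$ in $I_\ell$ rather than $I_0$ is a separate later argument is also accurate.
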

\begin{proof}
Suppose otherwise, we have for any $x_1\in I_0\cup I_{\ell}$, 
$$|\tilde{P}_{2q_n-1}(\theta_{x_1})|< \frac{|\ell|}{q_{n+1}} e^{(\tilde{L}-2\varepsilon)(2q_n-1)}.$$
By \eqref{tildePlagrange}
\begin{align*}
\tilde{P}_{2q_n-1}(\theta)=\sum_{x_1\in I_0\cup I_{\ell}}\tilde{P}_{2q_n-1}(\theta_{x_1})\prod_{\substack{j\in I_0\cup I_{\ell}\\ j\neq x_1}} \frac{\sin\pi(\theta-\theta_j)}{\sin\pi(\theta_{x_1}-\theta_j)}.
\end{align*}
Combining this with Lemma \ref{lem:1_res_uni} yields, uniformly in $\theta$,
\begin{align*}
|\tilde{P}_{2q_n-1}(\theta)|\leq 2q_n e^{(\tilde{L}-\varepsilon)(2q_n-1)}<e^{(\tilde{L}-\frac{\varepsilon}{2})(2q_n-1)}.
\end{align*}
Hence contradiction with \eqref{averagelower}. \qed
\end{proof}

\section{Characterization of the index $\delta$}\label{Sec:delta}
From this point on, we shall write $\delta(\alpha,\theta)$ as $\delta$ and $\beta(\alpha)$ as $\beta$ for simplicity.

Fix a small $\varepsilon>0$ such that $L>\delta+700\varepsilon$.

It is evident from the definition of $\delta$ that it is never greater than $\beta$. 
The following lemma shows $\delta$ is always non-negative.
\begin{lemma}\label{lem:delta_geq_0}
We have $0\leq \delta\leq \beta$ for all $\alpha, \theta$.
\end{lemma}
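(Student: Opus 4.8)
The plan is to prove the two inequalities $\delta\le\beta$ and $\delta\ge 0$ separately. The upper bound $\delta\le\beta$ is immediate from the definitions \eqref{def:beta} and \eqref{def:delta}: since $\|q_n(\theta-\tfrac12)\|_{\T}\le 1/2$, we have $\ln\|q_n(\theta-\tfrac12)\|_{\T}\le 0$, so each term in the $\limsup$ defining $\delta$ is bounded above by $(\ln q_{n+1})/q_n$, and passing to the $\limsup$ gives $\delta\le\beta$. (This is the "evident" direction already flagged in the text.)

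For the lower bound $\delta\ge 0$ the idea is that even though $\ln\|q_n(\theta-\tfrac12)\|_{\T}$ can be very negative along some subsequence, it cannot be too negative too often, because $\|q_n(\theta-\tfrac12)\|_{\T}$ being tiny forces a near-coincidence that the continued fraction structure limits. Concretely, I would pick, for each $n$, the index $m_n$ realizing $\min_{0\le k\le q_n-1}\|\theta-\tfrac12+k\alpha\|_{\T}$, and note that $\|q_n(\theta-\tfrac12)\|_{\T}$ relates to $\|\theta-\tfrac12+m_n\alpha\|_{\T}$ up to the controlled error $q_n\|q_n\alpha\|\le q_n/q_{n+1}$ (via \eqref{eq:cont2} and a telescoping argument), which is the content behind the alternate formula \eqref{def:another}. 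The key point is then: if along a subsequence $n_j$ the quantity $\ln q_{n_j+1}+\min_k \ln\|\theta-\tfrac12+k\alpha\|_{\T}$ were very negative, i.e.\ the minimal cosine at scale $q_{n_j}$ were smaller than $e^{-\epsilon q_{n_j}}/q_{n_j+1}$, I would move to the \emph{next} scale $q_{n_j+1}$ and use the pigeonhole/gap structure of $\{\theta-\tfrac12+k\alpha\}_{k=0}^{q_{n_j+1}-1}$ — namely that consecutive such points (in the circle) are separated by at least $\|q_{n_j}\alpha\|\ge 1/(2q_{n_j+1})$ — to conclude that the minimum of $\|\theta-\tfrac12+k\alpha\|_{\T}$ over $k\in[0,q_{n_j+1}-1]$ is at least $\gtrsim 1/q_{n_j+1}$ times a factor accounting for how close we already were, so that $\ln q_{n_j+2}+\min_{k<q_{n_j+1}}\ln\|\cdots\|\ge -O(\ln q_{n_j+1})$, which is $o(q_{n_j+1})$. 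Hence the $\limsup$ over the shifted subsequence is $\ge 0$, giving $\delta\ge 0$.

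An alternative, cleaner route I would try first: show directly that for infinitely many $n$ one has $\ln q_{n+1}+\ln\|q_n(\theta-\tfrac12)\|_{\T}\ge -C\ln q_n$, i.e.\ the defining fraction is $\ge -C(\ln q_n)/q_n\to 0$. This follows if $\|q_n(\theta-\tfrac12)\|_{\T}\ge 1/(q_n q_{n+1})$ for infinitely many $n$, which in turn follows from the three-distance / equidistribution fact that $\{q_n(\theta-\tfrac12)\bmod 1\}_n$ cannot approach $0$ along \emph{every} scale at rate faster than $1/q_n$ unless $\theta-\tfrac12$ has a specific Liouville-type alignment with $\alpha$ — and even then one exploits $\theta\notin\Theta$ together with \eqref{eq:cont4} to get a lower bound. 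Essentially one uses that $q_{n}(\theta-\frac12)$ and $q_{n+1}(\theta-\frac12)$ cannot both be extremely close to integers because $q_{n+1}=a_{n+1}q_n+q_{n-1}$ would then force $q_{n-1}(\theta-\frac12)$ close to an integer as well, and iterating backward contradicts $\theta\notin\Theta$ (which guarantees $\theta-\frac12\notin\alpha\Z+\Z$, so $\|q_0(\theta-\tfrac12)\|_{\T}=\|\theta-\tfrac12\|_{\T}>0$ is a fixed positive number).

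The main obstacle is making the backward-iteration / pigeonhole argument quantitatively airtight: one must show that $\|q_n(\theta-\tfrac12)\|_{\T}$ cannot decay faster than any exponential along a \emph{full tail} of scales, and the natural way is to track how smallness at scale $n$ propagates to scale $n-1$ via \eqref{eq:cont3}–\eqref{eq:cont4}, bottoming out at the fixed positive quantity $\|\theta-\tfrac12\|_{\T}$ guaranteed by $\theta\notin\Theta$. Handling the case $\beta(\alpha)=\infty$ (super-Liouville $\alpha$) needs a little care but is harmless since then $\delta\le\beta$ is vacuous and the lower bound argument is unaffected. I expect the write-up to be short once the right subsequence is identified.
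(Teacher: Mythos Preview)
The paper's proof of $\delta\ge 0$ is quite different from both of your sketches and much shorter: it brings in the continued-fraction denominators $\tilde q_k$ of $|\theta-\tfrac12|$ itself (not of $\alpha$). If $\delta<0$ then $\|q_n(\theta-\tfrac12)\|<e^{-cq_n}/q_{n+1}$ for all large $n$; picking $n$ so that $q_n<\tilde q_{k+1}\le q_{n+1}$, the best-approximation property \eqref{eq:cont1} (applied to $|\theta-\tfrac12|$) gives $1/(2\tilde q_{k+1})\le\|\tilde q_k(\theta-\tfrac12)\|\le\|q_n(\theta-\tfrac12)\|<e^{-cq_n}/q_{n+1}$, whence $\tilde q_{k+1}>q_{n+1}$, contradicting the choice of $n$. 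No iteration, no backward recursion.

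Regarding your sketches: the first approach has a real gap --- passing to scale $q_{n_j+1}$ can only \emph{decrease} $\min_k\|\theta-\tfrac12+k\alpha\|$ (the range of $k$ is larger), so the three-distance gap structure bounds the second-smallest value from below, not the smallest, and your claimed inequality at the next scale does not follow. The backward-recursion approach can be made rigorous, but you have not identified the crucial step: the recursion $\|q_{n-1}(\theta-\tfrac12)\|\le\|q_{n+1}(\theta-\tfrac12)\|+a_{n+1}\|q_n(\theta-\tfrac12)\|$ loses a factor of roughly $a_{n+1}$ per step, so iterating from a \emph{fixed} $N$ down to $0$ gives only a fixed upper bound on $\|\theta-\tfrac12\|$, which is no contradiction. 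What actually works is to start the backward iteration from an arbitrary $M\ge N$, observe that the accumulated loss is at most $\prod_{j\le M+1}(a_j+1)\le 2^{M}q_{M+1}$, and then use $2^{M}q_{M+1}\cdot e^{-cq_M}/q_{M+1}=2^{M}e^{-cq_M}\to 0$ (since $q_M$ grows at least geometrically in $M$) to force $\|\theta-\tfrac12\|=0$, contradicting $\theta\notin\Theta$. This is correct but noticeably heavier than the paper's argument.
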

\begin{proof}
Recall the definition 
$$\delta=\limsup_{n\to \infty}\frac{\ln{q_{n+1}}+\ln{\|q_n(\theta-\frac{1}{2})\|}}{q_n}=
\limsup_{n\to \infty}\frac{\ln{q_{n+1}}+\ln{\|q_n|\theta-\frac{1}{2}|\|}}{q_n}.$$
Suppose $\delta<0$, then there exists $c>0$, such that for $n$ large enough, we have
\begin{align*}
\frac{\ln{q_{n+1}}+\ln{\|q_n|\theta-\frac{1}{2}|\|}}{q_n}<-c.
\end{align*}
This implies 
\begin{align}\label{eq:qn_theta-1/2}
\|q_n|\theta-\frac{1}{2}|\|<\frac{e^{-c q_n}}{q_{n+1}}.
\end{align}
Let $\{\tilde{p}_k/\tilde{q}_k\}$ be the continued fraction approximants to $|\theta-\frac{1}{2}|$.
Take $k$ large, and $n$ such that
\begin{align}\label{eq:qn_qk+1_qn+1}
q_n<\tilde{q}_{k+1}\leq q_{n+1}.
\end{align}
By \eqref{eq:qn_theta-1/2}, \eqref{eq:cont1} and \eqref{eq:cont2}, we have
$$\frac{1}{2\tilde{q}_{k+1}}\leq \|\tilde{q}_k |\theta-\frac{1}{2}|\|\leq \|q_n |\theta-\frac{1}{2}|\|<\frac{e^{-c q_n}}{q_{n+1}}<\frac{1}{2q_{n+1}}.$$
This implies 
$$\tilde{q}_{k+1}>q_{n+1},$$
which contradicts with \eqref{eq:qn_qk+1_qn+1}.
\qed
\end{proof}

Define 
\begin{align}\label{def:betan}
\beta_n:=\frac{\ln q_{n+1}}{q_n},
\end{align}
and
\begin{align}\label{def:deltan}
\delta_n:=\frac{\ln \|q_n(\theta-\frac{1}{2})\|-\ln \|q_n\alpha\|}{q_n}.
\end{align}
Since $\limsup_{n\to \infty}\delta_n=\delta$, we have that for $n>N(\varepsilon)$ large enough, 
\begin{align}\label{eq:L>deltan+100}
L>\delta_n+650\varepsilon.
\end{align}
It is also clear by \eqref{eq:cont2} that
\begin{align}\label{eq:deltan<betan}
\delta_n\leq \frac{\ln(1/2)-\ln \|q_n\alpha\|}{q_n}<\beta_n.
\end{align}
By Lemma \ref{lem:delta_geq_0}, we also have
\begin{align}\label{eq:lim_max_0_deltan}
\delta=\limsup \max(0, \delta_n).
\end{align}

First we will characterize the minimal values of $\{|\cos\pi(\theta+j\alpha)|\}_j$ on scale $q_n$.
\begin{definition}\label{def:minimal}
We say $(m,\ell)\in \Z^2$ is {\it $\theta$-minimal on scale $q_n$} if the following holds
\begin{enumerate}
\item $m\in [-q_n/2, q_n/2)$
\item $|\ell|\leq \frac{1}{q_n}(e^{\delta_n q_n}+q_n+\frac{1}{2})$,
\item $\|\theta-\frac{1}{2}+(m+\ell q_n)\|<(\frac{1}{2}+\frac{1}{2q_n})\|q_n\alpha\|$,
\item (i). If $a_{n+1}\geq 4$, we have 
$$\|\theta-\frac{1}{2}+(m+jq_n)\alpha\|\leq 20 \min_{|k|<q_n} \|\theta-\frac{1}{2}+(m+j q_n+k)\alpha\|,$$ 
holds for any $|j|\leq a_{n+1}/6$.

(ii). If $a_{n+1}\leq 3$, we have
$$\|\theta-\frac{1}{2}+m\alpha\|\leq 20\min_{-q_n/2\leq k<q_n/2}\|\theta-\frac{1}{2}+k\alpha\|.$$
\end{enumerate}
\end{definition}

Next we show that the existence of $\theta$-minimal $(m,\ell)$.
\begin{lemma}\label{lem:mn}
For any $q_n$ sufficiently large, there exists $\theta$-minimal $(m_n,\ell_n)$ at scale $q_n$.
\end{lemma}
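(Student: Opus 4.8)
The plan is to choose $m_n$ and $\ell_n$ almost separately: $m_n$ (together with (1)) will carry condition (4), while $\ell_n$ will carry (2) and (3). Write $\eta:=\theta-\tfrac12$, and recall from \eqref{def:deltan} that $e^{\delta_nq_n}\|q_n\alpha\|=\|q_n(\theta-\tfrac12)\|$. For \emph{any} $m_n$, since $q_n\alpha$ lies within $\|q_n\alpha\|$ of an integer, the numbers $\|\eta+(m_n+\ell q_n)\alpha\|$ ($\ell\in\Z$) sample an arithmetic progression of step $\|q_n\alpha\|$ on the circle; choosing $\ell_n$ so that $m_n+\ell_nq_n$ hits the term closest to $0$ gives
\[
\|\eta+(m_n+\ell_nq_n)\alpha\|\le\tfrac12\|q_n\alpha\|<\bigl(\tfrac12+\tfrac1{2q_n}\bigr)\|q_n\alpha\|,\qquad|\ell_n|\le\frac{\|\eta+m_n\alpha\|}{\|q_n\alpha\|}+\tfrac12 .
\]
The first inequality is (3); the second, after substituting $e^{\delta_nq_n}\|q_n\alpha\|=\|q_n(\theta-\tfrac12)\|$, shows that (2) holds as soon as $\|\eta+m_n\alpha\|\le\frac{\|q_n(\theta-\frac12)\|}{q_n}+\frac{\|q_n\alpha\|}{2}$.

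Such an $m_n$ always exists: let $N_0$ be the nearest integer to $q_n\eta$ and, using $\gcd(p_n,q_n)=1$, let $M_0\in[-q_n/2,q_n/2)$ solve $M_0p_n\equiv-N_0\pmod{q_n}$. Writing $\alpha=p_n/q_n+\beta_n$ with $|\beta_n|=\|q_n\alpha\|/q_n$, the integer $\tfrac{N_0+M_0p_n}{q_n}$ disappears from $\eta+M_0\alpha$ modulo $1$, so $\|\eta+M_0\alpha\|\le\frac{\|q_n(\theta-\frac12)\|}{q_n}+|M_0|\tfrac{\|q_n\alpha\|}{q_n}\le\frac{\|q_n(\theta-\frac12)\|}{q_n}+\frac{\|q_n\alpha\|}{2}$. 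Hence $m_n=M_0$ already meets (1)–(3), and so does any $m_n\in[-q_n/2,q_n/2)$ minimizing $\|\eta+m\alpha\|$ (its value is $\le\|\eta+M_0\alpha\|$).

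What remains is (4), and the engine is \eqref{eq:cont1}, \eqref{eq:cont4}: by \eqref{eq:cont4}, $\|q_n\alpha\|\le\|q_{n-1}\alpha\|/a_{n+1}$, hence $|j|\,\|q_n\alpha\|\le\tfrac16\|q_{n-1}\alpha\|$ for $|j|\le a_{n+1}/6$; and by \eqref{eq:cont1}, $\|k\alpha\|\ge\|q_{n-1}\alpha\|$ for $0<|k|<q_n$, so whenever $\|\eta+(m_n+jq_n)\alpha\|<\tfrac12\|q_{n-1}\alpha\|$ the minimum over $|k|<q_n$ in (4)(i) is attained at $k=0$ and (4)(i) holds with constant $1$. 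For $a_{n+1}\le3$ one takes $m_n$ to minimize $\|\eta+m\alpha\|$ over $[-q_n/2,q_n/2)$, so (4)(ii) holds with constant $1$ and (1)–(3) hold by the above. For $a_{n+1}\ge4$: if $\|\eta+M_0\alpha\|$ is a small fraction ($<\tfrac13$) of $\|q_{n-1}\alpha\|$, then $m_n=M_0$ gives $\|\eta+(m_n+jq_n)\alpha\|<\tfrac12\|q_{n-1}\alpha\|$ for all $|j|\le a_{n+1}/6$ and we are done. If not, the bound $\|\eta+M_0\alpha\|\le\frac{\|q_n(\theta-\frac12)\|}{q_n}+\frac{\|q_n\alpha\|}{2}$ forces $\|q_n(\theta-\tfrac12)\|$ to be bounded below by an absolute constant, hence $e^{\delta_nq_n}=\|q_n(\theta-\tfrac12)\|/\|q_n\alpha\|\gtrsim q_{n+1}$, which relaxes (2) enough to allow $|\ell_n|\asymp a_{n+1}$; one then either replaces $m_n$ by the $q_n$-residue of a sufficiently small near-zero of $\|\eta+\cdot\,\alpha\|$ inside $|M|\lesssim a_{n+1}q_n$ (again making the minimum in (4)(i) land at $k=0$), or, if there is no such near-zero, one checks that $\|\eta+\cdot\,\alpha\|$ is then comparable to $\|q_{n-1}\alpha\|$ on all the windows appearing in (4)(i), so the ratio there is bounded by an absolute constant, which is arranged to be $\le20$.

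The delicate point — and the reason for the loose factor $20$ and the somewhat intricate Definition \ref{def:minimal} — is this last alternative: the window radius, the threshold distinguishing ``small'' from ``large'' values of $\|\eta+\cdot\,\alpha\|$, and the comparability constant must be pinned down simultaneously, so that in the ``no near-zero'' branch the ratio really is $\le20$ while in the ``near-zero'' branch the replacement $m_n$ still obeys the bound on $\|\eta+m_n\alpha\|$ needed for (2); it is precisely the identity $e^{\delta_nq_n}\|q_n\alpha\|=\|q_n(\theta-\tfrac12)\|$ together with \eqref{eq:cont2} and \eqref{eq:cont4} that makes a consistent choice possible. Tracking the location of $M_0$ relative to the near-zeros of $\|\eta+\cdot\,\alpha\|$ through this bookkeeping is where the real work is concentrated; the rest is the routine rounding and continued-fraction estimates above.
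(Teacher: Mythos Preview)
Your construction of $(m_n,\ell_n)$ satisfying (1)--(3) is correct and close in spirit to the paper's (you fix $m_n$ first and then pick $\ell_n$, while the paper builds $k_n=j_0+j_1$ first and then reads off $(m_n^{(1)},\ell_n^{(1)})$ from its $q_n$-residue). For $a_{n+1}\le 3$ your choice of $m_n$ as the minimizer of $\|\eta+m\alpha\|$ over $[-q_n/2,q_n/2)$ is valid and in fact cleaner than the paper's case analysis there.

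The genuine gap is the verification of (4)(i) for $a_{n+1}\ge 4$ in your ``large'' sub-case $\|\eta+M_0\alpha\|\ge\tfrac13\|q_{n-1}\alpha\|$. You only sketch two alternatives --- replace $m_n$ by the $q_n$-residue of some near-zero, or argue comparability if no near-zero exists --- and explicitly say the real work is concentrated there. In fact alternative (b) never occurs: there is \emph{always} a near-zero in the relevant range, namely $k_n$ from the paper's construction, with $\|\eta+k_n\alpha\|\le(\tfrac12+\tfrac1{2q_n})\|q_n\alpha\|$ and $|k_n|\le e^{\delta_nq_n}+\tfrac{q_n+1}{2}$. And alternative (a) is not free: once you replace $M_0$ by a new residue $m_n'$, you must re-verify the bound on $|\ell_n|$ in (2), and the rough estimate $|\ell_n'|\lesssim a_{n+1}$ you obtain need not sit under $\tfrac{1}{q_n}(e^{\delta_nq_n}+q_n+\tfrac12)$ without further work. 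More basically, your pair $(M_0,\ell_n)$ can simply fail (4)(i) at $j=0$ in this sub-case: if the true minimizer $m_{\min}$ over $[-q_n/2,q_n/2)$ has $\|\eta+m_{\min}\alpha\|$ much smaller than $\|\eta+M_0\alpha\|$, the ratio in (4)(i) exceeds $20$.

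The paper avoids this split entirely. Having constructed $(m_n^{(1)},\ell_n^{(1)})$ with $\|\eta+(m_n^{(1)}+\ell_n^{(1)}q_n)\alpha\|\le(\tfrac12+\tfrac1{2q_n})\|q_n\alpha\|$ and $|\ell_n^{(1)}|\le\bigl[\tfrac{a_{n+1}+3}{2}\bigr]$, it bounds directly, for any $|j|\le a_{n+1}/6$,
\[
\|\eta+(m_n^{(1)}+jq_n)\alpha\|\le\Bigl(|\ell_n^{(1)}-j|+\tfrac12+\tfrac1{2q_n}\Bigr)\|q_n\alpha\|\le\Bigl(\tfrac{2a_{n+1}}{3}+2+\tfrac1{2q_n}\Bigr)\|q_n\alpha\|,
\]
and for $0<|k|<q_n$,
\[
\|\eta+(m_n^{(1)}+jq_n+k)\alpha\|\ge\|q_{n-1}\alpha\|-\|\eta+(m_n^{(1)}+jq_n)\alpha\|\ge\Bigl(\tfrac{a_{n+1}}{3}-2-\tfrac1{2q_n}\Bigr)\|q_n\alpha\|.
\]
The ratio of these is at most $20$ for all $a_{n+1}\ge 4$ (checked separately for $a_{n+1}\ge 7$ and $4\le a_{n+1}\le 6$), with no sub-case on the size of $\|\eta+m_n\alpha\|$. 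That explicit arithmetic is exactly the step your sketch defers.
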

\begin{rem}
For any given $\theta$, following the procedure below, one can construct $(m_n,\ell_n)$ explicitly.
\end{rem}
\begin{proof}
By the definition of $\delta_n$, we have
\begin{align*}
\|q_n(\theta-\frac{1}{2})\|=e^{\delta_n q_n}\|q_n\alpha\|.
\end{align*}
Using the fact that $\|q_n\alpha\|>1/(q_{n+1}+q_n)$ and $\|q_n(\theta-1/2)\|\leq 1/2$, we have
\begin{align}\label{eq:delta_n0}
e^{\delta_n q_n}\leq \frac{q_{n+1}+q_n-1}{2}.
\end{align}
One can choose $j_0\in \Z$ such that
\begin{align}\label{eq:delta_n1}
\|q_n(\theta-\frac{1}{2})+j_0 q_n\alpha\|\leq \frac{1}{2}\|q_n\alpha\|,
\end{align}
and $j_0$ satisfies
\begin{align}\label{eq:delta_n2}
|j_0|\leq 
e^{\delta_n q_n}+\frac{1}{2}.
\end{align}
Note that \eqref{eq:delta_n1} yields
\begin{align}
\|q_n(\theta-\frac{1}{2}+j_0\alpha)\|\leq \frac{1}{2}\|q_n\alpha\|.
\end{align}
Hence there exists an integer $p$ such that
\begin{align}\label{eq:delta_n3}
|(\theta-\frac{1}{2}+j_0\alpha)-\frac{p}{q_n}|\leq \frac{1}{2q_n}\|q_n\alpha\|.
\end{align}
Since $p_n, q_n$ are coprime, there exists 
\begin{align}\label{eq:delta_n3'}
j_1\in [-\frac{q_n}{2}, \frac{q_n}{2})
\end{align} 
such that
\begin{align*}
j_1p_n\equiv p \text{ (mod } q_n\text{)}.
\end{align*}
This implies
\begin{align}\label{eq:delta_n4}
j_1\alpha-\frac{p}{q_n}=j_1\frac{p_n}{q_n}-\frac{p}{q_n}+j_1(\alpha-\frac{p_n}{q_n})=k+j_1(\alpha-\frac{p_n}{q_n}),
\end{align}
where $k\in \Z$ and 
\begin{align}\label{eq:delta_n5}
|j_1(\alpha-\frac{p_n}{q_n})|=\frac{j_1}{q_n}\|q_n\alpha\|\leq \frac{1}{2}\|q_n\alpha\|.
\end{align}
Combining \eqref{eq:delta_n3}, \eqref{eq:delta_n4} and \eqref{eq:delta_n5}, we have
\begin{align}\label{eq:delta_n6}
\|\theta-\frac{1}{2}+(j_0+j_1)\alpha\|\leq (\frac{1}{2}+\frac{1}{2q_n})\|q_n\alpha\|.
\end{align}
Define $k_n:=j_0+j_1$, then clearly by \eqref{eq:delta_n2} and \eqref{eq:delta_n3'} we have
\begin{align}\label{eq:delta_n7}
|k_n|\leq e^{\delta_n q_n}+\frac{q_n+1}{2}.
\end{align}

Define $m_n^{(1)}\in [-q_n/2,q_n/2)$ and $\ell_n^{(1)}\in \Z$ be such that
\begin{align}\label{def:mn_elln}
m_n^{(1)}+\ell_n^{(1)} q_n=k_n.
\end{align}
By \eqref{eq:delta_n7} and \eqref{eq:delta_n0}, we have
\begin{align}\label{eq:delta_n9'}
|\ell_n^{(1)}|\leq \frac{1}{q_n}(e^{\delta_n q_n}+q_n+\frac{1}{2})\leq \frac{q_{n+1}+3q_n}{2q_n}\leq \left[\frac{a_{n+1}+3}{2}\right].
\end{align}
By \eqref{eq:delta_n6} and \eqref{def:mn_elln}, we have
\begin{align}\label{eq:delta_n6'}
\|\theta-\frac{1}{2}+(m_n^{(1)}+\ell_n^{(1)} q_n)\|\leq (\frac{1}{2}+\frac{1}{2q_n})\|q_n\alpha\|.
\end{align}
We also have, by \eqref{eq:delta_n9'}, that for $\ell\neq \ell_n^{(1)}$ such that $|\ell|\leq [a_{n+1}/6]$ that
\begin{align}\label{eq:delta_n10}
|\ell_n^{(1)}-\ell| \cdot \|q_n\alpha\|\leq \left(\left[\frac{a_{n+1}+3}{2}\right]+\left[\frac{a_{n+1}}{6}\right] \right) \|q_n\alpha\|\leq \frac{2}{3}\|q_{n-1}\alpha\|+\frac{3}{2}\|q_n\alpha\|<\frac{1}{2}.
\end{align}
Hence
\begin{align*}
\|(\ell_n^{(1)}-\ell)q_n\alpha\|=|\ell_n^{(1)}-\ell|\cdot \|q_n\alpha\|.
\end{align*}
This implies
\begin{align}\label{eq:delta_n11}
(|\ell_n^{(1)}-\ell|-\frac{1}{2}-\frac{1}{2q_n})\|q_n\alpha\|\leq \|\theta-\frac{1}{2}+(m_n^{(1)}+\ell q_n)\alpha\|
\leq &(|\ell_n^{(1)}-\ell|+\frac{1}{2}+\frac{1}{2q_n}) \|q_n\alpha\|.
\end{align}
Combining the right hand side of \eqref{eq:delta_n11} with \eqref{eq:delta_n10}, we have
\begin{align*}
\|\theta-\frac{1}{2}+(m_n^{(1)}+\ell q_n)\alpha\|
\leq &\left(\left[\frac{a_{n+1}+3}{2}\right]+\left[\frac{a_{n+1}}{6}\right]+\frac{1}{2}+\frac{1}{2q_n}\right)\|q_n\alpha\|\\
\leq &\begin{cases}
(\frac{2}{3}a_{n+1}+2+\frac{1}{2q_n})\|q_n\alpha\|, \text{ if } a_{n+1}\geq 7\\
(a_{n+1}-\frac{1}{2}+\frac{1}{2q_n})\|q_n\alpha\|, \text{ if } 4\leq a_{n+1}\leq 6.
\end{cases}
\end{align*}
Hence for integer $|k|<q_n$ and $a_{n+1}\geq 7$, we have
\begin{align}\label{eq:delta_n12}
\|\theta-\frac{1}{2}+(m_n^{(1)}+\ell q_n+k)\alpha\|
\geq &\|k\alpha\|-\|\theta-\frac{1}{2}+(m_n^{(1)}+\ell q_n)\alpha\| \notag\\
\geq &\|q_{n-1}\alpha\|-\|\theta-\frac{1}{2}+(m_n^{(1)}+\ell q_n)\alpha\| \notag\\
\geq & a_{n+1}\|q_n\alpha\|-\|\theta-\frac{1}{2}+(m_n^{(1)}+\ell q_n)\alpha\| \notag\\
\geq &(\frac{1}{3}a_{n+1}-2-\frac{1}{2q_n}) \|q_n\alpha\| \notag\\
\geq &\frac{1}{20}\|\theta-\frac{1}{2}+(m_n^{(1)}+\ell q_n)\alpha\|,
\end{align}
and similarly for $4\leq a_{n+1}\leq 6$, 
\begin{align}\label{eq:delta_n13}
\|\theta-\frac{1}{2}+(m_n^{(1)}+\ell q_n+k)\alpha\|
\geq &a_{n+1}\|q_n\alpha\|-(a_{n+1}-\frac{1}{2}+\frac{1}{2q_n})\|q_n\alpha\| \notag\\
\geq &\frac{1}{20}\|\theta-\frac{1}{2}+(m_n^{(1)}+\ell q_n)\alpha\|.
\end{align}
Thus combining \eqref{eq:delta_n9'}, \eqref{eq:delta_n6'}, \eqref{eq:delta_n12}, \eqref{eq:delta_n13}, we have proved that $(m_n, \ell_n)=(m_n^{(1)}, \ell_n^{(1)})$ is $\theta$-minimal at scale $q_n$ if $a_{n+1}\geq 4$. 

Next we consider the case $a_{n+1}\in [1, 3]$. 

If $\ell_n^{(1)}=0$, we have by \eqref{eq:delta_n6'} that for any $|k|<q_n$,
\begin{align*}
\|\theta-\frac{1}{2}+(m_n^{(1)}+k)\alpha\|
\geq &\|q_{n-1}\alpha\|-\|\theta-\frac{1}{2}+m_n^{(1)}\alpha\|\\
\geq &(\frac{1}{2}-\frac{1}{2q_n})\|q_n\alpha\|\\
\geq &\frac{1}{2}\|\theta-\frac{1}{2}+m_n^{(1)}\alpha\|.
\end{align*}
This verifies (4)(ii) in Definition \ref{def:minimal}. Hence $(m_n,\ell_n)=(m_n^{(1)},\ell_n^{(1)})$ is $\theta$-minimal at scale $q_n$.

If $\ell_n^{(1)}\neq 0$, the minimal may not occur at $m_n$. We define $\tilde{m}_n\in [-q_n/2, q_n/2)$ such that
\begin{align}\label{def:tildemn}
\|\theta-\frac{1}{2}+\tilde{m}_n\alpha\|=\inf_{k\in [-q_n/2, q_n/2)} \|\theta-\frac{1}{2}+k\alpha\|.
\end{align}
Next we divide into two different cases depending on $\|\theta-\frac{1}{2}+\tilde{m}_n\alpha\|<\frac{1}{3}\|q_n\alpha\|$ or not.

Case 1. If $\|\theta-\frac{1}{2}+\tilde{m}_n\alpha\|\geq \frac{1}{3}\|q_n\alpha\|$.
We have
\begin{align*}
\|\theta-\frac{1}{2}+m_n^{(1)}\alpha\|\leq (|\ell_n^{(1)}|+\frac{1}{2}+\frac{1}{2q_n})\|q_n\alpha\|\leq 4\|q_n\alpha\|\leq 12\|\theta-\frac{1}{2}+\tilde{m}_n\alpha\|,
\end{align*}
which verifies (4)(ii) in Definition \ref{def:minimal}. Hence $(m_n,\ell_n)=(m_n^{(1)}, \ell_n^{(1)})$ is $\theta$-minimal at scale $q_n$.

Case 2. If 
\begin{align}\label{eq:tildemn<13}
\|\theta-\frac{1}{2}+\tilde{m}_n\alpha\|< \frac{1}{3}\|q_n\alpha\|.
\end{align}
Such $\tilde{m}_n$ may not exist in some cases, but if it exists, $(m_n,\ell_n)=(\tilde{m}_n, 0)$ will be $\theta$-minimal at scale $q_n$.
Indeed, (4)(ii) obviously hold for $m=\tilde{m}_n$ due to the definition of $\tilde{m}_n$. 
(3) of Definition \ref{def:minimal} holds for $(m,\ell)=(\tilde{m}_n,0)$ due to \eqref{eq:tildemn<13}.
Hence $(\tilde{m}_n, 0)$ is $\theta$-minimal at scale $q_n$. \qed
\end{proof}

Define 
\begin{align}\label{def:cnell}
c_{n,\ell}:=|\cos(\pi \theta_{m_n+\ell q_n})|.
\end{align}
As a corollary of the construction in Lemma \ref{lem:mn} we have.
\begin{cor}\label{cor:mn}
If $a_{n+1}\geq 4$, we have
\begin{align}\label{eq:cnelln}
c_{n,\ell_n}\leq \frac{2}{3}\|q_n\alpha\|,
\end{align} 
and for $|\ell|\leq q_{n+1}/(6q_n)$
\begin{align}\label{eq:delta_n_min_cos}
c_{n,\ell}\leq 21 \min_{|k|<q_n} |\cos(\pi\theta_{m_n+\ell_n q_n+k})|,
\end{align}
and further if $\ell\neq \ell_n$,
\begin{align}\label{eq:cnell_low_up}
c|\ell-\ell_n|\cdot \|q_n\alpha\|\leq c_{n,\ell}\leq C|\ell-\ell_n|\cdot \|q_n\alpha\|,
\end{align}
where $c,C\in (1/3,2)$ are two absolute constants.

If $a_{n+1}\leq 3$, we have that 
\begin{align}\label{eq:cnell_min_an+1<4}
c_{n,0}\leq 21 \min_{k\in [-q_n/2,q_n/2)} |\cos(\pi\theta_k)|.
\end{align}
\end{cor}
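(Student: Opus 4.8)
The plan is to observe that Corollary \ref{cor:mn} adds nothing to the proof of Lemma \ref{lem:mn}: it merely re-expresses, in terms of the cosines $c_{n,\ell}=|\cos(\pi\theta_{m_n+\ell q_n})|$, the metric estimates on $(m_n,\ell_n)$ obtained in that proof. The one extra ingredient is the elementary comparison between $|\cos\pi x|$ and the distance to the half-integers. Since $|\cos\pi x|=\sin(\pi\|x-\tfrac12\|)$, one has $2\|x-\tfrac12\|\le|\cos\pi x|\le\pi\|x-\tfrac12\|$, with $|\cos\pi x|/\|x-\tfrac12\|\to\pi$ as $\|x-\tfrac12\|\to0$, and $|\cos\pi x|$ is strictly increasing in $\|x-\tfrac12\|\in[0,\tfrac12]$. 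I would first record that, for $n$ large, all the distances $\|\theta-\tfrac12+j\alpha\|$ and all the minima $\min_{|k|<q_n}\|\theta-\tfrac12+(j+k)\alpha\|$ that enter \eqref{eq:cnelln}--\eqref{eq:cnell_min_an+1<4} are $o(1)$: indeed, by \eqref{eq:delta_n6'}, \eqref{eq:delta_n11} and \eqref{eq:delta_n9'} (and, when $a_{n+1}\le3$, by the definition of $\tilde{m}_n$) each is $O(q_{n+1}/q_n)\|q_n\alpha\|=O(1/q_n)$ or smaller. Consequently, in each estimate below one may replace $|\cos\pi\theta_j|$ by $(\pi+o(1))\,\|\theta-\tfrac12+j\alpha\|$, and a minimum of such cosines over $j$ by $(\pi+o(1))$ times the corresponding minimum of distances (the argmins agreeing by monotonicity).

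I would then translate the four assertions one at a time. For \eqref{eq:cnelln} (case $a_{n+1}\ge4$): condition (3) of Definition \ref{def:minimal} applied to $(m_n,\ell_n)$ gives $\|\theta-\tfrac12+(m_n+\ell_n q_n)\alpha\|\le(\tfrac12+\tfrac1{2q_n})\|q_n\alpha\|$, and the bound $|\cos\pi x|\le\pi\|x-\tfrac12\|$ turns this into the stated estimate on $c_{n,\ell_n}$. For \eqref{eq:delta_n_min_cos}: one first checks, from $q_{n+1}=a_{n+1}q_n+q_{n-1}$ and $q_{n-1}<q_n$, that any integer $\ell$ with $|\ell|\le q_{n+1}/(6q_n)$ satisfies $|\ell|\le a_{n+1}/6$, so condition (4)(i) applies with $m=m_n,\ j=\ell$; converting both sides (both $o(1)$, matching argmins) changes the constant $20$ into $20(1+o(1))<21$ for $n$ large. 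For \eqref{eq:cnell_low_up}: inequality \eqref{eq:delta_n11} from the proof of Lemma \ref{lem:mn} sandwiches $\|\theta-\tfrac12+(m_n+\ell q_n)\alpha\|$ between $(|\ell-\ell_n|-\tfrac12-\tfrac1{2q_n})\|q_n\alpha\|$ and $(|\ell-\ell_n|+\tfrac12+\tfrac1{2q_n})\|q_n\alpha\|$, both comparable to $|\ell-\ell_n|\,\|q_n\alpha\|$ with absolute constants since $|\ell-\ell_n|\ge1$; applying the comparison to both ends gives the two-sided bound for $c_{n,\ell}$. Finally, for \eqref{eq:cnell_min_an+1<4} (case $a_{n+1}\le3$): condition (4)(ii) reads $\|\theta-\tfrac12+m_n\alpha\|\le20\min_{-q_n/2\le k<q_n/2}\|\theta-\tfrac12+k\alpha\|$, and since $m_n$ is itself one of the competing indices the numerator and the minimum are comparable and both $o(1)$, so the comparison once more upgrades $20$ to $<21$; here $c_{n,0}=|\cos\pi\theta_{m_n}|$ since $\ell=0$.

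I expect no genuine conceptual obstacle: everything substantive is already in Lemma \ref{lem:mn}, and what remains is essentially transcription. The points that do need some care are (i) the arithmetic implication $|\ell|\le q_{n+1}/(6q_n)\Rightarrow|\ell|\le a_{n+1}/6$, which makes condition (4)(i) applicable; (ii) keeping the $\sin$-versus-linear comparison sharp enough that the constant $20$ of Definition \ref{def:minimal} survives as $<21$ after the $1+o(1)$ loss --- this is precisely where one uses that the relevant distances tend to $0$ with $n$; and (iii) the routine tracking of the absolute constants in \eqref{eq:cnelln} and \eqref{eq:cnell_low_up}, which, if one wishes to be fully explicit, can be read directly off the two comparison inequalities $2\|x-\tfrac12\|\le|\cos\pi x|\le\pi\|x-\tfrac12\|$.
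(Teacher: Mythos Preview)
Your approach is exactly the paper's: cite Definition \ref{def:minimal} (parts (3) and (4)) and inequality \eqref{eq:delta_n11} from the proof of Lemma \ref{lem:mn}, then pass from distances $\|x-\tfrac12\|$ to cosines via $|\cos\pi x|=\sin(\pi\|x-\tfrac12\|)$. The paper's own proof is a one-line citation of precisely these ingredients, so you have simply written out what it leaves implicit.

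One caveat on the constants. Your claim that condition (3) ``turns into the stated estimate'' \eqref{eq:cnelln} is not arithmetically right: from $\|\theta-\tfrac12+(m_n+\ell_nq_n)\alpha\|\le(\tfrac12+\tfrac1{2q_n})\|q_n\alpha\|$ and $|\cos\pi x|\le\pi\|x-\tfrac12\|$ you get $c_{n,\ell_n}\le(\tfrac{\pi}{2}+o(1))\|q_n\alpha\|$, not $\tfrac23\|q_n\alpha\|$. Likewise, for \eqref{eq:cnell_low_up} the conversion of \eqref{eq:delta_n11} gives constants of order $\pi/2$ and $3\pi/2$, not both in $(1/3,2)$. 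These look like imprecise constants in the paper's \emph{statement} rather than a defect in your method (the paper's proof does not verify them either), and since neither \eqref{eq:cnelln} nor the exact values $c,C$ are used with their stated numerical ranges anywhere downstream, the discrepancy is harmless. You should, however, not assert that the stated constants follow when they do not.
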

\begin{proof}
\eqref{eq:delta_n_min_cos} and \eqref{eq:cnell_min_an+1<4} follow from (4) of Definition \ref{def:minimal},
and \eqref{eq:cnell_low_up} follows from \eqref{eq:delta_n11}.
\qed
\end{proof}

We also have the following corollary of the construction of Lemma \ref{lem:mn}
\begin{cor}\label{cor:elln>0}
If $e^{\delta_n q_n}>3q_n$, we have
\begin{align*}
\left| |\ell_n|-\frac{1}{q_n} e^{\delta_n q_n}\right|\leq \frac{1}{2q_n}+1.
\end{align*}
\end{cor}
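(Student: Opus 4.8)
The plan is to read the size of $\ell_n$ directly off the explicit construction in the proof of Lemma \ref{lem:mn}, after first observing that the hypothesis $e^{\delta_n q_n}>3q_n$ forces us into the regime $a_{n+1}\geq 4$, in which $(m_n,\ell_n)=(m_n^{(1)},\ell_n^{(1)})$.

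First I would dispose of the case reduction. By \eqref{eq:delta_n0} one has $3q_n<e^{\delta_n q_n}\leq \tfrac12(q_{n+1}+q_n-1)$, hence $q_{n+1}>5q_n$; since $q_{n+1}=a_{n+1}q_n+q_{n-1}<(a_{n+1}+1)q_n$ by \eqref{eq:cont3} (using $q_{n-1}<q_n$), this forces $a_{n+1}\geq 5$. Thus $\ell_n=\ell_n^{(1)}$, and by \eqref{def:mn_elln} together with the definition $k_n=j_0+j_1$ we get $m_n^{(1)}+\ell_n^{(1)}q_n=j_0+j_1$, where $|m_n^{(1)}|\leq q_n/2$ (from $m_n^{(1)}\in[-q_n/2,q_n/2)$) and $|j_1|\leq q_n/2$ (by \eqref{eq:delta_n3'}). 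Consequently $|\ell_n q_n-j_0|=|j_1-m_n^{(1)}|\leq q_n$.

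Next I would sharpen the control on $j_0$ itself. The proof of Lemma \ref{lem:mn} records only the one-sided bound \eqref{eq:delta_n2}, but the $j_0$ produced there can (and, for the purpose of this corollary, should) be taken to be the integer nearest to $q_n(\theta-\tfrac12)/\|q_n\alpha\|$ together with its sign: since $\|q_n(\theta-\tfrac12)\|=e^{\delta_n q_n}\|q_n\alpha\|$, this choice keeps $q_n(\theta-\tfrac12+j_0\alpha)$ within $\tfrac12\|q_n\alpha\|$ of an integer, so \eqref{eq:delta_n1} holds, and it obeys not merely \eqref{eq:delta_n2} but the two-sided estimate $\big|\,|j_0|-e^{\delta_n q_n}\,\big|\leq \tfrac12$. (One can also see this intrinsically: any $j_0$ satisfying \eqref{eq:delta_n1} whose associated nearest integer is $0$ already has $|j_0|\geq e^{\delta_n q_n}-\tfrac12$, while the only competing value, which would force $|j_0|\gtrsim q_{n+1}/2$, is the one the procedure discards.) Combining with the previous step: since $|j_0|\geq e^{\delta_n q_n}-\tfrac12>3q_n-\tfrac12>2q_n$, the integers $\ell_n$ and $j_0$ have the same sign and $\big|\,|\ell_n|q_n-|j_0|\,\big|=|\ell_n q_n-j_0|\leq q_n$, so that
\[
\big|\,|\ell_n|q_n-e^{\delta_n q_n}\,\big|\ \leq\ \big|\,|\ell_n|q_n-|j_0|\,\big|+\big|\,|j_0|-e^{\delta_n q_n}\,\big|\ \leq\ q_n+\tfrac12,
\]
and dividing by $q_n$ yields the assertion. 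The only point of substance is the sharpened bound on $j_0$: one must make sure that the $j_0$ selected in Lemma \ref{lem:mn} is the "primary" choice bringing $q_n(\theta-\tfrac12)$ close to $0$ rather than to a nonzero integer, which is precisely the choice dictated by the explicit procedure referred to in the remark following that lemma.
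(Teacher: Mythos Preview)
Your proof is correct and follows the same inspection-of-the-construction approach that the paper's one-line proof gestures at via \eqref{eq:delta_n2}, \eqref{eq:delta_n3'}, \eqref{eq:delta_n7}, \eqref{eq:delta_n9'}. You actually go further than the paper by making explicit the two-sided bound $\big||j_0|-e^{\delta_n q_n}\big|\leq\tfrac12$ (the cited inequalities by themselves only deliver the upper half of the corollary), and your observation that the construction's $j_0$ is the one bringing $q_n(\theta-\tfrac12)$ near $0$ rather than near a nonzero integer is exactly what is needed to close the lower bound.
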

\begin{proof}
This follows from an inspection of \eqref{eq:delta_n2}, \eqref{eq:delta_n3'}, \eqref{eq:delta_n7} and \eqref{eq:delta_n9'}.
\end{proof}
Combining \eqref{eq:lim_max_0_deltan} with Corollaries \ref{cor:mn} and \ref{cor:elln>0}, we have
\begin{cor}\label{cor:alternate_delta}
\begin{align*}
\delta(\alpha,\theta)=\limsup_{n\to \infty}\max(0,\delta_n)=\limsup_{n\to \infty} \max(0, \frac{\ln q_{n+1}+\ln |c_{n,0}|}{q_n}).
\end{align*}
\end{cor}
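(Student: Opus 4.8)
The first equality is precisely \eqref{eq:lim_max_0_deltan}, so only the second one needs an argument. Write $\rho_n:=\frac{\ln q_{n+1}+\ln c_{n,0}}{q_n}=\beta_n+\frac{\ln c_{n,0}}{q_n}$. The plan is to produce a single sequence $\varepsilon_n\to0$ with $\bigl|\max(0,\delta_n)-\max(0,\rho_n)\bigr|\le\varepsilon_n$ for all large $n$; taking $\limsup$ and invoking \eqref{eq:lim_max_0_deltan} then gives $\limsup_n\max(0,\rho_n)=\limsup_n\max(0,\delta_n)=\delta(\alpha,\theta)$. Throughout I would use that $x\mapsto\max(0,x)$ is $1$-Lipschitz, so it is enough to treat separately the scales where $|\delta_n-\rho_n|$ is small and the scales where both $\max(0,\delta_n)$ and $\max(0,\rho_n)$ are small.

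I would split by a dichotomy: call a scale $q_n$ \emph{strong} if $a_{n+1}\ge4$ and $e^{\delta_n q_n}>3q_n$, and \emph{weak} otherwise. On a strong scale, Corollary \ref{cor:elln>0} forces $\ell_n\ne0$ for $n$ large (if $\ell_n=0$ it would give $e^{\delta_n q_n}\le q_n+\tfrac12$, contradicting $e^{\delta_n q_n}>3q_n$). Then \eqref{eq:cnell_low_up} with $\ell=0$ shows $c_{n,0}$ equals $|\ell_n|\,\|q_n\alpha\|$ up to absolute multiplicative constants, so with $-\ln q_{n+1}-\ln 2\le\ln\|q_n\alpha\|\le-\ln q_{n+1}$ from \eqref{eq:cont2} we get $\rho_n=\frac{\ln|\ell_n|}{q_n}+O(1/q_n)$. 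By Corollary \ref{cor:elln>0}, $|\ell_n|=\frac{1}{q_n}e^{\delta_n q_n}+O(1)$, and since $\frac{1}{q_n}e^{\delta_n q_n}>3$ this gives $\ln|\ell_n|=\delta_n q_n-\ln q_n+O(1)$. Hence $|\rho_n-\delta_n|=O\!\bigl(\tfrac{\ln q_n}{q_n}\bigr)\to0$, and by the Lipschitz remark $\bigl|\max(0,\delta_n)-\max(0,\rho_n)\bigr|=O\!\bigl(\tfrac{\ln q_n}{q_n}\bigr)$ on strong scales.

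On a weak scale it suffices to bound both $\max(0,\delta_n)$ and $\max(0,\rho_n)$ by $O\!\bigl(\tfrac{\ln q_n}{q_n}\bigr)$. If $a_{n+1}\le3$, then $q_{n+1}<4q_n$, so $\beta_n<\frac{\ln 4q_n}{q_n}$, and combining $c_{n,0}\le1$, $\|q_n(\theta-\tfrac12)\|\le\tfrac12$ and $\|q_n\alpha\|\ge\frac{1}{2q_{n+1}}$ with \eqref{def:betan}, \eqref{def:deltan} bounds both $\rho_n$ and $\delta_n$ by $\frac{\ln 4q_n}{q_n}$. If $a_{n+1}\ge4$ but $e^{\delta_n q_n}\le3q_n$ — which, again by Corollary \ref{cor:elln>0}, includes every scale with $\ell_n=0$ — then $\delta_n\le\frac{\ln 3q_n}{q_n}$, while \eqref{eq:delta_n9'} forces $|\ell_n|\le4$; consequently either $\ell_n\ne0$ and \eqref{eq:cnell_low_up} gives $c_{n,0}\lesssim\|q_n\alpha\|\lesssim q_{n+1}^{-1}$, whence $\rho_n\lesssim q_n^{-1}$, or $\ell_n=0$ and \eqref{eq:cnelln} gives $c_{n,0}\le\tfrac23\|q_n\alpha\|$, whence $\rho_n\le0$. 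In all weak sub-cases both quantities are $o(1)$. Taking $\varepsilon_n$ to dominate the error bounds from the strong and weak cases completes the proof.

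I expect the only point needing genuine care to be the bookkeeping on strong scales, namely checking that $\rho_n-\delta_n$ is an honest null sequence and not just bounded; this is exactly where the two-sided quantitative estimate $\bigl||\ell_n|-\frac{1}{q_n}e^{\delta_n q_n}\bigr|\le\frac{1}{2q_n}+1$ of Corollary \ref{cor:elln>0}, used together with $\frac{1}{q_n}e^{\delta_n q_n}>3$, is needed: it controls $\ln|\ell_n|$ from both sides, so that the remaining discrepancy in $\rho_n$ is $O(1/q_n)\to0$. All the other inputs are routine applications of \eqref{eq:cont2}, \eqref{eq:delta_n9'} and Corollary \ref{cor:mn}.
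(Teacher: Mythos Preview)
Your proposal is correct and follows essentially the same approach as the paper: both arguments split according to whether $a_{n+1}\ge4$ and whether $e^{\delta_nq_n}$ is large compared to $q_n$, use Corollary~\ref{cor:elln>0} together with \eqref{eq:cnell_low_up} on the ``large'' scales to show $|\rho_n-\delta_n|=O(\tfrac{\ln q_n}{q_n})$, and on the remaining scales show both $\max(0,\delta_n)$ and $\max(0,\rho_n)$ are $O(\tfrac{\ln q_n}{q_n})$. Your strong/weak dichotomy is in fact organized a bit more carefully than the paper's three cases: you explicitly separate the sub-case $\ell_n=0$ (where \eqref{eq:cnell_low_up} does not apply at $\ell=0$ and one must instead invoke \eqref{eq:cnelln}), a point the paper's Case~1 glosses over.
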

\begin{proof}
Let 
$$\delta_n':=\frac{\ln q_{n+1}+\ln |c_{n,0}|}{q_n}.$$
If $a_{n+1}\geq 4$, and $e^{\delta_n q_n}\leq 3q_n$, we have by \eqref{eq:delta_n9'} and \eqref{eq:cnell_low_up} that
\begin{align*}
\delta_n'\leq \frac{\ln q_{n+1}+\ln (C|\ell_n| \|q_n\alpha\|)}{q_n}\leq \frac{\ln (5C)}{q_n}.
\end{align*}
This implies
\begin{align}\label{eq:d-d'_1}
|\max(0, \delta_n)-\max(0, \delta_n')|\leq \frac{\ln(3q_n)}{q_n}.
\end{align}
If $e^{\delta_n q_n}>3q_n$, we have by Corollary \ref{cor:elln>0} that
\begin{align*}
\delta_n'\leq \frac{\ln(e^{\delta_n q_n}+\frac{1}{2}+q_n)}{q_n}-\frac{\ln q_n}{q_n}<\delta_n+\frac{\ln(2/q_n)}{q_n},
\end{align*}
and
\begin{align*}
\delta_n'\geq \frac{\ln(e^{\delta_n q_n}-\frac{1}{2}-q_n)}{q_n}-\frac{\ln q_n}{q_n}<\delta_n-\frac{\ln(2q_n)}{q_n}.
\end{align*}
Hence 
\begin{align}\label{eq:d-d'_2}
|\delta_n-\delta_n'|\leq \frac{\ln(2q_n)}{q_n}.
\end{align}
If $1\leq a_{n+1}\leq 3$, we have by \eqref{eq:deltan<betan} that
\begin{align}\label{eq:d-d'_3}
\delta_n<\beta_n=\frac{\ln q_{n+1}}{q_n}<\frac{\ln (4q_n)}{q_n},
\end{align}
and by (3) of Definition \ref{def:minimal} we have
\begin{align}\label{eq:d-d'_4}
\delta_n'=\frac{\ln q_{n+1}+\ln |\sin(\pi(\theta-\frac{1}{2}+m_n\alpha))|}{q_n}\leq \frac{\ln q_{n+1}+\ln (\pi \|q_n\alpha\|)}{q_n}<\frac{\ln \pi}{q_n}.
\end{align}
Combining \eqref{eq:d-d'_3} with \eqref{eq:d-d'_4}, we have
\begin{align}\label{eq:d-d'_5}
|\max(0,\delta_n)-\max(0,\delta_n')|\leq \frac{\ln (4q_n)}{q_n}.
\end{align}
Corollary \ref{cor:alternate_delta} follows from combining \eqref{eq:d-d'_1}, \eqref{eq:d-d'_2} and \eqref{eq:d-d'_5}.
\end{proof}

Let us also note that if $\beta_n\geq \delta_n+200\varepsilon$, we use the following estimate, obtained from \eqref{eq:delta_n9'},
\begin{align}\label{eq:delta_n9}
|\ell_n|\leq \frac{1}{q_n}(e^{\delta_n q_n}+\frac{2q_n+1}{2})\leq 2\max(e^{\delta_n q_n},1),
\end{align}
which implies, after combining with \eqref{eq:cnell_low_up}, that for $|\ell|\leq q_{n+1}/(6q_n)$ and some absolute constant $0<C<8$,
\begin{align}\label{eq:cnell_final}
c_{n,\ell}\leq C\max(|\ell|, e^{\delta_n q_n}, 1) e^{-\beta_nq_n}.
\end{align}

As a consequence of \eqref{eq:delta_n_min_cos} and Lemma \ref{lem:upperbddtildeP} and Corollary \ref{cor:prod_cos}, we have the following corollaries. Since the proofs are very standard, we leave all of them to the appendix.
\begin{cor}\label{cor:cos_prod_lower}
Let $I=[\ell_1, \ell_2]\subset \Z$ be such that there exists $j\in \Z$, $|j|<q_{n+1}/(6q_n)$, that satisfies 
$$I\subset [m_n+j q_n+1, m_n+(j+1)q_n-1].$$
Then for $n>N(\varepsilon)$ large enough, we have
\begin{align*}
\prod_{\ell\in I}|\cos(\pi\theta_{\ell})|\geq e^{-\varepsilon(2q_n-|I|)} e^{-(\ln 2)|I|}
\end{align*}
\end{cor}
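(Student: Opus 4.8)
The plan is to derive Corollary \ref{cor:cos_prod_lower} directly from the product estimate in Corollary \ref{cor:prod_cos} together with the pointwise lower bound on the minimal cosine supplied by \eqref{eq:delta_n_min_cos}. First I would apply Corollary \ref{cor:prod_cos} to the interval $I=[\ell_1,\ell_2]$, which gives
\begin{align*}
\prod_{\ell\in I}|\cos(\pi\theta_\ell)|\geq C(\varepsilon)^{-1} e^{-(|I|-1)(-\ln 2+\varepsilon)}\inf_{\ell\in I}|\cos(\pi\theta_\ell)|
= C(\varepsilon)^{-1} e^{-(|I|-1)(\varepsilon-\ln 2)}\inf_{\ell\in I}|\cos(\pi\theta_\ell)|,
\end{align*}
so everything reduces to showing that $\inf_{\ell\in I}|\cos(\pi\theta_\ell)|$ is not too small.

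To bound the infimum from below, the key observation is the hypothesis that $I$ is contained in a single ``block'' $[m_n+jq_n+1, m_n+(j+1)q_n-1]$ between two consecutive minimal points $m_n+jq_n$ and $m_n+(j+1)q_n$, with $|j|<q_{n+1}/(6q_n)$. So for each $\ell\in I$ I can write $\ell = m_n+jq_n+k$ with $1\leq k\leq q_n-1$, hence $|k|<q_n$, and \eqref{eq:delta_n_min_cos} (valid since $|j|\leq q_{n+1}/(6q_n)$) gives
\begin{align*}
|\cos(\pi\theta_\ell)| = |\cos(\pi\theta_{m_n+jq_n+k})| \geq \frac{1}{21}\, c_{n,j} \geq \frac{1}{21}\min_{|k'|<q_n}|\cos(\pi\theta_{m_n+\ell_n q_n+k'})|.
\end{align*}
Wait — I need to be careful about which minimal index appears; the cleanest route is: since the block is anchored at the minimal point $m_n+jq_n$, the cosine at any interior point $\ell$ of the block is comparable to $c_{n,j}$ up to the factor $21$ by the minimality property, and $c_{n,j}$ is in turn bounded below by a quantity of size $e^{\delta_n q_n}\|q_n\alpha\|$ (for $j=\ell_n$ this is \eqref{eq:cnell_low_up} read in reverse, and for $j\neq\ell_n$ it is $\geq c|j-\ell_n|\|q_n\alpha\|$). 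In all cases the infimum over $I$ is bounded below by $c\,\|q_n\alpha\| \geq c/(q_{n+1}+q_n) \geq e^{-\beta_n q_n-\ln 2}$ times an absolute constant, which for $n>N(\varepsilon)$ large is at least $e^{-\varepsilon q_n}$ since $\beta_n$ could be large but $|I|\leq q_n$ and we only need $\inf \geq e^{-\varepsilon q_n}\cdot(\text{const})$...

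Actually the sharpest and simplest bound uses only the minimality factor $21$ relating $|\cos(\pi\theta_\ell)|$ to $c_{n,j}$ and then bounds $c_{n,j}$ below by, at worst, $e^{-\varepsilon q_n}$ up to absolute constants using $\|q_n\alpha\|\geq 1/(2q_{n+1})$ and absorbing $\ln q_{n+1}=\beta_n q_n$ — but this requires $\beta_n\leq\varepsilon q_n/q_n$... no. The honest accounting: from \eqref{eq:delta_n_min_cos}, $\inf_{\ell\in I}|\cos(\pi\theta_\ell)|\geq \frac{1}{21}c_{n,j}$, and I claim $c_{n,j}\geq e^{-\varepsilon q_n}$ for large $n$ is false in general; rather one gets $\prod_{\ell\in I}|\cos(\pi\theta_\ell)|\geq C(\varepsilon)^{-1}e^{-(\ln 2)|I|}e^{\varepsilon|I|}\cdot\frac{1}{21}c_{n,j}$, and since $|I|\leq q_n$, the claimed bound $e^{-\varepsilon(2q_n-|I|)}e^{-(\ln2)|I|}$ follows provided $\frac{1}{21 C(\varepsilon)}c_{n,j}e^{\varepsilon|I|}\geq e^{-\varepsilon(2q_n-|I|)}$, i.e. $c_{n,j}\geq 21 C(\varepsilon) e^{-2\varepsilon q_n}$. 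This last inequality holds for $n$ large because $c_{n,j}\geq c\,\|q_n\alpha\|\geq c/(2q_{n+1})$ and... no, $q_{n+1}$ can be super-exponential in $q_n$. The resolution must be that when $I$ is interior to a block, $c_{n,j}$ is comparable to $|\cos|$ at interior points which are bounded below by $\|k\alpha\|$-type quantities; but near the minimal point the cosine is genuinely exponentially small, of size $\sim e^{(\delta_n-\beta_n)q_n}$, so one really does lose $e^{-\beta_n q_n}$ — hence the factor $e^{-(\ln2)|I|}$ is not enough unless $|I|$ is large. I think the correct reading is that the statement as written is for the case where the block does not contain its own minimum in its interior interior, so $\inf_{\ell\in I}|\cos|$ is at least a constant times $\|q_{n-1}\alpha\|\sim e^{-\beta_{n-1}q_{n-1}}$, which is $\geq e^{-\varepsilon q_n}$ for large $n$ since $q_{n-1}\ll q_n$.

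So the revised plan is: parametrize $\ell = m_n + jq_n + k$ with $1\leq |k|\leq q_n-1$; by the minimality of $m_n+jq_n$ among $\{m_n+jq_n+k'\}_{|k'|<q_n}$ (property (4)(i) of Definition \ref{def:minimal}, or rather the fact that $\|\theta-\tfrac12+(m_n+jq_n)\alpha\|$ is the near-minimum over that range), the distance $\|\theta-\tfrac12+\ell\alpha\|$ is bounded below by $\|k\alpha\| - \|\theta-\tfrac12+(m_n+jq_n)\alpha\|\geq \|q_{n-1}\alpha\| - O(\|q_n\alpha\|)\gtrsim \|q_{n-1}\alpha\|$, hence $|\cos(\pi\theta_\ell)|=|\sin(\pi(\theta-\tfrac12+\ell\alpha))|\gtrsim \|q_{n-1}\alpha\|\geq 1/(2q_n)$. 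Therefore $\inf_{\ell\in I}|\cos(\pi\theta_\ell)|\geq (2q_n)^{-1}\cdot c \geq e^{-\varepsilon q_n}$ for $n>N(\varepsilon)$ large enough. Feeding this into the displayed consequence of Corollary \ref{cor:prod_cos}, and using $|I|\leq q_n$ so that $C(\varepsilon)e^{(|I|-1)\varepsilon}e^{\varepsilon q_n}\leq e^{2\varepsilon q_n}\leq e^{\varepsilon(2q_n-|I|)}\cdot e^{\varepsilon|I|}$... I would absorb constants by replacing $\varepsilon$ with $\varepsilon/3$ at the start. The main obstacle is getting the bookkeeping of the $\varepsilon$'s and the absolute constants $21$, $C(\varepsilon)$ exactly right so that the final exponent comes out as the clean $-\varepsilon(2q_n-|I|)-(\ln2)|I|$; the geometric input (interior points of a block are $\gtrsim\|q_{n-1}\alpha\|$ away from the singularity) is routine once one unpacks Definition \ref{def:minimal}(4) and \eqref{eq:delta_n11}. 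As noted in the excerpt, this is standard, so I would state it briefly and defer the $\varepsilon$-chasing to the appendix.
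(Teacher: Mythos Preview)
Your very first step is where the argument breaks. Corollary \ref{cor:prod_cos} is an \emph{upper} bound on the product in terms of the infimum:
\[
\prod_{\ell\in I}|\cos(\pi\theta_\ell)|\leq C(\varepsilon)\,e^{(|I|-1)(-\ln 2+\varepsilon)}\,\inf_{\ell\in I}|\cos(\pi\theta_\ell)|.
\]
You cannot rearrange this into a lower bound $\prod\geq C(\varepsilon)^{-1}e^{\cdots}\inf$; the inequality simply goes the wrong way. Everything that follows in your proposal is built on this nonexistent lower bound, so the proof as written does not get off the ground. (Your later pointwise estimate $\inf_{\ell\in I}|\cos(\pi\theta_\ell)|\gtrsim\|q_{n-1}\alpha\|\gtrsim 1/q_n$ is essentially correct, but knowing each factor is at least $e^{-\varepsilon q_n}$ only gives $\prod\geq e^{-\varepsilon q_n|I|}$, which is far too weak.)

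The ingredient you are missing is Lemma \ref{lana}, which is two-sided: over a block of exactly $q_n$ consecutive points it controls $\sum_{j\neq j_0}\ln|\cos(\pi\theta_j)|+(q_n-1)\ln 2$ within $C\ln q_n$ of zero, where $j_0$ is the minimizer. The paper's argument applies this to the full block $\tilde I=[m_n+jq_n+1,\,m_n+(j+1)q_n-1]$ together with the endpoint $m_n+jq_n$, then uses \eqref{eq:delta_n_min_cos} to identify the minimum as $|\cos(\pi\theta_{m_n+jq_n})|$ up to the factor $21$ and cancel it from both sides, yielding
\[
\prod_{\ell\in\tilde I}|\cos(\pi\theta_\ell)|\geq q_n^{-C}\,e^{-q_n\ln 2}.
\]
Only after this lower bound on the full block is secured does Corollary \ref{cor:prod_cos} enter, now in the correct direction: it gives an \emph{upper} bound on $\prod_{\ell\in\tilde I\setminus I}|\cos(\pi\theta_\ell)|$, and dividing the full-block lower bound by this complement upper bound produces the claimed lower bound on $\prod_{\ell\in I}|\cos(\pi\theta_\ell)|$. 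So the minimality property \eqref{eq:delta_n_min_cos} is used, but only to cancel the single excluded minimum term in Lemma \ref{lana}, not to bound every factor individually.
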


Combining the above Corollary with Lemma \ref{upperbounds}, we have
\begin{cor}\label{cor:A_upper}
Let $I=[\ell_1, \ell_2]\subset \Z$ be such that there exists $j\in \Z$, $|j|<q_{n+1}/(6q_n)$, that satisfies 
$$I\subset [m_n+j q_n+1, m_n+(j+1)q_n-1].$$
Then for $n>N(\varepsilon)$ large enough, we have
\begin{align*}
\|A_{|I|}(\theta_{\ell_1})\|\leq e^{3\varepsilon q_n} e^{L |I|}.
\end{align*}
\end{cor}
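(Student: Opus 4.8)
The plan is to deduce the bound from the corresponding bound for the \emph{non-singular} cocycle $F$, dividing out the product of cosines. First I would use the factorization $F=\cos(\pi\theta)\,A$ iterated along the orbit: since $F_k(\theta)=\bigl(\prod_{j=0}^{k-1}\cos\pi(\theta+j\alpha)\bigr)A_k(\theta)$, taking $\theta=\theta_{\ell_1}$ and $k=|I|$ (so that $j$ ranges over $0,\dots,\ell_2-\ell_1$ and $\theta_{\ell_1}+j\alpha$ ranges over $\theta_\ell$, $\ell\in I$) gives
\[
\|A_{|I|}(\theta_{\ell_1})\|=\frac{\|F_{|I|}(\theta_{\ell_1})\|}{\prod_{\ell\in I}|\cos(\pi\theta_\ell)|}.
\]

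Next I would bound the numerator and denominator separately. For the numerator, $F$ is a continuous cocycle with Lyapunov exponent $\tilde L=L-\ln 2$, so Lemma \ref{upperbounds} gives $\|F_{|I|}(\theta_{\ell_1})\|\le C(\alpha,F,\varepsilon)\,e^{(\tilde L+\varepsilon)|I|}$, uniformly in the base point. For the denominator, the hypothesis $I\subset[m_n+jq_n+1,\,m_n+(j+1)q_n-1]$ with $|j|<q_{n+1}/(6q_n)$ is exactly the hypothesis of Corollary \ref{cor:cos_prod_lower}, which yields $\prod_{\ell\in I}|\cos(\pi\theta_\ell)|\ge e^{-\varepsilon(2q_n-|I|)}e^{-(\ln 2)|I|}$ for $n>N(\varepsilon)$. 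Dividing, the factors $e^{-(\ln 2)|I|}$ combine with $\tilde L=L-\ln 2$ to produce $e^{L|I|}$, the two $\varepsilon|I|$ contributions cancel, and what is left is
\[
\|A_{|I|}(\theta_{\ell_1})\|\le C(\alpha,F,\varepsilon)\,e^{L|I|}\,e^{2\varepsilon q_n}.
\]
Finally, since $|I|\le q_n-1<q_n$ and $C(\alpha,F,\varepsilon)$ is a fixed constant, for $n>N(\varepsilon)$ large enough one has $C(\alpha,F,\varepsilon)\le e^{\varepsilon q_n}$, which upgrades $e^{2\varepsilon q_n}$ to $e^{3\varepsilon q_n}$ and gives the claimed estimate.

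There is no real obstacle here: the content is entirely bookkeeping of exponents. The one point that requires the stated hypothesis — rather than being automatic — is the lower bound on the cosine product: if $I$ were allowed to contain one of the near-zero points $m_n+jq_n$, the cosine product could be exponentially small and the division would destroy the bound. The restriction $I\subset[m_n+jq_n+1,\,m_n+(j+1)q_n-1]$ keeps the orbit segment strictly between two consecutive minimal points, which is precisely what Corollary \ref{cor:cos_prod_lower} needs; everything else is the uniform upper bound from the Lyapunov exponent and the harmless absorption of the fixed constant into $e^{\varepsilon q_n}$ using $|I|<q_n$.
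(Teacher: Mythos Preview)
Your proof is correct and follows essentially the same approach as the paper: express $\|A_{|I|}(\theta_{\ell_1})\|$ as $\|F_{|I|}(\theta_{\ell_1})\|$ divided by the cosine product, bound the numerator uniformly by $C(\varepsilon)e^{(\tilde L+\varepsilon)|I|}$, bound the denominator below via Corollary~\ref{cor:cos_prod_lower}, and absorb the constant into $e^{\varepsilon q_n}$. The only cosmetic difference is that the paper cites Lemma~\ref{lem:upperbddtildeP} together with \eqref{tildePinF} for the bound on $\|F_{|I|}\|$, whereas you invoke Lemma~\ref{upperbounds} directly for the continuous cocycle $F$; these amount to the same thing.
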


And further, we have
\begin{cor}\label{cor:A_upper_mn}
Let $I=[\ell_1, \ell_2]\subset \Z$ be such that $\ell_1\in [(j-1)q_n+m_n+1, jq_n+m_n-1]$ and $\ell_2\in [jq_n+m_n+1, (j+1)q_n+m_n-1]$, for some $j\in \Z$, $|j|<q_{n+1}/(6q_n)$. For $n>N(\varepsilon)$ large enough we have
\begin{align*}
\|A_{|I|}(\theta_{\ell_1})\|\leq e^{7\varepsilon q_n} \frac{1}{c_{n,j}} e^{L|I|}.
\end{align*}
\end{cor}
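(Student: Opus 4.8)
The plan is to reduce $A_{|I|}$ to the nonsingular cocycle $F$ and charge the tangent near-singularity at $m_n+jq_n$ to the product of cosines over $I$. Since $F(\theta)=\cos\pi\theta\cdot A(\theta)$ by \eqref{defnonsingular}, iterating gives $F_{|I|}(\theta_{\ell_1})=\bigl(\prod_{\ell=\ell_1}^{\ell_2}\cos\pi\theta_\ell\bigr)A_{|I|}(\theta_{\ell_1})$, hence
\[
\|A_{|I|}(\theta_{\ell_1})\|=\frac{\|F_{|I|}(\theta_{\ell_1})\|}{\prod_{\ell=\ell_1}^{\ell_2}|\cos\pi\theta_\ell|}.
\]
For the numerator I would apply Lemma \ref{upperbounds} to the continuous cocycle $(\alpha,F)$, whose Lyapunov exponent equals $\tilde L=L-\ln 2$ by \eqref{LEtildeLE}; combined with the bound $|I|\le 2q_n$ coming from the hypotheses on $\ell_1,\ell_2$, this yields $\|F_{|I|}(\theta_{\ell_1})\|\le C(\varepsilon)\,e^{2\varepsilon q_n}\,e^{(L-\ln 2)|I|}$.

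The main point is the lower bound on the denominator. By hypothesis $\ell_1\in[(j-1)q_n+m_n+1,jq_n+m_n-1]$ and $\ell_2\in[jq_n+m_n+1,(j+1)q_n+m_n-1]$, so $I$ contains the single lattice point $m_n+jq_n$ and splits as $I=I_1\cup\{m_n+jq_n\}\cup I_2$ with $I_1=[\ell_1,m_n+jq_n-1]\subset[m_n+(j-1)q_n+1,m_n+jq_n-1]$ and $I_2=[m_n+jq_n+1,\ell_2]\subset[m_n+jq_n+1,m_n+(j+1)q_n-1]$. Applying Corollary \ref{cor:cos_prod_lower} to each of $I_1,I_2$ (with window indices $j-1$ and $j$, both admissible for $n>N(\varepsilon)$ since $|j|<q_{n+1}/(6q_n)$) gives $\prod_{\ell\in I_i}|\cos\pi\theta_\ell|\ge e^{-2\varepsilon q_n}e^{-(\ln 2)|I_i|}$ for $i=1,2$; multiplying these by the middle factor $|\cos\pi\theta_{m_n+jq_n}|=c_{n,j}$ and using $|I_1|+|I_2|=|I|-1$ yields
\[
\prod_{\ell=\ell_1}^{\ell_2}|\cos\pi\theta_\ell|\ \ge\ e^{-4\varepsilon q_n}\,c_{n,j}\,e^{-(\ln 2)(|I|-1)}.
\]

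Dividing the two estimates gives $\|A_{|I|}(\theta_{\ell_1})\|\le \tfrac12 C(\varepsilon)\,e^{6\varepsilon q_n}\,c_{n,j}^{-1}\,e^{L|I|}$, and for $n>N(\varepsilon)$ large enough the prefactor $\tfrac12 C(\varepsilon)e^{6\varepsilon q_n}$ is $\le e^{7\varepsilon q_n}$, which is the asserted bound. I expect no genuine obstacle here — this is indeed one of the ``very standard'' corollaries flagged in the text, the content being just the identity $A=F/\cos$ together with two applications of Corollary \ref{cor:cos_prod_lower}. The only things to monitor are the $\varepsilon$-bookkeeping and the mild shift of window index ($j-1$ versus $j$) in the application of Corollary \ref{cor:cos_prod_lower} to the left half $I_1$, which is harmless since a fixed shift of the window index is absorbed once $n$ is large and $|j|<q_{n+1}/(6q_n)$.
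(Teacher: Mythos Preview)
Your proof is correct and is essentially the same argument as the paper's, just organized one level lower: the paper splits $A_{|I|}$ multiplicatively as $A_{jq_n+m_n-\ell_1}\cdot A(\theta_{jq_n+m_n})\cdot A_{\ell_2-jq_n-m_n}$, applies Corollary~\ref{cor:A_upper} to the two side factors, and bounds the middle by $\|A(\theta_{jq_n+m_n})\|\lesssim 1/c_{n,j}$, whereas you inline Corollary~\ref{cor:A_upper} by passing to $F/\cos$ and applying Corollary~\ref{cor:cos_prod_lower} directly to the two halves of the cosine product. The content and $\varepsilon$-bookkeeping are the same; your remark about the window index $j-1$ in the application to $I_1$ is the only point requiring a moment's care, and it is indeed harmless under the hypothesis $|j|<q_{n+1}/(6q_n)$.
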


Next lemma allows us to locate $\tilde{m}_n$.
\begin{lemma}
Let $a_{n+1}\in \{1,2,3\}$.
Suppose there exists $\tilde{m}_n\in [-q_n/2, q_n/2)$ satisfying \eqref{def:tildemn} and \eqref{eq:tildemn<13}, we have $\tilde{m}_n\in \mathcal{A}$, where
\begin{align*}
\mathcal{A}:=
\begin{cases}
\emptyset, \text{ if } a_{n+1}\in \{2, 3\} \text{ and } |\ell_n|\leq a_{n+1}-1\\
\{m_n-b q_{n-1}\}, \text{ if } a_{n+1}\in \{2, 3\} \text{ and } \ell_n=b a_{n+1}, \text{ where } b=\pm 1\\
\{m_n-b q_{n-1}\}, \text{ if } a_{n+1}=1, \ell_n=b, \text{ where } b=\pm 1\\
\{m_n+b(q_n-q_{n-1}), \text{ if } a_{n+1}=1, \ell_n=2b, a_{n+2}\geq 2, \text{ where } b=\pm 1\\
\{m_n-b q_{n-1}, m_n+b (q_n-q_{n-1})\}, \text{ if } a_{n+1}=a_{n+2}=1, \ell_n=2b, \text{ where } b=\pm 1
\end{cases}.
\end{align*}
\end{lemma}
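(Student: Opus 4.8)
The plan is to unwind the construction of $\tilde{m}_n$ from the proof of Lemma \ref{lem:mn} (Case 2, the subcase $a_{n+1}\in\{1,2,3\}$ with $\ell_n^{(1)}\neq 0$) and locate it modulo $q_n$ using the fundamental three-distance-type estimates \eqref{eq:cont3}, \eqref{eq:cont4} for the continued fraction approximants. Recall that when $a_{n+1}\leq 3$ we have $|\ell_n|\leq[(a_{n+1}+3)/2]\leq 3$ by \eqref{eq:delta_n9'}, so $\ell_n\in\{0,\pm1,\pm2,\pm3\}$, and the hypothesis $\ell_n\neq 0$ together with $a_{n+1}\leq 3$ leaves only finitely many cases to inspect. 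The point of $(m_n,\ell_n)$ being $\theta$-minimal is that $\|\theta-\tfrac12+(m_n+\ell_n q_n)\alpha\|\leq(\tfrac12+\tfrac1{2q_n})\|q_n\alpha\|$, i.e. the quantity $\theta-\tfrac12+k_n\alpha$ with $k_n=m_n+\ell_n q_n$ is extremely close to an integer, on the scale $\|q_n\alpha\|$; whereas $\tilde{m}_n$ is defined by \eqref{def:tildemn} to be the \emph{exact} minimizer of $\|\theta-\tfrac12+k\alpha\|$ over the single window $k\in[-q_n/2,q_n/2)$, and we are in the case \eqref{eq:tildemn<13} where this minimum is below $\tfrac13\|q_n\alpha\|$.

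The key step is to write, for each candidate $k\in[-q_n/2,q_n/2)$, the displacement $(k-k_n)\alpha$ modulo $1$ in terms of $q_n\alpha$ and $q_{n-1}\alpha$. Since $k-k_n = k-m_n-\ell_n q_n$ and $|k-m_n|<q_n$, we can expand $k-m_n$ as a signed combination $\pm q_{n-1}$, $\pm(q_n-q_{n-1})$, etc., and use $\|q_{n-1}\alpha\|=a_{n+1}\|q_n\alpha\|+\|q_{n+1}\alpha\|$ together with $\|q_n\alpha\|\approx 1/q_{n+1}$ and $\|q_{n+1}\alpha\|\leq 1/q_{n+2}$. For $\tilde{m}_n$ to achieve $\|\theta-\tfrac12+\tilde m_n\alpha\|<\tfrac13\|q_n\alpha\|$ while $\|\theta-\tfrac12+k_n\alpha\|$ is already $\leq(\tfrac12+o(1))\|q_n\alpha\|$, the difference $(\tilde m_n-k_n)\alpha$ must be within roughly $\|q_n\alpha\|$ of an integer, and by the minimality property \eqref{eq:cont1}, $\|q_{n-1}\alpha\|$ is the smallest nonzero value of $\|j\alpha\|$ for $j<q_n$. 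So essentially $\tilde m_n - m_n$ must be (up to adding a multiple of $q_n$) one of the few integers $j$ with $|j|<q_n$ for which $\|j\alpha\|$ is comparable to $\|q_n\alpha\|$, namely combinations of $q_{n-1}$ and $q_n-q_{n-1}$; which exact combination is forced depends on the value of $\ell_n$ (to cancel the $\ell_n q_n\alpha$ contribution, which has size $|\ell_n|\|q_n\alpha\|$) and on whether $a_{n+1}$ equals $1$, $2$, or $3$, and in the borderline $a_{n+1}=1$ case also on $a_{n+2}$ (which governs whether $\|(q_n-q_{n-1})\alpha\|=\|q_{n+1}\alpha\|+(a_{n+1}-1)\|q_n\alpha\|$ is small or not). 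Running through the possibilities $\ell_n=\pm1,\pm2,\pm3$ and $a_{n+1}\in\{1,2,3\}$ and discarding those that cannot satisfy \eqref{eq:tildemn<13} yields exactly the list $\mathcal{A}$; in the cases where $\mathcal{A}=\emptyset$ one shows $\|\theta-\tfrac12+k\alpha\|\geq\tfrac13\|q_n\alpha\|$ for every $k\in[-q_n/2,q_n/2)$, contradicting the standing assumption that $\tilde m_n$ exists.

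Concretely I would proceed as follows. First, fix $a_{n+1}\in\{2,3\}$ and suppose $|\ell_n|\leq a_{n+1}-1$; using \eqref{eq:delta_n6'} and the triangle inequality, for any $k\in[-q_n/2,q_n/2)\setminus\{m_n\}$ estimate $\|\theta-\tfrac12+k\alpha\|$ from below by $\|(k-m_n-\ell_n q_n)\alpha\| - (\tfrac12+\tfrac1{2q_n})\|q_n\alpha\|$, and observe that $\|(k-m_n-\ell_n q_n)\alpha\|\geq\|q_{n-1}\alpha\| - |\ell_n|\|q_n\alpha\|$ by \eqref{eq:cont1} when $k\neq m_n$; since $\|q_{n-1}\alpha\|\geq a_{n+1}\|q_n\alpha\|$ this is $\geq(a_{n+1}-|\ell_n|-\tfrac12-\tfrac1{2q_n})\|q_n\alpha\|\geq\tfrac13\|q_n\alpha\|$, ruling out \eqref{eq:tildemn<13}. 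Second, for the nonempty cases, suppose e.g. $a_{n+1}\in\{2,3\}$ and $\ell_n=ba_{n+1}$ with $b=\pm1$; then $\ell_n q_n\alpha = b a_{n+1}q_n\alpha$, and since $\|q_{n-1}\alpha\| - a_{n+1}\|q_n\alpha\| = \|q_{n+1}\alpha\|$ is tiny, the choice $k-m_n=-bq_{n-1}$ gives $\|(k-m_n-\ell_n q_n)\alpha\| = \|{-b}q_{n-1}\alpha - ba_{n+1}q_n\alpha\| = \|b q_{n+1}\alpha\| = \|q_{n+1}\alpha\|$, hence $\|\theta-\tfrac12+(m_n-bq_{n-1})\alpha\|\leq(\tfrac12+\tfrac1{2q_n})\|q_n\alpha\| + \|q_{n+1}\alpha\| < \tfrac23\|q_n\alpha\|$, which combined with the lower bound $\geq\tfrac13\|q_n\alpha\|$ for \emph{all other} $k$ (obtained as above, since any other $k-m_n$ differs from $-bq_{n-1}$ by something of size $\geq\|q_{n-1}\alpha\|$ modulo $q_n\alpha$-contributions) pins down $\tilde m_n = m_n - bq_{n-1}$. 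The $a_{n+1}=1$ cases are handled the same way but require splitting on $a_{n+2}$: when $\ell_n=2b$ and $a_{n+2}=1$, both $k-m_n=-bq_{n-1}$ and $k-m_n=b(q_n-q_{n-1})$ produce near-integers of size $\|q_{n+1}\alpha\|$ (using $q_n-q_{n-1}$, $a_{n+1}=1$, and $q_{n+2}=a_{n+2}q_{n+1}+q_n$), which is why $\mathcal{A}$ then has two elements, whereas $a_{n+2}\geq2$ makes $\|(q_n-q_{n-1})\alpha\|$ comparatively large and only the second candidate survives. The main obstacle is purely the bookkeeping: keeping track of all sign choices $b=\pm1$ and all residual terms of size $\|q_{n+1}\alpha\|$, $\|q_{n+2}\alpha\|$, and $1/q_n$ so that the "gap" separating the winning candidate from every other $k\in[-q_n/2,q_n/2)$ is genuinely bounded below by a fixed fraction of $\|q_n\alpha\|$ for all large $n$; there is no conceptual difficulty beyond the three-distance arithmetic of continued fractions.
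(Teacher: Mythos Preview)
Your proposal is correct and follows essentially the same route as the paper. The paper's organization is slightly cleaner: rather than enumerating candidates $k$ and bounding $\|\theta-\tfrac12+k\alpha\|$ from below one by one, it first isolates the single inequality $\|(m_n+\ell_n q_n-\tilde m_n)\alpha\|<\|q_n\alpha\|$ (your triangle-inequality step, stated once), then bounds the integer $|m_n+\ell_n q_n-\tilde m_n|$ in each case and invokes the best-approximation property to force it to equal $q_{n+1}$ (or lie in $\{q_{n+1},q_{n+2}\}$ when $a_{n+1}=a_{n+2}=1$), from which $\tilde m_n$ is read off directly; your candidate-by-candidate verification and the extra check that elements of $\mathcal A$ actually achieve small values are then unnecessary.
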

We leave the proof of this lemma in the appendix.
\begin{proof}
We have by \eqref{eq:delta_n6'} and \eqref{eq:tildemn<13} that
\begin{align}\label{eq:delta_n14}
\|(m_n+\ell_n q_n-\tilde{m}_n)\alpha\|\leq \|\theta-\frac{1}{2}+(m_n+\ell_n q_n)\alpha\|+\|\theta-\frac{1}{2}+\tilde{m}_n\alpha\|<\|q_n\alpha\|.
\end{align}
We further divide into $a_{n+1}=1$ and $a_{n+1}=2,3$.

Case 1. If $a_{n+1}=2$ or $3$, we have by \eqref{eq:delta_n9'} that $|\ell_n|\leq a_{n+1}$. 
Hence
\begin{align}\label{eq:delta_n15}
0<|m_n+\ell_n q_n-\tilde{m}_n|<a_{n+1}q_n+q_n<q_{n+1}+q_n.
\end{align}
If further $|\ell_n|\leq a_{n+1}-1$, we have
\begin{align*}
0<|m_n+\ell_n q_n-\tilde{m}_n|<a_{n+1}q_n<q_{n+1},
\end{align*}
which implies 
\begin{align*}
\|(m_n+\ell_n q_n-\tilde{m}_n)\alpha\|\geq \|q_n\alpha\|,
\end{align*}
contradicting \eqref{eq:delta_n14}.
Hence $\tilde{m}_n$ can not exist if $0<|\ell_n|\leq a_{n+1}-1$.

If $|\ell_n|=a_{n+1}$, combining \eqref{eq:delta_n14} and \eqref{eq:delta_n15}, we must have
\begin{align*}
|m_n+\ell_n q_n-\tilde{m}_n|=q_{n+1},
\end{align*}
yielding
\begin{align*}
\tilde{m}_n=
\begin{cases}
m_n-q_{n-1}, \text{ if } \ell_n=a_{n+1}\\
m_n+q_{n-1}, \text{ if } \ell_n=-a_{n+1}
\end{cases}.
\end{align*}

Case 2. If $a_{n+1}=1$, we have by \eqref{eq:delta_n9'} that $|\ell_n|\leq 2$.

If $|\ell_n|=1$, we have
\begin{align*}
0<|m_n+\ell_n q_n-\tilde{m}_n|<2q_n<q_{n+2}.
\end{align*}
Combining this with \eqref{eq:delta_n14}, we must have
\begin{align*}
|m_n+\ell_n q_n-\tilde{m}_n|=q_{n+1}=q_n+q_{n-1},
\end{align*}
which implies
\begin{align*}
\tilde{m}_n=
\begin{cases}
m_n-q_{n-1}, \text{ if } \ell_n=1\\
m_n+q_{n-1}, \text{ if } \ell_n=-1
\end{cases}.
\end{align*} 

If $|\ell_n|=2$ and $a_{n+2}\geq 2$, we have
\begin{align*}
0<|m_n+\ell_n q_n-\tilde{m}_n|<3q_n<q_{n+2},
\end{align*}
Combining this with \eqref{eq:delta_n14}, we must have
\begin{align*}
|m_n+\ell_n q_n-\tilde{m}_n|=q_{n+1}=q_n+q_{n-1},
\end{align*}
which implies
\begin{align*}
\tilde{m}_n=
\begin{cases}
m_n+q_n-q_{n-1}, \text{ if } \ell_n=2\\
m_n-q_n+q_{n-1}, \text{ if } \ell_n=-2
\end{cases}.
\end{align*}

If $|\ell_n|=2$ and $a_{n+2}=1$, we have
\begin{align*}
0<|m_n+\ell_n q_n-\tilde{m}_n|<3q_n<q_{n+3}.
\end{align*}
Combining this with \eqref{eq:delta_n14}, we must have
\begin{align*}
|m_n+\ell_n q_n-\tilde{m}_n|\in \{q_{n+1}, q_{n+2}\}.
\end{align*}
Hence 
\begin{align*}
\tilde{m}_n=
\begin{cases}
m_n-q_{n-1} \text{ or } m_n+q_n-q_{n-1}, \text{ if } \ell_n=2\\
m_n+q_{n-1} \text{ or } m_n-q_n+q_{n-1}, \text{ if } \ell_n=-2
\end{cases}.
\end{align*}
\qed
\end{proof}


\section{Proof of Theorem \ref{main0}}\label{Sec:proof}

We first normalize the generalized eigenfunction $\phi$ such that
$|\phi(0)|^2+|\phi(-1)|^2=2$ (we normalize the right-hand-side to $2$ for simplicity).

For convenience we will assume 
\begin{align}\label{assume:phi0=1}
|\phi(0)|\geq 1
\end{align}
and 
\begin{align}\label{Shnol}
|\phi(k)|\leq C_0 |k|.
\end{align}
If $|\phi(0)|<1$ and $|\phi(-1)|>1$, we simply need to replace the $|\phi(0)|$ on the left-hand-sides of \eqref{eq:Green_at_0}, \eqref{phi0_2}, \eqref{phi0_3} and \eqref{eq:P2sq_I1_2} with $|\phi(-1)|$ and slightly adjust the right-hand-sides accordingly.

Theorem \ref{main0} is a consequence of the following lemma.
\begin{lemma}\label{lem:main}
Let $\phi$ be an eigenfunction satisfying $|\phi(0)|\geq 1$ and \eqref{Shnol}. Then for $n>N(\alpha, E, \lambda, \varepsilon, C_0)$ large enough and $\frac{1}{12}q_n\leq |k|<\frac{1}{12}q_{n+1}$, we have
\begin{align*}
|\phi(k)|\leq e^{-(L-\delta_n-650\varepsilon)|k|}.
\end{align*}
\end{lemma}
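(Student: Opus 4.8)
The plan is to prove Lemma~\ref{lem:main} by induction on $n$, establishing exponential decay on the annulus $[\frac{1}{12}q_n, \frac{1}{12}q_{n+1}]$ from decay information at the previous scales, bootstrapping from the normalization $|\phi(0)|\ge 1$. The skeleton follows \cite{jl1}, but the heart of the argument—the extra decay from anti-resonances—is new and will occupy most of the work. The first reduction is to pass from arbitrary points to resonant blocks: for any non-resonant $y$ (in the sense of Section~\ref{Sec:uni}), I would use the Green's expansion \eqref{Green_tildeP} on the interval $\tilde I_y$ (or its translate), together with the uniformity Lemma~\ref{lem:nonres_uni} and Lemma~\ref{lem:nonres_I2_large} to get a good lower bound on the denominator $|\tilde P_{2sq_{n-n_0}-1}|$, and Lemma~\ref{lem:upperbddtildeP} plus Corollary~\ref{cor:prod_cos} to control the numerators and cosine products; this shows $|\phi(y)|$ is dominated (up to a subexponential factor $e^{O(\varepsilon)|y|}$) by the values of $|\phi|$ at the two nearest resonant blocks $R_{\ell}$ at scale $q_n$. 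So it suffices to control the block maxima $r_\ell = \sup_{y\in R_\ell}|\phi(y)|$.

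For the block-to-block estimate, fix $\ell$ with $0<|\ell|\lesssim q_{n+1}/q_n$ and $y\in R_\ell$, and expand $\phi(y)$ by \eqref{Green_tildeP} over an interval $D=[x_1,x_2]$ of length $2q_n-1$ with $x_1\in I_\ell$, chosen so that $y-x_1$ lies in the middle half of $D$. Corollary~\ref{cor:1_res_uni} (via the uniformity Lemma~\ref{lem:1_res_uni}) gives some $x_1\in I_0\cup I_\ell$ with $|\tilde P_{2q_n-1}(\theta_{x_1})|\gtrsim \frac{|\ell|}{q_{n+1}}e^{(\tilde L-2\varepsilon)(2q_n-1)}$; one checks $x_1\notin I_0$ by a contradiction with the already-known decay of $\phi$ near $0$ from the inductive hypothesis (or the base case normalization), so $x_1\in I_\ell$. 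Plugging this denominator bound into \eqref{Green_tildeP}, bounding $|\tilde P_k|\le e^{(\tilde L+\varepsilon)|k|}$ in the numerators, and using $\tilde L = L-\ln 2$ to absorb the $\ln 2$ against the $\prod|\cos|$ factors via Corollary~\ref{cor:prod_cos}, one arrives at the basic inequality $r_\ell \lesssim e^{-(L-\beta_n)q_n}\max(r_{\ell-1},r_{\ell+1})$ (together with a harmless term from $\phi$ near the far end, again controlled inductively). This reproduces the $\{L>\beta\}$ result and is not enough when $\delta_n < L \le \beta_n$.

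The crucial improvement—breaking the $\beta$-barrier down to $\delta_n$—comes from the phase anti-resonances, i.e.\ the near-zeros $c_{n,j}=|\cos\pi\theta_{m_n+jq_n}|$ located by Lemma~\ref{lem:mn} and Corollary~\ref{cor:mn}. The cosine product $\prod_{k=x_1}^{y}|\cos\pi\theta_k|$ (or the other one) stretches over a full period $q_n$ and hence, when $m_n$ falls into the relevant range, picks up a factor $\lesssim c_{n,j}\lesssim e^{(\delta_n-\beta_n)q_n}$ by \eqref{eq:cnell_final}, upgrading the estimate to $r_\ell \lesssim e^{-(L-\delta_n)q_n}\max(r_{\ell-1},r_{\ell+1})$. \textbf{The main obstacle}, and the genuinely new contribution, is the case where the relevant cosine product does \emph{not} contain such a minimal value (Case~1 of Section~\ref{Sec:C2}, the resonant-$m_n$ situation): then the naive product of cosines only gives $e^{-\beta_n q_n}$ and the extra decay must be extracted elsewhere. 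Here I would not simply bound $|\tilde P_{2q_n-1}(\theta_{x_1})|$ from below by its average, but analyze it more finely—exploiting that $\tilde P_{2q_n-1}$ is a degree-$2q_n-1$ polynomial in $\tan\pi\theta$ (via \eqref{Phpolyg}, \eqref{tildePlagrange}) and that the near-zero of a nearby $\cos\pi\theta_j$ forces structure on the interpolation—to show the \emph{denominator} is in fact larger than $\frac{|\ell|}{q_{n+1}}e^{2q_nL}$ by precisely the missing $e^{(\beta_n-\delta_n)q_n}$ factor, equivalently that the anti-resonance creates additional decay of the Green's function through the numerator/denominator interplay rather than through the raw cosine product; this is the content promised for Section~\ref{Sec:key}.

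Once the sharpened recursion $r_\ell \lesssim e^{-(L-\delta_n-O(\varepsilon))q_n}\max(r_{\ell-1},r_{\ell+1})$ holds for all relevant $\ell$ (with the $\ell=0$ anchor supplied by the normalization and the non-resonant reduction near $0$), a standard iteration—telescoping the recursion from the anchor outward along the chain of resonant blocks between $q_n/12$ and $q_{n+1}/12$, and reinserting the non-resonant estimate $|\phi(y)|\lesssim e^{O(\varepsilon)|y|}\max(r_{\ell-1},r_{\ell+1})$—yields $|\phi(k)|\le e^{-(L-\delta_n-650\varepsilon)|k|}$ on the whole annulus, completing the induction and hence Lemma~\ref{lem:main}, from which Theorem~\ref{main0} follows by taking $\limsup$ and using $\delta = \limsup\max(0,\delta_n)$.
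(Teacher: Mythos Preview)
Your overall framework is right and closely tracks the paper: reduce arbitrary points to resonant blocks via the non-resonant estimates (Lemmas~\ref{lem:nonres_uni}, \ref{lem:nonres_I2_large}, and the block-pointwise Lemmas~\ref{lem:C2_n-r}/\ref{lem:22_non-to-res}), then establish a block recursion $r_\ell \lesssim e^{-(L-\delta_n-O(\varepsilon))q_n}\max(r_{\ell-1},r_{\ell+1})$ and iterate. You also correctly isolate the hard case: when the cosine product in \eqref{Green_tildeP} fails to pick up a minimal value $c_{n,j}$, the naive estimate only gives $e^{-(L-\beta_n)q_n}$, and an extra $e^{(\delta_n-\beta_n)q_n}$ must be found somewhere else.

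The gap is in \emph{how} you propose to find that extra decay. You suggest refining the Lagrange interpolation to force the \emph{denominator} $|\tilde P_{2q_n-1}(\theta_{x_1})|$ to be larger by $e^{(\beta_n-\delta_n)q_n}$. There is no mechanism for this: the uniformity bound in Lemma~\ref{lem:1_res_uni} is essentially sharp (the $\ln(q_{n+1}/|\ell|)$ loss comes from a single genuinely small $\|(k-j_0)\alpha\|$), and the location of the cosine near-zero has no bearing on the distribution of $\{\theta_j\}_{j\in I_0\cup I_\ell}$ that drives that bound. What the paper actually does (Section~\ref{Sec:key}) is improve the \emph{numerator}, and the mechanism is dynamical, not algebraic: at the anti-resonance $m_n$ the potential $\lambda\tan\pi\theta_{m_n}$ is of order $1/c_{n,0}$, so the eigenvalue equation together with Shnol's bound \eqref{Shnol} forces $|\phi(m_n)|\lesssim c_{n,0}q_n$ and likewise $|\phi(m_n-q_n)|\lesssim c_{n,-1}q_n$. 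Feeding these into the Green's formula \eqref{Green_tildeP} on $[m_n-q_n+1,m_n-1]$ with $|\phi(0)|\ge 1$ on the left yields the improved upper bound $|\tilde P_{q_n-1}(\theta_{m_n-q_n+1})|\lesssim e^{(\tilde L+\delta_n-\beta_n)q_n}$ (Lemma~\ref{lem:Pqn_mn}); a telescoping argument on $F_{q_n-1}$ then propagates this to all shifts $m_n+\ell q_n$ (Lemma~\ref{lem:Pqn_mn+ell}), and a block-matrix computation extends it to any $\tilde P_k$ whose window straddles two consecutive anti-resonance points (Lemma~\ref{lem:cor_Pqn}, Corollary~\ref{cor:Pk_key}). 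It is precisely this improved numerator bound, encoded as $g_{k,\ell}$, that supplies the missing $e^{(\delta_n-\beta_n)q_n}$ in Case~1 of Section~\ref{Sec:C2} (and Case~1 of Section~\ref{Sec:C1}). Without this idea---using the hugeness of the potential at $m_n$ to pin down $\phi(m_n)$, and then turning that into a numerator estimate via the normalization at $0$---your proposal does not close in the regime $\delta_n<L\le\beta_n$.
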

The remaining of the paper will be devoted to the proof of Lemma \ref{lem:main}, dividing into the following three cases.

{\it Case 1. $0\leq \delta_n\leq \beta_n\leq 300\varepsilon$.}
This case is essentially the Diophantine case that is handled in \cite{JYMaryland}. 
We include a brief proof following \cite{JYMaryland} in the appendix.

{\it Case 2. $300\varepsilon\leq \beta_n\leq \delta_n+200\varepsilon$.}
This is an intermediate case. In this case we have $L>\beta_n+200\varepsilon$. One can combine the strategy in \cite{jl1} (which proves Anderson localization for the almost Mathieu operator in the case of $L>\beta$) with that in \cite{JYMaryland} to handle this case. Compared to the Case 3 below, Case 2 has a lot of simplifications, and in particular does not require our key estimates in Section \ref{Sec:key} at all. 

{\it Case 3. $\beta_n>\max(\delta_n+200\varepsilon, 300\varepsilon)$.}
This is our main achievement of the paper.

From this point on, we shall omit the dependence of the parameters on $\alpha, E, \lambda$ and only emphasize on the dependence on $\varepsilon$.
We shall also assume $\beta_n>300\varepsilon$, since the $\beta_n\leq 300\varepsilon$ case is only proved in the appendix.


\section{Key technical lemmas}\label{Sec:key}
The following lemmas on $\tilde{P}_{q_n-1}$ are the key to prove Anderson localization in the sharp regime $L>\delta$.
These lemmas reveal that large potential values $|\tan(\pi\theta_{m_n+\ell q_n})|$ combined with Shnol's theorem \eqref{Shnol} yield improved upper bounds, roughly speaking with an additional $e^{(\delta_n-\beta_n)q_n}$ decay, for $|\tilde{P}_{q_n-1}(\theta_{m_n+\ell q_n+1})|$.

\begin{lemma}\label{lem:Pqn_mn}
For $n>N(\varepsilon)$ large enough we have
\begin{align*}
|\tilde{P}_{q_n-1}(\theta+(m_n-q_n+1)\alpha)| \leq C_0 e^{q_n (\tilde{L}+\delta_n-\beta_n+3\varepsilon)}.
\end{align*}
\end{lemma}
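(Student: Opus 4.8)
The plan is to use the Green's function expansion \eqref{Green_tildeP} on the interval $D=[m_n-q_n+1, m_n+q_n-1]$ of length $2q_n-1$, with $y=m_n$ the point where the cosine is minimal (here I write $\theta_k=\theta+k\alpha$, so I am looking at $\theta_{m_n}$ near $\tfrac12+\Z$, i.e.\ $|\cos(\pi\theta_{m_n})|=c_{n,0}$ small by the $\theta$-minimality from Lemma~\ref{lem:mn} and Corollary~\ref{cor:mn}). The point is that $\tilde P_{q_n-1}(\theta_{m_n-q_n+1})$ is, up to the explicit ratio coming from \eqref{PkG}, a Green's function matrix entry $G_{D}(x_1,m_n)$ times $\tilde P_{2q_n-1}(\theta_{m_n-q_n+1})$ divided by a product of cosines over the left half $[m_n-q_n+1,m_n]$. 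Equivalently, I will read \eqref{Green_tildeP} backwards: apply it to $\phi$ at $y=m_n$ with $D$ as above, and extract from the resulting inequality a bound on the quantity $|\tilde P_{q_n-1}(\theta_{m_n-q_n+1})|$ after dividing through by the competing terms.

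More precisely, the first step is to choose the interval and establish the denominator lower bound: by Corollary~\ref{cor:1_res_uni} (applied with $\ell=1$, say, so that the relevant block of length $2q_n-1$ is placed to include $m_n$ in its middle third) there exists $x_1$ in the appropriate $I_0\cup I_\ell$ with
\[
|\tilde P_{2q_n-1}(\theta_{x_1})|\geq \tfrac{1}{q_{n+1}}e^{(\tilde L-2\varepsilon)(2q_n-1)},
\]
and a routine check via Corollary~\ref{cor:A_upper_mn} shows this forces $x_1\notin I_0$, hence $x_1\in I_\ell$ so that the block $[x_1,x_1+2q_n-2]$ straddles exactly one multiple-of-$q_n$ resonance, namely the one at $m_n+\ell q_n$ for the relevant $\ell$, and $m_n$ (translated appropriately) lands in the middle third. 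The second step is the Green's formula: writing \eqref{Green_tildeP} for $\phi$ at $y=m_n$ (on the shifted block), the left side is $|\phi(m_n)|\leq C_0|m_n|\leq C_0 q_n$ by Shnol \eqref{Shnol}, while on the right the denominator is $|\tilde P_{2q_n-1}(\theta_{x_1})|\gtrsim q_{n+1}^{-1}e^{2q_n\tilde L}$, the numerator $\tilde P$-factors are bounded by $e^{(\tilde L+\varepsilon)q_n}$ via Lemma~\ref{lem:upperbddtildeP}, and the cosine products over the two halves are controlled by Corollary~\ref{cor:prod_cos} and Corollary~\ref{cor:cos_prod_lower}, which pin the product over a half-block to $\asymp c_{n,\cdot}e^{-(\ln2)(q_n-1)}$ — i.e.\ the product carries exactly one small cosine $c_{n,\cdot}\asymp e^{(\delta_n-\beta_n)q_n}$ (up to subexponential factors, using \eqref{eq:cnell_final}).

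The third step is to solve the resulting inequality for the target quantity. Since $|\cos(\pi\theta_{m_n})|=c_{n,0}$ is itself the small cosine appearing in the product over $[m_n-q_n+1,m_n]$, and $|\tilde P_{q_n-1}(\theta_{m_n-q_n+1})|$ is the numerator $\tilde P$ attached to the $\phi(x_1-1)$ term after rewriting $G_D(x_1,m_n)$ via \eqref{PkG}, the inequality $|\phi(m_n)|\leq(\text{left term})+(\text{right term})$ becomes, after discarding the irrelevant right term and using $|\phi(x_1-1)|\geq$ (some normalization lower bound available from \eqref{assume:phi0=1} and the resonant-regime control) — actually the cleaner route is: $G_D(x_1,m_n)=\tilde P_{q_n-1}(\theta_{m_n+1})/\tilde P_{2q_n-1}(\theta_{x_1})\cdot\prod_{k=x_1}^{m_n}|\cos\pi\theta_k|$, and by the reflection/transpose symmetry of $\tilde P$ together with the shift $\theta\mapsto\theta_{m_n-q_n+1}$ one identifies $\tilde P_{q_n-1}(\theta_{m_n-q_n+1})$ with a matrix entry, bounded directly:
\[
|\tilde P_{q_n-1}(\theta_{m_n-q_n+1})|
=|\tilde P_{2q_n-1}(\theta_{m_n-q_n+1})|\cdot|G_{D}(m_n-q_n+1\to\cdots)|\Big/\prod|\cos|,
\]
and plugging the upper bound $|\tilde P_{2q_n-1}|\leq e^{(\tilde L+\varepsilon)(2q_n-1)}$ from Lemma~\ref{lem:upperbddtildeP} together with the cosine-product lower bound $\prod_{k}|\cos\pi\theta_k|\geq c\, c_{n,0}\,e^{-(\ln2+\varepsilon)(q_n-1)}$ over the half not containing a second small cosine, and $c_{n,0}^{-1}\lesssim e^{(\beta_n-\delta_n)q_n}\cdot\mathrm{poly}(q_n)$ from \eqref{eq:cnell_final}, one arrives at
\[
|\tilde P_{q_n-1}(\theta_{m_n-q_n+1})|\leq C_0\, e^{(\tilde L+\varepsilon)(2q_n-1)}\cdot e^{-(\tilde L-2\varepsilon)(2q_n-1)}/\bigl(c\,c_{n,0}\,e^{-(\ln2+\varepsilon)(q_n-1)}\bigr),
\]
whose exponent is $\tilde L q_n + (\delta_n-\beta_n)q_n + O(\varepsilon q_n)$ after cancelling $e^{\pm\tilde L(2q_n)}$ against the $e^{-(\ln2)q_n}=e^{-(L-\tilde L)q_n}$... so collecting powers gives exactly $e^{q_n(\tilde L+\delta_n-\beta_n+3\varepsilon)}$, up to absorbing the $C_0 q_n$ and $q_{n+1}=e^{\beta_n q_n}$ bookkeeping into the $3\varepsilon$.

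The main obstacle I expect is the bookkeeping of which half-block carries the small cosine and ensuring that exactly one factor $c_{n,\cdot}$ (not two, not zero) appears — this is precisely the subtlety flagged in Section~\ref{details} and is the reason Definition~\ref{def:minimal} is stated with the rather delicate conditions (4)(i)--(ii) controlling the nearby minima, and why the cases $a_{n+1}\geq4$ versus $a_{n+1}\leq3$ must be separated. In the case $a_{n+1}\geq4$ the consecutive small cosines at $m_n+\ell q_n$ are separated by the full resonance gap, so a block of length $2q_n-1$ sees exactly one; the estimates \eqref{eq:delta_n_min_cos}, \eqref{eq:cnell_low_up}, and \eqref{eq:cnell_final} are tailored to make the cosine-product bounds tight here. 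The secondary obstacle is making the $C_0$-dependence explicit (it enters only through Shnol \eqref{Shnol} via the polynomial bound on $|\phi(m_n)|$, which is harmless as it is subexponential), and making sure the appeal to Corollary~\ref{cor:1_res_uni} is legitimate, i.e.\ that $|\ell|\leq 2q_{n+1}/(3q_n)$, which holds since $m_n\in[-q_n/2,q_n/2)$ keeps the relevant block within $|k|<q_{n+1}/(6q_n)$ of the origin by our standing hypothesis $\tfrac1{12}q_n\leq|k|<\tfrac1{12}q_{n+1}$.
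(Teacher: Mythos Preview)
Your approach has a fundamental directional error. You propose expanding $\phi$ at $y=m_n$ on the block $D=[m_n-q_n+1,m_n+q_n-1]$ via \eqref{Green_tildeP}. But that formula gives an \emph{upper} bound on $|\phi(m_n)|$ in terms of the numerators $|\tilde P_{q_n-1}(\cdot)|$; since $|\phi(m_n)|$ is small, this inequality goes the wrong way to extract an upper bound on $|\tilde P_{q_n-1}(\theta_{m_n-q_n+1})|$. Your ``cleaner route'' then writes $|\tilde P_{q_n-1}|=|\tilde P_{2q_n-1}|\cdot|G_D|/\prod|\cos|$ and plugs in an upper bound for $|\tilde P_{2q_n-1}|$, but you have no upper bound on $|G_D|$, and the displayed inequality you end with simultaneously uses the upper bound $e^{(\tilde L+\varepsilon)(2q_n-1)}$ and divides by the lower bound $e^{(\tilde L-2\varepsilon)(2q_n-1)}$, which is incoherent. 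Indeed your final exponent computes to roughly $(\ln 2+\beta_n-\delta_n)q_n$, not $(\tilde L+\delta_n-\beta_n)q_n$: wrong sign on $\beta_n-\delta_n$ and no $\tilde L$.

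The paper's argument is quite different and much shorter. Apply \eqref{Green_tildeP} on the interval $[m_n-q_n+1,\,m_n-1]$ of length $q_n-1$ at the point $y=0$, so that $|\tilde P_{q_n-1}(\theta_{m_n-q_n+1})|$ is the \emph{denominator}. Since $|\phi(0)|\geq 1$ by \eqref{assume:phi0=1}, rearranging gives
\[
|\tilde P_{q_n-1}(\theta_{m_n-q_n+1})|\leq |\tilde P_{m_n-1}(\theta_1)|\prod_{j=m_n-q_n+1}^{0}|\cos\pi\theta_j|\cdot|\phi(m_n-q_n)|+|\tilde P_{q_n-m_n-1}(\theta_{m_n-q_n+1})|\prod_{j=0}^{m_n-1}|\cos\pi\theta_j|\cdot|\phi(m_n)|.
\]
The key input you are missing is not a cosine-product bookkeeping but the \emph{eigenvalue equation} at $m_n$ and $m_n-q_n$: since $|\tan(\pi\theta_{m_n})|\gtrsim c_{n,0}^{-1}$ is huge, the equation $\phi(m_n+1)+\phi(m_n-1)=(E-\lambda\tan\pi\theta_{m_n})\phi(m_n)$ combined with Shnol \eqref{Shnol} forces $|\phi(m_n)|\leq \tfrac{4}{\lambda}C_0 c_{n,0}q_n$, and similarly $|\phi(m_n-q_n)|\leq \tfrac{4}{\lambda}C_0 c_{n,-1}q_n$. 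Bounding the $\tilde P$'s in the numerator by Lemma~\ref{lem:upperbddtildeP}, the cosine products by Corollary~\ref{cor:prod_cos}, and $\max(c_{n,0},c_{n,-1})$ by \eqref{eq:cnell_final} gives the claim immediately. No appeal to Corollary~\ref{cor:1_res_uni} or to any $2q_n-1$ block is needed; the smallness comes from $|\phi(m_n^{\pm})|$ being small, not from a small cosine sitting inside a product.
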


\begin{proof}
Let $I:=[m_n-q_n+1,m_n-1]$.
By Green's formula \eqref{Green_tildeP} and assumption \eqref{assume:phi0=1}, we have
\begin{align}\label{eq:Green_at_0}
1\leq |\phi(0)|
\leq &\frac{|\tilde{P}_{m_n-1}(\theta_1)|}{|\tilde{P}_{q_n-1}(\theta_{m_n-q_n+1})|} \prod_{j=m_n-q_n+1}^0|\cos(\pi \theta_j)| 
\cdot |\phi(m_n-q_n)|\\
&+\frac{|\tilde{P}_{-m_n+q_n-1}(\theta_{m_n-q_n+1})|}{|\tilde{P}_{q_n-1}(\theta_{m_n-q_n+1})|}\prod_{j=0}^{m_n-1}|\cos(\pi\theta_j)| \cdot |\phi(m_n)| \notag
\end{align}
Note that by Corollary \ref{cor:prod_cos} and Lemma \ref{lem:upperbddtildeP}, equation \eqref{eq:Green_at_0} yields
\begin{align}\label{eq:Green_at_0_2}
|\tilde{P}_{q_n-1}(\theta_{m_n-q_n+1})| \leq C(\varepsilon) e^{m_n (\tilde{L}+\varepsilon)} e^{(q_n-m_n)(-\ln 2+\varepsilon)} |\phi(m_n-q_n)|
+C(\varepsilon) e^{(q_n-m_n)(\tilde{L}+\varepsilon)} e^{m_n (-\ln 2+\varepsilon)} |\phi(m_n)|.
\end{align}

Next, let us consider the eigenvalue equation:
\begin{align}\label{eq:ev_at_mn}
\phi(m_n+1)+\phi(m_n-1)+\lambda \tan(\pi \theta_{m_n})\phi(m_n)=E\phi(m_n)
\end{align}
By \eqref{Shnol}, we have
\begin{align*}
|\phi(k)|\leq C_0 |k|
\end{align*} 
for any $|k| \geq 1$.
Hence \eqref{eq:ev_at_mn} and \eqref{def:cnell} imply that for $n$ large enough
\begin{align*}
\frac{\lambda}{2 c_{n,0}} |\phi(m_n)| \leq |\lambda \tan(\pi \theta_{m_n})-E|\cdot |\phi(m_n)| &\leq 2 \max(|\phi(m_n+1)|,|\phi(m_n-1)|)\leq 2C_0 q_n.
\end{align*}
This implies
\begin{align}\label{est:mn}
|\phi(m_n)| \leq \frac{4}{\lambda} C_0 c_{n,0} q_n.
\end{align}
Similarly one has
\begin{align}\label{est:mn-qn}
|\phi(m_n-q_n)|\leq \frac{4}{\lambda} C_0 c_{n,-1} q_n.
\end{align}
Plugging \eqref{est:mn} and \eqref{est:mn-qn} into \eqref{eq:Green_at_0_2}, using \eqref{LEtildeLE} we have
\begin{align*}
|\tilde{P}_{q_n-1}(\theta_{m_n-q_n+1})| 
\leq &C_0 e^{q_n (-\ln 2+2\varepsilon)} \max(e^{m_n L}c_{n,0}, e^{(q_n-m_n)L}c_{n,-1})\\
\leq &C_0 e^{q_n (\tilde{L}+2\varepsilon)} \max(c_{n,0}, c_{n,-1}).
\end{align*}
By \eqref{eq:cnell_final}, we have
\begin{align}
|\tilde{P}_{q_n-1}(\theta_{m_n-q_n+1})| 
\leq C_0 e^{q_n (\tilde{L}-\beta_n+3\varepsilon)} \max(e^{\delta_n q_n}, 1).
\end{align}
\qed
\end{proof}

Furthermore, we have for any $|\ell |\leq 2q_{n+1}/(3q_n)$
\begin{lemma}\label{lem:Pqn_mn+ell}
We have
\begin{align*}
|\tilde{P}_{q_n-1}(\theta_{m_n+\ell q_n+1})| \leq e^{q_n (\tilde{L}-\beta_n+4\varepsilon)}\max(e^{\delta_n q_n}, |\ell|, 1).
\end{align*} 
\end{lemma}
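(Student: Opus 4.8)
The plan is to mimic the proof of Lemma \ref{lem:Pqn_mn} but centered at the translate $\theta+m_n\alpha+\ell q_n\alpha$ rather than at $\theta+m_n\alpha$. First I would apply the Green's formula \eqref{Green_tildeP} on the interval $D=[m_n+\ell q_n-q_n+1,\ m_n+\ell q_n-1]$ (of length $2q_n-1$, or more precisely I would set it up so that $y$ is an interior point as in Lemma \ref{lem:Pqn_mn}) to some point where we have a good lower bound on $|\phi|$. The natural candidate is again the point $0$: by assumption \eqref{assume:phi0=1} we have $|\phi(0)|\geq 1$, so expanding $\phi(0)$ via the window straddling $m_n+\ell q_n$ produces $|\tilde P_{q_n-1}(\theta_{m_n+\ell q_n-q_n+1})|$ in a denominator, together with products of cosines running from $0$ to the endpoints of $D$. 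The key difference from Lemma \ref{lem:Pqn_mn} is that now the window is at distance roughly $|\ell| q_n$ from $0$, so the numerators $|\tilde P_{\cdot}|$ are controlled by Lemma \ref{lem:upperbddtildeP} by $e^{(\tilde L+\varepsilon)(|\ell|q_n\pm O(q_n))}$, and the cosine products from $0$ to the window span about $|\ell|q_n$ terms; the long product of cosines along the way picks up, by Corollary \ref{cor:prod_cos} applied block by block (each block of length $q_n$ lying between consecutive near-zeros $m_n+jq_n$), essentially $e^{(-\ln 2+\varepsilon)|\ell|q_n}$ but with one extra factor $c_{n,j}$ for each intervening block.

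Next I would feed in the pointwise bounds on $|\phi|$ near $m_n+\ell q_n$ obtained exactly as in Lemma \ref{lem:Pqn_mn}: from the eigenvalue equation at $m_n+\ell q_n$ and at $m_n+(\ell-1) q_n$ (or $m_n+(\ell+1)q_n$, depending on the side), using the large value $|\tan(\pi\theta_{m_n+\ell q_n})|\gtrsim 1/c_{n,\ell}$ and Shnol's bound \eqref{Shnol}, we get $|\phi(m_n+\ell q_n)|\lesssim C_0 c_{n,\ell} q_n$ and likewise at the neighbouring near-zero. Plugging these in, the $c_{n,\ell}$-type factors from the $|\phi|$ bounds and the $c_{n,j}$-factors from the long cosine products should combine with the $e^{L}$'s, via $\tilde L = L-\ln 2$, so that the total exponential rate along the stretch from $0$ to the window collapses to $e^{\tilde L \cdot (\text{length})}$ up to the single leftover factor $\max(c_{n,\ell},c_{n,\ell-1})$ or similar; then \eqref{eq:cnell_final} converts that leftover factor into $\max(|\ell|,e^{\delta_n q_n},1)\,e^{-\beta_n q_n}$, producing the claimed bound $|\tilde P_{q_n-1}(\theta_{m_n+\ell q_n+1})|\leq e^{q_n(\tilde L-\beta_n+4\varepsilon)}\max(e^{\delta_n q_n},|\ell|,1)$. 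The slightly larger error $4\varepsilon$ (versus $3\varepsilon$ in Lemma \ref{lem:Pqn_mn}) reflects the accumulation of the $\varepsilon$'s over the $O(|\ell|)$ many blocks, which is harmless since $|\ell|\leq 2q_{n+1}/(3q_n)$ means $|\ell|\varepsilon q_n$ is still $\lesssim \varepsilon q_{n+1}$ — though I should be careful here, as that is not obviously $o(q_n)$; so in fact I would instead arrange the telescoping so that the exponential rate is \emph{exactly} balanced block by block, with each block of length $q_n$ contributing a numerator factor $e^{(\tilde L+\varepsilon)q_n}$ and a cosine-product factor $e^{(-\ln 2+\varepsilon)q_n}c_{n,j}$, and the $c_{n,j}$ cancel against the $e^{L}$'s coming from nothing — rather, the point is that the $\varepsilon q_n$ per block must cancel, which forces using the sharp Corollary \ref{cor:prod_cos} with the infimum rather than accumulating errors multiplicatively.

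The main obstacle, therefore, is exactly this bookkeeping: ensuring that the long chain of cosine products spanning $|\ell|$ blocks does not accumulate a factor $e^{c|\ell|\varepsilon q_n}$ that would swamp the gain. The resolution is that the cosine products and the numerator transfer-matrix norms must be paired \emph{within each block of length $q_n$}, where Corollary \ref{cor:A_upper} (respectively \ref{cor:A_upper_mn} for the blocks containing a near-zero) gives $\|A_{q_n}(\theta_{\cdot})\|\leq e^{3\varepsilon q_n}e^{L q_n}$ with only an \emph{additive} $3\varepsilon q_n$ per block, but then the Green's function ratio has the block-length cosine product in the numerator \emph{and} the full length $2q_n-1$ denominator $|\tilde P_{q_n-1}(\theta_{m_n+\ell q_n -q_n+1})|$ — so really the whole expansion should be organized as a single application of \eqref{Green_tildeP} on the window $D$ near $m_n+\ell q_n$, reading off $\phi(0)$ through the transfer matrix over $[\text{endpoint of } D,\ 0]$, whose norm is bounded directly by Corollary \ref{cor:A_upper} / \ref{cor:A_upper_mn} over the $O(|\ell|)$ blocks as a single product $e^{O(\varepsilon) |\ell| q_n}\, e^{L |\ell| q_n}\prod_j c_{n,j}^{-1}\cdot(\text{one }c_{n,j}\text{ per block})$ — and here one uses that $|\ell|\leq 2q_{n+1}/(3q_n)$ together with $\beta_n > 300\varepsilon$ only enters to keep $|\ell|$ in the allowed range of the Corollaries, while the true smallness is that we only need the bound to beat $e^{\tilde L q_n - \beta_n q_n}$, i.e. we have room of size $\beta_n q_n$, and $\beta_n q_n = \ln q_{n+1}$ dominates any $C\varepsilon q_n$. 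So the cleanest route, which I would adopt, is: expand $\phi(0)$ once via \eqref{Green_tildeP} with window $D\ni m_n+\ell q_n$, bound the numerator $|\tilde P|$'s by Lemma \ref{lem:upperbddtildeP}, bound the long cosine products from $0$ to $D$ by applying Corollary \ref{cor:cos_prod_lower}-type reasoning block by block (absorbing $|\ell|\cdot\varepsilon q_n \leq \varepsilon q_{n+1}$ into a redefinition making it $\leq \varepsilon q_n \cdot(\text{const})$ via the room afforded by $\beta_n$), insert \eqref{est:mn}-style bounds on $|\phi|$ near $m_n+\ell q_n$ using $c_{n,\ell},c_{n,\ell\pm1}$, and finally invoke \eqref{eq:cnell_final} to turn the surviving $c_{n,\cdot}$ into $\max(|\ell|,e^{\delta_n q_n},1)e^{-\beta_n q_n}$.
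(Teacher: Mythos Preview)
Your approach has a fundamental gap. To extract $|\tilde{P}_{q_n-1}(\theta_{m_n+(\ell-1) q_n+1})|$ as a \emph{denominator} via Green's formula \eqref{Green_tildeP}, you must expand $\phi(y)$ at some $y$ lying \emph{inside} the interval $[m_n+(\ell-1)q_n+1,\,m_n+\ell q_n-1]$, and you need an a priori \emph{lower} bound on $|\phi(y)|$. For $|\ell|\ge 2$ that interval does not contain $0$, so \eqref{assume:phi0=1} is unavailable; and there is no other point in that interval where a lower bound on $|\phi|$ is known (indeed, by the localization we are trying to prove, $|\phi|$ is expected to be exponentially \emph{small} there). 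Your variant of ``expand $\phi(0)$ with a window near $m_n+\ell q_n$'' is not a valid instance of \eqref{Green_tildeP}. Even the fallback of bridging from $0$ to the far window by transfer matrices fails quantitatively: chaining $|\ell|$ blocks via Corollaries \ref{cor:A_upper}/\ref{cor:A_upper_mn} accumulates an error $e^{C\varepsilon |\ell| q_n}$, and since $|\ell|$ ranges up to $2q_{n+1}/(3q_n)$ this can be as large as $e^{C\varepsilon q_{n+1}}$, which dwarfs the target factor $e^{-\beta_n q_n}=1/q_{n+1}$. Your own worry about this was correct; the ``room of size $\beta_n q_n=\ln q_{n+1}$'' does not absorb an error of size $\varepsilon q_{n+1}$.

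The paper's proof avoids both problems by a completely different, much shorter mechanism: it never re-applies Green's formula at the shifted phase at all. Instead it telescopes the \emph{continuous} cocycle $F$ to control the difference
\[
|\tilde{P}_{q_n-1}(\theta_{m_n+\ell q_n+1})-\tilde{P}_{q_n-1}(\theta_{m_n-q_n+1})|
\le \sum_{j=0}^{q_n-2}\|F_{q_n-j-2}\|\cdot\|F(\theta_{m_n-q_n+1+j})-F(\theta_{m_n+\ell q_n+1+j})\|\cdot\|F_j\|,
\]
and since the two phases differ by $(\ell+1)q_n\alpha$ with $\|(\ell+1)q_n\alpha\|\le |\ell+1|/q_{n+1}$, each middle factor is $\lesssim |\ell+1|e^{-\beta_n q_n}$; the outer factors are bounded by Lemma \ref{lem:upperbddtildeP}. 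This yields $|\ell+1|\,e^{q_n(\tilde{L}-\beta_n+2\varepsilon)}$ for the difference, and then Lemma \ref{lem:Pqn_mn} supplies the bound at the base point $\theta_{m_n-q_n+1}$. The key missing idea in your proposal is precisely this telescoping on $F$: it transfers the single hard estimate of Lemma \ref{lem:Pqn_mn} (which uses $|\phi(0)|\ge 1$) to all shifts $\ell$ at the cost of the Lipschitz factor $|\ell+1|/q_{n+1}$, with no accumulation of $\varepsilon$'s over $|\ell|$ blocks.
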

\begin{proof}
By telescoping argument, we have
\begin{align*}
&|\tilde{P}_{q_n-1}(\theta_{m_n+\ell q_n+1})-\tilde{P}_{q_n-1}(\theta_{m_n-q_n+1})|\\
\leq &\|F_{q_n-1}(\theta_{m_n+\ell q_n+1})-F_{q_n-1}(\theta_{m_n-q_n+1})\|\\
\leq &\sum_{j=0}^{q_n-2} \| F_{q_n-j-2}(\theta_{m_n+\ell q_n+2+j})\|\cdot \| F(\theta_{m_n-q_n+1+j})-F(\theta_{m_n+\ell q_n+1+j})\|\cdot  \| F_{j}(\theta_{m_n-q_n+1}) \|,
\end{align*}
where $F_0:=\mathrm{Id}$.
By \eqref{tildePinF} and Lemma \ref{lem:upperbddtildeP}, we have that for $\ell\neq -1$,
\begin{align}\label{eq:P-P_tele}
&|\tilde{P}_{q_n-1}(\theta_{m_n+\ell q_n+1})-\tilde{P}_{q_n-1}(\theta_{m_n-q_n+1})| \notag\\
\leq &\sum_{j=0}^{q_n-2} C(\varepsilon) e^{(q_n-2)(\tilde{L}+\varepsilon)} \|\theta_{m_n+\ell q_n+1+j}-\theta_{m_n-q_n+1+j}\|\notag\\
\leq &\sum_{j=0}^{q_n-2} C(\varepsilon) e^{(q_n-2)(\tilde{L}+\varepsilon)}\|(\ell+1) q_n \alpha\|\notag\\
\leq &|\ell+1| e^{q_n(\tilde{L}-\beta_n+2\varepsilon)}.
\end{align}
Combining \eqref{eq:P-P_tele} with Lemma \ref{lem:Pqn_mn}, we have
\begin{align}\label{eq:C0<eqn}
|\tilde{P}_{q_n-1}(\theta_{m_n+\ell q_n+1})|
\leq &C_0 e^{q_n (\tilde{L}-\beta_n+3\varepsilon)} \max(e^{\delta_n q_n},1)+ |\ell+1| e^{q_n(\tilde{L}-\beta_n+2\varepsilon)} \notag\\
\leq &e^{q_n (\tilde{L}-\beta_n+4\varepsilon)}\max(e^{\delta_n q_n}, |\ell|, 1),
\end{align}
where we require $C_0<e^{q_n\varepsilon}$ in \eqref{eq:C0<eqn}. \qed
\end{proof}

Lemma \ref{lem:Pqn_mn+ell} implies the following lemma.
\begin{lemma}\label{lem:cor_Pqn}
For $|\ell |<2q_{n+1}/(3q_n)$, assume $k<2q_n$ and
$$y\leq \ell q_n+m_n+1,\ \ \text{and}\ \ y+k-1\geq (\ell +1)q_n+m_n-1.$$
We then have
\begin{align*}
|\tilde{P}_k(\theta_y)|\leq \max(e^{\delta_n q_n}, |\ell |, 1) e^{-(\beta_n-6\varepsilon) q_n} e^{k\tilde{L}}.
\end{align*}
\end{lemma}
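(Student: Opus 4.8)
The plan is to isolate, inside the transfer-matrix product computing $\tilde{P}_k(\theta_y)=[F_k(\theta_y)]_{11}$ (see \eqref{tildePinF}), the ``good'' block $[m_n+\ell q_n+1,\,m_n+(\ell+1)q_n-1]$ on which Lemma~\ref{lem:Pqn_mn+ell} is available, and to cut the cocycle product precisely at the two cosine-minimum sites $z_1:=m_n+\ell q_n$ and $z_2:=m_n+(\ell+1)q_n$ bracketing it. Set $k_1:=z_1+1-y$ and $k_3:=(y+k-1)-(z_2-1)$; the hypotheses $y\le\ell q_n+m_n+1$ and $y+k-1\ge(\ell+1)q_n+m_n-1$ give $k_1,k_3\ge 0$, while $k<2q_n$ forces $k_1+k_3=k-(q_n-1)\le q_n$. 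The cocycle composition identity then yields
\[
F_k(\theta_y)=F_{k_3}(\theta_{z_2})\,F_{q_n-1}(\theta_{z_1+1})\,F_{k_1}(\theta_y)=:R\,M\,L,
\]
with the convention $F_0=\mathrm{Id}$ (so a vanishing outer length simply kills the corresponding contributions), and hence $\tilde{P}_k(\theta_y)=\sum_{p,q\in\{1,2\}}R_{1p}\,M_{pq}\,L_{q1}$.

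The key observation is that, by \eqref{tildePinF}, each of the four summands carries an exponentially small factor. Indeed $M_{11}=\tilde{P}_{q_n-1}(\theta_{m_n+\ell q_n+1})$ is controlled by Lemma~\ref{lem:Pqn_mn+ell}; $L_{21}=\tilde{P}_{k_1-1}(\theta_y)\cos\pi\theta_{z_1}$ contains the factor $\cos\pi\theta_{m_n+\ell q_n}=\pm c_{n,\ell}$; and $R_{12}=-\tilde{P}_{k_3-1}(\theta_{z_2+1})\cos\pi\theta_{z_2}$ contains $\cos\pi\theta_{m_n+(\ell+1)q_n}=\pm c_{n,\ell+1}$. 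Since every pair $(p,q)$ has $p=2$, or $q=2$, or $(p,q)=(1,1)$, each summand inherits one of these small factors. For $\ell$ with $|\ell|\le q_{n+1}/(6q_n)$ and $|\ell+1|\le q_{n+1}/(6q_n)$ I would then bound $c_{n,\ell}$ and $c_{n,\ell+1}$ by $C\max(e^{\delta_n q_n},|\ell|,1)e^{-\beta_n q_n}$ using \eqref{eq:cnell_final}, bound $M_{11}$ by Lemma~\ref{lem:Pqn_mn+ell}, and bound every remaining matrix entry (at most two per summand) by $C(\varepsilon)e^{(\tilde{L}+\varepsilon)|\cdot|}$ via Lemma~\ref{lem:upperbddtildeP} (using $|\cos|\le 1$). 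Multiplying, the exponential rates recombine to $e^{\tilde{L}k}$ up to an $n$-independent constant, the distinguished factor produces $\max(e^{\delta_n q_n},|\ell|,1)e^{-\beta_n q_n}$, and every $\varepsilon$-loss sits in an exponent of order at most $q_n$ (since $k_1+k_3\le q_n$ and the block length is $q_n-1$); summing the four terms and absorbing the constants into $e^{\varepsilon q_n}$ for $n>N(\varepsilon)$ gives the asserted bound with the $6\varepsilon$ loss. The complementary range, where $|\ell|$ or $|\ell+1|$ exceeds $q_{n+1}/(6q_n)$, is trivial: then $\max(e^{\delta_n q_n},|\ell|,1)\gtrsim q_{n+1}/q_n$, so the right-hand side of the claim is $\gtrsim e^{6\varepsilon q_n}q_n^{-1}e^{\tilde{L}k}$, which for large $n$ dominates the crude bound $|\tilde{P}_k(\theta_y)|\le C(\varepsilon)e^{(\tilde{L}+\varepsilon)k}\le C(\varepsilon)e^{2\varepsilon q_n}e^{\tilde{L}k}$ from Lemma~\ref{lem:upperbddtildeP}.

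The single nontrivial point — and the reason the estimate works at all — is the choice of where to break the cocycle. The block $F_{q_n-1}(\theta_{m_n+\ell q_n+1})$ itself has operator norm of the \emph{generic} size $\approx e^{\tilde{L}q_n}$; only its $(1,1)$ entry $\tilde{P}_{q_n-1}$ is anomalously small, so a naive norm estimate on the factorization loses the gain entirely. Cutting exactly at the cosine-minimum sites $z_1,z_2$ is what forces the off-diagonal entries of the two outer factors $F_{k_1}(\theta_y)$ and $F_{k_3}(\theta_{z_2})$ to carry the exponentially small cosines $c_{n,\ell}$ and $c_{n,\ell+1}$, so that the entrywise expansion of $[F_k(\theta_y)]_{11}$ — as opposed to a norm bound — propagates the decay. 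Everything else (the degenerate cases $k_1=0$ or $k_3=0$, keeping all $\varepsilon$-errors confined to exponents of order $q_n$, and the comparison of constants in the trivial range) is routine bookkeeping.
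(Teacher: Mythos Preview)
Your proof is correct and follows essentially the same idea as the paper's: factor the cocycle so that the central block is exactly $F_{q_n-1}(\theta_{m_n+\ell q_n+1})$ (whose $(1,1)$ entry is small by Lemma~\ref{lem:Pqn_mn+ell}) and so that the cosine-minimum sites $m_n+\ell q_n$ and $m_n+(\ell+1)q_n$ land in the boundary entries of the outer factors, then expand $[F_k(\theta_y)]_{11}$ entrywise rather than by norm. The paper carries this out by splitting into three cases according to which of the two endpoint inequalities is strict and writing out only one case; your single three-factor decomposition $R\,M\,L$ with $F_0=\mathrm{Id}$ handles all cases at once and is arguably cleaner, but the mechanism and the estimates are identical.
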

\begin{proof}
Note that when $y=\ell q_n+m_n+1$ and $y+k-1=(\ell+1)q_n+m_n-1$, this is the previous lemma.
Hence let us consider the case when $(y, y+k-1)\neq (\ell q_n+m_n+1, (\ell+1)q_n+m_n-1)$.
We shall divide into three cases: 

1. $y=\ell q_n+m_n+1$, $y+k-1>(\ell+1)q_n+m_n-1$; 

2. $y<\ell q_n+m_n+1$, $y+k-1=(\ell+1)q_n+m_n-1$; 

3. $y<\ell q_n+m_n+1$, $y+k-1>(\ell+1)q_n+m_n-1$.

These three cases are very similar to each other, hence we will only present the proof for Case 1 in details.

{\it Case 1.} Let us consider the normalized transfer matrix
\begin{align*}
F_k(\theta_y)=F_{k-q_n}(\theta_{(\ell+1)q_n+m_n+1})F(\theta_{(\ell+1)q_n+m_n})F_{q_n-1}(\theta_{\ell q_n+m_n+1})
\end{align*}
Recall 
\begin{align*}
F(\theta_{(\ell+1)q_n+m_n})=
\left(
\begin{matrix}
\cos(\pi\theta_{(\ell+1)q_n+m_n})E-\lambda \sin(\pi\theta_{(\ell+1)q_n+m_n})\ \ &-\cos(\pi\theta_{(\ell+1)q_n+m_n})\\
\cos(\pi\theta_{(\ell+1)q_n+m_n}) &0
\end{matrix}
\right)
\end{align*}
and
\begin{align*}
&F_{q_n-1}(\theta_{\ell q_n+m_n+1})\\=&
\left(
\begin{matrix}
\tilde{P}_{q_n-1}(\theta_{\ell q_n+m_n+1})\ \ &-\tilde{P}_{q_n-2}(\theta_{\ell q_n+m_n+2})\cos(\pi\theta_{\ell q_n+m_n+1})\\
\tilde{P}_{q_n-2}(\theta_{\ell q_n+m_n+1})\cos(\pi\theta_{(\ell+1)q_n+m_n-1})\ \ &-\tilde{P}_{q_n-3}(\theta_{\ell q_n+m_n+2})\cos(\pi\theta_{\ell q_n+m_n+1})\cos(\pi\theta_{(\ell+1)q_n+m_n-1})
\end{matrix}
\right)
\end{align*}
By Lemmas \ref{lem:upperbddtildeP}, \ref{lem:Pqn_mn+ell} and inequality \eqref{eq:cnell_final}, we have
\begin{align*}
F(\theta_{(\ell+1)q_n+m_n})F_{q_n-1}(\theta_{\ell q_n+m_n+1})=:
\left(
\begin{matrix}
b_1 &b_2\\
b_3 & b_4
\end{matrix}
\right),
\end{align*}
satisfies
\begin{align*}
\begin{cases}
|b_1|\leq C \max(e^{\delta_n q_n}, |\ell |, 1) e^{(\tilde{L}-\beta_n+4\varepsilon) q_n}\\
|b_3|\leq \max(e^{\delta_n q_n}, |\ell |, 1) c_{n,\ell+1} e^{(\tilde{L}-\beta_n+4\varepsilon) q_n} \\
|b_4|\leq  c_{n,\ell+1}e^{(\tilde{L}+\varepsilon) q_n}\leq C \max(e^{\delta_n q_n}, |\ell |, 1) e^{(\tilde{L}-\beta_n+\varepsilon) q_n}
\end{cases}
\end{align*}
For $F_{k-q_n}(\theta_{(a+1)q_n+m_n+1})$ we use the estimate from Lemma \ref{lem:upperbddtildeP},
resulting in the  desired estimate for the upper left corner of $F_k(\theta_y)$, which is $\tilde{P}_k(\theta_y)$, as follows
\begin{align*}
|\tilde{P}_k(\theta_y)|\leq C(\varepsilon) \max(e^{\delta_n q_n}, |\ell |, 1) e^{(\tilde{L}-\beta_n+4\varepsilon) q_n} e^{(k-q_n)(\tilde{L}+\varepsilon)}\leq \max(e^{\delta_n q_n}, |\ell |, 1) e^{-(\beta_n-6\varepsilon) q_n} e^{k\tilde{L}}.
\end{align*}
This proves Case 1. \qed
\end{proof}

In order to unify the estimates for $\tilde{P}$ in both the $\beta_n\geq \delta_n+200\varepsilon$ and $\beta_n<\delta_n+200\varepsilon$ cases, we define
\begin{align}\label{def:g}
g_{k,\ell}:=
\begin{cases}
\max(e^{\delta_n q_n}, |\ell|, 1) e^{-(\beta_n-6\varepsilon)q_n}\ &\text{ if } \beta_n\geq \delta_n+200\varepsilon\\
e^{2\varepsilon k} &\text{ if } \beta_n<\delta_n+200\varepsilon
\end{cases}
\end{align}
By Lemmas \ref{lem:upperbddtildeP} and \ref{lem:cor_Pqn}, we have
\begin{cor}\label{cor:Pk_key}
For $|\ell |<2q_{n+1}/(3q_n)$, assume $k<2q_n$ and
$$y\leq \ell q_n+m_n+1,\ \ \text{and}\ \ y+k-1\geq (\ell +1)q_n+m_n-1.$$
We then have
\begin{align*}
|\tilde{P}_k(\theta_y)|\leq g_{k,\ell} e^{k\tilde{L}}.
\end{align*}
\end{cor}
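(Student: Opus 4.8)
The plan is to dispatch the two branches in the definition \eqref{def:g} of $g_{k,\ell}$ separately, each by a direct appeal to one of the two lemmas named in the statement, after first extracting the one elementary consequence of the positional hypotheses that makes the argument run. Subtracting $y\le \ell q_n+m_n+1$ from $y+k-1\ge (\ell+1)q_n+m_n-1$ gives $k\ge q_n-1$, so $k$ is comparable to $q_n$; in particular $k>N(\varepsilon)$ for $n$ large, which is all that is needed to absorb any $(\alpha,E,\lambda,\varepsilon)$-dependent constant into a factor $e^{\varepsilon k}$.

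In the regime $\beta_n\ge \delta_n+200\varepsilon$ one has $g_{k,\ell}=\max(e^{\delta_n q_n},|\ell|,1)\,e^{-(\beta_n-6\varepsilon)q_n}$, and here I would simply observe that the desired bound $|\tilde P_k(\theta_y)|\le g_{k,\ell}e^{k\tilde L}$ is \emph{verbatim} the conclusion of Lemma \ref{lem:cor_Pqn}: the hypotheses $|\ell|<2q_{n+1}/(3q_n)$, $k<2q_n$, $y\le \ell q_n+m_n+1$, and $y+k-1\ge (\ell+1)q_n+m_n-1$ are exactly those of that lemma (which is itself stated under $\beta_n\ge \delta_n+200\varepsilon$, the same regime in which $g_{k,\ell}$ takes this form). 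So there is nothing to prove in this case.

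In the complementary regime $\beta_n<\delta_n+200\varepsilon$ one has $g_{k,\ell}=e^{2\varepsilon k}$, so it suffices to show $|\tilde P_k(\theta_y)|\le e^{(\tilde L+2\varepsilon)k}$. I would obtain this from the general transfer-matrix estimate Lemma \ref{lem:upperbddtildeP}, which yields $|\tilde P_k(\theta_y)|\le C(\alpha,E,\lambda,\varepsilon)\,e^{(\tilde L+\varepsilon)k}$; since $k\ge q_n-1>N(\varepsilon)$ for $n$ large, we bound $C(\alpha,E,\lambda,\varepsilon)\le e^{\varepsilon k}$, and the two inequalities combine to the claim.

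The hard part, to the extent there is one, is essentially nil: this corollary is a repackaging step whose sole purpose is to write the two regimes under the single symbol $g_{k,\ell}$ for convenient use in Section \ref{Sec:loc}. The only point deserving a moment's care is the constant absorption in the second case, and that is immediate from the lower bound $k\ge q_n-1$ read off from the hypotheses.
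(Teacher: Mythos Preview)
Your proposal is correct and matches the paper's own proof, which simply cites Lemmas \ref{lem:upperbddtildeP} and \ref{lem:cor_Pqn}. One minor imprecision: Lemma \ref{lem:cor_Pqn} is not literally \emph{stated} under the hypothesis $\beta_n\ge \delta_n+200\varepsilon$, but its proof does rely on \eqref{eq:cnell_final}, which carries that restriction---so your reading is right in substance, and your explicit check that $k\ge q_n-1$ (to absorb the constant from Lemma \ref{lem:upperbddtildeP}) is a useful detail the paper leaves implicit.
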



\section{The case of non-resonant singularity: $\dist(m_n, q_n \Z)>b_n$}\label{Sec:C1}
 
We will first prove non-resonant $y$'s can be dominated by resonances, and then study the relation between adjacent resonant regions.
\subsection{Non-resonance}\

Assume $\ell q_n+b_n \leq y \leq (\ell+1)q_n-b_n$.
We introduce some notations:
\begin{align*}
\begin{cases}
I^-:=[\ell q_n+b_n, \ell q_n+m_n-1],\\
I^+:=[\ell q_n+m_n+1, (\ell+1)q_n-b_n],\\
|\phi(x_0^-)|:=\max_{y\in I^-} |\phi(y)|,\\
|\phi(x_0^+)|:=\max_{y\in I^+} |\phi(y)|,\\
R_{\ell}:=[\ell q_n-b_n, \ell q_n+b_n],\\
r_{\ell}:=\max_{k\in R_{\ell}} |\phi(k)|.
\end{cases}
\end{align*}

\begin{lemma}\label{lem:C2_n-r}
We have, 
for $y=\ell q_n+m_n$,
\begin{align}\label{eq:I-capI+}
|\phi(y)| \leq e^{15\varepsilon q_n}  c_{n,\ell} \max(e^{-(y-\ell q_n)L} r_{\ell}, e^{-((\ell+1)q_n-y)L} r_{\ell+1})
\end{align}
For any $y \in I^-$,
\begin{align}\label{eq:I-}
|\phi(y)| \leq e^{15\varepsilon q_n} \max(e^{-(y-\ell q_n)L} r_{\ell}, c_{n,\ell} e^{-((\ell+1)q_n-y)L} r_{\ell+1}).
\end{align}
For any $y \in I^+$,
\begin{align}\label{eq:I+}
|\phi(y)| \leq e^{15\varepsilon q_n} \max(c_{n,\ell} e^{-(y-\ell q_n)L} r_{\ell}, e^{-((\ell+1)q_n-y)L} r_{\ell+1}).
\end{align}
\end{lemma}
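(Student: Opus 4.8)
The plan is to apply the Green's function expansion \eqref{Green_tildeP} on a carefully chosen interval $D=[x_1,x_2]\ni y$ of length $|D|=2q_n-1$, together with the key numerator bounds of Section \ref{Sec:key} (Corollary \ref{cor:Pk_key}) and the uniformity-based lower bound (Corollary \ref{cor:1_res_uni}) for the denominator $\tilde P_{2q_n-1}(\theta_{x_1})$. First I would treat the case $y=\ell q_n+m_n$: here $y$ is the singular site on scale $q_n$, and I want to choose $D$ so that the numerator $\tilde P_{x_2-y}(\theta_{y+1})$ (resp. $\tilde P_{y-x_1}(\theta_{x_1})$) spans a full $q_n$-block straddling $m_n+(\ell+1)q_n$ (resp. straddling $m_n+\ell q_n$), so that Corollary \ref{cor:Pk_key} gives the extra $g_{k,\ell}$-decay — and crucially, since $y$ itself is the minimum-cosine site, the factor $\prod|\cos(\pi\theta_j)|$ picks up a $c_{n,\ell}$. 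Combining: numerator $\lesssim g_{\cdot,\ell}e^{\tilde L(\cdot)}$, product of cosines $\lesssim e^{\tilde L(\cdot)}c_{n,\ell}$ up to $e^{\varepsilon q_n}$ errors (Corollary \ref{cor:prod_cos}/\ref{cor:cos_prod_lower}), denominator $\gtrsim \frac{|\ell|}{q_{n+1}}e^{(\tilde L-2\varepsilon)(2q_n-1)}$, and $\phi(x_1-1),\phi(x_2+1)$ live in $R_\ell$ or $R_{\ell+1}$ so are bounded by $r_\ell$ or $r_{\ell+1}$. Here $g_{\cdot,\ell}/(|\ell|/q_{n+1}) \lesssim e^{\delta_n q_n}\cdot e^{-\beta_n q_n}\cdot q_{n+1}\le e^{O(\varepsilon)q_n}$ up to absorbing, and $e^{\tilde L}$ factors combine to the claimed $e^{-(y-\ell q_n)L}$ or $e^{-((\ell+1)q_n-y)L}$ after using $L=\tilde L+\ln 2$ and counting block lengths; the $e^{15\varepsilon q_n}$ absorbs all constants and $\varepsilon q_n$-errors. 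This gives \eqref{eq:I-capI+}.

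For $y\in I^-$ (the sites strictly between $\ell q_n+b_n$ and the singularity $\ell q_n+m_n-1$), the argument is the same but now $y$ is \emph{not} the minimal-cosine site: the product $\prod_{j=x_1}^y|\cos\pi\theta_j|$ attached to $\phi(x_1-1)$ (which lands in $R_\ell$) no longer contains the site $m_n+\ell q_n$, so there is no $c_{n,\ell}$ gain on that side, whereas the product $\prod_{j=y}^{x_2}|\cos\pi\theta_j|$ attached to $\phi(x_2+1)$ (landing in $R_{\ell+1}$) \emph{does} pass through $m_n+\ell q_n$ and hence through the minimum, yielding the $c_{n,\ell}$ factor there. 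Likewise, on the $\phi(x_1-1)$ side the relevant numerator $\tilde P_{y-x_1}$ straddles the block around $m_n+(\ell-1)q_n$ or just sits inside one $q_n$-block — in either case the length count produces $e^{-(y-\ell q_n)L}$ without extra $g$-decay, while the $\phi(x_2+1)$-side numerator straddles $m_n+(\ell+1)q_n$ and so carries $g_{\cdot,\ell}$ on top of $c_{n,\ell}e^{-((\ell+1)q_n-y)L}$, absorbed as before. Taking the max of the two terms gives \eqref{eq:I-}. Estimate \eqref{eq:I+} is symmetric: for $y\in I^+$ the singular site $m_n+\ell q_n$ lies to the \emph{left} of $y$, so the $c_{n,\ell}$ gain attaches to the $r_\ell$-term and the clean bound to the $r_{\ell+1}$-term.

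The main obstacle I anticipate is the bookkeeping of which $q_n$-block each numerator $\tilde P$ straddles, so that one is genuinely in position to invoke Corollary \ref{cor:Pk_key} (which requires $y\le \ell' q_n+m_n+1$ and $y+k-1\ge(\ell'+1)q_n+m_n-1$ for the appropriate $\ell'$): because $y$ ranges over a full non-resonant block and $D$ has length exactly $2q_n-1$ with the freedom $\tfrac14|D|<y-x_1<\tfrac34|D|$, one must check that $x_1\in I_\ell\cup I_{\ell+1}$ (really $x_1\in I_\ell$ or $I_{\ell+1}$, never $I_0$, so the denominator lower bound applies) and simultaneously that the numerator block-straddling hypotheses hold — these two requirements pull $x_1$ in slightly different directions and the verification that the allowed window for $x_1$ is nonempty is the delicate point. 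A secondary subtlety is ensuring the $c_{n,\ell}$ gain is correctly attributed: one must confirm via Corollary \ref{cor:mn} \eqref{eq:delta_n_min_cos} that the minimum of $|\cos\pi\theta_j|$ over the relevant product range is indeed comparable to $c_{n,\ell}$ and is actually included in that range, and that on the other side (where we claim no gain) we are not accidentally forced to also pass through the minimum. Once the geometry is pinned down, the arithmetic of exponents — combining $g_{\cdot,\ell}$, $c_{n,\ell}\lesssim C\max(|\ell|,e^{\delta_n q_n},1)e^{-\beta_n q_n}$, the denominator's $|\ell|/q_{n+1}$, and the $\tilde L\to L$ conversion — is routine and all slack is swept into $e^{15\varepsilon q_n}$.
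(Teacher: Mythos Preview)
Your approach has a genuine gap: a single Green's expansion on an interval $D=[x_1,x_2]$ of length $2q_n-1$ does \emph{not} place the endpoints where you claim. If $y\in[\ell q_n+b_n,(\ell+1)q_n-b_n]$ and $\tfrac14|D|<y-x_1<\tfrac34|D|$, then $x_1-1$ lies roughly near $(\ell-1)q_n+m_n$ and $x_2+1$ near $(\ell+1)q_n+m_n$; since $m_n$ is non-resonant these are non-resonant points, not elements of $R_\ell$ or $R_{\ell+1}$. So the bound $|\phi(x_1-1)|\le r_\ell$, $|\phi(x_2+1)|\le r_{\ell+1}$ is unjustified, and the whole arithmetic that follows collapses. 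Moreover Corollary~\ref{cor:1_res_uni} only asserts the \emph{existence} of some $x_1\in I_0\cup I_\ell$ with a large denominator; you have no freedom to also force the block-straddling hypotheses of Corollary~\ref{cor:Pk_key} on both numerators, nor to place the endpoints in $R_\ell,R_{\ell+1}$.

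The paper's proof is quite different. It uses \emph{short} intervals of length $2sq_{n-n_0}-1$, where $sq_{n-n_0}\asymp\dist(y,q_n\Z)$ (so $\ll q_n$), with the non-resonant denominator bound Lemma~\ref{lem:nonres_I2_large} (which carries no $|\ell|/q_{n+1}$ loss). It then \emph{iterates} the Green's expansion up to $t_0=[24/\tau_n]+1$ times, following paths until an endpoint lands in $R_\ell$ (Case~1), in $R_{\ell+1}$ (Case~2), or the iteration count is exhausted (Case~3). The $c_{n,\ell}$ factor appears precisely when a path crosses the site $\ell q_n+m_n$, since the cosine product on that step then picks up the minimum $c_{n,\ell}$; this is why $y\in I^-$ gets $c_{n,\ell}$ only on the $r_{\ell+1}$ side (paths to $R_{\ell+1}$ must cross $m_n+\ell q_n$), $y\in I^+$ gets it only on the $r_\ell$ side, and $y=\ell q_n+m_n$ gets it on both. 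Corollary~\ref{cor:Pk_key} is not used here at all---that tool is reserved for the resonant analysis in Section~\ref{Sec:C1}.2, where intervals of length $2q_n-1$ are indeed appropriate because $y\in R_\ell$.
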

We leave the proof in the appendix.

\subsection{Resonance}

The main lemma of this section is the following.
\begin{lemma}\label{lem:21_ra<final}
For any $\ell\neq 0$, $|\ell|\leq q_{n+1}/(6q_n)$,
\begin{align*}
r_{\ell} \leq e^{40\varepsilon q_n} \frac{e^{-q_nL}}{\max(|\ell|,1)} \max(r_{\ell-1}, r_{\ell+1})\times
\begin{cases}
\max(|\ell|, e^{\delta_n q_n}), &\text{ if } \beta_n\geq \delta_n+200\varepsilon\\
e^{\beta_n q_n}, &\text{ if } \beta_n<\delta_n+200\varepsilon
\end{cases}.
\end{align*}
\end{lemma}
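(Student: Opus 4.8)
The plan is to expand $\phi$ at a point attaining $r_\ell$ via the Green's formula \eqref{Green_tildeP} on a block of length $2q_n-1$ anchored in $I_\ell$, bound its denominator from below by the uniformity estimates of Section \ref{Sec:uni}, and bound the numerators and cosine products from above — the point being that the surplus decay $e^{(\delta_n-\beta_n)q_n}$ needed to pass from the threshold $\beta_n$ down to $\delta_n$ is produced by the anti-resonance estimates of Section \ref{Sec:key}. Fix $y^\ast\in R_\ell$ with $|\phi(y^\ast)|=r_\ell$. By Corollary \ref{cor:1_res_uni} there is $x_1\in I_0\cup I_\ell$ with $|\tilde P_{2q_n-1}(\theta_{x_1})|\ge\frac{|\ell|}{q_{n+1}}e^{(\tilde L-2\varepsilon)(2q_n-1)}$, and one first checks that $x_1$ may be taken in $I_\ell$: if every admissible left endpoint lay in $I_0$, then running the estimate below with that block at the point $0$ (which by $\dist(m_n,q_n\Z)>b_n$ is not a singular site), using $|\phi(0)|\ge1$ and the Shnol bound \eqref{Shnol} at the two block endpoints, would force $|\phi(0)|<1$. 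So fix $x_1\in I_\ell$, put $x_2=x_1+2q_n-2$, so that $y^\ast\in D:=[x_1,x_2]$ and $\tfrac14|D|<y^\ast-x_1<\tfrac34|D|$ follow from the definitions of $I_\ell$ and $R_\ell$.

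Next I would feed these into \eqref{Green_tildeP} at $y^\ast$, treating its two terms symmetrically. Since $|D|=2q_n-1\ge q_n$, the block contains a singular site $m_n+jq_n$ with $|j-\ell|\le2$, and by $\dist(m_n,q_n\Z)>b_n$ no point of $R_\ell$ is a singular site, so this site is $\ne y^\ast$ and hence lies strictly in exactly one of the open half-blocks $(x_1,y^\ast)$, $(y^\ast,x_2)$. For the first Green term (carrying $\tilde P_{x_2-y^\ast}(\theta_{y^\ast+1})$ and $\prod_{x_1}^{y^\ast}|\cos\pi\theta_j|$) the gain is produced in one of two ways: (B) if a singular site lies in $[x_1,y^\ast]$, Corollary \ref{cor:prod_cos} and \eqref{eq:cnell_final} bound $\prod_{x_1}^{y^\ast}|\cos\pi\theta_j|$ by $e^{(-\ln 2)(y^\ast-x_1+1)}c_{n,j}e^{O(\varepsilon q_n)}$ with $c_{n,j}\lesssim\max(|\ell|,e^{\delta_n q_n},1)e^{-\beta_n q_n}$; (A) otherwise all singular sites of $D$ lie in $(y^\ast,x_2]$, and a short check (using the spacing $q_n$, $|D|=2q_n-1$ and $y^\ast\ne m_n+jq_n$) shows that $[y^\ast+1,x_2]$ then contains a full post-singular period $[m_n+j'q_n+1,m_n+(j'+1)q_n-1]$, so Corollary \ref{cor:Pk_key} replaces $e^{\tilde L(x_2-y^\ast)}$ by $g_{x_2-y^\ast,j'}e^{\tilde L(x_2-y^\ast)}$, which carries the same factor $\max(|\ell|,e^{\delta_n q_n},1)e^{-(\beta_n-6\varepsilon)q_n}$. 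Thus the first term always acquires the gain, and the second is handled identically with $x_1,x_2$ exchanged. (When $\beta_n<\delta_n+200\varepsilon$ no gain is needed: the plain bound of Lemma \ref{lem:upperbddtildeP}, i.e. $g_{k,\ell}=e^{2\varepsilon k}$, already suffices, so this dichotomy is used only when $\beta_n\ge\delta_n+200\varepsilon$.)

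It remains to control the endpoint values $|\phi(x_1-1)|$, $|\phi(x_2+1)|$, which lie within $q_n/2$ of $(\ell-1)q_n$, resp. $(\ell+1)q_n$: either they already sit in $R_{\ell-1}$, resp. $R_{\ell+1}$, hence are $\le r_{\ell-1}$, resp. $\le r_{\ell+1}$, or they sit in an adjacent non-resonant gap and Lemma \ref{lem:C2_n-r} dominates them by $e^{15\varepsilon q_n}$ times the relevant $r_{\ell'}$, where any $r_\ell$-contribution carries a factor $\le e^{-(q_n/2-b_n)L+15\varepsilon q_n}<1$ and any contribution with $|\ell'-\ell|\ge2$ carries an extra $e^{-Lq_n}$ and is negligible against the Shnol polynomial bound. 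Finally I would collect exponents: with $\tilde L=L-\ln 2$, the factors $e^{\tilde L(x_2-y^\ast)}$, $e^{(-\ln 2)(y^\ast-x_1+1)}$ and the denominator $e^{-\tilde L(2q_n-1)}$ telescope to $e^{-L(y^\ast-x_1+1)}$, which together with the $e^{-L\,\dist}$ from the endpoint bound and the triangle inequality gives $e^{-Lq_n}e^{Lb_n}\le e^{-Lq_n+\varepsilon q_n}$; the $\frac{q_{n+1}}{|\ell|}=\frac{e^{\beta_n q_n}}{|\ell|}$ from the denominator cancels the gain, leaving $\frac{1}{\max(|\ell|,1)}\max(|\ell|,e^{\delta_n q_n})$ when $\beta_n\ge\delta_n+200\varepsilon$ and $\frac{e^{\beta_n q_n}}{\max(|\ell|,1)}$ otherwise; all residual $e^{O(\varepsilon q_n)}$ fit into $e^{40\varepsilon q_n}$, and the usual rearrangement $r_\ell\le c\max(r_{\ell-1},r_\ell,r_{\ell+1})$ with $c<1$ deletes the self-term.

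The crux — where the preparatory lemmas of Sections \ref{Sec:delta} and \ref{Sec:key} are actually consumed — is the dichotomy of the second paragraph: that for every position of the externally dictated endpoint $x_1\in I_\ell$ relative to the singular lattice $m_n+q_n\Z$, each of the two Green's terms admits one of the two gain mechanisms, so the surplus decay is never inaccessible. The remaining points — excluding $x_1\in I_0$, and checking that all the $e^{O(\varepsilon q_n)}$ and Shnol-polynomial losses are genuinely absorbed — are routine but lengthy.
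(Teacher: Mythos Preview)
Your approach is essentially the paper's: the dichotomy (A)/(B) you describe is exactly its case split in the proof of Lemma \ref{lem:21_assume_Ia} by whether $x_1$ lies to the right or left of the singular site $(\ell-1)q_n+m_n$ in $I_\ell$, and the gain mechanism in each case --- Corollary \ref{cor:Pk_key} for the numerator in one, a $c_{n,j}$ from the cosine product in the other --- is identical. The exclusion of $x_1\in I_0$ and the endpoint control via Lemma \ref{lem:C2_n-r} also match.

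There is one genuine slip: your handling of the $r_{\ell\pm2}$ contributions. They do carry an extra $e^{-Lq_n}$, but bounding $r_{\ell\pm2}$ by the Shnol polynomial does \emph{not} yield the desired inequality, because the target is a bound in terms of $\max(r_{\ell-1},r_{\ell+1})$ and there is no a priori lower bound on that quantity --- it could itself be exponentially small. The paper instead uses the transfer-matrix estimate of Corollary \ref{cor:A_upper_mn}, giving $r_{\ell-2}\le c_{n,\ell-2}^{-1}e^{9\varepsilon q_n}e^{Lq_n}r_{\ell-1}$; the $e^{Lq_n}$ is cancelled by the extra $e^{-Lq_n}$ you noted, and the $c_{n,\ell-2}^{-1}$ is absorbed by the $c_{n,\ell-2}$ already present in that term's coefficient (coming from Lemma \ref{lem:C2_n-r}). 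With this fix your outline is complete.
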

\begin{proof}
This lemma is built on the following lemma.
\begin{lemma}\label{lem:21_assume_Ia}
Assume that there exists $x_1\in I_{\ell}$ such that 
\begin{align}\label{Case1_assume}
|\tilde{P}_{2q_n-1}(\theta_{x_1})|\geq \max(|\ell |, 1) e^{-\beta_n q_n} e^{(\tilde{L}-2\varepsilon)(2q_n-1)}.
\end{align}
Then we have
\begin{align*}
r_{\ell} \leq e^{39\varepsilon q_n} \frac{e^{\beta_n q_n}}{\max(|\ell|,1)} e^{-q_n L}\,  \max(c_{n,\ell-1} r_{\ell-1}, c_{n,\ell} r_{\ell+1}).
\end{align*}
\end{lemma}
We postpone the proof of this lemma till the end of the section.

As a corollary of Lemma \ref{lem:21_assume_Ia}, we have the following.
\begin{lemma}\label{lem:21_assume_la_beta><delta}
Under the assumption of Lemma \ref{lem:21_assume_Ia}, we have
\begin{align*}
r_{\ell} \leq e^{40\varepsilon q_n} \frac{e^{-q_nL}}{\max(|\ell|,1)} \max(r_{\ell-1}, r_{\ell+1})\times
\begin{cases}
\max(|\ell|, e^{\delta_n q_n}, 1), &\text{ if } \beta_n\geq \delta_n+200\varepsilon\\
e^{\beta_n q_n}, &\text{ if } \beta_n<\delta_n+200\varepsilon
\end{cases}.
\end{align*}
\end{lemma}
\begin{proof}
If $\beta_n\geq \delta_n+200\varepsilon$, bound the $c_{n,j}$'s by \eqref{eq:cnell_final}. Otherwise trivially bound the $c_{n,j}$'s by $1$. \qed
\end{proof}

Next we show the assumption in Lemma \ref{lem:21_assume_la_beta><delta} can not hold for $\ell=0$.
\begin{lemma}\label{lem:21_I0_small}
For any $x\in I_0$, $|\tilde{P}_{2q_n-1}(\theta_x)|<e^{-\beta_n q_n} e^{(\tilde{L}-2\varepsilon)(2q_n-1)}$
\end{lemma}
\begin{proof}
We prove by contradiction. Assume that there exists $x_1\in I_0$ such that
$$|\tilde{P}_{2q_n-1}(\theta_x)|\geq e^{-\beta_n q_n} e^{(\tilde{L}-2\varepsilon)(2q_n-1)}.$$
Then Lemma \ref{lem:21_assume_la_beta><delta} implies that, using \eqref{assume:phi0=1} and \eqref{Shnol},
\begin{align}\label{phi0_2}
1\leq |\phi(0)|\leq &r_0\leq e^{-(L-40\varepsilon) q_n} \max(r_{-1}, r_1) \times 
\begin{cases} 
e^{\delta_n q_n}, \text{ if } \beta_n\geq \delta_n+200\varepsilon\\
e^{\beta_n q_n}, \text{ if } \beta_n<\delta_n+200\varepsilon
\end{cases}\\
\leq &C_0q_n \begin{cases} 
e^{(\delta_n-L+40\varepsilon) q_n} e^{\delta_n q_n}, \text{ if } \beta_n\geq \delta_n+200\varepsilon \notag\\
e^{(\delta_n-L+40\varepsilon) q_n} e^{\beta_n q_n}, \text{ if } \beta_n<\delta_n+200\varepsilon
\end{cases} \notag\\
<&1. \notag
\end{align}
Contradiction. \qed
\end{proof}

Combining Corollary \ref{cor:1_res_uni} with Lemma \ref{lem:21_I0_small}, we obtain the following 
\begin{lemma}\label{lem:21_P2q_n_large}
For any $\ell \neq 0$, $|\ell|\leq q_{n+1}/(6q_n)$, we have
$$|\tilde{P}_{2q_n-1}(\theta_{x_1})|\geq \max(|\ell|, 1) e^{-\beta_n q_n} e^{(\tilde{L}-2\varepsilon)(2q_n-1)}.$$
\end{lemma}
Finally, Lemma \ref{lem:21_ra<final} follows from combining Lemma \ref{lem:21_assume_la_beta><delta} with Lemma \ref{lem:21_P2q_n_large}. \qed
\end{proof}

Now let us prove Lemma \ref{lem:21_assume_Ia}.
\subsection*{Proof of Lemma \ref{lem:21_assume_Ia}}\
Recalling that $r_{\ell}=\max_{k\in R_{\ell}} |\phi(k)|$, let $k\in R_{\ell}$.

Expanding $\phi(k)$ in the interval $I=[x_1, x_2]$ using \eqref{PkG}, where $x_2=x_1+2q_n-2$, we have,
\begin{align}\label{eq:hhh1}
|\phi(k)|
\leq &\frac{|\tilde{P}_{x_2-k}(\theta_{k+1})|}{|\tilde{P}_{2q_n-1}(\theta_{x_1})|} \prod_{j=x_1}^k |\cos(\pi \theta_j)|\, |\phi(x_1-1)|+
\frac{|\tilde{P}_{k-x_1}(\theta_{x_1})|}{|\tilde{P}_{2q_n-1}(\theta_{x_1})|} \prod_{j=k}^{x_2} |\cos(\pi \theta_j)|\, |\phi(x_2+1)|
\end{align}

\subsection*{Case 1. If $x_1\in [(\ell-1)q_n+m_n+1, \ell q_n-\lf q_n/2\rf -1]$.}\

Note that since
$$k+1\leq \ell q_n+b_n+1\leq  \ell q_n+m_n+1,$$
and
$$x_2=x_1+2q_n-2\geq (\ell+1)q_n+m_n-1,$$
Corollary \ref{cor:Pk_key} implies 
hence
$$|\tilde{P}_{x_2-k}(\theta_{k+1})|\leq g_{|x_2-k|, \ell}  e^{|x_2-k|\tilde{L}}.$$

By Corollary \ref{cor:prod_cos}  and \eqref{eq:delta_n_min_cos}, we have that
\begin{align*}
&\prod_{j=k}^{x_2}|\cos(\pi \theta_j))|\leq C(\varepsilon) c_{n,\ell} c_{n,\ell+1} e^{(-\ln{2}+\varepsilon)|x_2-k|},\ \ \text{ and }\\
&\prod_{j=x_1}^k |\cos(\pi \theta_j))|\leq C(\varepsilon) e^{(-\ln 2+\varepsilon)|k-x_1|}.
\end{align*}
By Lemma \ref{lem:upperbddtildeP} we have
\begin{align*}
|\tilde{P}_{k-x_1}(\theta_{x_1})|\leq C(\varepsilon) e^{(\tilde{L}+\varepsilon)|k-x_1|}.
\end{align*}
Plugging these upper bounds together with the lower bound \eqref{Case1_assume} into \eqref{eq:hhh1}, we obtain
\begin{align}\label{eq:111}
|\phi(k)| \leq &C(\varepsilon) \frac{g_{|x_2-k|, \ell} e^{\beta_n q_n}}{\max(|\ell |,1)} e^{5\varepsilon q_n} e^{-L |k-x_1|} |\phi(x_1-1)|+C(\varepsilon) \frac{c_{n,\ell}c_{n,\ell+1}e^{\beta_n q_n}}{\max(|\ell |,1)} e^{5\varepsilon q_n} e^{-L|x_2-k|} |\phi(x_2+1)|.
\end{align}

Equation \eqref{eq:I+} of Lemma \ref{lem:C2_n-r} implies
\begin{align*}
|\phi(x_1-1)|\leq e^{15\varepsilon q_n}  \max\{c_{n,\ell-1} e^{-(x_1-(\ell-1)q_n)L} r_{\ell-1},  e^{-(\ell q_n-x_1)L} r_{\ell}\}
\end{align*}
and 
\begin{align*}
|\phi(x_2+1)|\leq e^{15\varepsilon q_n} \max\{c_{n,\ell+1} e^{-(x_2+1-(\ell+1)q_n)L} r_{\ell+1}, e^{-((\ell+2)q_n-x_2-1)L} r_{\ell+2}\}
\end{align*}
Plugging the above estimates into \eqref{eq:111}, we have
\begin{align}\label{eq:21_ra_leq_1}
|\phi(k)| \leq e^{24\varepsilon q_n} \frac{e^{\beta_n q_n}}{\max(|\ell|,1)} 
\max
\begin{cases} 
c_{n,\ell-1} g_{|x_2-k|, \ell} e^{-q_n L} r_{\ell-1}\\
 g_{|x_2-k|, \ell}  e^{-q_n L} r_{\ell}\\
c_{n,\ell} (c_{n,\ell+1})^2 e^{-q_n L} r_{\ell+1}\\
c_{n,\ell} c_{n,\ell+1} e^{-2q_n L} r_{\ell+2}
\end{cases}
\end{align}

\subsection*{Case 2. If $x_1\in [(\ell-1)q_n-\lf q_n/2\rf, (\ell-1)q_n+m_n]$}\

We have $$x_2\in [(\ell+1)q_n-\lf q_n/2\rf-2, (\ell+1)q_n+m_n-2].$$

By Corollary \ref{cor:prod_cos} and \eqref{def:cnell} we have that
\begin{align*}
&\prod_{j=k}^{x_2}|\cos(\pi \theta_j))|\leq C(\varepsilon) c_{n,\ell} e^{(-\ln{2}+\varepsilon)|x_2-k|},\ \ \text{ and }\\
&\prod_{j=x_1}^k |\cos(\pi \theta_j))|\leq C(\varepsilon) c_{n,\ell-1} e^{(-\ln 2+\varepsilon)|k-x_1|}.
\end{align*}
By Lemma \ref{lem:upperbddtildeP} we have
\begin{align*}
|\tilde{P}_{k-x_1}(\theta_{x_1})|\leq C(\varepsilon) e^{(\tilde{L}+\varepsilon)|k-x_1|} \text{ and }
|\tilde{P}_{x_2-k}(\theta_{k+1})|\leq C(\varepsilon) e^{(\tilde{L}+\varepsilon)|x_2-k|}.
\end{align*}
Plugging these upper bounds together with the lower bound \eqref{Case1_assume} into \eqref{eq:hhh1}, we have
\begin{align}\label{eq:hhh3}
|\phi(k)|
\leq &C(\varepsilon) e^{6\varepsilon q_n}  \frac{e^{\beta_n q_n}}{\max(|\ell|, 1)} 
\left(e^{-|k-x_1|L} c_{n,\ell-1} |\phi(x_1-1)|+e^{-|x_2-k|L} c_{n,\ell} |\phi(x_2+1)|\right).
\end{align}
Lemma \ref{lem:C2_n-r} implies
\begin{align*}
&|\phi(x_1-1)|\leq e^{15\varepsilon q_n} \max\{
             c_{n,\ell-2} e^{-(x_1-(\ell-2)q_n)L} r_{\ell-2}, 
             e^{-|(\ell-1)q_n-x_1|L} r_{\ell-1},
             c_{n,\ell-1} e^{-(\ell q_n-x_1)L} r_{\ell}\}, \ \ \text{ and }\\
&|\phi(x_2+1)|\leq e^{15\varepsilon q_n} \max\{
             c_{n,\ell} e^{-(x_2-\ell q_n)L} r_{\ell},
             e^{-|(\ell+1)q_n-x_2|L} r_{\ell+1},
             c_{n,\ell+1} e^{-((\ell+2)q_n-x_2)L} r_{\ell+2}\}.
\end{align*}
Plugging these estimates in \eqref{eq:hhh3}, we have
\begin{align}\label{eq:21_ra_leq_3}
|\phi(k)|\leq e^{24\varepsilon q_n} \frac{e^{\beta_n q_n}}{\max(|\ell|,1)} \max
\begin{cases}
c_{n,\ell-2}c_{n,\ell-1} e^{-2 q_n L} r_{\ell-2}\\
e^{-q_n L} c_{n,\ell-1} r_{\ell-1}\\
\max((c_{n,\ell-1})^2, (c_{n,\ell})^2) e^{-q_n L} r_{\ell}\\
c_{n,\ell} e^{-q_n L} r_{\ell+1}\\
c_{n,\ell} c_{n,\ell+1} e^{-2 q_n L} r_{\ell+2}
\end{cases}
\end{align}

Putting estimates in both cases \eqref{eq:21_ra_leq_1} and \eqref{eq:21_ra_leq_3} together, we obtain, after setting $|\phi(k)|=r_{\ell}$,
\begin{align}\label{eq:21_ra_leq_4}
r_{\ell} \leq e^{24\varepsilon q_n} \frac{e^{\beta_n q_n}}{\max(|\ell|,1)}  \max
\begin{cases}
c_{n,\ell-2}c_{n,\ell-1} e^{-2 q_n L} r_{\ell-2}\\
c_{n,\ell-1} e^{-q_n L} r_{\ell-1} \max_{k\in R_{\ell}} g_{|x_2-k|, \ell} \\
e^{-q_n L} c_{n,\ell-1} r_{\ell-1}\\
e^{-q_n L} r_{\ell} \cdot \max_{k\in R_{\ell}} g_{|x_2-k|, \ell}  \\
\max((c_{n,\ell-1})^2, (c_{n,\ell})^2) e^{-q_n L} r_{\ell}\\
c_{n,\ell} e^{-q_n L} r_{\ell+1}\\
c_{n,\ell} c_{n,\ell+1} e^{-2 q_n L} r_{\ell+2}
\end{cases}
\end{align}
We have by Corollary \ref{cor:A_upper_mn} that
\begin{align*}
r_{\ell-2} \leq \frac{1}{c_{n,\ell-2}}e^{7\varepsilon q_n} e^{L(q_n+2b_n)}<\frac{1}{c_{n,\ell-2}}e^{9\varepsilon q_n} e^{Lq_n} r_{\ell-1}, 
\end{align*}
and similarly
\begin{align*}
r_{\ell+2} \leq \frac{1}{c_{n,\ell+1}} e^{9\varepsilon q_n} e^{q_n L} r_{\ell+1}.
\end{align*}
Plugging these estimates into \eqref{eq:21_ra_leq_4} yields
\begin{align}\label{eq:21_ra_leq_5}
r_{\ell} \leq e^{33\varepsilon q_n} \frac{e^{\beta_n q_n}}{\max(|\ell|,1)} \max
\begin{cases}
c_{n,\ell-1} e^{-q_n L} r_{\ell-1}\\
c_{n,\ell-1} e^{-q_n L} r_{\ell-1} \cdot \max_{k\in R_{\ell}}  g_{|x_2-k|, \ell}\\
e^{-q_n L} r_{\ell} \cdot \max_{k\in R_{\ell}}  g_{|x_2-k|, \ell}\\
\max((c_{n,\ell-1})^2, (c_{n,\ell})^2) e^{-q_n L} r_{\ell}\\
c_{n,\ell} e^{-q_n L} r_{\ell+1}
\end{cases}
\end{align}
Next we further bound this, dividing into two cases.

Case (i). If $\beta_n\geq \delta_n+200\varepsilon$, using \eqref{def:g} to bound 
\begin{align*}
\max_{k\in R_{\ell}} g_{|x_2-k|,\ell} 
\leq &\max(e^{\delta_n q_n}, |\ell|) e^{-(\beta_n-6\varepsilon)q_n}\\
\leq &e^{6\varepsilon q_n} \max(e^{-200\varepsilon q_n}, \frac{1}{6q_n})<e^{6\varepsilon q_n},
\end{align*}
and using \eqref{eq:cnell_final} to bound
\begin{align*}
\max((c_{n,\ell-1})^2, (c_{n,\ell})^2)\leq \max(c_{n,\ell-1}, c_{n,\ell}) \leq C \max(|\ell|, e^{\delta_n q_n}, 1) e^{-\beta_n q_n},
\end{align*}
we arrive at
\begin{align}\label{eq:21_ra_leq_6}
r_{\ell} \leq e^{39\varepsilon q_n} \frac{e^{\beta_n q_n}}{\max(|\ell|,1)} e^{-q_n L} \max
\begin{cases}
c_{n,\ell-1} r_{\ell-1}\\
\max(|\ell|, e^{\delta_n q_n}, 1) e^{-\beta_n q_n}  r_{\ell}\\
c_{n,\ell} r_{\ell+1}
\end{cases}.
\end{align}
Note that the coefficient of $r_{\ell}$ can be bounded by, using \eqref{eq:L>deltan+100},
\begin{align}\label{eq:rell_drop}
e^{39\varepsilon q_n} \frac{e^{\beta_n q_n}}{\max(|\ell|,1)} e^{-q_n L} \max(|\ell|, e^{\delta_n q_n}, 1) e^{-\beta_n q_n}\leq e^{-(L-\delta_n-39\varepsilon)q_n}<1.
\end{align}
Hence \eqref{eq:21_ra_leq_6} implies
\begin{align}\label{eq:21_ra_leq_7}
r_{\ell} \leq e^{39\varepsilon q_n} \frac{e^{\beta_n q_n}}{\max(|\ell|,1)} e^{-q_n L} \max(c_{n,\ell-1} r_{\ell-1}, c_{n,\ell} r_{\ell+1}).
\end{align}

Case (ii). If $\beta_n<\delta_n+200\varepsilon$, using \eqref{def:g} to bound 
\begin{align*}
\max_{k\in R_{\ell}} g_{|x_2-k|,\ell} 
\leq e^{2\varepsilon |x_2-k|}\leq e^{3\varepsilon q_n},
\end{align*}
and trivially bounding 
\begin{align*}
\max((c_{n,\ell-1})^2, (c_{n,\ell})^2)\leq 1,
\end{align*} 
we obtain from \eqref{eq:21_ra_leq_5} that
\begin{align}\label{eq:21_ra_leq_8}
r_{\ell} \leq e^{36\varepsilon q_n} \frac{e^{\beta_n q_n}}{\max(|\ell|,1)} e^{-q_n L} \max(c_{n,\ell-1} r_{\ell-1}, r_{\ell}, c_{n,\ell} r_{\ell+1}).
\end{align}
Note that the coefficient of $r_{\ell}$ can be bounded by, using \eqref{eq:L>deltan+100},
\begin{align}\label{eq:rell_drop_2}
e^{36\varepsilon q_n} \frac{e^{\beta_n q_n}}{\max(|\ell|,1)} e^{-q_n L}\leq e^{-(L-\delta_n-236\varepsilon)q_n}<1.
\end{align}
Hence \eqref{eq:21_ra_leq_8} implies
\begin{align}\label{eq:21_ra_leq_9}
r_{\ell} \leq e^{36\varepsilon q_n} \frac{e^{\beta_n q_n}}{\max(|\ell|,1)} e^{-q_n L} \max(c_{n,\ell-1} r_{\ell-1}, c_{n,\ell} r_{\ell+1}).
\end{align}
Lemma \ref{lem:21_assume_Ia} follows from combining \eqref{eq:21_ra_leq_7} and \eqref{eq:21_ra_leq_9}.
\qed



\section{The case of the resonant singularity: $\dist(m_n, q_n\Z)\leq b_n$}\label{Sec:C2}
Let us introduce some notations:
\begin{align*}
R_{\ell}^+:=[\ell q_n+m_n+1, \ell q_n+b_n] \text{ and } R_{\ell}^-:=[\ell q_n-b_n, \ell q_n+m_n-1],
\end{align*}
and
\begin{align}\label{def:ra+-}
r_{\ell}^+:=\max_{y\in R_{\ell}^+} |\phi(y)| \text{ and } r_{\ell}^-:=\max_{y\in R_{\ell}^-} |\phi(y)|
\end{align}

\subsection{Non-resonance}
\begin{lemma}\label{lem:22_non-to-res}
If $\ell q_n+b_n<y<(\ell+1)q_n-b_n$, for some $|\ell |\leq q_{n+1}/(6q_n)$. Then
\begin{align*}
|\phi(y)|\leq e^{30\varepsilon q_n} \max(e^{-(y-\ell q_n)L}r_{\ell}^+, e^{-((\ell+1)q_n-y)L}r_{\ell+1}^-).
\end{align*}
\end{lemma}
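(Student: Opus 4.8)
The plan follows the strategy behind Lemma~\ref{lem:C2_n-r}, adapted to the case where the singular site $\ell'q_n+m_n$ lies inside a resonant block, and organized by the size of $d:=\dist(y,q_n\Z)$. By the reflection $k\mapsto-k$ — which conjugates $H_{\lambda,\alpha,\theta}$ to $H_{\lambda,-\alpha,\theta}$, leaves $\beta_n$, $\delta_n$ and the construction of Section~\ref{Sec:delta} unchanged, and swaps $R_\ell^+\leftrightarrow R_{-\ell}^-$ — it suffices to bound $|\phi(y)|$ by $e^{30\varepsilon q_n}$ times the right-hand side, taking $y$ in the left half of its gap, $d=y-\ell q_n$, when convenient. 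If $d\le 5b_n$, then $y$ is separated from the nearer block ($R_\ell^+$, resp.\ $R_{\ell+1}^-$) by an interval of length $\le 5b_n$ containing no singular site, so Corollary~\ref{cor:A_upper} (equivalently Lemma~\ref{upperbounds} with Corollary~\ref{cor:prod_cos}) bounds that transfer matrix by $e^{O(\varepsilon q_n)}e^{5Lb_n}\le e^{O(\varepsilon q_n)}$; hence $|\phi(y)|\le e^{O(\varepsilon q_n)}r_\ell^+$ (resp.\ $r_{\ell+1}^-$), which, since $e^{-Ld}\ge e^{-5\varepsilon q_n}$, gives the claim. (If $R_\ell^+$ is too short because $m_n$ is near $b_n$, one instead estimates $\phi$ at the relevant site via its eigenvalue equation and \eqref{Shnol}, \eqref{est:mn}, as in the proof of Lemma~\ref{lem:Pqn_mn}.)

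Now assume $d>5b_n$, $y$ in the left half. Invoke the non-resonant uniformity of Section~\ref{Sec:uni}: with $n_0,s,\tilde I_0,\tilde I_y$ as in \eqref{def:Iy}, Lemma~\ref{lem:nonres_I2_large} gives $x_1\in\tilde I_y$ with $|\tilde P_{2sq_{n-n_0}-1}(\theta_{x_1})|\ge e^{(\tilde L-2\varepsilon)(2sq_{n-n_0}-1)}$; set $x_2=x_1+2sq_{n-n_0}-2$, $D=[x_1,x_2]$. From \eqref{def:Iy}, \eqref{sq-n0<qn-n0+1}, \eqref{eq:bn<yn-n0+1} and $2q_{n-n_0}\le d<2q_{n-n_0+1}$ one checks $sq_{n-n_0}\in(d/4,d/2]$, whence $y\in D$, $\min(y-x_1,\,x_2-y)\ge d/8-O(1)$, $\ell q_n+b_n<x_1-1<\ell q_n+\tfrac{7}{8}d$ (so $x_1-1$ is strictly past $R_\ell$, hence neither in $R_\ell^-$ nor equal to $\ell q_n+m_n$), and $\ell q_n+d<x_2+1<(\ell+1)q_n$. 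Feeding the lower bound into Green's formula \eqref{Green_tildeP} on $D$, estimating each $\tilde P$-numerator by Lemma~\ref{lem:upperbddtildeP} — or by Corollary~\ref{cor:Pk_key}, which only adds decay, when a subinterval straddles a singular site — and each cosine product by Corollary~\ref{cor:prod_cos}, and using $\tilde L=L-\ln 2$, one obtains
\[
|\phi(y)|\le e^{O(\varepsilon q_n)}\bigl(e^{-L(y-x_1)}\,|\phi(x_1-1)|+e^{-L(x_2-y)}\,|\phi(x_2+1)|\bigr).
\]

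One now iterates. Since $x_1-1<y$ and $x_1-1-\ell q_n\le\tfrac{7}{8}(y-\ell q_n)$, applying the previous step to $x_1-1$ repeatedly (falling back on the small-$d$ estimate, or on a direct bound, as soon as it enters $R_\ell^\pm$ or hits a singular site) drives it into $R_\ell^\pm$ after $O(\log(1/\varepsilon))$ steps; symmetrically $x_2+1$ first moves deeper into the gap — its distance to $\ell q_n$ growing, and its distance to $(\ell+1)q_n$ shrinking, by a bounded factor per step — and, once past the midpoint, the reflected estimate drives it into $R_{\ell+1}^\pm$ in $O(\log(1/\varepsilon))$ further steps. The recursion tree has depth $O(\log(1/\varepsilon))$, hence $\operatorname{poly}(1/\varepsilon)$ leaves; along each branch the step-decay factors telescope and, by the triangle inequality, are at most $e^{-L|y-z|}$ with $z$ the leaf, so (using $e^{Lb_n}\le e^{\varepsilon q_n}$ and $z\in R_\ell^\pm\cup R_{\ell+1}^\pm$) each leaf contributes at most $e^{O(\varepsilon q_n)}\max(e^{-L(y-\ell q_n)}r_\ell^+,\,e^{-L((\ell+1)q_n-y)}r_{\ell+1}^-)$. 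Summing over branches and absorbing the accumulated constants — a bounded power of $C(\alpha,E,\lambda,\varepsilon)$, a factor $\operatorname{poly}(1/\varepsilon)$, and $e^{O(\varepsilon q_n)}$ — into $e^{30\varepsilon q_n}$ finishes the proof, for $n$ large.

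The main obstacle is organizing the iteration of the last paragraph so that it terminates — the geometric control of both endpoints, and in particular ensuring that the ``far'' endpoint $x_2+1$ never drifts out of the period $(\ell q_n,(\ell+1)q_n)$ and is eventually absorbed by $R_{\ell+1}$ — and keeping the accumulated prefactors subexponential in $q_n$; a secondary nuisance is the $r_\ell^+$-versus-$r_\ell^-$ bookkeeping, handled via the extra $c_{n,\ell}$-decay that the cosine product carries whenever a subinterval crosses $\ell q_n+m_n$, together with Corollary~\ref{cor:A_upper_mn}. Everything else is routine arithmetic-localization accounting, parallel to the proof of Lemma~\ref{lem:C2_n-r} and to \cite{jl1,JYMaryland}.
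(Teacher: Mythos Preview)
Your approach is essentially the paper's --- iterate Green's formula \eqref{Green_tildeP} with the non-resonant windows of Section~\ref{Sec:uni}, stop upon reaching $R_\ell^\pm$ or $R_{\ell+1}^\pm$, and use the extra $c_{n,\ell}$ from the cosine product together with Corollary~\ref{cor:A_upper_mn} to convert any $r_\ell^-$ contribution into $r_\ell^+$. The small-$d$ case split and the reflection reduction are harmless additions.

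There is, however, a genuine gap in your termination argument. You show that the branch which always moves toward the nearer resonance contracts $d$ by a factor $\le 7/8$ and hence terminates in $O(\log(1/\varepsilon))$ steps, and symmetrically for the branch that always moves away. But these are only two leaves of a binary tree; a generic branch may alternate left/right moves and need not terminate in $O(\log(1/\varepsilon))$ steps, so your claim that the recursion tree has depth $O(\log(1/\varepsilon))$ is unjustified. Worse, with your single-step estimate written as a flat prefactor $e^{O(\varepsilon q_n)}$ times $e^{-L|\text{step}|}$, iterating to the correct depth would accumulate $e^{O(q_n)}$, which is fatal. The paper fixes both issues at once: it writes the step estimate in the form $C(\varepsilon)\,e^{-(L-12\varepsilon)|\text{step}|}$ (so the $\varepsilon$-loss is in the rate, and $\sum|\text{step}_j|\ge |y-z|$ suffices), caps the iteration at $t_0=[24/\tau_n]+1=O(1/\varepsilon)$, and handles non-terminated branches as a separate Case~4 via the minimum step size $|\text{step}_j|\gtrsim b_n/8$, which forces total decay $e^{-3q_n(L-12\varepsilon)}$ after $t_0$ steps --- enough to absorb a crude bound on $|\phi(z_{t_0+1}')|$ by Corollary~\ref{cor:A_upper}. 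Your argument is easily repaired along these lines; but as written, the depth claim is the missing idea.
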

We leave the proof in the appendix.



\subsection{Resonance}\

Assume without loss of generality that $0<m_n\leq b_n$. 

The main lemma of this section is the following.
\begin{lemma}\label{lem:22_res_final}
For any $\ell\neq 0$ such that $|\ell|<q_{n+1}/(6q_n)$, we have
\begin{align*}
r_{\ell}\leq &e^{50\varepsilon q_n}\frac{e^{-q_nL}}{\max(|\ell|, 1)} \max(r_{\ell-1}, r_{\ell+1})\times
\begin{cases}
\max(|\ell|, e^{\delta_n q_n}), \text{ if } \beta_n\geq \delta_n+200\varepsilon,\\
e^{\beta_n q_n}, \text{ if }\beta_n< \delta_n+200\varepsilon
\end{cases}
\end{align*}
\end{lemma}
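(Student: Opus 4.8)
The plan is to follow the same architecture as the proof of Lemma~\ref{lem:21_ra<final} in Section~\ref{Sec:C1}, adapted to the fact that the minimal cosine site $m_n$ now lies inside a resonant block. The skeleton has four steps. (a) An intermediate lemma: assuming a good lower bound $|\tilde P_{2q_n-1}(\theta_{x_1})|\geq \max(|\ell|,1)e^{-\beta_nq_n}e^{(\tilde L-2\varepsilon)(2q_n-1)}$ for some left endpoint $x_1\in I_\ell$, one derives a recursion of the shape $r_\ell\lesssim \tfrac{e^{\beta_nq_n}}{\max(|\ell|,1)}e^{-q_nL}\max(c_{n,\ell-1}r_{\ell-1},c_{n,\ell}r_{\ell+1})$. (b) This assumption cannot hold for $\ell=0$: otherwise the $\ell=0$ instance of the recursion together with $|\phi(0)|\geq1$, the Shnol bound \eqref{Shnol} and \eqref{eq:L>deltan+100} would give $1\leq r_0<1$, exactly as in Lemma~\ref{lem:21_I0_small}. (c) For $\ell\neq0$ with $|\ell|<q_{n+1}/(6q_n)$, Corollary~\ref{cor:1_res_uni} supplies $x_1\in I_0\cup I_\ell$ with $|\tilde P_{2q_n-1}(\theta_{x_1})|\geq \tfrac{|\ell|}{q_{n+1}}e^{(\tilde L-2\varepsilon)(2q_n-1)}=\max(|\ell|,1)e^{-\beta_nq_n}e^{(\tilde L-2\varepsilon)(2q_n-1)}$; by (b) in fact $x_1\in I_\ell$, so (a) applies. (d) Finally, bound the $c_{n,j}$ in the conclusion of (a) by \eqref{eq:cnell_final} when $\beta_n\geq\delta_n+200\varepsilon$ and trivially by $1$ otherwise, and fold $r_\ell^\pm$ into $r_\ell$, the singular value $|\phi(\ell q_n+m_n)|$ being controlled through the eigenvalue equation as in \eqref{est:mn}; this yields the stated two-regime bound.

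The substance is the proof of the intermediate lemma (a). Fix $k$ in the resonant region and expand $\phi(k)$ by the Green's formula \eqref{Green_tildeP} on $D=[x_1,x_2]$ with $x_2=x_1+2q_n-2$. Split into cases according to whether $x_1$ lies past the near-singularity $(\ell-1)q_n+m_n$ or before it, just as in the proof of Lemma~\ref{lem:21_assume_Ia}. In the ``good'' sub-configurations one of the intervals $[x_1,k]$, $[k,x_2]$ contains a full period $[m_n+jq_n+1,\ m_n+(j+1)q_n-1]$, so Corollary~\ref{cor:prod_cos} combined with \eqref{eq:delta_n_min_cos} extracts the small factor $c_{n,j}$ from the corresponding cosine product — just enough to defeat the $e^{\beta_nq_n}$ coming from the denominator lower bound. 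One then inserts the non-resonance bound of Lemma~\ref{lem:22_non-to-res} for $\phi(x_1-1)$ and $\phi(x_2+1)$, absorbs the resulting self-term $r_\ell$ using \eqref{eq:L>deltan+100} exactly as in \eqref{eq:rell_drop}, and reduces the $r_{\ell\pm2}$ contributions to $r_{\ell\pm1}$ via Corollary~\ref{cor:A_upper_mn}.

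The genuinely hard sub-configuration is \emph{Case~1 below}, where neither $[x_1,k]$ nor $[k,x_2]$ spans a full period, so the cosine products carry no minimal value at all. Here the missing $e^{(\delta_n-\beta_n)q_n}$ decay must instead be harvested from the numerator $\tilde P_k$ itself: Corollary~\ref{cor:Pk_key} (the key estimate of Section~\ref{Sec:key}, with its factor $g_{k,\ell}$) shows that precisely when the summation interval straddles a full period of cosines, $|\tilde P_k(\theta_y)|$ already gains that very decay. Identifying, in each branch, which of $\tilde P_{x_2-k}(\theta_{k+1})$, $\tilde P_{k-x_1}(\theta_{x_1})$ this applies to, and checking that the extracted $\max(e^{\delta_nq_n},|\ell|,1)e^{-\beta_nq_n}$ suffices in every case, is the main obstacle and the crux of the paper; the remaining bookkeeping (bounded products of $c_{n,j}$'s, $\varepsilon q_n$-losses, the $r_{\ell\pm2}\to r_{\ell\pm1}$ step) is routine and parallels Section~\ref{Sec:C1}.
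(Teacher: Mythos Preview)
Your proposal is correct and follows the paper's architecture closely: the intermediate lemma under the denominator lower bound, the $\ell=0$ contradiction, Corollary~\ref{cor:1_res_uni} to place $x_1\in I_\ell$, and the final $c_{n,j}$ bookkeeping all match. Two clarifications are worth making, though neither is a genuine gap.

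First, the paper's intermediate lemma (Lemma~\ref{lem:22_res}) is stated and proved for $r_\ell^-$ and $r_\ell^+$ \emph{separately}, with cross terms such as $c_{n,\ell}r_\ell^+$ appearing in the bound for $r_\ell^-$ (coming from the sub-case where $\phi(x_2+1)$ lands near $R_\ell^+$); only after the corollary step and the $\ell=0$ contradiction does one pass to $r_\ell=\max(r_\ell^-,r_\ell^+)$ and absorb the remaining self-term. You allude to this in step~(d), but your statement of (a) in terms of $r_\ell$ alone hides this structure.

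Second, your description of Case~1 is inaccurate. In the paper's Case~1 for $r_\ell^-$ (with $x_1>(\ell-1)q_n+m_n+1$), the interval $[k,x_2]$ \emph{does} contain both $\ell q_n+m_n$ and $(\ell+1)q_n+m_n$, so the cosine product in the second Green term carries $c_{n,\ell}c_{n,\ell+1}$. It is only the \emph{first} Green term whose cosine product (over $[x_1,k]$) carries no minimal value; its numerator $\tilde P_{x_2-k}(\theta_{k+1})$ has domain $[k+1,x_2]$ straddling a full period, and Corollary~\ref{cor:Pk_key} is applied there. The paper also isolates the boundary sub-case $x_1=(\ell-1)q_n+m_n+1$ as a separate Case~2, where the eigenvalue equation at $(\ell\pm1)q_n+m_n$ supplies the small factors directly. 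With these corrections your plan coincides with the paper's.
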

\begin{proof}
This lemma is mainly built on the following lemma.
\begin{lemma}\label{lem:22_res}
Assume that there exists $x_1\in I_{\ell}$, for some $|\ell | <q_{n+1}/(6q_n)$, such that
\begin{align}\label{ieq:22_res}
|\tilde{P}_{2q_n-1}(\theta_{x_1})|\geq  \max(|\ell|, 1) e^{-\beta_n q_n}e^{(\tilde{L}-2\varepsilon)(2q_n-1)}.
\end{align}

We then have
\begin{align*}
r_{\ell}^-\leq 
e^{49\varepsilon q_n} \frac{e^{\beta_n q_n}}{\max(|\ell|, 1)} e^{-q_n L} \max(
&c_{n,\ell-1} r_{\ell-1}^-,
c_{n,\ell-1} r_{\ell-1}^+,  
\gamma\, r_{\ell-1}^+, c_{n,\ell} r_{\ell}^+,
c_{n,\ell} r_{\ell+1}^-,
c_{n,\ell} c_{n,\ell+1} r_{\ell+1}^+),
\end{align*}
and
\begin{align*}
r_{\ell}^+\leq 
e^{49\varepsilon q_n} \frac{e^{\beta_n q_n}}{\max(|\ell|, 1)} e^{-q_n L} \max(
&c_{n,\ell} c_{n,\ell-1} r_{\ell-1}^-, 
c_{n,\ell} r_{\ell-1}^+,
c_{n,\ell} r_{\ell}^-, \gamma\, r_{\ell+1}^-,
c_{n,\ell+1} r_{\ell+1}^-,
c_{n,\ell+1} r_{\ell+1}^+),
\end{align*}
where 
\begin{align*}
\gamma:=\begin{cases}
\max(e^{\delta_n q_n}, |\ell|, 1) e^{-\beta_n q_n}, \text{ if } \beta_n\geq \delta_n+200\varepsilon\\
1, \text{ otherwise}
\end{cases}
\end{align*}
\end{lemma}
We will postpone the proof of this lemma till the end of the section. 

As a corollary of Lemma \ref{lem:22_res}, we have the following.
\begin{lemma}\label{lem:22_res_beta><delta+200}
Under the assumption of Lemma \ref{lem:22_res}.
We have
\begin{align*}
r_{\ell}^-\leq e^{50\varepsilon q_n} \frac{e^{-q_n L}}{\max(|\ell|, 1)} \max
(r_{\ell-1}^-,
r_{\ell-1}^+,
r_{\ell}^+,
r_{\ell+1}^-,
c_{n,\ell+1}r_{\ell+1}^+) \times
\begin{cases} 
\max(|\ell|, e^{\delta_n q_n}, 1), &\text{ if } \beta_n\geq \delta_n+200\varepsilon\\
e^{\beta_n q_n}, &\text{ if } \beta_n< \delta_n+200\varepsilon
\end{cases},
\end{align*}
and 
\begin{align*}
r_{\ell}^+\leq &e^{50\varepsilon q_n} \frac{e^{-q_n L}}{\max(|\ell|, 1)} \max
(c_{n,\ell-1}r_{\ell-1}^-,
r_{\ell-1}^+,
r_{\ell}^+,
r_{\ell+1}^-,
r_{\ell+1}^+) \times
\begin{cases}\max(|\ell|, e^{\delta_n q_n}, 1), &\text{ if } \beta_n\geq \delta_n+200\varepsilon\\
e^{\beta_n q_n}, &\text{ if } \beta_n< \delta_n+200\varepsilon
\end{cases}.
\end{align*}
\end{lemma}
\begin{proof}
If $\beta_n\geq \delta_n+200\varepsilon$, bound the $c_{n,j}$'s by \eqref{eq:cnell_final}. Otherwise trivially bound the $c_{n,j}$'s by $1$.
\qed
\end{proof}

Next we show the assumption \eqref{ieq:22_res} can not hold for $\ell=0$. 
\begin{lemma}\label{lem:22_I0large}
For any $x_1\in I_0$, we have
$$|\tilde{P}_{2q_n-1}(\theta_{x_1})|<e^{-\beta_n q_n}e^{(\tilde{L}-2\varepsilon)(2q_n-1)}.$$
\end{lemma}
\begin{proof}
Suppose otherwise, there exists $x_1\in I_0$ such that 
$$|\tilde{P}_{2q_n-1}(\theta_{x_1})|\geq  e^{-\beta_n q_n}e^{(\tilde{L}-2\varepsilon)(2q_n-1)}.$$
By Lemma \ref{lem:22_res_beta><delta+200} and equations \eqref{assume:phi0=1} and \eqref{Shnol} we have
\begin{align}\label{phi0_3}
1\leq |\phi(0)|\leq r_0^-
\leq &e^{(-L+50\varepsilon)q_n} \max(r_{-1}^-, r_{-1}^+, r_0^+, r_1^-, r_1^+) \times 
\begin{cases} 
\max(e^{\delta_n q_n}, 1), \text{ if } \beta_n\geq \delta_n+200\varepsilon\\
e^{\beta_n q_n}, \text{ if } \beta_n< \delta_n+200\varepsilon
\end{cases}\\
\leq &C_0 q_n e^{(-L+50\varepsilon)q_n}\times 
\begin{cases} 
\max(e^{\delta_n q_n}, 1), \text{ if } \beta_n\geq \delta_n+200\varepsilon \notag\\
e^{\beta_n q_n}, \text{ if } \beta_n< \delta_n+200\varepsilon
\end{cases} \notag\\
<&1. \notag
\end{align}
Contradiction! \qed
\end{proof}

Combining Corollary \ref{cor:1_res_uni} with Lemma \ref{lem:22_I0large}, we obtain the following.
\begin{lemma}
For any $\ell\neq 0$ such that $|\ell|\leq q_{n+1}/(6q_n)$, we have
$$|\tilde{P}_{2q_n-1}(\theta_{x_1})|\geq  \max(|\ell|, 1) e^{-\beta_n q_n}e^{(\tilde{L}-2\varepsilon)(2q_n-1)}.$$
\end{lemma}
Thus Lemma \ref{lem:22_res_beta><delta+200} holds for any $\ell\neq 0$ such that $|\ell|\leq q_{n+1}/(6q_n)$.
It particular it implies for the same $\ell$, the following hold
\begin{align*}
r_{\ell}\leq &e^{50\varepsilon q_n}\frac{e^{-q_nL}}{\max(|\ell|, 1)} \max(r_{\ell-1}, r_{\ell}, r_{\ell+1})\times
\begin{cases}
\max(|\ell|, e^{\delta_n q_n}), \text{ if } \beta_n\geq \delta_n+200\varepsilon,\\
e^{\beta_n q_n}, \text{ if }\beta_n< \delta_n+200\varepsilon
\end{cases}.
\end{align*}
By arguments similar to \eqref{eq:rell_drop} and \eqref{eq:rell_drop_2}, the $r_{\ell}$ terms on the right-hand-side of the equation above can be dropped. This proves Lemma \ref{lem:22_res_final}.\qed
\end{proof}

\subsection*{Proof of Lemma \ref{lem:22_res}}
We are going to give a detailed proof for $r_{\ell}^-$ when $0<m_n\leq b_n$, the other cases are similar.

Let $x_2=x_1+2q_n-2$.

By the Green's formula \eqref{Green_tildeP} we have,
\begin{align}\label{eq:res_formula_1}
|\phi(k)|
\leq &\frac{|\tilde{P}_{x_2-k}(\theta_{k+1})|}{|\tilde{P}_{2q_n-1}(\theta_{x_1})|} \prod_{j=x_1}^k |\cos(\pi \theta_j)|\, |\phi(x_1-1)|+
\frac{|\tilde{P}_{k-x_1}(\theta_{x_1})|}{|\tilde{P}_{2q_n-1}(\theta_{x_1})|} \prod_{j=k}^{x_2} |\cos(\pi \theta_j)|\, |\phi(x_2+1)|
\end{align}

\subsection*{Estimates for $r_{\ell}^-$}
\subsection*{Case 1. If $x_1\in [(\ell-1)q_n+m_n+2, \ell q_n-\lf q_n/2\rf-1]$}

By Lemma \ref{lem:upperbddtildeP} we have
\begin{align}\label{eq:res_1}
|\tilde{P}_{k-x_1}(\theta_{x_1})|\leq C(\varepsilon) e^{(\tilde{L}+\varepsilon)|k-x_1|}.
\end{align}
If $k\in R_{\ell}^-$, we have
$$k+1\in [\ell q_n-b_n+1, \ell q_n+m_n].$$
Also, 
$$x_2\in [(\ell+1)q_n+m_n, (\ell+2)q_n-\lf q_n/2\rf -3],$$
we have by Corollary \ref{cor:Pk_key} that
\begin{align}\label{eq:res_2}
|\tilde{P}_{x_2-k}(\theta_{k+1})|\leq g_{|x_2-k|, \ell} e^{\tilde{L} |x_2-k|}.
\end{align}
By Corollary \ref{cor:prod_cos}, we have
\begin{align}\label{eq:res_3}
\prod_{j=x_1}^k |\cos(\pi \theta_j)|\leq C(\varepsilon) e^{-(\ln 2-\varepsilon)|x_1-k|}, \text{ and }
\prod_{j=k}^{x_2} |\cos(\pi \theta_j)|\leq C(\varepsilon) e^{-(\ln 2-\varepsilon)|x_2-k|} c_{n,\ell} c_{n,\ell+1}.
\end{align}

Plugging estimates \eqref{eq:res_1}, \eqref{eq:res_2} and \eqref{eq:res_3} into \eqref{eq:res_formula_1}, 
we have
\begin{align}\label{eq:res_5}
|\phi(k)|\leq C(\varepsilon) e^{5\varepsilon q_n} \frac{g_{|x_2-k|,\ell} e^{\beta_n q_n}}{\max(|\ell|, 1)}e^{-L|x_1-k|} |\phi(x_1-1)|
+C(\varepsilon) e^{5\varepsilon q_n} \frac{c_{n,\ell} c_{n,\ell+1} e^{\beta_n q_n}}{\max(|\ell|, 1)}  e^{-L|x_2-k|} |\phi(x_2+1)|.
\end{align}

Lemma \ref{lem:22_non-to-res} implies
\begin{align}\label{eq:res_6}
\begin{cases}
|\phi(x_1-1)|\leq e^{30\varepsilon q_n}\max\{e^{-(x_1-(\ell-1)q_n)L} r_{\ell-1}^+, e^{-(\ell q_n-x_1)L} r_{\ell}^-\}\\
|\phi(x_2+1)|\leq e^{30\varepsilon q_n}\max\{e^{-(x_2-(\ell+1)q_n)L} r_{\ell+1}^+, e^{-((\ell+2)q_n-x_2)L} r_{\ell+2}^-\}
\end{cases}
\end{align}

Plugging \eqref{eq:res_6} into \eqref{eq:res_5}, we have
\begin{align}\label{eq:res_7}
|\phi(k)|\leq e^{43\varepsilon q_n}\frac{e^{\beta_n q_n}}{\max(|\ell|, 1)}e^{-q_n L} 
\max(
g_{|x_2-k|, \ell} r_{\ell-1}^+,
g_{|x_2-k|, \ell} r_{\ell}^-,
c_{n,\ell} c_{n,\ell+1} r_{\ell+1}^+),
\end{align}
where we controlled $r_{\ell+2}^-$ by $r_{\ell+1}^+$ using Corollary \ref{cor:A_upper} in the following way
$$r_{\ell+2}^-\leq e^{6\varepsilon q_n} e^{q_n L} r_{\ell+1}^+.$$

\subsection*{Case 2. If $x_1=(\ell-1)q_n+m_n+1$ and $x_2=(\ell+1)q_n+m_n-1$}
We use the following estimates by Lemma \ref{lem:upperbddtildeP},
\begin{align}\label{eq:res_1''}
|\tilde{P}_{k-x_1}(\theta_{x_1})|\leq C(\varepsilon)e^{(\tilde{L}+\varepsilon)|k-x_1|}, \text{ and }
|\tilde{P}_{x_2-k}(\theta_{k+1})|\leq C(\varepsilon)e^{(\tilde{L}+\varepsilon)|x_2-k|}.
\end{align}
By Corollary \ref{cor:prod_cos}, we have
\begin{align}\label{eq:res_3''}
\prod_{j=x_1}^k |\cos(\pi \theta_j)|\leq C(\varepsilon) e^{-(\ln 2-\varepsilon)|x_1-k|}, \text{ and }
\prod_{j=k}^{x_2} |\cos(\pi \theta_j)|\leq C(\varepsilon) e^{-(\ln 2-\varepsilon)|x_2-k|} c_{n,\ell}.
\end{align}

Plugging estimates \eqref{eq:res_1''} and \eqref{eq:res_3''} into \eqref{eq:res_formula_1}, 
we have
\begin{align}\label{eq:res_5''}
|\phi(k)|\leq C(\varepsilon) e^{5\varepsilon q_n} \frac{e^{\beta_n q_n}}{\max(|\ell |, 1)} \max(e^{-q_n L} |\phi((\ell-1)q_n+m_n)|,  c_{n,\ell} e^{-q_n L} |\phi((\ell+1)q_n+m_n)|).
\end{align}
By the eigenvalue equation
\begin{align*}
\phi(j q_n+m_n+1)+\phi(j q_n+m_n-1)=(E-\lambda\tan(\pi\theta_{j q_n+m_n}))\phi(j q_n+m_n).
\end{align*}
We have that for any $j\in \Z$,
\begin{align}\label{eq:res_6''}
\phi(j q_n+m_n)|\leq Cc_{n,j} \max(|\phi(j q_n+m_n+1)|, |\phi(j q_n+m_n-1)|)\leq Cc_{n,j} \max(r_{j}^+, r_{j}^-).
\end{align}
Plugging \eqref{eq:res_6''} into \eqref{eq:res_5''}, we have
\begin{align}\label{eq:res_7''}
|\phi(k)|\leq e^{7\varepsilon q_n} \frac{e^{\beta_n q_n}}{\max(|\ell |, 1)} e^{-q_n L}\max(c_{n,\ell-1} r_{\ell-1}^-, 
c_{n,\ell-1} r_{\ell-1}^+,  c_{n,\ell} c_{n,\ell+1}r_{\ell+1}^-, c_{n,\ell} c_{n,\ell+1} r_{\ell+1}^+).
\end{align}

\subsection*{Case 3. If $x_1\in [(\ell-1)q_n-\lf q_n/2\rf, (\ell-1)q_n+m_n]$}
Now $x_2\in [(\ell+1)q_n-\lf q_n/2\rf-2, (\ell+1)q_n+m_n-2]$.

Lemma \ref{lem:upperbddtildeP} yields
\begin{align}\label{eq:res_1'}
|\tilde{P}_{k-x_1}(\theta_{x_1})|\leq e^{(\tilde{L}+\varepsilon)|k-x_1|}, \text{ and }
|\tilde{P}_{x_2-k}(\theta_{k+1})|\leq e^{(\tilde{L}+\varepsilon)|x_2-k|}.
\end{align}
Corollary \ref{cor:prod_cos} yields
\begin{align}\label{eq:res_3'}
\prod_{j=x_1}^k |\cos(\pi \theta_j)|\leq e^{-(\ln 2-\varepsilon)|x_1-k|} c_{n,\ell-1}, \text{ and }
\prod_{j=k}^{x_2} |\cos(\pi \theta_j)|\leq e^{-(\ln 2-\varepsilon)|x_2-k|} c_{n,\ell}.
\end{align}

Plugging estimates \eqref{eq:res_1'} and \eqref{eq:res_3'} into \eqref{eq:res_formula_1},
we have
\begin{align}\label{eq:res_5'}
|\phi(k)|\leq C(\varepsilon) e^{5\varepsilon q_n} \frac{e^{\beta_n q_n}}{\max(|\ell|, 1)} 
\left(e^{-L|x_1-k|} c_{n,\ell-1} |\phi(x_1-1)|+e^{-L|x_2-k|} c_{n,\ell} |\phi(x_2+1)|\right)
\end{align}
Lemma \ref{lem:22_non-to-res} implies
\begin{align}\label{eq:res_6'}
\begin{cases}
|\phi(x_1-1)|\leq  e^{30\varepsilon q_n} \max\{e^{-((\ell-1)q_n-x_1)L} r_{\ell-1}^-, e^{-(x_1-(\ell-2)q_n)L} r_{\ell-2}^+\}\\
|\phi(x_2+1)|\leq e^{30\varepsilon q_n} \max\{e^{-((\ell+1)q_n-x_2)L} r_{\ell+1}^-, e^{-(x_2-\ell q_n)L} r_{\ell}^+\}
\end{cases}
\end{align}

Plugging \eqref{eq:res_6'} into \eqref{eq:res_5'}, we have
\begin{align}\label{eq:res_7'}
|\phi(k)|\leq e^{42\varepsilon q_n} \frac{e^{\beta_n q_n}}{\max(|\ell|, 1)}e^{- q_n L}  
\max(
c_{n,\ell-1} r_{\ell-1}^-,
c_{n,\ell} r_{\ell}^+,
c_{n,\ell} r_{\ell+1}^-),
\end{align}
where we controlled $r_{\ell-2}^+$ by $r_{\ell-1}^-$ using Corollary \ref{cor:A_upper} in the following way 
$$r_{\ell-2}^+\leq e^{6\varepsilon q_n} e^{q_n L} r_{\ell-1}^-.$$

Putting the three cases \eqref{eq:res_7}, \eqref{eq:res_6''} and \eqref{eq:res_7'} together, taking $|\phi(k)|=r_{\ell}^-$, we obtain
\begin{align}\label{eq:res_10''}
r_{\ell}^-\leq e^{43\varepsilon q_n}\frac{e^{\beta_n q_n}}{\max(|\ell|, 1)} e^{-q_n L} 
\max(
&c_{n,\ell-1} r_{\ell-1}^-,
c_{n,\ell-1} r_{\ell-1}^+,  
(\max_{k\in R_{\ell}} g_{|x_2-k|, \ell})\cdot \max(r_{\ell-1}^+, r_{\ell}^-), \\
&\ \ c_{n,\ell} r_{\ell}^+,
c_{n,\ell} r_{\ell+1}^-,
c_{n,\ell} c_{n,\ell+1} r_{\ell+1}^+ \notag
).
\end{align}
Next, we further bound this, dividing into two cases.
Case (i). If $\beta_n\geq \delta_n+200\varepsilon$, using \eqref{def:g} to bound 
\begin{align*}
\max_{k\in R_{\ell}} g_{|x_2-k|,\ell} 
\leq \max(e^{\delta_n q_n}, |\ell|) e^{-(\beta_n-6\varepsilon)q_n},
\end{align*}
and using \eqref{eq:cnell_final} to bound
\begin{align*}
c_{n,\ell-1}\leq C \max(|\ell|, e^{\delta_n q_n}) e^{-\beta_n q_n},
\end{align*}
we arrive at
\begin{align}\label{eq:res_11''}
r_{\ell}^- \leq e^{49\varepsilon q_n} \frac{e^{\beta_n q_n}}{\max(|\ell|,1)} e^{-q_n L} \max(
&c_{n,\ell-1} r_{\ell-1}^-,
\max(e^{\delta_n q_n}, |\ell|) e^{-\beta_n q_n} \max(r_{\ell-1}^+, r_{\ell}^-), \notag\\ 
&\ \ c_{n,\ell} r_{\ell}^+, c_{n,\ell} r_{\ell+1}^-,
c_{n,\ell} c_{n,\ell+1} r_{\ell+1}^+).
\end{align}
Note that the coefficient of $r_{\ell}^-$ on the right-hand-side of \eqref{eq:res_11''} can be bounded by, using \eqref{eq:L>deltan+100},
\begin{align*}
e^{49\varepsilon q_n} \frac{e^{\beta_n q_n}}{\max(|\ell|,1)} e^{-q_n L}\max(e^{\delta_n q_n}, |\ell|) e^{-\beta_n q_n}\leq e^{-(L-\delta_n-50)q_n}<1.
\end{align*}
Hence \eqref{eq:res_11''} implies
\begin{align}\label{eq:res_12''}
&r_{\ell}^-
\leq e^{49\varepsilon q_n} \frac{e^{\beta_n q_n}}{\max(|\ell|,1)} e^{-q_n L} \max(
c_{n,\ell-1} r_{\ell-1}^-,
\max(e^{\delta_n q_n}, |\ell|) e^{-\beta_n q_n} r_{\ell-1}^+, c_{n,\ell} r_{\ell}^+, c_{n,\ell} r_{\ell+1}^-,
c_{n,\ell} c_{n,\ell+1} r_{\ell+1}^+).
\end{align}

Case (ii). If $\beta_n<\delta_n+200\varepsilon$, using \eqref{def:g} to bound 
\begin{align*}
\max_{k\in R_{\ell}} g_{|x_2-k|,\ell} 
\leq e^{2\varepsilon |x_2-k|}\leq e^{3\varepsilon q_n},
\end{align*}
and trivially bounding $c_{n,\ell-1}\leq 1$,
we obtain from \eqref{eq:res_10''} that
\begin{align}\label{eq:res_13''}
r_{\ell}^- \leq e^{46\varepsilon q_n} \frac{e^{\beta_n q_n}}{\max(|\ell|,1)} e^{-q_n L} 
\max(c_{n,\ell-1} r_{\ell-1}^-,
r_{\ell-1}^+,  
r_{\ell}^-, 
c_{n,\ell} r_{\ell}^+,
c_{n,\ell} r_{\ell+1}^-,
c_{n,\ell} c_{n,\ell+1} r_{\ell+1}^+).
\end{align}
Note that the coefficient of $r_{\ell}^-$ on the right-hand-side of \eqref{eq:res_13''} can be bounded by, using \eqref{eq:L>deltan+100},
\begin{align*}
e^{46\varepsilon q_n} \frac{e^{\beta_n q_n}}{\max(|\ell|,1)} e^{-q_n L}\leq e^{-(L-\delta_n-246\varepsilon)q_n}<1.
\end{align*}
Hence \eqref{eq:res_13''} implies
\begin{align}\label{eq:res_14''}
r_{\ell}^- \leq e^{46\varepsilon q_n} \frac{e^{\beta_n q_n}}{\max(|\ell|,1)} e^{-q_n L} \max(c_{n,\ell-1} r_{\ell-1}^-,
r_{\ell-1}^+,  
c_{n,\ell} r_{\ell}^+,
c_{n,\ell} r_{\ell+1}^-,
c_{n,\ell} c_{n,\ell+1} r_{\ell+1}^+).
\end{align}
Lemma \ref{lem:22_res} follows from combining \eqref{eq:res_12''} and \eqref{eq:res_14''}.
\qed

\section{Localization: proofs of Cases 2 and 3 of Lemma \ref{lem:main}}\label{Sec:loc}
The proofs of Cases 2 and 3 of Lemma \ref{lem:main} are completely analogous. Hence we will only prove Case 3. Assume $\beta_n\geq \delta_n+200\varepsilon$.

Let us first prove 
\begin{lemma}\label{lem:localization}
For any $\ell_0$ such that $1\leq |\ell_0|\leq q_{n+1}/(12q_n)$, we have
\begin{align*}
r_{\ell_0}\leq e^{(\delta_n-L+52\varepsilon)|\ell_0| q_n}.
\end{align*}
\end{lemma}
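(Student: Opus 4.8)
The plan is to iterate the block‑to‑block estimate just established. In Case~3, i.e.\ when $\beta_n\ge\delta_n+200\varepsilon$, Lemmas~\ref{lem:21_ra<final} and~\ref{lem:22_res_final} both give, for every $\ell\ne0$ with $|\ell|\le q_{n+1}/(6q_n)$,
\[
r_\ell\le A_\ell\,\max(r_{\ell-1},r_{\ell+1}),\qquad A_\ell:=e^{50\varepsilon q_n}\,\frac{e^{-q_nL}}{\max(|\ell|,1)}\,\max(|\ell|,e^{\delta_n q_n}).
\]
One may assume $\delta_n\ge0$: if $\delta_n<0$ then $\max(|\ell|,e^{\delta_n q_n})=|\ell|$, which is its value at $\delta_n=0$, and the whole argument is run verbatim with $\delta_n$ replaced by $0$. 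With $\delta_n\ge0$, writing $A_\ell=e^{(50\varepsilon-L)q_n}\max(1,e^{\delta_n q_n}/|\ell|)\le e^{(\delta_n+50\varepsilon-L)q_n}$, inequality~\eqref{eq:L>deltan+100} forces $A_\ell<e^{-600\varepsilon q_n}<1$ for all such $\ell$ and all $n>N(\varepsilon)$. By the symmetry $\ell\leftrightarrow-\ell$ assume $\ell_0>0$, and put $T:=\lfloor q_{n+1}/(6q_n)\rfloor$, so the recursion is available at every $\ell\in\{1,\dots,T\}$.

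Because $A_\ell<1$, the recursion gives $r_\ell<\max(r_{\ell-1},r_{\ell+1})$, so the finite sequence $(r_\ell)_{0\le\ell\le T+1}$ has no interior strict local maximum. Define a path by $j_0:=\ell_0$ and $j_{i+1}:=$ the neighbour of $j_i$ in $\{j_i-1,j_i+1\}$ realising $\max(r_{j_i-1},r_{j_i+1})$. If $r_{j_i}=0$ for some $i$ we are done; otherwise $r_{j_{i+1}}\ge r_{j_i}/A_{j_i}>r_{j_i}$, so the $r_{j_i}$ strictly increase, the $j_i$ are pairwise distinct, and — having unit steps — the index sequence is strictly monotone, hence the path leaves the window either at $j=0$ on the left or at $j=T+1$ on the right. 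Telescoping the recursion along the path gives
\[
r_{\ell_0}\le\Bigl(\prod_{j=1}^{\ell_0}A_j\Bigr)r_0\qquad\text{or}\qquad r_{\ell_0}\le\Bigl(\prod_{j=\ell_0}^{T}A_j\Bigr)r_{T+1},
\]
and by the Shnol bound~\eqref{Shnol} the endpoints satisfy $r_0\le C_0q_n$ and $r_{T+1}\le C_0q_{n+1}$.

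It remains to estimate these products. In the right case it has $T-\ell_0+1=\ell_0+s$ factors with $s:=T+1-2\ell_0\ge0$ and $\ell_0+s\ge q_{n+1}/(12q_n)$; bounding each factor by $e^{(\delta_n+50\varepsilon-L)q_n}$, and using $(L-\delta_n-50\varepsilon)sq_n+2\varepsilon\ell_0q_n\ge2\varepsilon(\ell_0+s)q_n\ge\varepsilon q_{n+1}/6>\ln(C_0q_{n+1})$, we get $r_{\ell_0}\le e^{(\delta_n-L+50\varepsilon)\ell_0q_n}\cdot C_0q_{n+1}e^{-(L-\delta_n-50\varepsilon)sq_n}\le e^{(\delta_n-L+52\varepsilon)\ell_0q_n}$. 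In the left case one must keep the sharp form $A_j=e^{(50\varepsilon-L)q_n}\max(1,e^{\delta_n q_n}/j)$; with $m:=\min(\ell_0,\lfloor e^{\delta_n q_n}\rfloor)$ the identity $\prod_{j=1}^{\ell_0}\max(1,e^{\delta_n q_n}/j)=e^{\delta_n q_n m}/m!$ together with $a^k/k!\le e^a$ gives $\prod_{j=1}^{\ell_0}A_j\le e^{(50\varepsilon-L)\ell_0q_n}e^{\min(\delta_n q_n\ell_0,\,e^{\delta_n q_n})}$. If $\ell_0\le e^{\delta_n q_n}$ this is $\le e^{(\delta_n-L+50\varepsilon)\ell_0q_n}$, whence $r_{\ell_0}\le C_0q_ne^{(\delta_n-L+50\varepsilon)\ell_0q_n}\le e^{(\delta_n-L+52\varepsilon)\ell_0q_n}$; if $\ell_0>e^{\delta_n q_n}$ then $\ell_0q_n>q_ne^{\delta_n q_n}$, so $C_0q_ne^{e^{\delta_n q_n}}\le e^{2\varepsilon\ell_0q_n}$ and $r_{\ell_0}\le e^{(50\varepsilon-L)\ell_0q_n}e^{2\varepsilon\ell_0q_n}\le e^{(\delta_n-L+52\varepsilon)\ell_0q_n}$ (using $\delta_n\ge0$). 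This proves the lemma.

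\textbf{Main difficulty.} The single‑step estimate only shuttles decay between adjacent blocks and is blind to where $\phi$ is anchored; what converts it into genuine decay is the elementary but essential fact that $A_\ell<1$ forbids an interior maximum, forcing the larger‑neighbour path to exit the window rather than revisit indices. The two delicate bookkeeping points are: (i) in the range $|\ell_0|<e^{\delta_n q_n}$ — the ``$m_n$''‑block range — the coefficients $A_\ell$ overshoot $e^{-q_nL}$, and the factorial identity above is precisely what certifies that the accumulated product nevertheless matches $e^{(\delta_n-L)\ell_0q_n}$ up to a subexponential factor (this is where the anti‑resonance is ``paid for''); and (ii) $q_{n+1}$ may be astronomically larger than $q_n$, so the polynomial Shnol bound $r_{T+1}\le C_0q_{n+1}$ at the right endpoint is only absorbable because the path necessarily carries $\gtrsim q_{n+1}/q_n$ surplus contracting factors — which is exactly why the statement restricts $|\ell_0|$ to at most $q_{n+1}/(12q_n)$, one half of the recursion's validity range $q_{n+1}/(6q_n)$.
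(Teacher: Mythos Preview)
Your proof is correct, and the overall strategy---iterating the one-step contraction along a path of ``larger neighbours''---matches the paper. The genuine difference is how you handle the possibility that the path wanders: you observe that $A_\ell<1$ forbids an interior maximum, so the larger-neighbour path is strictly monotone and must exit the window at $0$ or at $T+1$, reducing to just two cases. The paper instead avoids this observation by simply capping the iteration at $t_0=y_n/2$ steps and adding a third case ($t=t_0$), which works because even $y_n/2\gtrsim q_{n+1}/(12q_n)$ applications of the uniform contraction $e^{(\delta_n-L+50\varepsilon)q_n}$ already beat the Shnol bound. Your monotonicity argument is cleaner and eliminates this third case. On the other hand, your left-case estimate via the factorial identity $\prod_{j=1}^{\ell_0}\max(1,e^{\delta_n q_n}/j)=e^{\delta_n q_n m}/m!$ is correct but superfluous: you already noted $A_j\le e^{(\delta_n-L+50\varepsilon)q_n}$ uniformly, which gives $\prod_{j=1}^{\ell_0}A_j\le e^{(\delta_n-L+50\varepsilon)\ell_0 q_n}$ in one line (this is what the paper does). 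The extra sharpness you extract when $\ell_0>e^{\delta_n q_n}$ is not needed for the stated lemma, so your ``main difficulty (i)'' overstates the role of the anti-resonance here---the uniform bound already delivers the rate $\delta_n-L$.
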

\begin{proof}
Without loss of generality, we will prove it for $\ell_0>0$. 
In view of Lemmas \ref{lem:21_ra<final} and Lemma \ref{lem:22_res_final}, for any $0 < |\ell_0|\leq q_{n+1}/(12q_n)$, we have
\begin{align}\label{A}
r_{\ell_0}\leq e^{(\delta_n-L+50\varepsilon)q_n} \max_{\ell_1=\ell_0\pm 1} r_{\ell_1}.
\end{align}
Let an even number $y_n\in\{[q_{n+1}/(6q_n)]-1, [q_{n+1}/(6q_n)]\}$. Let $t_0:=y_n/2$.
One can iterate \eqref{A} until one reaches $\ell_t$ (and stops the iteration once reaches such a $\ell_t$) where either:

{\it Case 1}. $\ell_t=0$ and $t<t_0$

{\it Case 2}. $\ell_t=y_n$ and $t<t_0$

{\it Case 3}. $t=t_0$

Hence one obtains
\begin{align*}
r_{\ell_0}\leq \max_{(\ell_0, \ell_1,...,\ell_t)\in \mathcal{G}} e^{(\delta_n-L+50\varepsilon)tq_n} r_{\ell_t},
\end{align*}
where $\mathcal{G}=\{(\ell_0, ..., \ell_t ): |\ell_i-\ell_{i-1}|=1, \text{ and } \ell_t \text{ satisfies one of the three cases above}\}$.


If $\ell_t$ satisfies Case 1, we have
\begin{align*}
e^{(\delta_n-L+50\varepsilon)tq_n} r_{\ell_t}\leq e^{(\delta_n-L+50\varepsilon)\ell_0 q_n} r_0
\leq &C_0 q_n e^{(\delta_n-L+50\varepsilon)\ell_0 q_n}\\
\leq &e^{(\delta_n-L+52\varepsilon)\ell_0 q_n}.
\end{align*}
where we used \eqref{Shnol} to control $r_0$. We will also use \eqref{Shnol} in the following two cases.

If $\ell_t$ satisfies Case 2, we have
\begin{align*}
e^{(\delta_n-L+50\varepsilon)tq_n} r_{\ell_t}\leq e^{(\delta_n-L+50\varepsilon)(y_n-\ell_0)q_n} r_{y_n}
\leq &C_0 y_n q_n e^{(\delta_n-L+50\varepsilon)(y_n-\ell_0)q_n}\\
\leq &e^{(\delta_n-L+51\varepsilon) y_n q_n} e^{-(\delta_n-L+50\varepsilon) \ell_0 q_n}\\
\leq &e^{(\delta_n-L+51\varepsilon) 2\ell_0q_n} e^{-(\delta_n-L+50\varepsilon) \ell_0 q_n}\\
=&e^{(\delta_n-L+52\varepsilon)\ell_0 q_n}.
\end{align*}

If $\ell_t$ satisfies Case 3, we have
\begin{align*}
e^{(\delta_n-L+50\varepsilon)tq_n} r_{\ell_t}\leq e^{(\delta_n-L+50\varepsilon)t_0 q_n} \max_{1\leq j\leq y_n-1} r_j
\leq &C_0 y_n q_n e^{(\delta_n-L+50\varepsilon)t_0 q_n}\\
\leq &e^{\varepsilon y_nq_n} e^{(\delta_n-L+50\varepsilon)\frac{y_n}{2} q_n}\\
=&e^{(\delta_n-L+52\varepsilon)\frac{y_n}{2} q_n}\\
\leq &e^{(\delta_n-L+52\varepsilon)\ell_0 q_n}.
\end{align*}
Combining the three cases we have proved Lemma \ref{lem:localization}.\qed
\end{proof}

We finally present the proof of Case 3 of Lemma \ref{lem:main}.
\begin{proof}
Let $y\in (\ell q_n+b_n, (\ell+1)q_n-b_n)$ for some $|\ell|\leq \frac{q_{n+1}}{12q_n}$.
We will only prove it for the cases when $\ell=0$ and $\ell\geq 1$. 
We distinguish three cases: 

If $\ell\neq 0, -1$, we have by Lemma \ref{lem:localization} that 
\begin{align*}
r_{\ell}\leq e^{(\delta_n-L+52\varepsilon)|\ell| q_n}, \text{ and } r_{\ell+1}\leq e^{(\delta_n-L+52\varepsilon)|\ell+1| q_n}.
\end{align*}
By Lemmas \ref{lem:C2_n-r} (if $m_n$ is non-resonant) and \ref{lem:22_non-to-res} (if $m_n$ is resonant), we have
\begin{align*}
|\phi(y)|
\leq &e^{30\varepsilon q_n} \max(e^{-(y-\ell q_n)L}r_{\ell}, e^{-((\ell+1)q_n-y)L}r_{\ell+1})\\
\leq &e^{(\delta_n-L+82\varepsilon)|y|}.
\end{align*}

If $\ell=0$, we have by Lemma \ref{lem:localization} that 
\begin{align*}
r_{1}\leq e^{(\delta_n-L+52\varepsilon)q_n}.
\end{align*}
and by \eqref{Shnol} that
\begin{align*}
r_0\leq C_0\tau_n q_n.
\end{align*}
By Lemmas \ref{lem:C2_n-r} and \ref{lem:22_non-to-res}, we have that for $q_n-b_n>y\geq q_n/12$,
\begin{align*}
|\phi(y)|
\leq &e^{30\varepsilon q_n} \max(e^{-y L}r_0, e^{-(q_n-y)L}r_1)\\
\leq &\max(C_0\tau_n q_n e^{30\varepsilon q_n} e^{-y L}, e^{(\delta_n-L+82\varepsilon)y})\\
\leq &\max(e^{-(L-361\varepsilon)y}, e^{(\delta_n-L+82\varepsilon)y})\\
\leq &e^{(\delta_n-L+361\varepsilon)y}.
\end{align*}

If $\ell=-1$, the proof is similar to that of $\ell=0$. We can prove that for $-q_n+b_n<y\leq -q_n/12$, 
\begin{align*}
|\phi(y)|\leq \max(e^{-(L-361\varepsilon)|y|}, e^{(\delta_n-L+82\varepsilon)|y|})\leq e^{(\delta_n-L+361\varepsilon)|y|}.
\end{align*}
Combining the three cases above, we have proved Case 3 of Lemma \ref{lem:main}.
\qed
\end{proof}

\section{Appendix}

\subsubsection*{Proof of Corollary \ref{cor:cos_prod_lower}}
Let $\tilde{I}:=[m_n+jq_n+1, m_n+(j+1)q_n-1]$. 
Let $$I^c:=\tilde{I}\setminus I=[m_n+j q_n+1, \ell_1-1]\cup [\ell_2+1, m_n+(j+1)q_n-1]=:I^c_1\cup I^c_2.$$
We have by Lemma \ref{lana} and equation \eqref{eq:delta_n_min_cos} that
\begin{align*}
\prod_{\ell\in\tilde{I}}|\cos(\pi\theta_{\ell})| \cdot |\cos(\pi\theta_{m_n+jq_n})| 
\geq &\frac{1}{q_n^C} e^{-q_n \ln 2}  \min_{\ell\in \tilde{I}\cup\{m_n+j q_n\}} |\cos(\pi\theta_{\ell})|\notag\\
\geq &\frac{1}{21} \frac{1}{q_n^C} e^{-q_n \ln 2}  |\cos(\pi\theta_{m_n+jq_n})| 
\end{align*}
Hence
\begin{align}\label{eq:cor_prod_1}
\prod_{\ell\in\tilde{I}}|\cos(\pi\theta_{\ell})| \geq \frac{1}{q_n^{C}} e^{-q_n \ln 2}.
\end{align}
Also by Corollary \ref{cor:prod_cos}, for $k\in \{1,2\}$  we have
\begin{align}\label{eq:cor_prod_2}
\prod_{\ell\in I^c_k}|\cos(\pi\theta_{\ell})|\leq C(\varepsilon) e^{|I_k^c|(-\ln 2+\varepsilon)}
\end{align}
Hence combining \eqref{eq:cor_prod_1} and \eqref{eq:cor_prod_2}, we have
\begin{align}\label{eq:cor_prod_3}
\prod_{\ell\in I}|\cos(\pi\theta_{\ell})|=\frac{\prod_{\ell\in \tilde{I}}|\cos(\pi\theta_{\ell})|}{\prod_{\ell\in I^c}|\cos(\pi\theta_{\ell})|}
\geq e^{-\varepsilon(2q_n-|I|)} e^{(-\ln 2)|I|}. 
\end{align}
\qed

\subsubsection*{Proof of Corollary \ref{cor:A_upper}}
By Lemma \ref{lem:upperbddtildeP}, \eqref{tildePinF} we have uniformly in $\theta$,
\begin{align*}
\|F_{|I|}(\theta)\|\leq C(\varepsilon) e^{|I|(\tilde{L}+\varepsilon)}.
\end{align*}
By Corollary \ref{cor:cos_prod_lower} and \eqref{defnonsingular}, we then have by \eqref{LEtildeLE}
\begin{align*}
\|A_{|I|}(\theta_{\ell_1})\|
=\frac{1}{\prod_{\ell\in I} |\cos(\pi\theta_{\ell})|} \|F_{|I|}(\theta_{\ell_1})\|
\leq  C(\varepsilon) e^{\varepsilon (2q_n-|I|)} e^{|I| \ln 2} e^{|I| (\tilde{L}+\varepsilon)}\leq e^{3\varepsilon q_n} e^{L |I|}.
\end{align*}
\qed

\subsubsection*{Proof of Corollary \ref{cor:A_upper_mn}}
We have by Corollary \ref{cor:A_upper} that
\begin{align*}
\|A_{|I|}(\theta_{\ell_1})\|
\leq &\|A_{jq_n+m_n-\ell_1}(\theta_{\ell_1})\|\cdot \|A(\theta_{jq_n+m_n})\| \cdot \| A_{\ell_2-jq_n-m_n}(\theta_{jq_n+m_n+1})\|\\
\leq &C e^{6\varepsilon q_n} e^{L(|I|-1)} |\tan(\pi(\theta_{jq_n+m_n}))|\\
\leq &\frac{1}{c_{n,j}} e^{7\varepsilon q_n} e^{L |I|}.
\end{align*}
\qed

\subsection{Uniformity lemmas}
\subsubsection{Proof of Lemma \ref{lem:nonres_uni}}\

Without loss of generality we assume $d(y, q_n\Z)=y-\ell q_n$ and $y=\ell q_n+2sq_{n-n_0}+x$ where $0\leq x<2q_{n-n_0}$.
We are going to show that for any $\theta\in \T$ and $j\in I_1$ (for $j\in I_2$ the proof is similar) that
\begin{align}\label{nonresdeftopbottom}
  \ln\left\lbrace\prod_{l\neq j} \frac{|\sin\pi(\theta-\theta_l)|}{|\sin\pi(\theta_j-\theta_l)|}\right\rbrace
=\sum_{l\neq j} \ln|\sin\pi(\theta-\theta_l)|-\sum_{l\neq j} \ln|\sin\pi(\theta_j-\theta_l)|< (2sq_{n-n_0}-1)\epsilon.
\end{align}
For $m=1, 2,..., s$, let 
$$T_m=[-[\frac{1}{2}sq_{n-n_0}]-sq_{n-n_0}+(m-1)q_{n-n_0}, -[\frac{1}{2}sq_{n-n_0}]-sq_{n-n_0}+mq_{n-n_0}-1].$$
For $m=s+1,..., 2s$, let 
$$T_m=[y-[\frac{1}{2}sq_{n-n_0}]-sq_{n-n_0}+(m-s-1)q_{n-n_0}, y-[\frac{1}{2}sq_{n-n_0}]+(m-s)q_{n-n_0}-1].$$
Each $T_m$ consists of $q_{n-n_0}$ consequential numbers.
Denote 
$$|\sin\pi(\theta_j-\theta_{l_m})|=\min_{l\in T_m}|\sin\pi (\theta_j-\theta_l)|.$$ 
Assume $j \in T_{m_0}$, then clearly $l_{m_0}=j$.

First, about $\sum_{l\neq j} \ln|\sin\pi(\theta-\theta_l)|$, applying Lemma \ref{lana} on each $T_m$, we have
\begin{align}\label{nonrestopestimate}
\sum_{l\neq i} \ln|\sin\pi(\theta-\theta_l)|
 \leq &2s(C\ln q_{n-n_0}-(q_{n-n_0}-1)\ln 2) \notag\\
 \leq &(2sq_{n-n_0}-1)(-\ln 2+\frac{C\ln{q_{n-n_0}}}{q_{n-n_0}})   
\end{align}

Secondly, about $\sum_{l\neq j} \ln|\sin\pi(\theta_j-\theta_l)|$. Clearly,
\begin{align}\label{defsum1sum2}
\sum_{l\neq j} \ln|\sin\pi(\theta_j-\theta_l)|=\sum_{l\in I_1, l\neq j}\ln{|\sin\pi (j-l)\alpha|}+\sum_{l\in I_2}\ln{|\sin\pi (j-l)\alpha|}\triangleq \sum_{1}+\sum_{2}.
\end{align}

For $\sum_{1}$, by Lemma \ref{lana} we have
\begin{align}
\sum_{1} \geq&s(-C\ln q_{n-n_0}-(q_{n-n_0}-1)\ln2)+\sum_{m=1, m\neq m_0}^{s} \ln|\sin \pi(j-l_m)\alpha| \notag \\
\geq &s(-C\ln q_{n-n_0}-(q_{n-n_0}-1)\ln2)+\sum_{m=1, m\neq m_0}^{s} \ln{\|(j-l_m)\alpha\|_{\T}} \label{sum1first}
\end{align}
Since $|j-l_m|<sq_{n-n_0}<q_{n-n_0+1}$, we have 
\begin{align}
\sum_{m=1, m\neq m_0}^{s} \ln{\|(j-l_m)\alpha\|_{\T}}
\geq &2\sum^{[\frac{s+1}{2}]}_{k=1}\ln{(k\|q_{n-n_0}\alpha\|_{\T})} \notag\\
\geq &2\int_{1}^{\frac{s+1}{2}}\ln{x} \mathrm{d}x+(s+1)\ln{\frac{1}{2q_{n-n_0+1}}} \notag\\
= &(s+1)\ln{(s+1)}-2(s+1)\ln{2}-(s-1)-(s+1)\ln{q_{n-n_0+1}} \notag\\
> &(s+1)\ln(\frac{s}{q_{n-n_0+1}})-Cs \notag\\
\geq &2s\ln(\frac{s}{q_{n-n_0+1}})-Cs. \label{sum1smallest}
\end{align}
Thus combining (\ref{sum1first}) with (\ref{sum1smallest}), we have
\begin{align}\label{sum1final}
\sum_1\geq sq_{n-n_0}\left(-\frac{C \ln q_{n-n_0}}{q_{n-n_0}}-\ln2+\frac{2 \ln(s/q_{n-n_0+1})}{q_{n-n_0}}\right).
\end{align}
For $\sum_{2}$, first consider $\tilde{T}_m=T_m-aq_n$, let $\tilde{l}_m$ be such that $|\sin\pi (j-\tilde{l}_m)\alpha|=\min_{l\in\tilde{T}_m}|\sin\pi (j-l)\alpha|$. Clearly, $\tilde{l}_m=l_m-aq_n$.
Again using Lemma \ref{lana} we have
\begin{align}
\sum_{2} 
\geq&s(-C\ln q_{n-n_0}-(q_{n-n_0}-1)\ln2)+\sum_{m=s+1}^{2s} \ln|\sin\pi(j-l_m)\alpha| \notag \\
\geq&s(-C\ln q_{n-n_0}-(q_{n-n_0}-1)\ln2)+\sum_{m=s+1}^{2s} \ln \|(j-l_m)\alpha\|_{\T}
\end{align}
Note that
\begin{align*}
\|(j-\tilde{\ell}_m)\alpha\|\geq \|q_{n-1}\alpha\|\geq a_{n+1}\|q_n\alpha\|\geq 2a\|q_n\alpha\|.
\end{align*}
Hence
\begin{align}
\sum_{2} \geq s(-C\ln q_{n-n_0}-(q_{n-n_0}-1)\ln2)+\sum_{m=s+1}^{2s} \ln (\|(j-\tilde{l}_m)\alpha\|_{\T}-a\|q_n\alpha\|_{\T}) \label{sum2first}
\end{align} 
Now let $min\triangleq \|(j-\tilde{l}_{m^*})\alpha\|_{\T}=\min_{m=s+1}^{2s}\|(j-\tilde{l}_m)\alpha\|_{\T}$. Note that $|\tilde{l}_{m_1}-\tilde{l}_{m_2}|<sq_{n-n_0}<q_{n-n_0+1}$ for any $m_1, m_2\in \{s+1,..., 2s\}$. We will divide $\sum_{m=s+1}^{2s} \ln (\|(j-\tilde{l}_m)\alpha\|_{\T}-a\|q_n\alpha\|_{\T})$ into two cases:
\begin{enumerate}
\item if $min> \frac{2}{3}\|q_{n-n_0}\alpha\|_{\T}$, then
\begin{align}
        &\sum_{m=s+1}^{2s} \ln (\|(j-\tilde{l}_m)\alpha\|_{\T}-a\|q_n\alpha\|_{\T}) \notag\\
\geq &\ln {(min-a\|q_n\alpha\|_{\T})}+ \sum_{k=1}^{s-1} \ln {(\|kq_{n-n_0}\alpha\|_{\T}+min-a\|q_n\alpha\|_{\T})}\notag \\
\geq &\sum_{k=0}^{s-1} \ln {(\frac{3k+2}{3}\|q_{n-n_0}\alpha\|_{\T}-a\|q_n\alpha\|_{\T})}\notag  \\
\geq &\sum_{k=0}^{s-1} \ln {(\frac{6k+1}{6}\|q_{n-n_0}\alpha\|_{\T})}\notag  \\
=      &\sum_{k=0}^{s-1} \ln{(6k+1)}-s\ln{q_{n-n_0+1}}-Cs \notag\\
\geq &s\ln{(s/q_{n-n_0+1})}-Cs. \label{sumI2smallestcase1}
\end{align}
\item if $min \leq \frac{2}{3}\|q_{n-n_0}\alpha\|_{\T}$, clearly we still have a lower bound $min\geq \|q_{n-1}\alpha\|_{\T}$, then
\begin{align}
        &\sum_{m=s+1}^{2s} \ln (\|(j-\tilde{l}_m)\alpha\|_{\T}-a\|q_n\alpha\|_{\T}) \notag\\
\geq &\ln {(min-a\|q_n\alpha\|_{\T})}+ \sum_{k=1}^{[\frac{s+1}{2}]} \ln {(\|kq_{n-n_0}\alpha\|_{\T}+min-a\|q_n\alpha\|_{\T})}\notag\\
+&\sum_{k=1}^{[\frac{s+1}{2}]}\ln {(\|kq_{n-n_0}\alpha\|_{\T}-min-a\|q_n\alpha\|_{\T})}.\notag\\
\geq&\ln {(\|q_{n-1}\alpha\|_{\T}-a\|q_n\alpha\|_{\T})}+\sum_{k=1}^{[\frac{s+1}{2}]} \ln {(k\|q_{n-n_0}\alpha\|_{\T}-a\|q_n\alpha\|_{\T})} \notag\\
+&\sum_{k=1}^{[\frac{s+1}{2}]}\ln {(\frac{2k-1}{2}\|q_{n-n_0}\alpha\|_{\T}-a\|q_n\alpha\|_{\T})} \notag \\
\geq & \ln{\frac{1}{4q_n}}+2\sum_{k=1}^{[\frac{s+1}{2}]}\ln{\frac{4k-3}{4}}-2[\frac{s+1}{2}]\ln{2q_{n-n_0+1}} \notag\\
\geq &  s\ln{(s/q_{n-n_0+1})}-\ln{q_n}-Cs. \label{sumI2smallestcase2}
\end{align}
Note that 
\begin{align}\label{nonresqnn0}
\tau_n q_n=b_n<\dist{(y, q_n\Z)}<2(s+1)q_{n-n_0},
\end{align} 
we will get the lower bound of $\sum_{m=s+1}^{2s} \ln (\|(j-\tilde{l}_m)\alpha\|_{\T}-a\|q_n\alpha\|_{\T})$ from case (2),
\begin{align}\label{suml2smallestcase2final}
\sum_{m=s+1}^{2s} \ln (\|(j-\tilde{l}_m)\alpha\|_{\T}-a\|q_n\alpha\|_{\T})
\geq &s\ln{(s/q_{n-n_0+1})}-\ln{(2(s+1)q_{n-n_0}/\tau_n)}-Cs \notag\\
 >&s\ln{(s/q_{n-n_0+1})}-\ln{q_{n-n_0}}-Cs-\ln(2/\tau_n).
\end{align}
\end{enumerate}
Thus combining (\ref{sum2first}), (\ref{sumI2smallestcase2}) with (\ref{suml2smallestcase2final}), we have
\begin{align}\label{sum2final}
\sum_{2}\geq sq_{n-n_0}(-C\frac{\ln{q_{n-n_0}}}{q_{n-n_0}}-\ln{2}+\frac{\ln(s/q_{n-n_0+1})}{q_{n-n_0}}).
\end{align}
Combining (\ref{defsum1sum2}), (\ref{sum1final}) with (\ref{sum2final}), we have
\begin{align}\label{nonresonbottomestimate}
\sum_{l\neq j} \ln|\sin\pi(\theta_j-\theta_l)|\geq 2sq_{n-n_0}\left(-C\frac{\ln{q_{n-n_0}}}{q_{n-n_0}}-\ln{2}+\frac{2\ln(s/q_{n-n_0+1})}{q_{n-n_0}}\right).
\end{align}
Eventually, by (\ref{nonresdeftopbottom}), (\ref{nonrestopestimate}) and (\ref{nonresonbottomestimate}),
\begin{align}\label{nonres2rdlast}
\ln\left\lbrace\prod_{l\neq j} \frac{|\sin\pi(\theta-\theta_l)|}{|\sin\pi(\theta_j-\theta_l)|}\right\rbrace< &(2sq_{n-n_0}-1)(C\frac{\ln{q_{n-n_0}}}{q_{n-n_0}}+\frac{2\ln(q_{n-n_0+1}/s)}{q_{n-n_0}}).
\end{align}
Taking into account that $b_n=\tau_n q_n<4sq_{n-n_0}$, and that $q_{n-n_0+1}\leq q_n$, \eqref{nonres2rdlast} yields
\begin{align}\label{nonres2rdlast'}
\ln\left\lbrace\prod_{l\neq j} \frac{|\sin\pi(\theta-\theta_l)|}{|\sin\pi(\theta_j-\theta_l)|}\right\rbrace
< &(2sq_{n-n_0}-1)(C\frac{\ln{q_{n-n_0}}}{q_{n-n_0}}+\frac{2\ln(q_{n}/s)}{q_{n-n_0}}) \notag\\
\leq &(2sq_{n-n_0}-1)(C\frac{\ln{q_{n-n_0}}}{q_{n-n_0}}+\frac{2\ln(q_{n-n_0}/\tau_n)}{q_{n-n_0}})\notag\\
<&(2sq_{n-n_0}-1)\varepsilon.
\end{align}
\qed

\subsubsection{Proof of Lemma \ref{lem:nonres_I2_large}}\

As a corollary of Lemma \ref{lem:nonres_uni} we have
\begin{cor}\label{cor:nonres_uni}
There exists $x_1\in \tilde{I}_0\cup \tilde{I}_y$ such that 
$$|\tilde{P}_{2sq_{n-n_0}-1}(\theta_{x_1})|\geq e^{(\tilde{L}-2\varepsilon)(2sq_{n-n_0}-1)}.$$
\end{cor}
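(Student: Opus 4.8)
The plan is to run the standard Lagrange-interpolation-plus-uniformity argument, structurally identical to the proof of Corollary \ref{cor:1_res_uni}, now feeding in the node set $\tilde{I}_0\cup\tilde{I}_y$ produced by Lemma \ref{lem:nonres_uni}. Write $k:=2sq_{n-n_0}-1$, so that by \eqref{Phpolyg} the function $\tilde{P}_k(\theta)/(\cos\pi\theta)^k$ is a polynomial of degree $k$ in $\tan\pi\theta$, and note that $\tilde{I}_0\cup\tilde{I}_y$ consists of exactly $2sq_{n-n_0}=k+1$ distinct integers, precisely the number of interpolation nodes one needs.

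First I would argue by contradiction: assume $|\tilde{P}_k(\theta_{x_1})|<e^{(\tilde{L}-2\varepsilon)k}$ for every $x_1\in\tilde{I}_0\cup\tilde{I}_y$. Substituting the nodes $\{\theta_{x_1}\}_{x_1\in\tilde{I}_0\cup\tilde{I}_y}$ into the Lagrange formula \eqref{tildePlagrange} gives
$$\tilde{P}_k(\theta)=\sum_{x_1\in\tilde{I}_0\cup\tilde{I}_y}\tilde{P}_k(\theta_{x_1})\prod_{\substack{j\in\tilde{I}_0\cup\tilde{I}_y\\ j\neq x_1}}\frac{\sin\pi(\theta-\theta_j)}{\sin\pi(\theta_{x_1}-\theta_j)},$$
and bounding each Lagrange coefficient by $e^{\varepsilon k}$ via the $\varepsilon$-uniformity of Lemma \ref{lem:nonres_uni} (i.e. \eqref{defuniform} with $\gamma=\varepsilon$) yields, uniformly in $\theta\in[0,1]$,
$$|\tilde{P}_k(\theta)|\leq (k+1)\,e^{(\tilde{L}-2\varepsilon)k}\,e^{\varepsilon k}=(k+1)\,e^{(\tilde{L}-\varepsilon)k}<e^{(\tilde{L}-\varepsilon/2)k}$$
for $n>N(\varepsilon)$ large enough, absorbing the polynomial factor $k+1$ into the exponential.

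Finally I would integrate in $\theta$: the last display gives $\frac{1}{k}\int_0^1\ln|\tilde{P}_k(\theta)|\,\mathrm{d}\theta\leq \tilde{L}-\varepsilon/2$, contradicting the subharmonicity lower bound $\frac{1}{k}\int_0^1\ln|\tilde{P}_k(\theta)|\,\mathrm{d}\theta\geq\tilde{L}$ of Lemma \ref{subharmonic}. Hence some $x_1\in\tilde{I}_0\cup\tilde{I}_y$ with $|\tilde{P}_k(\theta_{x_1})|\geq e^{(\tilde{L}-2\varepsilon)k}$ must exist. There is essentially no obstacle: the only point to verify is that Lemma \ref{lem:nonres_uni} delivers uniformity with exponent exactly $\varepsilon$ (not $\varepsilon/k$), so that the product of $k+1$ Lagrange factors costs only $e^{\varepsilon k}$ — which is exactly what the definition of $\varepsilon$-uniformity provides — and that $|\tilde{I}_0\cup\tilde{I}_y|=k+1$ matches $\deg$ in \eqref{tildePlagrange}; everything else is verbatim the computation already carried out for Corollary \ref{cor:1_res_uni}.
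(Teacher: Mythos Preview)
Your proof is correct and is essentially identical to the paper's own argument: contradiction via the Lagrange interpolation formula \eqref{tildePlagrange} over the $2sq_{n-n_0}$ nodes in $\tilde{I}_0\cup\tilde{I}_y$, the $\varepsilon$-uniformity bound from Lemma \ref{lem:nonres_uni}, absorption of the factor $k+1$ to get a uniform bound $e^{(\tilde{L}-\varepsilon/2)k}$, and contradiction with the averaged lower bound \eqref{averagelower}.
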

\begin{proof}
Suppose otherwise, we have for any $x_1\in I_1\cup I_2$, 
$$|\tilde{P}_{2sq_{n-n_0}-1}(\theta_{x_1})|< e^{(\tilde{L}-2\varepsilon)(2sq_{n-n_0}-1)}.$$
By \eqref{tildePlagrange}
\begin{align*}
\tilde{P}_{2sq_{n-n_0}-1}(\theta)=\sum_{x_1\in I_1\cup I_2}\tilde{P}_{2sq_{n-n_0}-1}(\theta_{x_1})\prod_{\substack{j\in I_1\cup I_2\\ j\neq x_1}} \frac{\sin\pi(\theta-\theta_j)}{\sin\pi(\theta_{x_1}-\theta_j)}.
\end{align*}
Combining this with Lemma \ref{lem:nonres_uni} yields, uniformly in $\theta$,
\begin{align*}
|\tilde{P}_{2sq_{n-n_0}-1}(\theta)|\leq 2sq_{n-n_0} e^{(\tilde{L}-\varepsilon)(2sq_{n-n_0}-1)}<e^{(\tilde{L}-\frac{\varepsilon}{2})(2sq_{n-n_0}-1)}.
\end{align*}
Hence contradiction with \eqref{averagelower}. \qed
\end{proof}

\begin{lemma}\label{lem:nonres_I1_small}
For $x_1\in \tilde{I}_0$, we have  
$$|\tilde{P}_{2sq_{n-n_0}-1}(\theta_{x_1})|< e^{(\tilde{L}-2\epsilon)(2sq_{n-n_0}-1)}.$$
\end{lemma}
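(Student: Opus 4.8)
The plan is to mirror the argument used for the resonant scale in Lemma~\ref{lem:21_I0_small}, now at the non-resonant subscale $q_{n-n_0}$, combining the uniformity statement just proved (Corollary~\ref{cor:nonres_uni}) with the polynomial growth bound \eqref{Shnol} on the eigenfunction. Suppose for contradiction that there exists $x_1 \in \tilde I_0$ with $|\tilde P_{2sq_{n-n_0}-1}(\theta_{x_1})| \geq e^{(\tilde L - 2\varepsilon)(2sq_{n-n_0}-1)}$. The idea is that such a large value of $\tilde P$ at a point of $\tilde I_0$ forces, via the Green's function expansion \eqref{Green_tildeP} on an interval of length $2sq_{n-n_0}-1$ centered appropriately near $0$, that $|\phi(0)|$ (equivalently, the solution near the origin) is exponentially large in $sq_{n-n_0}$, contradicting \eqref{Shnol} together with the normalization $|\phi(0)|\geq 1$ once $n$ is large.

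First I would set up the Green's formula. Choose the interval $D = [x_1, x_1 + 2sq_{n-n_0}-2]$, so that $0 \in D$ and $0$ sits comfortably in the bulk (this is guaranteed by the definitions of $\tilde I_0$ in \eqref{def:Iy}, which place $0$ roughly in the middle with the $-[sq_{n-n_0}/2]$ shift). Apply \eqref{Green_tildeP} with $y = 0$:
\begin{align*}
1 \leq |\phi(0)| \leq \frac{|\tilde P_{x_2}(\theta_1)|}{|\tilde P_{2sq_{n-n_0}-1}(\theta_{x_1})|}\prod_{j=x_1}^{0}|\cos\pi\theta_j|\cdot|\phi(x_1-1)| + \frac{|\tilde P_{-x_1}(\theta_{x_1})|}{|\tilde P_{2sq_{n-n_0}-1}(\theta_{x_1})|}\prod_{j=0}^{x_2}|\cos\pi\theta_j|\cdot|\phi(x_2+1)|,
\end{align*}
where $x_2 = x_1 + 2sq_{n-n_0}-2$. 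Now bound each factor: the numerators $|\tilde P_k|$ by Lemma~\ref{lem:upperbddtildeP} ($\leq C(\varepsilon)e^{(\tilde L+\varepsilon)|k|}$), the products of cosines by Corollary~\ref{cor:prod_cos} ($\leq C(\varepsilon)e^{(-\ln 2 + \varepsilon)\cdot(\text{length})}$), and the boundary values $|\phi(x_1-1)|, |\phi(x_2+1)|$ by $C_0(|x_1|+|x_2|) \lesssim C_0\, s q_{n-n_0}$ using \eqref{Shnol}. Using the contradiction hypothesis to bound the denominator from below, and noting $\tilde L = L - \ln 2$ so that the $e^{(\tilde L + \varepsilon)|k|}$ and $e^{(-\ln 2 + \varepsilon)|D\setminus\{\text{rest}\}|}$ factors combine in the exponent with the $e^{-(\tilde L - 2\varepsilon)(2sq_{n-n_0}-1)}$ from the denominator to give a net exponent of the form $(L-\tilde L) \cdot(\text{something}) - (\text{something})$ that is in fact strictly negative — more precisely one gets $|\phi(0)| \leq C(\varepsilon)\, C_0\, s q_{n-n_0}\, e^{-c\, s q_{n-n_0}}$ for some positive constant, since the competition is $(\tilde L + \varepsilon) + (-\ln 2 + \varepsilon)$ on top against $(\tilde L - 2\varepsilon)$ on the bottom, and $-\ln 2 < 0$ provides the decay. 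For $n$ large enough (hence $sq_{n-n_0}$ large, using $2q_{n-n_0} \leq \dist(y, q_n\Z)$ and $n_0$ bounded appropriately, or at worst $q_{n-n_0}\to\infty$) this is $< 1$, a contradiction.

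The main obstacle I anticipate is bookkeeping the exponents correctly so that the decay genuinely survives: one must check that the position of $0$ inside $D$ (the split into $|x_1|$ and $|x_2|$) does not spoil the estimate, i.e. that the decay rate coming from the $-\ln 2$ in the cosine products dominates uniformly regardless of where in the bulk $0$ falls. This is exactly the standard ``bulk point'' estimate and is handled by the fact that $\tilde L + (-\ln 2) = L - 2\ln 2 < L - \ln 2 = \tilde L$, so splitting $2sq_{n-n_0}-1$ into two pieces of lengths $a$ and $b$ with $a+b = 2sq_{n-n_0}-1$ yields an upper bound with exponent $\tilde L\, a + (-\ln 2) b$ (or the symmetric version), which is strictly less than $\tilde L(a+b) = \tilde L(2sq_{n-n_0}-1)$ by $(\ln 2)\min(a,b) \gtrsim (\ln 2)\cdot\frac{1}{2}sq_{n-n_0}$ thanks to the bulk condition. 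A secondary point is that one should absorb the polynomial prefactor $C_0\, s q_{n-n_0}$ and the constants $C(\varepsilon)$ into the exponential decay, which is routine once $sq_{n-n_0}$ is large; here one uses that $n_0$ is the \emph{least} integer with $2q_{n-n_0} \leq \dist(y,q_n\Z)$ and $y$ non-resonant, so $\dist(y,q_n\Z) > b_n = \tau_n q_n \to \infty$, forcing $sq_{n-n_0} \geq \frac{1}{2}\dist(y,q_n\Z)\cdot\frac{1}{?}$ to grow. This completes the proof.
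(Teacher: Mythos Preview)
Your proposal is correct and follows essentially the same approach as the paper: assume the bound fails for some $x_1\in\tilde I_0$, apply the Green's formula \eqref{Green_tildeP} at $y=0$ on $[x_1,x_1+2sq_{n-n_0}-2]$, bound numerators by Lemma~\ref{lem:upperbddtildeP}, cosine products by Corollary~\ref{cor:prod_cos}, boundary values by \eqref{Shnol}, and the denominator by the contradiction hypothesis, yielding $1\le|\phi(0)|\le C(\varepsilon)|I|e^{-(L/4-O(\varepsilon))|I|}<1$ for $n$ large. One minor correction: in your bulk estimate the gap between $\tilde L(a+b)$ and $\tilde L a + (-\ln 2)b$ is $Lb$, not $(\ln 2)b$, so the decay rate you obtain is actually $L\min(a,b)$ rather than $(\ln 2)\min(a,b)$---your weaker bound still suffices, but the paper's computation uses the full $L$.
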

\begin{proof}
Suppose otherwise, we have that for some $x_1\in \tilde{I}_0$, 
\begin{align}\label{eq:P2sq_I1_1}
|\tilde{P}_{2sq_{n-n_0}-1}(\theta_{x_1})|\geq e^{(\tilde{L}-2\epsilon)(2sq_{n-n_0}-1)}.
\end{align}
Let $x_2:=x_1+2sq_{n-n_0}-2$ and $I:=[x_1, x_2]$. Clearly $|I|=2sq_{n-n_0}-1$.

By the Green's formula 
\begin{align}\label{eq:P2sq_I1_2}
|\phi(0)|
\leq &|G_I(x_1, 0)|\cdot |\phi(x_1-1)|+|G_I(x_2, 0)|\cdot |\phi(x_2+1)| \notag\\
=&\frac{|\tilde{P}_{x_2}(\theta_1)|}{|\tilde{P}_I(\theta_{x_1})|} \prod_{j=x_1}^0 |\cos(\pi(\theta_j))|\cdot |\phi(x_1-1)|+
\frac{|\tilde{P}_{-x_1}(\theta_{x_1})|}{|\tilde{P}_I(\theta_{x_1})|} \prod_{j=0}^{x_2} |\cos(\pi(\theta_j))|\cdot |\phi(x_2+1)|.
\end{align}
By \eqref{Shnol}, we have
\begin{align*}
|\phi(x_1-1)|\leq C_0|x_1-1|, \text{ and } |\phi(x_2+1)|\leq C_0|x_2+1|.
\end{align*}
In \eqref{eq:P2sq_I1_2}, use Lemma \ref{lem:upperbddtildeP} to estimate the $\tilde{P}$'s in the numerators, use \eqref{eq:P2sq_I1_1} to estimate the $\tilde{P}$ in denominator, and use Corollary \ref{cor:prod_cos} to estimate the cosine products, we arrive at
\begin{align*}
1\leq  |\phi(0)|\leq  C_0 C(\varepsilon) e^{3\varepsilon |I|} \left(e^{-(1-x_1)L} |1-x_1| +e^{-x_2L} |x_2+1| \right)
  \leq  C(\varepsilon) e^{3\varepsilon |I|} |I| e^{-\frac{|I|}{4}L}<C(\varepsilon) e^{-(\frac{L}{4}-4\varepsilon)|I|}.
\end{align*}
Contradiction, provided that $q_{n-n_0+1}\geq \frac{1}{2} \tau_n q_n$ is sufficiently large which is satisfied for large $n$. 
\end{proof}

A quick combination of Corollary \ref{cor:nonres_uni} and Lemma \ref{lem:nonres_I1_small} yields Lemma \ref{lem:nonres_I2_large}. \qed

\subsubsection{Proof of Lemma \ref{lem:1_res_uni}}
We are going to show that for any $\theta\in \T$ and $k\in I_0$ (for $k\in I_{\ell}$ the proof is similar) 
\begin{align}\label{resdeftopbottom}
  \ln\left\lbrace\prod_{j\neq k} \frac{|\sin\pi(\theta-\theta_j)|}{|\sin\pi(\theta_k-\theta_j)|}\right\rbrace
=\sum_{j\neq k} \ln|\sin\pi(\theta-\theta_j)|-\sum_{j\neq k} \ln|\sin\pi(\theta_k-\theta_j)|< (2q_n-1)(\frac{\ln{q_{n+1}/|\ell|}}{2q_n-1}+\epsilon).
\end{align}
First, about $\sum_{j\neq k} \ln|\sin\pi(\theta-\theta_j)|$, applying Lemma \ref{lana} on $I_0$ and $I_{\ell}$ respectively, we have
\begin{align}\label{restop}
\sum_{j\neq k} \ln|\sin\pi(\theta-\theta_j)|
 \leq 2(C\ln q_{n}-(q_{n}-1)\ln 2).
\end{align}
Secondly, about $\sum_{j\neq k} \ln|\sin\pi(\theta_k-\theta_j)|$. Clearly,
\begin{align}\label{defressum1sum2}
\sum_{j\neq k} \ln|\sin\pi(\theta_k-\theta_j)|=\sum_{j\in I_0, j\neq k}\ln{|\sin\pi (k-j)\alpha|}+\sum_{j\in I_{\ell}}\ln{|\sin\pi (k-j)\alpha|}=: \sum_{1}+\sum_{2}.
\end{align}
For $\sum_{1}$, by Lemma \ref{lana} we have
\begin{align}\label{ressum1}
\sum_{1}\geq -C\ln{q_n}-(q_n-1)\ln{2}.
\end{align}
For $\sum_2$, again by Lemma \ref{lana} we have
\begin{align}\label{ressum2smallestterm}
\sum_{2}\geq -C\ln{q_n}-(q_n-1)\ln{2}+\ln{|\sin{\pi(k-j_0)\alpha}|},
\end{align}
where $\ln{|\sin{\pi(k-j_0)\alpha}|}:= \min_{j\in I_2}\ln{|\sin{\pi(k-j)\alpha}|}$.
Clearly 
\begin{align*}
k-j\in [-(\ell+1)q_n+1, -(\ell-1)q_n-1].
\end{align*} 
Let $j_1\in I_{\ell}$ (it is possible that $j_1=j_0$) be such that $k-j_1=-\ell q_n$. We have for any $j\in I_{\ell}$,
\begin{align*}
\|(k-j)\alpha\|\geq \|(j-j_1)\alpha\|-\|(k-j_1)\alpha\|
\geq &\|q_{n-1}\alpha\|-|\ell|\cdot \|q_n\alpha\|\\
\geq &(a_{n+1}-|\ell|)\|q_n\alpha\|+\|q_{n+1}\alpha\|\\
\geq &(\frac{q_{n+1}}{3q_n}-1)\|q_n\alpha\|\\
\geq &\frac{1}{3}|\ell|\cdot \|q_n\alpha\|.
\end{align*}
Thus
\begin{align}\label{ressum2smallestestimate}
\ln{|\sin{\pi(k-j_0)\alpha}|}
\geq \ln{\|(k-j_0)\alpha\|_{\T}} +\ln 2
\geq \ln{|\ell|\cdot \|q_n\alpha\|_{\T}} +\ln\frac{2}{3}
\geq \ln\frac{|\ell|}{q_{n+1}}+\ln\frac{2}{3}. 
\end{align}
Thus combining (\ref{ressum2smallestterm}) with (\ref{ressum2smallestestimate}), we have
\begin{align}\label{ressum2}
\sum_{2}\geq -(C+2)\ln{q_n}-(q_n-1)\ln{2}+\ln{\frac{|\ell|}{q_{n+1}}}.
\end{align}
Therefore by (\ref{defressum1sum2}), (\ref{ressum1}) and (\ref{ressum2}),
\begin{align}\label{resbottom}
\sum_{j\neq k} \ln|\sin\pi(\theta_k-\theta_j)|\geq 2(-(C+1)\ln{q_n}-(q_n-1)\ln{2})+\ln{\frac{|\ell|}{q_{n+1}}}.
\end{align}
Eventually, by (\ref{resdeftopbottom}), (\ref{restop}) and (\ref{resbottom}) we get
\begin{align*}
 \ln\left\lbrace\prod_{j\neq k} \frac{|\sin\pi(\theta-\theta_j)|}{|\sin\pi(\theta_k-\theta_j)|}\right\rbrace
 \leq \ln{(q_{n+1}/|\ell|)}+\epsilon (4C+2)\ln q_n
 \leq (2q_n-1)(\frac{\ln{q_{n+1}/|\ell|}}{2q_n-1}+\epsilon).
\end{align*}
\qed

\subsection{Proof of Lemmas \ref{lem:C2_n-r} and \ref{lem:22_non-to-res}}
\subsubsection{Proof of Lemma \ref{lem:C2_n-r}}
For $y$ so that $\dist(y, q_n\Z)>b_n$, we have proved that for some $x_1\in I_2$, 
$$|\tilde{P}_{2sq_{n-n_0}-1}(\theta_{x_1})|\geq e^{(\tilde{L}(E)-2\varepsilon)(2sq_{n-n_0}-1)}.$$
Let $z_2=z_1+2sq_{n-n_0}-2$, $I(y)=[z_1, z_2]\cap \Z$ and $\partial I(y)=\{z_1, z_2\}$. 
In general, if $I=[a,b]$, let $\partial I:=\{a,b\}$ and $a':=a-1$, $b':=b+1$.

Let us first consider $y\in I^-$.
By Green's function expansion, we have
\begin{align*}
\phi(y)=\sum_{z\in \partial I(y)}G_{I(y)}(z,y) \phi(z').
\end{align*}
If $x_1-1> \ell q_n+b_n$ or $x_2+1<(\ell+1)q_n-b_n$, we could expand $\phi(x_1-1)$ or $\phi(x_2+1)$. 
We will continue this process until we arrive at a $z$ so that $z\leq \ell q_n+b_n$ or $z\geq (\ell+1)q_n-b_n$, or the iterating number reaches $t_0:=[\frac{24}{\tau_n}]+1$.
We obtain, after a series of expansions, the following
\begin{align*}
\phi(y)=\sum_{\substack{z_1,,,z_t,z_{t+1}\\z_{i+1}\in I(z_i')}}G_{I(y)}(y, z_1)G_{I(z_1')}(z_1', z_2)\cdots G_{I(z_t')}(z_t', z_{t+1})\phi({z_{t+1}'}),
\end{align*}
where $z_{t+1}'$ either satisfies 

{\it Case 1}\ : $\ell q_n\leq z_{t+1}'\leq \ell q_n+b_n$ and $t<t_0$ or 

{\it Case 2}\ : $(\ell+1)q_n\geq z_{t+1}'\geq (\ell+1)q_n-b_n$ and $t<t_0$ or 

{\it Case 3}\ : $t=t_0$. 

For simplicity, let us denote $y=z_0'$.

If $z_{t+1}'$ satisfies Case 1. For each $z_j'$, $0\leq j\leq t$, denote $\partial I(z_j')=\{z_{j+1}, y_{j+1}\}$. Assume WLOG $z_{j+1}<y_{j+1}$, we have
\begin{align*}
|G_{I(z_j')}(z_j', z_{j+1})|
=\frac{|P_{|y_{j+1}-z_j'|}(\theta+(z_j'+1)\alpha)|}{|P_{|I(z_j')|}(\theta+z_{j+1}\alpha)|}
=&\frac{|\tilde{P}_{|y_{j+1}-z_j'|}(\theta+(z_j'+1)\alpha)|}{|\tilde{P}_{|I(z_j')|}(\theta+z_{j+1}\alpha)|} 
\prod_{\ell=z_{j+1}}^{z_j'}|\cos(\pi(\theta+\ell\alpha))|.
\end{align*}
We have by Lemma \ref{lem:nonres_I2_large} that
\begin{align*}
|\tilde{P}_{|I(z_j')|}(\theta+z_{j+1}\alpha)|\geq e^{|I(z_j')|(\tilde{L}-2\varepsilon)},
\end{align*}
by Lemma \ref{lem:upperbddtildeP} that
\begin{align*}
|\tilde{P}_{|y_{j+1}-z_j'|}(\theta+(z_j'+1)\alpha)|\leq C(\varepsilon) e^{|y_{j+1}-z_j'|(\tilde{L}+\varepsilon)},
\end{align*}
and by \ref{cor:prod_cos},
\begin{align*}
\prod_{\ell=z_{j+1}}^{z_j'}|\cos(\pi(\theta+\ell\alpha))|\leq e^{|z_j'-z_{j+1}|(-\ln 2+\varepsilon)}.
\end{align*}
Putting them all together, we have
\begin{align}\label{eq:non-res-Green-general}
|G_{I(z_j')}(z_j', z_{j+1})|\leq C(\varepsilon) e^{-|z_j'-z_{j+1}+1|(L-12\varepsilon)}.
\end{align}
Bounding $|\phi(z_{t+1}')|\leq r_a$, we have
\begin{align}\label{eq:non-res-Green-general_1}
|G_{I(y)}(y, z_1)G_{I(z_1')}(z_1', z_2)\cdots G_{I(z_t')}(z_t', z_{t+1})\phi({z_{t+1}'})|
\leq &(C(\varepsilon))^{t_0+1} e^{-(y-\ell q_n-b_n)(L-12\varepsilon)} r_{\ell} \notag\\
\leq &(C(\varepsilon))^{t_0+1} e^{\varepsilon q_n}e^{-(y-\ell q_n)(L-12\varepsilon)} r_{\ell}.
\end{align}

If $z_{t+1}'$ satisfies Case 2, there must be a $z_j'$ such that $aq_n+m_n\in I(z_j')$.
For this particular pair, let $\partial I(z_j')=\{z_{j+1}, y_{j+1}\}$ and $y_{j+1}<z_{j+1}$, we estimate similarly to \eqref{eq:non-res-Green-general}, only modifying the estimate of the cosine product, 
\begin{align*}
|G_{I(z_j')}(z_j', z_{j+1})|
=&\frac{|P_{|y_{j+1}-z_j'|}(\theta+y_{j+1}\alpha)|}{|P_{|I(z_j')|}(\theta+y_{j+1}\alpha)|}\\
=&\frac{|P_{|y_{j+1}-z_j'|}(\theta+y_{j+1}\alpha)|}{|P_{|I(z_j')|}(\theta+y_{j+1}\alpha)|} \prod_{\ell=z_j'}^{z_{j+1}}|\cos(\pi(\theta+\ell\alpha))|\\
\leq &C(\varepsilon) e^{|y_{j+1}-z_j'|(L+\varepsilon)} e^{-|I(z_j')|(L-2\varepsilon)} e^{|z_j'-z_{j+1}|(-\ln 2+\varepsilon)} c_{n,\ell}\\
\leq &C(\varepsilon) e^{-|z_j'-z_{j+1}+1|(L-12\varepsilon)} c_{n,\ell}.
\end{align*}
Hence
\begin{align}\label{eq:non-res-Green-special}
|G_{I(y)}(y, z_1)G_{I(z_1')}(z_1', z_2)\cdots G_{I(z_t')}(z_t', z_{t+1})\phi({z_{t+1}'})|
\leq (C(\varepsilon))^{t_0+1} e^{\varepsilon q_n} e^{-((\ell+1)q_n-y)(L-12\varepsilon)} c_{n,\ell} r_{\ell+1}.
\end{align}

If $z_{t+1}'$ satisfies Case 3. Here 
\begin{align*}
|z_j'-z_{j+1}|\geq \frac{1}{4} I(z_j')\geq \frac{1}{8} \min((z_j'-\ell q_n, (\ell+1)q_n-z_j'))\geq \frac{b_n}{8}=\frac{\tau_n}{8} q_n.
\end{align*} 
In general, we use the Green's function estimate \eqref{eq:non-res-Green-general}, which yields
\begin{align}\label{eq:non-res_101}
|G_{I(z_j')}(z_j', z_{j+1})|\leq C(\varepsilon) e^{-\frac{1}{8}\tau_n q_n(L-12\varepsilon)}.
\end{align}
If $z_{t+1}'\in I^-$, we control $|\phi(z_{t+1}')|\leq |\phi(x_0^-)|$.

If $z_{t+1}'\in I^+$, we control $|\phi(z_{t+1}')|\leq |\phi(x_0^+)|$. 
Furthermore, we know from \eqref{eq:non-res-Green-special}, that one of the Green's function satisfies additional decay, 
\begin{align}\label{eq:non-res_102}
|G_{I(z_j')}(z_j', z_{j+1})|\leq C(\varepsilon) e^{-\frac{1}{8}\tau_n q_n(L-12\varepsilon)} c_{n,\ell}.
\end{align}
We control $|\phi(z_{t+1}')$ as follows
\begin{align}\label{eq:non-res_103}
|\phi(z_{t+1}')|\leq 
\begin{cases}
|\phi(x_0^-)|, \text{ if } z_{t+1}'\in I^-\\
|\phi(\ell q_n+m_n)|, \text{ if } z_{t+1}'=\ell q_n+m_n\\
|\phi(x_0^+), \text{ if } z_{t+1}'\in I^+
\end{cases}
\end{align}
Hence combining \eqref{eq:non-res_101}, \eqref{eq:non-res_102} and \eqref{eq:non-res_103}, we have
\begin{align}\label{eq:non-res_104_t0}
&|G_{I(y)}(y, z_1)G_{I(z_1')}(z_1', z_2)\cdots G_{I(z_t')}(z_t', z_{t+1})\phi({z_{t+1}'})| \notag\\
&\leq (C(\varepsilon))^{t_0} e^{-\frac{1}{8}\tau_n t_0 (L-12\varepsilon)} \max
\{|\phi(x_0^-)|, |\phi(\ell q_n+m_n)|, c_{n,\ell} |\phi(x_0^+)|\} \notag\\
&\leq e^{-3q_n(L-12\varepsilon)} \max
\{|\phi(x_0^-)|, |\phi(\ell q_n+m_n)|, c_{n,\ell} |\phi(x_0^+)|\}.
\end{align}
Taking into account all the three cases \eqref{eq:non-res-Green-general_1}, \eqref{eq:non-res-Green-special} and \eqref{eq:non-res_104_t0}, we have proved that for $y\in I^-$,
\begin{align}\label{eq:non-res-I-}
|\phi(y)|\leq (C(\varepsilon))^{t_0} \max(& e^{\varepsilon q_n} e^{-(y-\ell q_n)(L-12\varepsilon)} r_{\ell}, e^{\varepsilon q_n}  e^{-((\ell+1)q_n-y)(L-12\varepsilon)} c_{n,\ell} r_{\ell+1},\\
&e^{-3q_n (L-12\varepsilon)} 
\max(|\phi(x_0^-)|, |\phi(\ell q_n+m_n)|, c_{n,\ell} |\phi(x_0^+)|) ). \notag
\end{align}
Similarly, one can show that for $y\in I_+$,
\begin{align}\label{eq:non-res-I+}
|\phi(y)|\leq (C(\varepsilon))^{t_0} \max(&e^{\varepsilon q_n}  e^{-(y-\ell q_n)(L-12\varepsilon)} c_{n,\ell} r_{\ell}, e^{\varepsilon q_n}  e^{-((\ell+1)q_n-y)(L-12\varepsilon)} r_{\ell+1},\\
&e^{-3q_n (L-12\varepsilon)} 
\max(c_{n,\ell}|\phi(x_0^-)|, |\phi(\ell q_n+m_n)|, |\phi(x_0^+)|) ). \notag
\end{align}
and 
\begin{align}\label{eq:non-res-I-mid}
|\phi(\ell q_n+m_n)|\leq (C(\varepsilon))^{t_0} c_{n,\ell} \max(&e^{\varepsilon q_n}  e^{-(y-\ell q_n)(L-12\varepsilon)} r_{\ell}, e^{\varepsilon q_n}  e^{-((\ell+1)q_n-y)(L-12\varepsilon)} r_{\ell+1},\\
&e^{-3q_n (L-12\varepsilon)} 
\max(\phi(x_0^-)|, |\phi(\ell q_n+m_n)|, |\phi(x_0^+)|) ). \notag
\end{align}
Combining \eqref{eq:non-res-I-} applied to $y=x_0^-$, \eqref{eq:non-res-I-} applied to $y=x_0^+$, with \eqref{eq:non-res-I-mid}, we obtain that
\begin{align*}
\max(|\phi(x_0^-)|, |\phi(x_0^+)|, |\phi(aq_n+m_n)|)\leq (C(\varepsilon))^{t_0} \max(r_{\ell}, r_{\ell+1}).
\end{align*}
Plugging this back into \eqref{eq:non-res-I-}, \eqref{eq:non-res-I+}, \eqref{eq:non-res-I-mid}, we obtain the claimed result. 
\qed

\subsubsection*{Proof of Lemma \ref{lem:22_non-to-res}}
The proof of this lemma is almost identical to that of Lemma \ref{lem:C2_n-r}. We only give a brief proof.
By Green's function expansion, we have
\begin{align*}
\phi(y)=\sum_{z\in \partial I(y)}G_{I(y)}(z,y) \phi(z').
\end{align*}
If $x_1-1> \ell q_n+b_n$ or $x_2+1<(\ell+1)q_n-b_n$, we continue to expand $\phi(x_1-1)$ or $\phi(x_2+1)$. 
We repeat this process until we arrive at a $z$ so that $z\leq \ell q_n+b_n$ or $z\geq (\ell+1)q_n-b_n$, or the iterating number reaches $t_0:=[24/\tau_n]+1$.
We obtain, after a series of expansions, the following
\begin{align*}
\phi(y)=\sum_{s; z_{i+1}\in I(z_i')}G_{I(y)}(y, z_1)G_{I(z_1')}(z_1', z_2)\cdots G_{I(z_t')}(z_t', z_{t+1})\phi({z_{t+1}'}),
\end{align*}
where $z_{t+1}'$ either satisfies

{\it Case 1}\ :$z_{t+1}'\in R_{\ell}^+\cup \{\ell q_n+m_n\}$ and $t<t_0$ or 

{\it Case 2}\ :$z_{t+1}'\in R_{\ell}^-$ and $t<t_0$ or

{\it Case 3}\ : $z_{t+1}'\in R_{\ell+1}^-$ and $t<t_0$ or 

{\it Case 4}\ : $t=t_0$. 

If $z_{t+1}'$ satisfies Case 1. 
One can follow the proof of Case 1 of Lemma \ref{lem:C2_n-r}.
Bounding $|\phi(z_{t+1}')|\leq \max(r_{\ell}^+, |\phi(\ell q_n+m_n)|)$, we have, similar to \eqref{eq:non-res-Green-general_1}
\begin{align}\label{eq:non-res-Green-general_111}
|G_{I(y)}(y, z_1)G_{I(z_1')}(z_1', z_2)\cdots G_{I(z_t')}(z_t', z_{t+1})\phi({z_{t+1}'})|\leq (C(\varepsilon))^{t_0+1} e^{\varepsilon q_n}  e^{-(y-\ell q_n)(L-12\varepsilon)}\max(r_{\ell}^+, |\phi(\ell q_n+m_n)|).
\end{align}
Note that by the eigenvalue equation, 
$$|\phi(\ell q_n+m_n)|\leq Cc_{n,\ell}\max(|\phi(\ell q_n+m_n-1)|, |\phi(\ell q_n+m_n+1)|)\leq Cc_{n,\ell}e^{5\varepsilon q_n}\max(r_{\ell}^-, r_{\ell}^+).$$
Hence \eqref{eq:non-res-Green-general_111} yields
\begin{align}\label{eq:non-res-Green-general_111'}
|G_{I(y)}(y, z_1)G_{I(z_1')}(z_1', z_2)\cdots G_{I(z_t')}(z_t', z_{t+1})\phi({z_{t+1}'})|\leq (C(\varepsilon))^{t_0+1} e^{\varepsilon q_n}  e^{-(y-\ell q_n)(L-12\varepsilon)}\max(r_{\ell}^+, Cc_{n,\ell}e^{5\varepsilon q_n}r_{\ell}^-).
\end{align}

If $z_{t+1}'$ satisfies Case 2, there must be a $z_j'$ such that $aq_n+m_n\in I(z_j')$.
Similar to the proof of Case 2 of Lemma \ref{lem:C2_n-r}, we have
\begin{align}\label{eq:non-res-Green-special_222}
|G_{I(y)}(y, z_1)G_{I(z_1')}(z_1', z_2)\cdots G_{I(z_t')}(z_t', z_{t+1})\phi({z_{t+1}'})|\leq (C(\varepsilon))^{t_0+1} e^{\varepsilon q_n} e^{-(y-\ell q_n)(L-12\varepsilon)} c_{n,\ell} r_{\ell}^-.
\end{align}

If $z_{t+1}'$ satisfies Case 3. Estimating similar to Case 1, we have
\begin{align}\label{eq:non-res-Green-general_333}
|G_{I(y)}(y, z_1)G_{I(z_1')}(z_1', z_2)\cdots G_{I(z_t')}(z_t', z_{t+1})\phi({z_{t+1}'})|\leq (C(\varepsilon))^{t_0+1} e^{\varepsilon q_n}  e^{-((\ell+1) q_n-y)(L-12\varepsilon)}.
\end{align}

If $z_{t+1}'$ satisfies Case 4. Estimating similar to Case 3 of the proof of Lemma \ref{lem:C2_n-r}, we have
\begin{align}\label{eq:non-res_104_t0_444}
|G_{I(y)}(y, z_1)G_{I(z_1')}(z_1', z_2)\cdots G_{I(z_t')}(z_t', z_{t+1})\phi({z_{t+1}'})| \leq e^{-3q_n(L-12\varepsilon)} |\phi(x_0)|,
\end{align}
where $|\phi(x_0)|:=\sup_{\ell q_n+b_n<y<(\ell+1)q_n-b_n} |\phi(y)|$.
Taking into account that (by Corollary \ref{cor:A_upper}),
\begin{align*}
|\phi(x_0)|\leq e^{3\varepsilon q_n} e^{q_nL} \max(r_{\ell}^+, r_{\ell+1}^-).
\end{align*}
\eqref{eq:non-res_104_t0_444} yields
\begin{align}\label{eq:non-res_104_t0_555}
|G_{I(y)}(y, z_1)G_{I(z_1')}(z_1', z_2)\cdots G_{I(z_t')}(z_t', z_{t+1})\phi({z_{t+1}'})|\leq e^{(-2L+40\varepsilon)q_n}\max(r_{\ell}^+, r_{\ell+1}^-).
\end{align}

Eventually combining the four cases \eqref{eq:non-res-Green-general_111}, \eqref{eq:non-res-Green-special_222}, \eqref{eq:non-res-Green-general_333} and \eqref{eq:non-res_104_t0_555}, we have
\begin{align}\label{eq:non-res-Green-666}
|\phi(y)|\leq (C(\varepsilon))^{t_0} e^{18\varepsilon q_n} \max(e^{-(y-\ell q_n)L}r_{\ell}^+, c_{n,\ell}e^{-(y-\ell q_n)L}r_{\ell}^-, e^{-((\ell+1)q_n-y)L}r_{\ell+1}^-).
\end{align}
By Corollary \ref{cor:A_upper_mn}, we have
\begin{align*}
r_{\ell}^-\leq e^{9\varepsilon q_n} \frac{1}{c_{n,\ell}} r_{\ell}^+.
\end{align*}
Hence \eqref{eq:non-res-Green-666} yields
\begin{align}\label{eq:non-res-Green-777}
|\phi(y)|\leq e^{30\varepsilon q_n} \max(e^{-(y-\ell q_n)L}r_{\ell}^+, e^{-((\ell+1)q_n-y)L}r_{\ell+1}^-).
\end{align}
This proves the claimed result.
\qed

\subsection{Proof of Case 1 of Lemma \ref{lem:main}}\
We divide into two cases depending if $k<q_n/2$.

Case 1. If $\frac{q_n}{12}< k< \frac{q_n}{2}$. \

The proof of this lemma is similar to that of Lemma \ref{lem:C2_n-r}. We only give a brief proof.
By Green's function expansion, we have
\begin{align*}
\phi(k)=\sum_{z\in \partial I(k)}G_{I(k)}(z,k) \phi(z').
\end{align*}
If $x_1-1>b_n$ or $x_2+1<q_n-b_n$, we continue to expand $\phi(x_1-1)$ or $\phi(x_2+1)$. 
We repeat this process until we arrive at a $z$ so that $z\leq b_n$ or $z\geq q_n-b_n$, or the iterating number reaches $t_0:=[24/\tau_n]+1$.
We obtain, after a series of expansions, the following
\begin{align*}
\phi(k)=\sum_{s; z_{i+1}\in I(z_i')}G_{I(k)}(k, z_1)G_{I(z_1')}(z_1', z_2)\cdots G_{I(z_t')}(z_t', z_{t+1})\phi({z_{t+1}'}),
\end{align*}
where $z_{t+1}'$ either satisfies

{\it Case (i)}\ : $z_{t+1}'\leq b_n$ and $t<t_0$ or 

{\it Case (ii)}\ : $z_{t+1}'\geq q_n-b_n$ and $t<t_0$ or

{\it Case (iii)}\ : $t=t_0$. 

If $z_{t+1}'$ satisfies Case (i). 
One can follow the proof of Case 1 of Lemma \ref{lem:C2_n-r}.
Bounding $|\phi(z_{t+1}')|\leq r_0$, we have, similar to \eqref{eq:non-res-Green-general_1}
\begin{align}\label{eq:non-res-Green-general_111_dio}
|G_{I(k)}(k, z_1)G_{I(z_1')}(z_1', z_2)\cdots G_{I(z_t')}(z_t', z_{t+1})\phi({z_{t+1}'})|\leq (C(\varepsilon))^{t_0+1} e^{\varepsilon q_n}  e^{-k(L-12\varepsilon)}r_0.
\end{align}

If $z_{t+1}'$ satisfies Case (ii), similar to Case (i) above, bounding $|\phi(z_{t+1}')|\leq r_1$, we have, similar to \eqref{eq:non-res-Green-general_1}
\begin{align}\label{eq:non-res-Green-special_222_dio}
|G_{I(k)}(k, z_1)G_{I(z_1')}(z_1', z_2)\cdots G_{I(z_t')}(z_t', z_{t+1})\phi({z_{t+1}'})|\leq (C(\varepsilon))^{t_0+1} e^{\varepsilon q_n} e^{-(q_n-k)(L-12\varepsilon)} r_1.
\end{align}

If $z_{t+1}'$ satisfies Case (iii). Estimating similar to Case 3 of the proof of Lemma \ref{lem:C2_n-r}, we have
\begin{align}\label{eq:non-res_104_t0_444_dio}
|G_{I(k)}(k, z_1)G_{I(z_1')}(z_1', z_2)\cdots G_{I(z_t')}(z_t', z_{t+1})\phi({z_{t+1}'})| \leq e^{-3q_n(L-12\varepsilon)} |\phi(x_0)|,
\end{align}
where $|\phi(x_0)|:=\sup_{b_n<y<q_n-b_n} |\phi(y)|$.
Taking \eqref{Shnol} into account, we have
\begin{align}\label{eq:dio_333}
\max(r_0, r_1, |\phi(x_0)|)\leq C_0 q_n.
\end{align}
Hence combining \eqref{eq:dio_333} with \eqref{eq:non-res-Green-general_111_dio}, \eqref{eq:non-res-Green-special_222_dio} and \eqref{eq:non-res_104_t0_444}, we obtain
\begin{align}\label{eq:non-res-Green-666_dio}
|\phi(k)|\leq e^{2\varepsilon q_n} e^{-k (L-12\varepsilon)}\leq e^{-k(L-36\varepsilon)}.
\end{align}
where we used $q_n<12k$.

Case 2. If $\frac{q_n}{2}< k< \frac{q_{n+1}}{12}$. \

For $y$ such that $\frac{q_n}{2}< y< \frac{q_{n+1}}{6}$,
let $s$ be the smallest positive integer such that 
\begin{align*}
(2s-1-\frac{1}{2})q_n\leq y<(2s+1-\frac{1}{2})q_n,
\end{align*}
and 
\begin{align*}
I_1:= [-2sq_n+[sq_n/2]+1, -sq_n+[sq_n/2] ], \text{ and } I_2:= [y-[sq_n/2]-sq_n+1, y-[sq_n/2]].
\end{align*}
It is easy to see that $I_1\cap I_2=\emptyset$.
Next we show
\begin{lemma}\label{lem:Dio_uni}
For $n$ large enough, $\{\theta_{\ell}\}_{\ell\in I_1\cup I_2}$ is $153\varepsilon$-uniform.
\end{lemma}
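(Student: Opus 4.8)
The plan is to imitate, essentially verbatim, the appendix proof of Lemma~\ref{lem:nonres_uni} (which is itself the analogue at the smaller scale $q_{n-n_0}$, and is close in spirit to Lemma~\ref{lem:1_res_uni}), with the block length $q_{n-n_0}$ there replaced by $q_n$ (and hence $q_{n-n_0+1}$ by $q_{n+1}$), the intervals $\tilde I_0,\tilde I_y$ replaced by $I_1,I_2$, and the number of blocks equal to $2s$. So I would fix $\theta\in\T$ and $j\in I_1\cup I_2$ and, exactly as in \eqref{resdeftopbottom}, reduce the claim to an upper bound of the form
$$\sum_{\ell\ne j}\ln|\sin\pi(\theta-\theta_\ell)|-\sum_{\ell\ne j}\ln|\sin\pi(\theta_j-\theta_\ell)|<153\varepsilon\,(2sq_n-1).$$

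First I would estimate the numerator sum: partition $I_1\cup I_2$ into $2s$ consecutive blocks of $q_n$ integers each and apply Lemma~\ref{lana} on every block, which yields $\sum_{\ell\ne j}\ln|\sin\pi(\theta-\theta_\ell)|\le(2sq_n-1)(-\ln2+C\tfrac{\ln q_n}{q_n})$, exactly as in \eqref{nonrestopestimate}. Next I would estimate the denominator sum, where the real work lies: write it as $\Sigma_1+\Sigma_2$ according to $\ell\in I_1$ or $\ell\in I_2$, apply Lemma~\ref{lana} on each of the $2s$ blocks, and isolate, from every block not containing $j$, the minimal term $\ln|\sin\pi(j-l_m)\alpha|\ge\ln\|(j-l_m)\alpha\|+\ln2$. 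Since $s<q_{n+1}/(12q_n)+1$ (a consequence of $y<q_{n+1}/6$), one has $|j-l_m|<3sq_n<q_{n+1}$, so by \eqref{eq:cont1} and the three-distance structure of the orbit $\{q_n\alpha\}$ each such minimum is, up to a bounded multiplicative factor, of the form $k_m\|q_n\alpha\|$, where the integers $k_m$ run through consecutive values: through $1,\dots,\lceil (s-1)/2\rceil$ (doubled, on the two sides of $j$) for the $s-1$ blocks of $I_1$, and through an interval of length $s$ inside $[s,3s]$ for the $s$ blocks of $I_2$. Bounding the geometric-type products $\prod_m k_m\|q_n\alpha\|$ exactly as in \eqref{sum1smallest}--\eqref{sumI2smallestcase2} (with the same dichotomy according to whether the central $I_2$-minimum exceeds $\tfrac23\|q_n\alpha\|$) gives a lower bound $\Sigma_1+\Sigma_2\ge(2sq_n-1)(-\ln2)-Cs\ln q_n-(2s-1)\ln q_{n+1}+2s\ln s-Cs$.

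Subtracting the two estimates, the $-\ln2$ contributions cancel up to $O(1)$, and I would be left with
$$\sum_{\ell\ne j}\ln|\sin\pi(\theta-\theta_\ell)|-\sum_{\ell\ne j}\ln|\sin\pi(\theta_j-\theta_\ell)|\le Cs\ln q_n+(2s-1)\ln q_{n+1}-2s\ln s+O(s).$$
The term $Cs\ln q_n+O(s)$ is at most $\varepsilon(2sq_n-1)$ for $n$ large. For the remaining term I would invoke the Case~1 hypothesis of Lemma~\ref{lem:main}, namely $\beta_n\le300\varepsilon$, i.e.\ $\ln q_{n+1}=\beta_nq_n\le300\varepsilon q_n$. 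Dividing by $2sq_n-1$ and retaining the $2s\ln s$ saving coming from the factors $\prod_k k$ in \eqref{sum1smallest}, one checks — and this is the delicate quantitative point of the argument — that the resulting quotient stays below $152\varepsilon$ throughout the admissible range of $s$, so that the displayed difference is $<153\varepsilon(2sq_n-1)$, which is the assertion.

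The step I expect to be the main obstacle is precisely this final accounting of the constant. A priori the $\ln q_{n+1}$-type loss in the denominator could be counted once per block, contributing $\approx\beta_n\le300\varepsilon$ per interpolation node; keeping it controlled by the stated size requires carefully tracking (i)\ the growth of the successive minima $\|q_n\alpha\|,2\|q_n\alpha\|,\dots$, (ii)\ the exact number $2s-1$ of blocks not containing $j$, and (iii)\ the distinction between the ``near'' minima contributing to $\Sigma_1$ and the ``far'' minima (with differences of order $y$) contributing to $\Sigma_2$. It is also the only place where the Diophantine hypothesis $\beta_n\le300\varepsilon$ is used in an essential way; everything else is a routine transcription of the appendix proof of Lemma~\ref{lem:nonres_uni}.
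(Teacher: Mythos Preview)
Your approach works, but it is more elaborate than the paper's, and your expectation that the final uniformity exponent comes out below $152\varepsilon$ is optimistic. After partitioning $I_1\cup I_2$ into $2s$ blocks of length $q_n$ and applying Lemma~\ref{lana} blockwise (exactly as you propose), the paper does \emph{not} track the growth $k_m\|q_n\alpha\|$ of the successive block minima. Instead it observes that $|j-\ell_w|<y+sq_n<3y<q_{n+1}$ for every block $T_w$, so by \eqref{eq:cont1}--\eqref{eq:cont2} each of the $2s-1$ minimal terms satisfies the single crude bound $\|(j-\ell_w)\alpha\|\ge\|q_n\alpha\|\ge\tfrac12e^{-\beta_nq_n}\ge\tfrac12e^{-300\varepsilon q_n}$; the Case~1 hypothesis $\beta_n\le300\varepsilon$ enters at precisely this one step, and the proof concludes in a few lines. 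The Stirling-type saving $2s\ln s$ that you work to extract via the machinery of \eqref{sum1smallest}--\eqref{sumI2smallestcase2} is not needed here: it was essential in Lemma~\ref{lem:nonres_uni} only because there the ratio $\ln q_{n-n_0+1}/q_{n-n_0}$ was not assumed small. In fact your saving does not bring the exponent below $\beta_n$ for intermediate values of $s$ (roughly $1\ll s\ll q_{n+1}/q_n$), so both routes yield a uniformity constant of order $\beta_n\le300\varepsilon$ in the worst case; the precise numerical constant in the lemma statement is immaterial for the downstream argument, which only needs $O(\varepsilon)$.
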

\begin{proof}
We divide the $2sq_n$ points into $2s$ intervals: $T_1, \cdots, T_{2s}$, each containing $q_n$ points.
Fixing any $j$. For $1\leq w\leq 2s$, let 
$$|\sin\pi(\theta_j-\theta_{{\ell}_w})|:=\min_{\ell\in T_w} |\sin\pi(\theta_j-\theta_{\ell})|.$$ 
Without loss of generality, assume $j \in T_{s_0}$ for some $s_0$ such that $1\leq s_0 \leq s$. 

By Lemma \ref{lana}, we have
\begin{align}\label{Dio:3}
\sum_{\ell \neq j} \ln|\sin\pi(\theta-\theta_{\ell})| \leq 2s (C\ln q_n-(q_n-1)\ln 2) \leq 2sq_n(-\ln 2+\varepsilon),
\end{align}
and
\begin{align}\label{Dio:4}
 & \sum_{\ell \neq j} \ln|\sin\pi(\theta_j-\theta_{\ell})| \notag \\
\geq &2s(-C\ln q_n-(q_n-1)\ln2)+\sum_{w=1, w\neq s_0}^s \ln|\sin\pi(\theta_j-\theta_{\ell_w})|
          +\sum_{w=s+1}^{2s} \ln|\sin\pi(\theta_i-\theta_{\ell_w})| \notag\\
\geq &2sq_n(-\ln 2-\varepsilon)+\sum_{w=1, w\neq s_0}^s \ln \|(j-\ell_w)\alpha\|
          +\sum_{w=s+1}^{2s} \ln \|(j-\ell_w)\alpha\|
\end{align}
We have
\begin{align}\label{eq:klarge}
|j-\ell_w|< y+sq_n<3y<q_{n+1},
\end{align}
hence for each $w\neq s_0$, we have by \eqref{eq:cont2} and \eqref{def:betan} that
\begin{align}\label{Dio:5}
\|(j-\ell_w)\alpha\|\geq \|q_n\alpha\|\geq\frac{1}{2q_{n+1}}=\frac{1}{2}e^{-\beta_n q_n}\geq \frac{1}{2}e^{-300\varepsilon q_n}.
\end{align}
Combining \eqref{Dio:3}, \eqref{Dio:4} and \eqref{Dio:5}, we have
\begin{align}
\sum_{\ell \neq j} \ln|\sin\pi(\theta-\theta_{\ell})|-\sum_{\ell \neq j} \ln|\sin\pi(\theta_j-\theta_{\ell})|<305\varepsilon sq_n.
\end{align}
\qed
\end{proof}

Lemma \ref{lem:Dio_uni} implies the following, similar to Lemma \ref{lem:nonres_I2_large},
\begin{lemma}
For $n$ large enough, there exists $x_1\in I_2$ such that that 
\begin{align}\label{Dio:6}
|\tilde{P}_{2sq_n-1}(\theta_{x_1})|\geq e^{(\tilde{L}-155\varepsilon)(2sq_n-1)}.
\end{align}
\end{lemma}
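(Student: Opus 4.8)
The plan is to run the two-step argument behind Lemma~\ref{lem:nonres_I2_large}, now at scale $2sq_n$ on the node set $I_1\cup I_2$ and with the uniformity input of Lemma~\ref{lem:Dio_uni}. First, note that $I_1\cup I_2$ consists of $2sq_n$ distinct integers and that $\tilde{P}_{2sq_n-1}(\theta)/(\cos\pi\theta)^{2sq_n-1}$ is a polynomial of degree $2sq_n-1$ in $\tan\pi\theta$, so the Lagrange interpolation identity \eqref{tildePlagrange} applies with these nodes. If one had $|\tilde{P}_{2sq_n-1}(\theta_x)|<e^{(\tilde{L}-155\varepsilon)(2sq_n-1)}$ for every $x\in I_1\cup I_2$, then bounding the interpolation weights by Lemma~\ref{lem:Dio_uni} would give, uniformly in $\theta\in\T$,
\begin{align*}
|\tilde{P}_{2sq_n-1}(\theta)|\leq 2sq_n\,e^{(\tilde{L}-155\varepsilon)(2sq_n-1)}\,e^{153\varepsilon(2sq_n-1)}=2sq_n\,e^{(\tilde{L}-2\varepsilon)(2sq_n-1)}<e^{(\tilde{L}-\varepsilon)(2sq_n-1)}
\end{align*}
for $n$ large (using $s\geq1$, so $2sq_n-1\to\infty$, to absorb the polynomial factor), contradicting the average lower bound \eqref{averagelower}. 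Hence there is some $x_1\in I_1\cup I_2$ with $|\tilde{P}_{2sq_n-1}(\theta_{x_1})|\geq e^{(\tilde{L}-155\varepsilon)(2sq_n-1)}$; this is the analogue of Corollary~\ref{cor:nonres_uni}.

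Next I would show $x_1\notin I_1$, exactly as in Lemma~\ref{lem:nonres_I1_small}. Suppose $x_1\in I_1$ and set $x_2:=x_1+2sq_n-2$, $I:=[x_1,x_2]$. From $I_1=[-2sq_n+[sq_n/2]+1,\,-sq_n+[sq_n/2]]$ one checks that $0\in I$ and $\dist(0,\{x_1,x_2\})\geq \frac{1}{4}|I|-2$. Expanding $\phi(0)$ by the Green's formula \eqref{Green} rewritten through \eqref{PkG}, and estimating the numerator $\tilde{P}$'s by Lemma~\ref{lem:upperbddtildeP}, the cosine products by Corollary~\ref{cor:prod_cos}, the denominator $\tilde{P}_{2sq_n-1}(\theta_{x_1})$ from below by the bound just obtained, and $|\phi(x_1-1)|,|\phi(x_2+1)|$ (each $\leq C_0\cdot 2sq_n$) by Shnol's bound \eqref{Shnol}, one arrives at $1\leq|\phi(0)|\leq C(\varepsilon)\,(2sq_n)\,e^{-(\frac{L}{4}-4\varepsilon)(2sq_n-1)}$, which is $<1$ for $n$ large since $\varepsilon$ is small relative to $L$ and, in the case at hand, $q_{n+1}\leq e^{300\varepsilon q_n}$, so $\ln(2sq_n)=O(\varepsilon q_n)$ while $(2sq_n-1)\geq 2q_n-1$. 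This contradiction forces $x_1\in I_2$, which is precisely \eqref{Dio:6}.

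The argument is essentially routine given Lemma~\ref{lem:Dio_uni}; the only points requiring care are the geometric bookkeeping in the second step — verifying that $0$ lies at distance $\gtrsim|I|/4$ from each endpoint of $[x_1,x_1+2sq_n-2]$ when $x_1\in I_1$, and hence that the exponent $\frac{L}{4}$ really appears — and checking that the working length $2sq_n-1$ is large enough for both the Lagrange/average-lower-bound contradiction and the Green's function contradiction to take effect, which follows immediately from $s\geq1$. I do not expect any real obstacle: the substantive content of this sub-case has already been carried by the uniformity estimate Lemma~\ref{lem:Dio_uni}.
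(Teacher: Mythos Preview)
Your proposal is correct and follows exactly the approach the paper intends: the paper itself gives no proof, merely writing ``similar to Lemma~\ref{lem:nonres_I2_large}'', and you have filled in precisely that two-step argument (Lagrange interpolation plus the average lower bound to place $x_1$ in $I_1\cup I_2$, then the Green's function expansion at $0$ to exclude $I_1$).

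One small numerical slip: in the second step you quote the exponent $\tfrac{L}{4}-4\varepsilon$ from Lemma~\ref{lem:nonres_I1_small}, but here the assumed denominator bound is $e^{(\tilde{L}-155\varepsilon)|I|}$ rather than $e^{(\tilde{L}-2\varepsilon)|I|}$, so the error term is of order $156\varepsilon$, not $4\varepsilon$. Concretely, the Green's function estimate yields
\[
1\leq|\phi(0)|\leq C(\varepsilon)\,|I|\,e^{-(L/4-157\varepsilon)|I|},
\]
and the contradiction requires $L/4>157\varepsilon$, i.e.\ $L>628\varepsilon$. This still holds because the standing hypothesis $L>\delta+700\varepsilon$ together with $\delta\geq 0$ gives $L>700\varepsilon$; so the argument goes through, just with a different constant than the one you wrote.
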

Let $x_2=x_1+2sq_n-2$ and $I(k):=[x_1, x_2]$.
Plugging \eqref{Dio:6} into the Green's formula \eqref{Green_tildeP}, and using estimates from Lemma \ref{lem:upperbddtildeP} and Corollary \ref{cor:prod_cos}, we have that for $\frac{q_n}{2}<k<\frac{q_{n+1}}{12}$,
\begin{align}\label{Dio2}
|\phi(k)|
\leq &\sum_{z\in \partial I(k)} e^{-(L-625\varepsilon)|k-z|} |\phi(z')|.
\end{align}
Iterating this process for $\phi(z')$ until we arrive at a $z'$ such that $z'\leq \max(\gamma k, q_n/2)$ or $z'\geq 2k$ or the iteration number $t$ reaches $t_0:=[5/\gamma]+1$, where $\gamma$ is a small positive constant such that
$$(L-625\varepsilon)(1-\gamma)=L-626\varepsilon.$$
We obtain, after a series of expansions, the following
\begin{align*}
|\phi(k)|\leq \sum_{s; z_{i+1}\in I(z_i')} e^{-(L-625\varepsilon)(|k-z_1|+|z_1'-z_2|+...+|z_t'-z_{t+1}|)} |\phi({z_{t+1}'})|,
\end{align*}
where $z_{t+1}'$ either satisfies

{\it Case (i)}\ : $z_{t+1}'\leq \max(\gamma k, \frac{q_n}{2})$ and $t<t_0$ or 

{\it Case (ii)}\ : $z_{t+1}'\geq 2k$ and $t<t_0$ or

{\it Case (iii)}\ : $t=t_0$. \

If $z_{t+1}'\leq \frac{q_n}{2}$, we bound $|\phi(z_{t+1}')|$ by \eqref{eq:non-res-Green-666_dio}, which is
\begin{align}\label{Dio33}
|\phi(z_{t+1}')|\leq e^{-(L-36\varepsilon)z_{t+1}'},
\end{align}
and hence
\begin{align}\label{Dio33'}
e^{-(L-625\varepsilon)(|k-z_1|+|z_1'-z_2|+...+|z_t'-z_{t+1}|)} |\phi({z_{t+1}'})|\leq &e^{-(L-625\varepsilon)(k-z_{t+1})} e^{-(L-36\varepsilon)z_{t+1}'}
\leq e^{-(L-625\varepsilon)k}.
\end{align}

If $z_{t+1}'\leq \gamma k$, bounding $|\phi(z_{t+1}')|\leq C_0 \gamma k$ by \eqref{Shnol}, 
we obtain
\begin{align}\label{Dio33''}
e^{-(L-625\varepsilon)(|k-z_1|+|z_1'-z_2|+...+|z_t'-z_{t+1}|)} |\phi({z_{t+1}'})|\leq &C_0 \gamma k e^{-(L-625\varepsilon)(k-z_{t+1})}  \notag\\
\leq &C_0 \gamma k e^{-(L-625\varepsilon)(1-\gamma)k} \notag\\
\leq &e^{-(L-627\varepsilon)k}.
\end{align}

If $z_{t+1}'$ satisfies Case (ii), bounding $|\phi(z_{t+1}')|\leq C_0 z_{t+1}'$ by \eqref{Shnol}, we obtain
\begin{align}\label{Dio44'}
e^{-(L-625\varepsilon)(|k-z_1|+|z_1'-z_2|+...+|z_t'-z_{t+1}|)} |\phi({z_{t+1}'})|\leq &e^{-(L-625\varepsilon)|k-z_{t+1}|} C_0 z_{t+1}'\notag\\
\leq &e^{-(L-626\varepsilon)|k-z_{t+1}|}\leq e^{-(L-626\varepsilon)k}.
\end{align}

If $z_{t+1}'$ satisfies Case (iii), we bound $|\phi(z_{t+1}')|\leq C_0z_{t+1}'\leq 2C_0 k$ by \eqref{Shnol}.
Further we bound each $|z_j'-z_{j+1}|$, denoting for simplicity $k=z_0'$, in the following way.
For $z_j'\geq \gamma k$ satisfying 
\begin{align*}
\max(\gamma k, (2s-1-\frac{1}{2})q_n)\leq z_j'<(2s+1-\frac{1}{2})q_n,
\end{align*}
we have
\begin{align*}
|z_j'-z_{j+1}|\geq \frac{1}{2}sq_n\geq \frac{s}{4s+1} \gamma k\geq \frac{1}{5}\gamma k.
\end{align*}
Hence we have by \eqref{def:betan} that
\begin{align}\label{Dio55}
e^{-(L-625\varepsilon)(|k-z_1|+|z_1'-z_2|+...+|z_t'-z_{t+1}|)} |\phi({z_{t+1}'})|
\leq &2C_0 k e^{-(L-625\varepsilon) \frac{t_0}{5}\gamma k}\notag\\
\leq &e^{-(L-626\varepsilon)k}.
\end{align}
Summarizing \eqref{Dio33'}, \eqref{Dio33''}, \eqref{Dio44'} and \eqref{Dio55}, we have
\begin{align}\label{Dio66}
|\phi(k)|\leq 2^{t_0}e^{-(L-627\varepsilon)k}\leq e^{-(L-630\varepsilon) k}.
\end{align}
\qed

\section*{Acknowledgement}
R. H. is partially supported by NSF-DMS-2053285. S.J. was a 2020-21 Simons fellow. Her work
was also partially supported by NSF DMS-2052899, DMS-2155211, and Simons 681675. F. Y. is partially supported by an AMS-Simons Travel Grant. 
R. H. and F. Y. thank the hospitality of University of California,
Irvine during summer 2017, when the key work of this paper was done and the work on \cite{HJY} was started.

\bibliographystyle{amsplain}

\

\address{Rui Han, rhan@lsu.edu\\
Department of Mathematics,
Louisiana State University, Baton Rouge, USA}
\

\

\address{Svetlana Jitomirskaya, szhitomi@math.uci.edu\\ 
Department of Mathematics,
University of California, Irvine, California, USA}
\

\

\address{Fan Yang, ffyangmath@gmail.com\\
Department of Mathematics,
Louisiana State University, Baton Rouge, USA}


\end{document}